\documentclass[11pt]{amsart}
\usepackage{amsmath}
\usepackage{amscd}
\usepackage{amssymb}
\usepackage{amsfonts}
\usepackage{amsthm}
\usepackage{bbm}
\usepackage{cancel}
\usepackage{color}
\usepackage{eucal}
\usepackage{enumerate,yfonts}
\usepackage{enumitem}
\usepackage{pdfsync}
\usepackage[all,cmtip]{xy}
\usepackage{graphicx}
\usepackage{graphics}
\usepackage{hyperref}
\usepackage{latexsym}
\usepackage{mathrsfs}
\usepackage{placeins}
\usepackage{pstricks}
\usepackage{bookmark}
\usepackage{mathtools}
\usepackage{stmaryrd}
\usepackage{url}

\newtheorem{thm}{Theorem}[section]

\newtheorem{corollary}[thm]{Corollary}
\newtheorem{lemma}[thm]{Lemma}

\newtheorem{prop}[thm]{Proposition}

\newtheorem{thm-dfn}[thm]{Theorem-Definition}
\newtheorem{cor}[thm]{Corollary}

\newtheorem{remark}[thm]{Remark}
\newtheorem{definition}[thm]{Definition}

\numberwithin{equation}{section}

\newcommand\myatop[2]{\genfrac{}{}{0pt}{}{#1}{#2}}

\setlength{\parskip}{2ex}

\setlength{\textwidth}{360pt}

\newcommand{\nc}{\newcommand}

\newcommand{\cD}{{\mathcal D}}
\newcommand{\cO}{{\mathcal O}}

\newcommand{\cN}{{\mathcal N}}
\newcommand{\cH}{{\mathcal H}}

\newcommand{\cP}{{\mathcal P}}
\newcommand{\cL}{{\mathcal L}}
\newcommand{\cE}{{\mathcal E}}

\newcommand{\cZ}{{\mathcal Z}}
\newcommand{\cS}{{\mathcal S}}
\newcommand{\cM}{{\mathcal M}}
\newcommand{\cG}{{\mathcal G}}
\newcommand{\tcZ}{{\widetilde \cZ}}
\newcommand{\tF}{{\widetilde F}}
\newcommand{\tM}{{\widetilde M}}

\newcommand{\bC}{{\mathbb C}}
\newcommand{\bZ}{{\mathbb Z}}

\newcommand{\bN}{{\mathbb N}}
\newcommand{\bR}{{\mathbb R}}
\newcommand{\bH}{{\mathbb H}}
\newcommand{\bG}{{\mathbb G}}

\newcommand{\rc}{\mathrm{c}}
\newcommand{\rr}{\mathrm{r}}
\newcommand{\ri}{\mathrm{i}}

\newcommand{\rB}{\mathrm{B}}
\newcommand{\rD}{\mathrm{D}}

\newcommand{\La}{{\mathfrak{a}}}

\newcommand{\Lc}{{\mathfrak{c}}}
\newcommand{\Lt}{{\mathfrak{t}}}
\newcommand{\Lg}{{\mathfrak{g}}}
\newcommand{\Lo}{{\mathfrak{o}}}

\newcommand{\Ll}{{\mathfrak{l}}}
\newcommand{\Lp}{{\mathfrak{p}}}
\newcommand{\Lm}{{\mathfrak{m}}}
\newcommand{\Lk}{{\mathfrak{K}}}

\newcommand{\Ls}{{\mathfrak{s}}}
\newcommand{\Lu}{{\mathfrak{u}}}


\newcommand{\fp}{{\mathfrak{p}}}

\newcommand{\fF}{{\mathfrak{F}}}

\newcommand{\fa}{{\mathfrak{a}}}

\newcommand{\p}{\perp}

\newcommand{\la}{\langle}
\newcommand{\ra}{\rangle}

\nc{\ot}{\otimes}
\nc{\on}{\operatorname}

\nc{\oh}{{\operatorname{H}}}
\nc{\gr}{{\operatorname{gr}}}
\nc{\rk}{{\operatorname{rank}}}
\nc{\codim}{{\operatorname{codim}}}
\nc{\img}{{\operatorname{Im}}}
\nc{\IC}{{\operatorname{IC}}}

\nc{\bI}{{\mathbf 1}}

\nc{\lp}{{\left(}}
\nc{\rp}{{\right)}}

\newcommand{\beqn}{\begin{equation*}}
\newcommand{\eeqn}{\end{equation*}}

\newcommand{\beq}{\begin{equation}}
\newcommand{\eeq}{\end{equation}}

\newcommand{\bern}{\begin{eqnarray*}}
\newcommand{\eern}{\end{eqnarray*}}

\newcommand{\inv}{{\mathbin{/\mkern-4mu/}}}

\newcommand{\dimLp}{\dim \Lp}
\newcommand{\baseRing}{\Bbbk}
\newcommand{\BruhatLength}{L_S}
\newcommand{\LiftToBraids}{\beta}
\newcommand{\Pone}{P}

\newcommand{\basis}{v}
\newcommand{\braid}{b}
\newcommand{\nS}{r}
\newcommand{\nSchi}{n}
\newcommand{\basepta}{a_0}
\newcommand{\barbasepta}{\bar{a}_0}
\newcommand{\hatbasepta}{\hat{a}_0}
\newcommand{\tildebasepta}{\tilde{a}_0}
\newcommand{\baseptas}{a_{0, s}}
\newcommand{\barbaseptas}{\bar{a}_{0, s}}

\newcommand{\genpta}{a}
\newcommand{\bargenpta}{\bar{a}}
\newcommand{\hatgenpta}{\hat{a}}

\newcommand{\groupL}{L}
\newcommand{\groupM}{M}

\begin{document}

\title[Nearby Cycle Sheaves for Symmetric Pairs]{Nearby Cycle Sheaves for Symmetric Pairs}

\author{Mikhail Grinberg}
\email{misha@mishagrinberg.net}

\author{Kari Vilonen}\address{School of Mathematics and Statistics, University of Melbourne, VIC 3010, Australia, also Department of Mathematics and Statistics, University of Helsinki, Helsinki, Finland}
\email{kari.vilonen@unimelb.edu.au, kari.vilonen@helsinki.fi}
\thanks{The second author was supported in part by the ARC grants DP150103525 and DP180101445  and the Academy of Finland}

\author{Ting Xue}
\address{School of Mathematics and Statistics, University of Melbourne, VIC 3010, Australia, also Department of Mathematics and Statistics, University of Helsinki, Helsinki, Finland} 
\email{ting.xue@unimelb.edu.au}
\thanks{The third author was supported in part by the ARC grants DP150103525 and DE160100975.}

\subjclass[2010]{14L40, 20C08, 32S30.}       

\begin{abstract}
We present a nearby cycle sheaf construction in the context of symmetric spaces. This construction can be regarded as a replacement for the Grothendieck-Springer resolution in classical Springer theory.
\end{abstract}

\date{November 6, 2022}

\maketitle

\tableofcontents

\section{Introduction}

In this paper, we present a nearby cycle sheaf construction which is a crucial ingredient  in the development of the theory of character sheaves in the context of symmetric spaces. We generalize the results of the first named author in \cite{G2} (see also \cite{G4}) to the case where we allow equivariant local systems as coefficients. It is shown in~\cite{VX} that this construction, and its variant, produce all character sheaves up to parabolic induction in the setting of classical symmetric spaces. The special case of the split symmetric pair $(SL(n), SO(n))$ was treated in the paper~\cite{CVX}, which also relies on the construction presented here. 

We consider a connected complex reductive group $G$ and an involution $\theta: G \to G$, giving rise to a symmetric pair $(G, K)$, with $K = G^\theta$. We have a decomposition $\Lg = \Lk \oplus \Lp$ into $+1$ and $-1$ eigenspaces of $\theta$. Let us write $\cN$ for the nilpotent cone in $\Lg$ and let $\cN_\Lp = \cN \cap \Lp$. Let $\La \subset \Lp$ be a Cartan subspace, i.e., a maximal abelian subspace of $\Lp$ consisting of semisimple elements, and let $W_\La = N_K (\La) / Z_K (\La)$ be the little Weyl group. We write $\Lp^{rs}$ for the set of regular semisimple elements of $\Lp$ and let $\La^{rs} = \La \cap \Lp^{rs}$. We refer the reader to Section~\ref{subsec-sp} for a definition of a regular element of $\Lp$, and remark that a regular semisimple element of $\Lp$ is not necessarily regular in $\Lg$. We have the adjoint quotient map:
\beqn
f: \Lp \to \Lp \inv K \cong \La / W_\La \, .
\eeqn
Note that $f^{-1} (0) = \cN_\Lp$. The fiber of this map at a point $\barbasepta \in \La^{rs} / W_\fa$ is a regular semisimple $K$-orbit $X_{\barbasepta}$, whose equivariant fundamental group is given by:
\beqn
\pi_1^K (X_{\barbasepta}) = I \coloneqq Z_K (\La) / Z_K (\La)^0.
\eeqn
For each character $\chi \in \hat I$, we construct a nearby cycle sheaf $P_\chi \in \on{Perv}_K (\cN_\Lp)$, a $K$-equivariant perverse sheaf on the nilpotent cone $\cN_\fp$. We study the topological Fourier transform of $P_\chi$, and show that this Fourier transform is an IC-extension of a $K$-equivariant local system on $\Lp^{rs}$ (under a suitable identification of $\Lp$ and $\Lp^*$). The local systems on $\Lp^{rs}$ arising in this way are described explicitly in our main Theorem~\ref{thm-main}, where Hecke algebras with parameters $\pm 1$, attached to certain Coxeter subgroups of $W_\La$, enter the description.  The construction of these Hecke algebras can be viewed as an instance of endoscopy, as explained in \cite[Section 3.3]{VX}. This nearby cycle sheaf construction can be regarded as a replacement for the Grothendieck-Springer resolution in classical Springer theory.

The paper is organized as follows. In Section~\ref{sec-preliminaries}, we fix some notation and discuss preliminaries on the little Weyl group $W_\La$ and the equivariant fundamental group $\pi_1^K (\Lp^{rs})$. In particular, we introduce the notion of a regular splitting homomorphism, which enables us to view the fundamental group $\pi_1^K (\Lp^{rs})$ as a semidirect product of the group $I$ and the braid group $B_{W_\La}$ associated to $W_\La$. This notion is needed to state our main result. We present two constructions of regular splitting homomorphisms. The first construction is geometric, arising from a Kostant-Rallis slice $\Ls \subset \Lp$. The second construction is more algebraic, and it leads to a full classification of such homomorphisms. In Section~\ref{sec-statement}, we define the perverse sheaves $P_\chi$, $\chi \in \hat I$, and state our main Theorem~\ref{thm-main}. We also state Theorem~\ref{thm-chi-eq-one}, which describes the Fourier transform of the sheaf $P_1$, corresponding to the trivial character $\chi = 1$, as well as the monodromy action of the braid group $B_{W_\La}$ on $P_1$. In Section~\ref{sec-example}, we describe in detail the example of the symmetric pair $(SL(2), SO(2))$. This example is later used in the proof of our main result. Proofs of several key facts from Sections \ref{sec-preliminaries} and \ref{sec-statement} are deferred to Section~\ref{sec-proofs-prelim}. In Section~\ref{sec-proof-of-thm-chi-eq-one}, we prove Theorem~\ref{thm-chi-eq-one}. Although this result essentially already appears in \cite{G2, G4}, this section sets the stage for the proof of our main theorem in the next section. Finally, in Section~\ref{sec-proof}, we prove our main theorem.

{\bf Acknowledgements.} We thank the Research Institute for Mathematical Sciences, Kyoto University, Japan, for hospitality, support, and a nice research environment. We also thank the anonymous referees for their valuable comments.

\section{Preliminaries}
\label{sec-preliminaries}

We work throughout over the complex numbers in the classical topology.  However, we expect that our statements can be extended to other settings, such as the \'etale one. The classical topology allows us to work with perverse sheaves with coefficients in a general commutative ring $\baseRing$, which we assume to be an integral domain and of finite global dimension. For perverse sheaves we use the conventions of~\cite{BBD}. The reader can just imagine our ring of coefficients to be $\bZ$, as only integers arise in our constructions. In particular, Propositions~\ref{prop-fourier} and \ref{prop-fourier-chi} imply that it suffices to prove our main theorems in the case $\baseRing = \bZ$; see Remarks \ref{rmk-Z-to-k}, \ref{rmk-Z-to-k-chi}.

\subsection{Symmetric pairs}\label{subsec-sp}
Let $G$ be a connected reductive algebraic group over the complex numbers and let $\theta: G \to G$ be an involution of $G$. We write: 
\beqn
G^\theta = K.
\eeqn
Let $\Lg = \on{Lie} G$. Then we have: 
\beqn
\Lg = \Lk \oplus \Lp,
\eeqn
where $\Lk$ (resp. $\Lp$) is the $+1$-eigenspace (resp. $-1$-eigenspace) of $\theta$. We fix a Cartan subspace $\La$ of $\Lp$; i.e., $\La \subset \Lp$ is a maximal abelian subspace consisting of semisimple elements. We write $\Lp^{rs}$ for the set of regular semisimple elements of $\Lp$, and we let: 
\beqn
\La^{rs} = \La \cap \Lp^{rs}.
\eeqn
Here, an element $x \in \Lp$ is called regular if $\dim Z_K (x) \leq \dim Z_K (x')$ for all $x' \in \Lp$. We note that an element of $\Lp^{rs}$ is not necessarily regular semisimple as an element of $\Lg$. We write $A$ for the maximal $\theta$-split torus of $G$ corresponding to $\La$, where $\theta$-split means that $\theta(t) = t^{-1}$ for all $t \in A$.

Let $\groupL = Z_G (\La)$. Note that $\groupL$ is $\theta$-stable and $\groupL^\theta = Z_K (\La)$. We fix a $\theta$-stable maximal torus $T \subset \groupL$. Then we have $A \subset T$ (as $A$ is contained in the center of $\groupL$), and $T$ is also a maximal torus of $G$. Let us write: 
\beq\label{definition-of-C}
C = T^\theta.
\eeq
On the level of Lie algebras, we have:
\beq\label{lie-alg-direct-sum}
\on{Lie}T = \Lt = \La \oplus \Lc,
\eeq
where $\Lc = \on{Lie} (C) \subset \Lk$.   

We write $X^* (T) = \on{Hom} (T, \bG_m)$, $X_* (T) =$ $ \on{Hom} (\bG_m, T)$, for the characters and the cocharacters of $T$, respectively, and $\langle \; , \, \rangle$ for the pairing $X^* (T) \times X_* (T) \to \bZ$. We let $\Phi \subset X^* (T)$ denote the set of roots of $\Lg$ with respect to the maximal torus $T \subset G$, and we write $\check\Phi \subset X_* (T)$ for the corresponding set of coroots. For each $\alpha \in \Phi$, we write $\check\alpha \in \check\Phi$ for the corresponding coroot. For $\alpha \in \Phi$, we write $\Lg_\alpha \subset \Lg$ for the associated root space. We then have:
\beqn
\Lg \ = \ \Lt \oplus  \bigoplus_{\alpha \in \Phi} \Lg_\alpha.
\eeqn
Similarly, we write $X^* (A) = \on{Hom} (A, \bG_m)$, $X_* (A) = \on{Hom} (\bG_m,A)$, and we continue to write $\langle \; , \, \rangle$ for the pairing $X^* (A) \times X_* (A) \to \bZ$. For a free abelian group $H$ of finite rank, we write $H_\bR = H \otimes_\bZ \bR$ and $H_\bC = H \otimes_\bZ \bC$. We have natural identifications $X_* (T)_\bC \cong \Lt$, $X_* (A)_\bC \cong \La$, $X^* (T)_\bC \cong \Lt^*$, and $X^* (A)_\bC \cong \La^*$. We will use these identifications to regard $X_* (T)$ as a subset of $\Lt$, etc.

\subsection{The little Weyl group \texorpdfstring{$W_\La$}{Lg}}
\label{subsec-structure-of-Wa}
Let us write $W_\La = W(K,\La)$ for the ``little" Weyl group:
\beqn
W_\La = W(K,\La) \coloneqq N_K(\La) / Z_K(\La).
\eeqn

Given an $\alpha \in \Phi$ we have $\theta \alpha = \beta \in \Phi$ and $\theta \Lg_\alpha = \Lg_\beta$, since $T$ is $\theta$-stable. We recall the following notions:
\begin{eqnarray*}
\alpha \in \Phi \ \ \text{is real} && \text{if } \alpha |_{\Lc} = 0 \iff \theta \alpha = -\alpha \\
\alpha \in \Phi \ \ \text{is imaginary} && \text{if } \alpha |_{\La} = 0 \iff \theta \alpha = \alpha \\
\alpha \in \Phi \ \ \text{is complex} && \text{otherwise}.
\end{eqnarray*}
We write:
\beqn
\Phi_\rr = \{ \alpha \in \Phi \; | \; \alpha \text{ real} \}, \ \
\Phi_\ri = \{ \alpha \in \Phi \; | \; \alpha \text{ imaginary} \}.
\eeqn
Both $\Phi_\rr$ and $\Phi_\ri$ are sub-root systems of $\Phi$.

We write $\Sigma \subset X^* (A)$ for the restricted root system, i.e., for the root system formed by the non-zero restrictions of the roots in $\Phi$ to $\La$. Thus, $\Sigma$ consists of the restrictions of the real and complex roots to $\La$. We have:
\beq\label{eqn-restricted-root-decomposition}
\Lg = \ Z_\Lg (\La) \oplus \bigoplus_{\bar\alpha \in \Sigma} \Lg_{\bar\alpha} \, ,
\eeq
where we denote the restricted roots by $\bar\alpha$ and the corresponding root spaces by $\Lg_{\bar\alpha}$. The root system $\Sigma$ is not necessarily reduced, and the root spaces $\Lg_{\bar\alpha}$ are not necessarily one dimensional. We note that for $\alpha \in \Phi \backslash \Phi_\ri$, the roots $\alpha$ and $-\theta\alpha$ give rise to the same restricted root in $\Sigma$, since $(\alpha + \theta\alpha) |_{\La} = 0$. We denote the resulting restricted root by $\bar\alpha$. If we interpret $\La^*$ as a subspace of $\Lt^*$, using the direct sum decomposition \eqref{lie-alg-direct-sum}, then we can write:
\beq\label{restricted-roots}
\bar\alpha \ = \ \frac{1}{2} \, (\alpha - \theta\alpha).
\eeq
In particular, for a complex root $\alpha \in \Phi$, the (restricted) root space attached to $\bar\alpha$ is at least two dimensional.

It is well-known that the little Weyl group $W_\La$ is the Coxeter group associated to the restricted root system $\Sigma$ (see, for example, \cite{K,R}), thus generated by the reflections $s_{\bar\alpha}$ at the restricted roots $\bar\alpha \in \Sigma$.

In what follows we describe the structure of $W_\fa$ in more detail, following \cite{V}. Let $W = W(G, T) = N_G(T) / Z_G(T)$ denote the ``big" Weyl group. Then $W$ is the Coxeter group associated to the root system $\Phi \subset X^* (T)_\bR \subset X^* (T)_\bC \cong \Lt^*$.

Recall that the pair $(G, \theta)$ uniquely determines a $\theta$-invariant real form $G_\bR$ of $G$, such that $K_\bR = K \cap G_\bR$ is a maximal compact subgroup of $K$. We let $\Lg_\bR = \on{Lie} (G_\bR)$, $\Lk_\bR = \on{Lie} (K_\bR) = \Lk \cap \Lg_\bR$, and $\Lp_\bR = \Lp \cap \Lg_\bR$. Then $\Lg_\bR = \Lk_\bR \oplus \Lp_\bR$ is a Cartan decomposition, meaning that $\Lu = \Lk_\bR \oplus \mathbf{i} \, \Lp_\bR$ is a compact real form of $\Lg$ (where $\mathbf{i} = \sqrt{-1}$). We will also write $\Lt_\bR = \Lt \cap \Lg_\bR$, $\Lc_\bR = \Lc \cap \Lg_\bR$, and $\La_\bR = \La \cap \Lg_\bR$. Then we have natural identifications: $X_* (A)_\bR \cong \La_\bR$ and $X_* (T)_\bR \cong \mathbf{i} \, \Lc_\bR \oplus \La_\bR$.

Pick a $G$-invariant symmetric bilinear form $\nu_\Lg$ on $\Lg$, which restricts to a pseudo-Euclidean structure on $\Lg_\bR$, so that $\Lg_\bR = \Lk_\bR \oplus \Lp_\bR$ is an orthogonal decomposition, and we have $\nu_\Lg |_{\Lk_\bR} < 0$ and $\nu_\Lg |_{\Lp_\bR} > 0$. Note that, by construction, the form $\nu_\Lg$ is non-degenerate and $\theta$-invariant. Note also that if $G$ is semisimple, we can take $\nu_\Lg$ to be the Killing form. By construction, the form $\nu_\Lg$ restricts to a Euclidean structure on $X_* (T)_\bR$. This exhibits the Weyl groups $W$ and $W_\La$ as Coxeter groups acting on the Euclidean spaces $X_* (T)_\bR$ and $X_* (A)_\bR$, respectively. From now on, we will use the above identification to write $\La_\bR$ for $X_* (A)_\bR$.

\begin{remark}\label{choice-of-nu}
Since many of our constructions will depend on the choice of the bilinear form $\nu_\Lg$, we note that this choice comes from a convex, and therefore contractible, set of possibilities.
\end{remark}

In a number of places in this paper, we will need to fix a metric on $\Lp$. To that end, note that the bilinear form $\nu_\Lg$ gives rise to a Hermitian structure $\langle \; , \, \rangle$ on $\Lp$, defined by $\langle x , y \rangle = \nu_\Lg (x, y)$ for all $x, y \in \Lp_\bR$. We will measure distances and angles in $\Lp$ with respect to this Hermitian structure.

Recall the root systems $\Phi_\rr$ and $\Phi_\ri$. Pick positive systems $\Phi_\rr^+ \subset \Phi_\rr$ and $\Phi_\ri^+ \subset \Phi_\ri$ (for example, we can choose a positive system $\Phi^+ \subset \Phi$ and let $\Phi_\rr^+ = \Phi_\rr \cap \Phi^+$, $\Phi_\ri^+ = \Phi_\ri \cap \Phi^+$). Let $\rho^\rr$ and $\rho^\ri$ be the half sums of the positive real roots and the positive imaginary roots, defined with respect to $\Phi_\rr^+$ and $\Phi_\ri^-$, respectively. With these choices we define:
\beqn
\Phi_\rc = \{ \alpha \in \Phi \; | \; \alpha \text{ complex  and  perpendicular to $\rho^\rr$ and $\rho^\ri$} \}.
\eeqn
Note that the condition that $\alpha$ is complex in the above definition is redundant. Note further that $\Phi_\rc$ is also a sub-root system of $\Phi$, and that $\theta$ restricts to an involution on $\Phi_\rc$. Let us write: 
\beqn
\text{$W_\rr$ (resp. $W_\ri$, $W_\rc$) $=$ the Coxeter group associated to $\Phi_\rr$ (resp. $\Phi_\ri$, $\Phi_\rc$).}
\eeqn
Let $W^\theta \subset W$ (resp. $W_\rc^\theta \subset W_\rc$) be the subgroup consisting of the elements that commute with $\theta$. Then, according to \cite[Proposition 3.12]{V}, we have:
\beqn
W^\theta \ = \ W_\rc^\theta \ltimes (W_\rr \times W_\ri). 
\eeqn
Moreover, we have an isomorphism:
\beq\label{little-Weyl-group}
W_\La \ \cong \ W_\rc^\theta \ltimes W_\rr \, ,
\eeq
given by restricting elements of the RHS to $\La$ (see, for example, \cite[Proposition 4.16]{V}). 

The group $W_\rr$ (as a subgroup of $W_\La$) is generated by the reflections $s_{\bar\alpha}$ for the real roots $\alpha \in \Phi_\rr$. The structure of $W_\rc^\theta$ is described as follows. By~\cite[Lemma 3.1, p$. \; 958$]{V}, we can write $\Phi_\rc$ as a disjoint union $\Phi_\rc^1 \sqcup \Phi_\rc^2$, such that:
\beq\label{roots-in-phic}
\begin{gathered}
\text{the roots in $\Phi_\rc^1$ are orthogonal to the roots in $\Phi_\rc^2$ and}
\\
\text{$\theta: \Phi_\rc^1 \to \Phi_\rc^2$ is an isomorphism of root systems.}
\end{gathered}
\eeq
In particular, we have:
\beq\label{perp-roots}
\alpha \text{ and } \theta \alpha \text{ are perpendicular}, \; \alpha \in \Phi_\rc \, .
\eeq
By~\eqref{roots-in-phic}, we have:
\beq\label{Wctheta}
W_\rc^\theta \text{ is generated by the products } s_\alpha s_{\theta\alpha} \, , \; \alpha \in \Phi_\rc \, .
\eeq
Using~\eqref{restricted-roots}, one can readily check that:
\beq\label{restriction-of-complex-roots}
\text{for } \alpha \in \Phi_\rc, \;\; s_{\bar\alpha} = s_\alpha s_{\theta \alpha} |_{\La} \in W_\La \, .
\eeq

We now pick a Weyl chamber $\La_\bR^+ \subset \La_\bR$ for the little Weyl group $W_\La$, as follows. The choice of the positive system $\Phi_\rr^+ \subset \Phi_\rr$ gives us a choice of a Weyl chamber $\La_{\bR, \rr}^+ \subset \La_\bR$, i.e., a connected component of $\La_\bR - \cup_{\alpha \in \Phi_\rr} H_{\bar\alpha}$, where the $H_{\bar\alpha} \subset \La$ is the root hyperplane corresponding to $\bar \alpha \in \Sigma \subset \La_\bR^* \cong X^* (A)_\bR$. We take $\La_\bR^+$ to be one of the connected components of the intersection $\La_{\bR, \rr}^+ \cap \La^{rs}$ (cf.~\eqref{little-Weyl-group}).

Let $S \subset W_\La$ be the set of simple reflections associated to the Weyl chamber $\La_\bR^+$, i.e., the reflections in the walls of $\La_\bR^+$. Using the semidirect product decomposition \eqref{little-Weyl-group}, we observe that if $w \in W_\La$ fixes the Weyl chamber $\La_{\bR, \rr}^+$, then we have $w \in W_\rc^\theta$. Thus, by our choice of the Weyl chamber $\La_\bR^+$, each simple reflection $s \in S$ belongs to either $W_\rr$ or $W_\rc^\theta$. Let $s \in W_\La$ be a reflection, i.e., $s = s_{\bar\alpha}$ for some $\bar\alpha \in \Sigma$ (where $s_{\bar\alpha}$ denotes the reflection corresponding to the restricted root $\bar\alpha \in \Sigma$). Then, by~\eqref{little-Weyl-group}, we have:
\beq
\label{reflections}
\text{either $\exists \; \alpha \in \Phi_\rr$ such that $s = s_{\bar\alpha} \;\;$ or 
$\;\; \exists \; w \in W_\La$ such that $w s w^{-1} \in W_\rc^\theta \, .$}
\eeq

For each reflection $s \in W_\La$, we define:
\beq\label{eqn-Phi-s}
\Phi_s = \{ \alpha \in \Phi \; | \; \bar\alpha \neq 0, \; s_{\bar\alpha} = s \},
\eeq
and
\beq\label{eqn-deltas}
\delta (s) = \frac{1}{ 2} \, | \Phi_s | = \frac{1}{2} \, \sum_{\substack{\bar\alpha \in \Sigma \\ s_{\bar\alpha} = s}} \dim \Lg_{\bar\alpha} \, .
\eeq
If $\delta(s) = 1$, we write $\alpha_s \in \Phi_\rr^+$ for the unique positive real root such that $s = s_{\bar\alpha_s}$.

\subsection{The equivariant fundamental group \texorpdfstring{$\pi_1^K (\Lp^{rs}, \basepta)$}{Lg}}
\label{subsec-fund-group}
Pick a basepoint $\basepta \in \La_\bR^+ \subset \Lp^{rs}$. In this subsection, we describe the equivariant fundamental group $\pi_1^K (\Lp^{rs}, \basepta)$.

\begin{remark}\label{rmk-basepoint-indep}
Note that the Weyl chamber $\La_\bR^+ \subset \Lp^{rs}$ is contractible. Therefore, the fundamental groups $\pi_1^K (\Lp^{rs}, \genpta)$, for different choices of $\genpta \in \La_\bR^+$, are all canonically isomorphic to each other.
\end{remark}

Given an algebraic group $\cG$, we will write $\cG^0$ for the identity component of $\cG$. If $\cG$ acts on a variety $X$, we can think of the equivariant fundamental group $\pi_1^{\cG} (X)$ in a classical way as $\pi_1 ((E \cG \times X) / \cG)$. Furthermore, we choose our conventions so that $\pi_1^\cG (pt) = \cG / \cG^0$. In particular, if $X$ is connected, we identify $\cG$-equivariant local systems on $X$ with (left) representations of $\pi_1^{\cG} (X)$.

\pagebreak

Let:
\beqn
I = Z_K (\La) / Z_K (\La)^0,
\eeqn
be the component group of $Z_K (\La)$. We have:
\beqn
I = C / C^0, 
\eeqn
where $C$ is defined in~\eqref{definition-of-C}. Let us write $A [2]$ for the group of order 2 elements of the torus $A$. Then we have $A [2] \subset C$ and $C = C^0 \cdot A [2]$ (see, for example, \cite[Proposition 1]{KR}). Thus, there is a surjection:
\beq\label{surjectionOnI}
A [2] \to C / C^0 = I.
\eeq
It follows that $I$ is an elementary abelian $2$-group, i.e., a finite product of copies of $\bZ / 2$, and the little Weyl group $W_\La$ acts on $I$ via conjugation.

Consider the adjoint quotient map:
\beqn
f: \Lp \to \Lp \inv K \cong \La \slash W_\La \, .
\eeqn
It restricts to a fibration $f: \Lp^{rs} \to \La^{rs} \slash W_\La$. We set $\barbasepta = f (\basepta)$. Recall that:
\beqn
B_{W_\La} \coloneqq \pi_1 (\La^{rs} \slash W_\La, \barbasepta),
\eeqn
is the braid group attached to the Coxeter group $W_\La$. Let us write: 
\beqn
X_{\barbasepta} = f^{-1} (\barbasepta) = \text{the semisimple $K$-orbit through $\basepta \,$.}
\eeqn
We have $X_{\barbasepta} \cong K / Z_K(\basepta)$, $Z_K(\basepta) = Z_K(\La)$, and therefore:
\beqn
\pi^K_1 (X_{\barbasepta}, \basepta) \cong I.
\eeqn

Note that the group $K$ is not necessarily connected. To deal with this disconnectedness, we write:
\beqn
\begin{split}
I^0 & = \pi^{K^0}_1 (X_{\barbasepta}, \basepta) = Z_{K^0} (\La) / Z_{K^0} (\La)^0 \\
& = \on{Im} (\pi_1 (X_{\barbasepta}, \basepta) \to \pi^K_1 (X_{\barbasepta}, \basepta)) \subset I.
\end{split}
\eeqn
We then have:
\beqn
I / I^0 \cong K / K^0. 
\eeqn
Here we have used the fact that $X_{\barbasepta}$ is connected, and hence it is also a homogenous space for $K^0$. Furthermore, we have: 
\beq\label{eqn-little-Weyl-redefined}
W_\La = N_K (\La) / Z_K (\La) = N_{K^0} (\La) / Z_{K^0} (\La).
\eeq
Let us write:
\beqn
\widetilde B_{W_\La} = \pi_1^K (\Lp^{rs}, \basepta), \;\; \widetilde B^0_{W_\La} = \pi_1^{K^0} (\Lp^{rs}, \basepta) = \on{Im} (\pi_1 (\Lp^{rs}, \basepta) \to \pi_1^K (\Lp^{rs}, \basepta)).
\eeqn
Then we have $\widetilde B_{W_\La} / \widetilde B^0_{W_\La} \cong K / K^0$. We thus have the following commutative diagram:
\beqn
\begin{CD}
1 @>>> I^0 @>>> \widetilde B^0_{W_\La} @>>> B_{W_\La} @>>> 1
\\
@. @VVV @VVV @| @.
\\
1 @>>> I @>>> \widetilde B_{W_\La} @>>> B_{W_\La} @>>> 1
\\
@. @VVV @VVV @. @.
\\
@.  K / K^0 @= K / K^0 \, . @. @. 
\end{CD}
\eeqn

To gain further insight into the structure of $\widetilde B_{W_\La}$, define:
\beqn
\widetilde W_\La = N_K (\La) / Z_K (\La)^0.
\eeqn
We then have the following commutative diagram:
\beq
\label{mainDiagram}
\begin{CD}
1 @>>> I @>>> \widetilde B_{W_\La} @>{\tilde q}>> B_{W_\La} @>>> 1 \;\,
\\
@. @| @VV{\tilde p}V @VV{p}V @.
\\
1 @>>> I @>>> \widetilde W_\La @>{q}>> W_\La @>>> 1 \, .
\end{CD}
\eeq
To define the maps in the diagram above, note that we have:
\beqn
\widetilde B_{W_\La} = \pi_1^K (\Lp^{rs}, \basepta) \cong \pi_1^{N_K (\La)} (\La^{rs}, \basepta) \cong \pi_1^{\widetilde W_{\La}} (\La^{rs}, \basepta).
\eeqn
Also, we have:
\beqn
B_{W_\La} = \pi_1 (\La^{rs} / W_\La, \barbasepta)  \cong \pi_1^{W_\La} (\La^{rs}, \basepta). 
\eeqn
The maps $p$ and $\tilde p$ are given by mapping $\La^{rs}$ to a point, the map $q$ is the natural quotient map, and the map $\tilde q$ is induced by $q$. It follows from diagram~\eqref{mainDiagram} and the abelian property of $I$ that the conjugation action of $\widetilde B_{W_\La}$ on itself descends to an action of $B_{W_\La}$ on $I$, and that this action factors through the conjugation action of $W_\La$ on $I$. We will denote these actions of $B_{W_\La}$ and $W_\La$ on $I$ by ``$\; \cdot \;$''; so we have $b \cdot u = p (b) \cdot u \in I$ for all $u \in I$ and $b \in B_{W_\La}$. Note that the action of $W_\La$ preserves the subgroup $I^0 \subset I$ and induces a trivial action on the quotient $I / I^0 \cong K / K^0$. Note also that the kernel of the map $p$ is the pure braid group $P\!B_{W_\La} = \pi_1 (\La^{rs}, \basepta)$.

Moreover, by inspection, the right square of diagram~\eqref{mainDiagram} is Cartesian, and hence we can identify the group $\widetilde B_{W_\fa}$ as a subgroup of $\widetilde W_\La \times B_{W_\La}$ as follows:
\beq
\label{concrete}
\widetilde B_{W_\La} \cong \{ (\widetilde w, \braid) \in \widetilde W_\La \times B_{W_\La} \; | \; q(\widetilde w) = p(\braid) \}.
\eeq

A further remarkable property of diagram \eqref{mainDiagram} is that the top row of this diagram splits, i.e., the map $\tilde q$ admits a right inverse $\tilde r : B_{W_\La} \to \widetilde B_{W_\La}$. To show this, it suffices to split the exact sequence:
\beq\label{shortexsecofpi1s}
1 \to \pi_1 (X_{\barbasepta}) \to \pi_1 (\Lp^{rs}, \basepta) \to B_{W_\La} \to 1 \, .
\eeq
We can do so using a Kostant-Rallis section. More precisely, for each regular nilpotent element $y \in \cN_\Lp$, there exists a linear subvariety (i.e., a parallel translate of a vector subspace) $\Ls \subset \Lp$ containing $y$, such that the embedding $\Ls \hookrightarrow \Lp$ induces an isomorphism $\Ls \to \Lp \inv K \cong \La / W_{\La}$ (see \cite[Theorems 11,12,13]{KR}). Let $\Ls^{rs} = \Ls \cap \Lp^{rs}$. The $K$-orbit $X_{\barbasepta}$ intersects the section $\Ls$ at exactly one point $a' \in \Ls^{rs}$. The restriction $f |_{\Ls^{rs}} : \Ls^{rs} \to \La^{rs} / W_{\La}$ induces an isomorphism of fundamental groups:
\beq\label{pi1isom1}
B_{W_\La} = \pi_1 (\La^{rs} / W_{\La}, \barbasepta) \cong \pi_1 (\Ls^{rs}, a').
\eeq
Pick a continuous path $\Gamma : [0, 1] \to X_{\barbasepta}$, with $\Gamma (0) = \basepta$ and $\Gamma (1) = a'$; it defines an isomorphism:
\beq\label{pi1isom2}
\pi_1 (\Lp^{rs}, \basepta) \cong \pi_1 (\Lp^{rs}, a').
\eeq
By composing isomorphisms \eqref{pi1isom1} and \eqref{pi1isom2} with the map of fundamental groups induced by the embedding $\Ls^{rs} \hookrightarrow \Lp^{rs}$, we obtain a homomorphism $B_{W_\La} \to \pi_1 (\Lp^{rs}, \basepta)$ which splits the exact sequence \eqref{shortexsecofpi1s}. Note that this splitting homomorphism depends on the triple $[y, \Ls, \Gamma]$. It gives rise to a homomorphism:
\beq\label{krsplitting}
\tilde r [y, \Ls, \Gamma] : B_{W_\La} \to \widetilde B_{W_\La} \, ,
\eeq
which splits the top row of diagram~\eqref{mainDiagram}. We will refer to the homomorphism $\tilde r [y, \Ls, \Gamma]$ as the Kostant-Rallis splitting corresponding to the triple $[y, \Ls, \Gamma]$.

\subsection{The subgroups \texorpdfstring{$\protect\widetilde W_s$}{Lg} and the concept of a regular splitting}
\label{subsec-regular-splitting}
The homomorphism $\tilde r [y, \Ls, \Gamma]$ possesses a certain key property with respect to the simple reflections in $W_\La$, which we call regularity, and which we now proceed to describe. We begin by associating to each reflection $s \in W_\La$ certain subgroups $W_s \subset W_\La$, $\widetilde W_s \subset \widetilde W_\La$, and $I_s \subset I$. 

Let $s \in W_\La$ be a reflection. Let $\La_s \subset \La$ denote the hyperplane fixed by $s$. Let: 
\beqn
K_s = Z_K (\La_s) \;\; \text{and} \;\; \Lk_s = \on{Lie} (K_s) \subset \Lk.
\eeqn 
Define:
\beqn
W_s = N_{K^0_s} (\La) / Z_{K^0_s} (\La) \;\; \text{and} \;\; \widetilde W_s = N_{K^0_s} (\La) / Z_{K^0_s} (\La)^0.
\eeqn
We have a natural projection $q_s : \widetilde W_s \to W_s$. Note that $Z_{K_s^0} (\La)^0 = Z_K(\La)^0$. Therefore, we have inclusions $W_s \to W_\La$ and $\widetilde W_s \to \widetilde W_\La$. We will use these inclusions to view $W_s$ (resp. $\widetilde W_s$) as a subgroup of $W_\La$ (resp. $\widetilde W_\La$). We have the following short exact sequence:
\beqn
1 \longrightarrow I_s \coloneqq Z_{K_s^0} (\La) / Z_{K_s^0} (\La)^0 \longrightarrow \widetilde W_s \xrightarrow{\; q_s \;} W_s \longrightarrow 1 \, .
\eeqn
Note that, by construction, we have $I_s \subset I^0 \subset I$.

The groups $W_s$, $\widetilde W_s$, and $I_s$ defined above can be described as follows. Recall the positive real root $\alpha_s$, associated to $s$ when $\delta (s) = 1$, such that $s = s_{\bar\alpha_s}$. 

\begin{lemma}\label{lemma-trichotomy}
We have:
\begin{enumerate}[topsep=-1.5ex]
\item[(i)]   $W_s = \{ 1, s \} \subset W_\La$.

\item[(ii)]  If $s = s_{\bar\alpha}$ for $\alpha \in \Phi_\rr$, then $\check\alpha (-1)$ represents an element of $I_s \subset I$.

\item[(iii)] If $\delta (s) > 1$, then $Z_{K_s^0} (\La)$ is connected, we have $I_s = \{ 1 \}$, and $q_s: \widetilde W_s \to W_s$ is an isomorphism.

\item[(iv)] If $\delta (s) = 1$, then we have $I_s = \langle \check\alpha_s (-1) \rangle$. Furthermore,
\begin{enumerate}
\item if $\check\alpha_s (-1) \in Z_K (\La)^0$, then $Z_{K_s^0} (\La)$ is connected, we have $I_s = \{ 1 \}$, and $q_s : \widetilde W _s \to W_s$ is an isomorphism;

\item if $\check\alpha_s (-1) \notin Z_K (\La)^0$, then we have $I_s \cong \bZ / 2$, $\widetilde W_s \cong \bZ / 4$, and $q_s: \widetilde W_s \to W_s$ is a surjection.
\end{enumerate}
\end{enumerate}
\end{lemma}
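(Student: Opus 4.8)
The plan is to analyze the structure of the reductive group $K_s = Z_K(\La_s)$ and its centralizer $Z_{K_s}(\La)$, reducing everything to a rank-one computation on the restricted root $\bar\alpha$ with $s = s_{\bar\alpha}$. First I would observe that $\La_s$ is the hyperplane cut out by $\bar\alpha$, so the restricted root system of the symmetric pair $(K_s, Z_{K_s}(\La)\cap K_s^\theta)$ relative to $\La$ has restricted roots supported only on $\bZ\bar\alpha$; hence $W_s = N_{K_s^0}(\La)/Z_{K_s^0}(\La)$, being the little Weyl group of this rank-one pair, is either trivial or $\{1,s\}$. Since $s$ itself visibly normalizes $\La$ and acts nontrivially, $W_s = \{1,s\}$, giving (i). This also shows $\widetilde W_s$ sits in the extension $1 \to I_s \to \widetilde W_s \to \{1,s\} \to 1$ with $I_s \subset I^0$ as already noted.

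Next I would handle the component group $I_s = Z_{K_s^0}(\La)/Z_{K_s^0}(\La)^0$. The key input is the analogue of the fact quoted from \cite{KR}: $Z_{K_s^0}(\La) = Z_{K_s^0}(\La)^0 \cdot (A\cap K_s^0)[2]$ (or rather the image of $A[2]$), so $I_s$ is a quotient of a subgroup of $A[2]$, generated by the images of the coroots $\check\alpha(-1)$ for $\alpha \in \Phi_s$, i.e.\ for $\alpha$ restricting to $\pm\bar\alpha$ (or $\pm 2\bar\alpha$). For $\alpha \in \Phi_\rr$ real, $\check\alpha \in X_*(A)$, so $\check\alpha(-1)\in A[2]$, and one checks it lands in $Z_{K_s^0}(\La)$ since $\langle \bar\alpha, \check\alpha\rangle = 2$ kills it on the relevant root spaces — giving (ii). For the trichotomy, I would separate on $\delta(s)$: when $\delta(s) > 1$, the restricted root space $\Lg_{\bar\alpha}$ (or the sum over $\bar\alpha, 2\bar\alpha$) is bigger, forcing $\Phi_s$ to contain complex roots $\alpha$, and using \eqref{perp-roots} / \eqref{restriction-of-complex-roots} one sees $s$ is represented by $s_\alpha s_{\theta\alpha}$ coming from a torus direction in $K_s^0$, so $\check\alpha(-1)$ for these is already in $Z_K(\La)^0$; combined with a dimension/connectedness count this forces $Z_{K_s^0}(\La)$ connected and $I_s$ trivial, yielding (iii). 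When $\delta(s)=1$ there is a unique positive real root $\alpha_s$ up to sign, so $\Phi_s = \{\pm\alpha_s\}$ (possibly $\{\pm\alpha_s,\pm 2\alpha_s\}$, but $2\alpha_s$ is never real here), hence $I_s$ is generated by $\check\alpha_s(-1)$, and the two sub-cases (a), (b) are exactly whether this element is in $Z_K(\La)^0$ or not.

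For (iv)(b) I would do the explicit rank-one computation: the relevant symmetric pair is essentially $(SL(2),SO(2))$ (or a cover thereof), the example worked out later in Section~\ref{sec-example}. There $N_{K_s^0}(\La)/Z_{K_s^0}(\La)^0$ is the preimage in a double cover of $W_s = \bZ/2$, i.e.\ cyclic of order $4$, generated by an element whose square is $\check\alpha_s(-1)$; so $\widetilde W_s \cong \bZ/4$, $I_s \cong \bZ/2$, and $q_s$ is the nontrivial surjection $\bZ/4 \to \bZ/2$. In case (a), $\check\alpha_s(-1)$ being trivial in $I$ means $Z_{K_s^0}(\La) = Z_{K_s^0}(\La)^0$ so $I_s$ is trivial and $q_s$ is an isomorphism, just as in (iii).

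The main obstacle I anticipate is the bookkeeping in (iii): verifying that when $\delta(s)>1$ all the relevant coroots $\check\alpha(-1)$ actually lie in the \emph{identity component} $Z_{K_s^0}(\La)^0$, which amounts to showing there is enough of a torus (or connected subgroup) inside $Z_{K_s^0}(\La)$ to absorb them. This requires carefully identifying the derived group / semisimple rank of $K_s^0$ and using the structure \eqref{roots-in-phic} of $\Phi_\rc$ — specifically that complex roots come in orthogonal $\theta$-swapped pairs, so $s_\alpha s_{\theta\alpha}$ is realized by a one-parameter subgroup of $K^0$ fixing $\La$ pointwise except rotating within a complex root space, forcing connectedness of the centralizer. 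I would lean on \cite[Prop.~3.12, Prop.~4.16]{V} and \cite[Prop.~1]{KR} (and possibly \cite[Prop.~5]{KR} on connectedness of centralizers of tori in $K$) to close this gap cleanly rather than reprove it.
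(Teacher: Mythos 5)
Your overall strategy — reduce to the rank-one pair determined by $s$ — matches the paper's, and your treatment of (i) and of (iv)(b) via the $(SL(2),SO(2))$ example is correct and in line with what the authors do. However, there are two genuine gaps in the crucial parts (ii) and (iii), and in both places the paper uses a cleaner mechanism than the one you sketch.

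For (ii), the easy direction is that $\check\alpha(-1)\in T$ is $\theta$-fixed (because $\check\alpha(-1)^2=1$ and $\theta\alpha=-\alpha$) and centralizes $\La$, hence lies in $Z_{K_s}(\La)$. The delicate point, which your pairing argument ``$\langle\bar\alpha,\check\alpha\rangle = 2$ kills it on the relevant root spaces'' does not touch, is that $\check\alpha(-1)$ lies in the \emph{identity component} $K_s^0$ of $K_s$, not merely in $K_s$; a pairing computation controls the adjoint action, not the component. The paper resolves this by constructing an explicit homomorphism $\varphi: SL(2)\to \bar G_s$ with $d\varphi(\mathfrak{sl}(2))=\Lg_{\pm\alpha}$ and $d\varphi(\mathfrak{so}(2))=\Lk_{\pm\alpha}$, so that $\check\alpha(-1)=\varphi(-1)$ with $-1\in SO(2)$ connected and $\varphi(SO(2))\subset K_s^0$ — the connected $SO(2)$ supplies the path to the identity. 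Your write-up needs some such witness.

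For (iii), you correctly observe that $\delta(s)>1$ forces non-real roots into $\Phi_s$, but the plan of chasing coroot images $\check\alpha(-1)$ into $Z_K(\La)^0$ does not by itself show that $Z_{K_s^0}(\La)$ is \emph{connected}: even if various specific elements land in the identity component, the full centralizer could still be disconnected. You flag this yourself as the main obstacle and propose to import it from references. The paper instead makes a short topological argument: $\bar f_s:\bar\Lp_s\to\La_s^\perp/W_s\cong\bC$ is a nondegenerate homogeneous quadratic form, so when $\dim\bar\Lp_s=\delta(s)+1>2$ its general fiber is a smooth affine quadric in $\geq 3$ variables, which is simply connected; that fiber is a single $\bar K_s^0$-orbit, and the long exact homotopy sequence of $\bar K_s^0\to\bar K_s^0/Z_{\bar K_s^0}(\La_s^\perp)$ then forces $\pi_0(Z_{\bar K_s^0}(\La_s^\perp))=1$, i.e.\ $Z_{K_s^0}(\La)$ is connected. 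This simultaneously yields $I_s=\{1\}$ and that $q_s$ is an isomorphism, with no case analysis on the root configuration. You should replace your root-theoretic bookkeeping with this fiber argument (or cite the relevant connectedness statement explicitly and show it applies).

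Finally, a small remark on (iv)(a): the hypothesis $\check\alpha_s(-1)\in Z_K(\La)^0$ does \emph{not} immediately give $Z_{K_s^0}(\La)$ connected by the reasoning ``$I_s$ trivial hence connected''; one still has to know a priori that $I_s$ is generated by $\check\alpha_s(-1)$, which the paper establishes first via the surjection $\widetilde\varphi: Z_{\bar K_s}(\bar\Lp_s)^0\times SL(2)\to \bar G_s$ and the identification $Z_{K_s^0}(\La)=\widetilde\varphi(Z_{\bar K_s}(\bar\Lp_s)^0\times\{\pm1\})$. Your sketch compresses this into one sentence in a way that presumes the conclusion.
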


A proof of Lemma~\ref{lemma-trichotomy} will be given in Section~\ref{subsec-rank-one}.

\begin{remark}
The groups $I_s = \ker (q_s) \subset \widetilde W_s$, for all reflections $s \in W_\La$, generate the subgroup $I^0 \subset I$; see, for example, \cite[Theorem 7.55]{K}. In \cite{K} the result is stated for semisimple groups, but by passing to the derived group, we can conclude that it also holds for reductive groups.
\end{remark}

Recall the set $S \subset W_\La$ of simple reflections corresponding to the Weyl chamber $\La_\bR^+$. For each $s \in S$, let $\sigma_s \in B_{W_\La}$ be the counter-clockwise braid generator corresponding to $s$.

\begin{definition}\label{defn-regular-splitting}
Let $\tilde r : B_{W_\La} \to \widetilde B_{W_\La}$ be a homomorphism which splits the top row of diagram~\eqref{mainDiagram}. We say that $\tilde r$ is a regular splitting if the composition $r = \tilde p \circ \tilde r : B_{W_\La} \to \widetilde W_\La$ satisfies:
\beqn
r (\sigma_s) \in \widetilde W_s \subset \widetilde W_\La \, ,
\eeqn
for every $s \in S$.
\end{definition}

\begin{prop}\label{prop-kr-is-regular}
The splitting homomorphism $\tilde r [y, \Ls, \Gamma]$ of equation~\eqref{krsplitting} is regular for every choice of the triple $[y, \Ls, \Gamma]$.
\end{prop}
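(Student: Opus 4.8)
The plan is to work locally near each simple reflection and reduce to a rank-one computation. Fix $s \in S$. By construction, the Kostant--Rallis splitting $\tilde r [y, \Ls, \Gamma]$ is obtained from a path $\Gamma$ in $X_{\barbasepta}$ together with the fundamental group isomorphism $B_{W_\La} \cong \pi_1 (\Ls^{rs}, a')$ coming from the slice $\Ls$. Since $r (\sigma_s) = \tilde p \circ \tilde r [y, \Ls, \Gamma] (\sigma_s)$ only sees the image of $\sigma_s$ in $\widetilde W_\La = N_K (\La) / Z_K (\La)^0$, and $\sigma_s$ is supported, up to homotopy in $\La^{rs} / W_\La$, near the wall $\La_s$ fixed by $s$, the first step is to show that the corresponding loop in $\Lp^{rs}$ can be taken to lie entirely inside $\Lp^{rs} \cap \Lk_s$-related data; more precisely, I would show that a representative braid loop for $\sigma_s$ lifts to a path in $\Lp^{rs}$ whose endpoints both lie in $X_{\barbasepta}$ and which, together with the return path along $\Gamma$, represents an element of $\pi_1^K (\Lp^{rs}, \basepta)$ projecting into the ``local'' subgroup $\widetilde B_{W_s} := \pi_1^{K_s} (\cdot)$. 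The key geometric input is that the slice $\Ls$ is linear and $f |_{\Ls^{rs}}$ is an isomorphism onto $\La^{rs} / W_\La$, so the preimage in $\Ls^{rs}$ of a small loop around the image of the wall $\La_s$ is again a small loop near $\Ls \cap f^{-1}(\text{discriminant component for } s)$, which can be arranged to stay in a neighborhood controlled by $K_s = Z_K (\La_s)$.

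Concretely, I would argue as follows. Choose the loop $\gamma_s$ in $\La^{rs} / W_\La$ representing $\sigma_s$ to be a small counter-clockwise circle, in the line transverse to the wall-image, around a generic point $\barbasepta_s$ of the discriminant stratum attached to $s$; pull it back via the slice to a loop in $\Ls^{rs}$ based near the point $a'_s \in \Ls$ lying over $\barbasepta_s$. The crucial claim is that there is a point $x_s \in \Ls \cap \Lp^{rs}$ over $\barbasepta_s$ whose centralizer data is governed by $\Lk_s$: since $x_s$ lies on the wall $\La_s$ up to the $W_\La$-action, its $K$-centralizer has positive-dimensional part coming precisely from the rank-one subsystem $\Phi_s$, hence $Z_K(x_s)$ meets $K_s$ in the relevant way. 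Then the loop $\gamma_s$, being confined to a small neighborhood of $x_s$ in $\Ls^{rs}$, maps into $\Lp^{rs} \cap (\text{nbhd of the } K\text{-orbit of } x_s)$, and the monodromy it induces in $\widetilde W_\La = N_K(\La)/Z_K(\La)^0$ is realized by an element normalizing $\La$ that fixes $\La_s$ pointwise (since near the wall only the $s$-direction degenerates), i.e.\ an element of $N_{K_s}(\La)$. Using that $Z_{K_s^0}(\La)^0 = Z_K(\La)^0$ (noted in the text just before Lemma~\ref{lemma-trichotomy}), this element lies in $\widetilde W_s = N_{K_s^0}(\La)/Z_{K_s^0}(\La)^0 \subset \widetilde W_\La$, which is exactly the assertion $r(\sigma_s) \in \widetilde W_s$.

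For the reduction to connected groups and the identification of the relevant local fundamental group, I would invoke \eqref{eqn-little-Weyl-redefined} and the equalities $I^0 = \pi_1^{K^0}(X_{\barbasepta})$, $\widetilde W_\La = N_K(\La)/Z_K(\La)^0$, so that everything about $r(\sigma_s)$ can be computed inside $K^0$ (and inside $K_s^0$); the normalizer element produced by the monodromy lies in $N_{K_s^0}(\La)$ precisely because the loop stays in a contractible neighborhood where the connected-group picture suffices. The rank-one local model is then the symmetric pair attached to the rank-one restricted subsystem generated by $\bar\alpha_s$ (or the relevant $W_\rc^\theta$-type reflection), which is analyzed in Section~\ref{subsec-rank-one} / Lemma~\ref{lemma-trichotomy}: in that model one checks directly that the counter-clockwise braid generator's image under $\tilde p \circ \tilde r$ is the element $s$ lifted into $\widetilde W_s$ (either $s$ itself, when $\widetilde W_s \cong W_s$, or a generator of $\bZ/4$ mapping to $s$, when $\check\alpha_s(-1) \notin Z_K(\La)^0$).

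The main obstacle I anticipate is making rigorous the ``localization'' step: showing that the braid generator $\sigma_s$ really can be represented by a loop in $\Ls^{rs}$ that, after the embedding $\Ls^{rs} \hookrightarrow \Lp^{rs}$ and composition with $\Gamma$, produces a $K$-equivariant loop whose $\widetilde W_\La$-monodromy is captured by $N_{K_s^0}(\La)$ rather than by all of $N_K(\La)$. This requires a careful argument that the small transverse circle around the $s$-wall image, lifted through the \emph{linear} slice $\Ls$, stays within a neighborhood of a single $K$-orbit on which the equivariant monodromy is controlled by the centralizer $K_s$ of the wall --- essentially a statement that the discriminant stratum for $s$ in $\Ls$ has the expected normal structure. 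The path $\Gamma$ contributes only a conjugation that does not affect membership in the conjugacy-closed condition being checked (or, more carefully, one shows the ambiguity lands in $\widetilde W_s$ as well), so once the localization is in hand the rest is the rank-one bookkeeping of Lemma~\ref{lemma-trichotomy}.
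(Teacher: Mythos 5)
Your plan correctly identifies the right strategy — reduce the verification that $r(\sigma_s)\in\widetilde W_s$ to a rank-one statement controlled by $K_s = Z_K(\La_s)$ — but you also correctly flag, and then fail to close, the crucial gap. The difficulty is more serious than a matter of "careful bookkeeping": the point $a'_s = \Ls\cap X_{\barbaseptas}$ lives on the Kostant--Rallis slice, which passes through a regular nilpotent, not through the Cartan $\La$. That point is \emph{not} in $\Lp_s = Z_\Lp(\La_s)$, and its $K$-centralizer is conjugate to, but not equal to, $Z_K(\baseptas)$. So the "small circle in $\Ls^{rs}$ around $a'_s$" does not live in a neighborhood visibly controlled by $K_s$, and the claim that its $\widetilde W_\La$-monodromy is "realized by an element of $N_{K_s^0}(\La)$" does not follow directly from the linearity of $\Ls$ or the wall structure in $\La/W_\La$ alone. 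This is precisely what you name as "the main obstacle," and your proposal stops short of resolving it.

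The paper closes this gap with a deformation-invariance argument rather than a direct localization. It associates to an arbitrary triple $[y_s, N, \epsilon]$ — a smooth point $y_s \in X_{\barbaseptas}^{reg}$, a normal slice $N$ through $y_s$, and a sufficiently small $\epsilon$ — an element $\gamma[y_s,N,\epsilon]$ of $\widetilde B_{W_\La}/I^0$, by composing the radial path $\gamma_1$ from $\basepta$ toward the wall, a connecting path $\gamma_2=\Gamma$ in a fiber, the small monodromy loop $\gamma_3$ inside $N$, and the returns. Lemma~\ref{lemma-conjByI} shows that the dependence on the connecting path $\Gamma$ is exactly conjugation by $I^0$, giving a well-defined class in $\widetilde B_{W_\La}/I^0$. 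The Kostant--Rallis splitting value $\tilde r[y,\Ls,\Gamma](\sigma_s)$ is literally $\gamma[y_{s,0},\Ls,\epsilon_0,\Gamma]$ where $y_{s,0}=\Ls\cap X_{\barbaseptas}$. The key then is that $\gamma[y_s,N,\epsilon] \in \widetilde B_{W_\La}/I^0$ is independent of $\epsilon$, independent of the normal slice $N$ (any two are smoothly connected), and depends on $y_s$ only through its connected component in $X_{\barbaseptas}^{reg}$. Because every $K^0$-orbit in $X_{\barbaseptas}$ meets $\Lp_s$ (as $\Lp_s$ is a normal slice to the closed orbit $K\cdot\baseptas$, using the restricted root decomposition), one can replace $y_{s,0}$ by a point $y_{s,1}\in K^0\cdot y_{s,0}\cap\Lp_s$ in the same connected component, and replace $\Ls$ by a normal slice $N_1\subset\Lp_s$. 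The loop $\gamma_3$ for $[y_{s,1},N_1]$ is then visibly contained in $\Lp^{rs}\cap\Lp_s$, hence its image under $\tilde p$ lands in $\widetilde W_s/I^0$. Since $\widetilde W_s$ is closed under $I^0$-conjugation, this gives $r(\sigma_s)\in\widetilde W_s$. None of this requires (and the proof never establishes) that the original Kostant--Rallis loop itself stays near $\Lp_s$; it establishes instead that a different representative, agreeing with it modulo $I^0$-conjugation, does.

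So: your strategy is aligned with the paper's, and your awareness of the obstruction is exactly right, but the proposal is missing the invariance/deformation lemma that makes the localization legitimate. Without it, the step "the small transverse circle lifted through $\Ls$ stays within a neighborhood controlled by $K_s$" is an unsupported assertion, and (as the slice point $a'_s$ is not in $\Lp_s$) it is not even true as literally stated.
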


We suspect that, in fact, every splitting homomorphism $\tilde r : B_{W_\La} \to \widetilde B_{W_\La}$ is regular, and every regular splitting is a Kostant-Rallis splitting for some triple $[y, \Ls, \Gamma]$. A proof of Proposition~\ref{prop-kr-is-regular} will be given in Section \ref{subsec-regularity-of-kr}.

A more algebraic description of the set of all regular splittings can be given by demonstrating that they can be constructed ``one simple reflection at a time.'' More precisely, we have the following lemma.

\pagebreak

\begin{lemma}\label{lemma-braid-relations}
Let $s_1, s_2 \in S$ be a pair of simple reflections. For each $i = 1, 2$, pick a lift:
\beqn
\tilde s_i \in q^{-1}_{s_i} (s_i) \subset \widetilde W_{s_i} \, .
\eeqn
Then the elements $\tilde s_1, \tilde s_2 \in \widetilde W_\La$ satisfy the braid relation for $\sigma_{s_1}, \sigma_{s_2} \in B_{W_\La}$.
\end{lemma}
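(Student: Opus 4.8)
The plan is to reduce the braid relation for the pair $\tilde s_1, \tilde s_2 \in \widetilde W_\La$ to the corresponding braid relation in $W_\La$, using the rank-one structure recorded in Lemma~\ref{lemma-trichotomy} to control the ambiguity by the central $2$-group $I$. Let $m = m_{s_1 s_2}$ be the order of $s_1 s_2$ in $W_\La$; we must show that the alternating product of $\tilde s_1$ and $\tilde s_2$ of length $m$, taken starting with $\tilde s_1$, equals the one of the same length starting with $\tilde s_2$. First I would pass to $\widetilde W_{s_1 s_2} := N_{K^0_{s_1 s_2}}(\La)/Z_{K^0_{s_1 s_2}}(\La)^0$, the analogous group attached to the codimension-two face $\La_{s_1} \cap \La_{s_2}$; the point is that both $\widetilde W_{s_1}$ and $\widetilde W_{s_2}$ sit inside this group (since $K_{s_i} \subset K_{s_1 s_2}$), so the whole computation takes place in a ``rank-two'' symmetric subquotient, governed by the parabolic $W_{\{s_1, s_2\}} \subset W_\La$ and a small $2$-group $I_{s_1 s_2} \subset I^0$. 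This is the reduction to small rank that makes the problem finite.

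Next I would establish the braid identity up to a correction in $I$. Both alternating words project under $q : \widetilde W_\La \to W_\La$ to the same element of $W_\La$ (the braid relation holds in the Coxeter group $W_\La$), and $I = \ker q$ is abelian, so the two alternating products differ by a well-defined element $\gamma \in I_{s_1 s_2}$, independent of the choices of lifts $\tilde s_i$ (changing $\tilde s_i$ by $c \in I_{s_i}$ changes each word by the same central element, using that $I$ is abelian and central in $\widetilde W_\La$ modulo the known $W_\La$-conjugation action — one has to track this carefully). So it remains to show $\gamma = 1$. I would do this case by case according to the trichotomy of Lemma~\ref{lemma-trichotomy} applied to $s_1$, $s_2$, and to the reflections lying in the rank-two sub-root system: if all relevant $I_s$ are trivial (cases (iii), (iv)(a)), then $\widetilde W_{s_1 s_2} \to W_{\{s_1,s_2\}}$ is an isomorphism on the relevant subgroup and $\gamma = 1$ automatically. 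The remaining cases involve the dihedral rank-two restricted root systems ($A_1 \times A_1$, $A_2$, $B_2/C_2$, $G_2$, and the non-reduced $BC_2$) with some simple reflections of type (ii)/(iv)(b), where $I_s = \langle \check\alpha_s(-1) \rangle \cong \bZ/2$; here I would compute directly in $\widetilde W_s \cong \bZ/4$ and its rank-two overgroup, using the explicit generators $s_\alpha s_{\theta\alpha}$ from~\eqref{Wctheta} and the coroot identities $\check\alpha(-1)\check\beta(-1) = \check{(\alpha+\beta)}(-1)\cdots$ that hold inside the torus $A[2]$.

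The cleanest way to handle the genuinely nontrivial rank-two cases is to invoke the worked example of $(SL(2), SO(2))$ from Section~\ref{sec-example}: each rank-two symmetric subquotient relevant here is built, via restricted-root combinatorics, from rank-one pieces each isomorphic (on the level of the relevant subgroups of $K$) to the $(SL(2),SO(2))$ situation or to a split/quasi-split rank-one pair, and the braid relation in $\widetilde W$ can be verified by multiplying the corresponding $\bZ/4$-generators. Concretely, in the dihedral group of order $2m$, the alternating products of the two lifts can be reorganized into a product of the ``square'' lifts $\tilde s_i^2 = \check\alpha_{s_i}(-1) \in I$ and pure Coxeter-type terms, and then one checks the $I$-valued discrepancy vanishes by the additivity of $\check\alpha(-1)$ over the positive roots of the rank-two system — this is essentially the statement that $\prod \check\alpha(-1)$ over a set of roots permuted by the dihedral action is trivial.

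I expect the main obstacle to be the bookkeeping in the mixed rank-two cases where one simple reflection is real (type (ii)/(iv)(b), so $\widetilde W_{s} \cong \bZ/4$) and the other lies in $W_\rc^\theta$ with $\delta > 1$ (type (iii), so $\widetilde W_s \cong W_s = \bZ/2$): here the two lifts have different orders, the braid word has odd or even length depending on $m$, and one must verify that the $I$-discrepancy — a product of terms $\check\alpha_s(-1)$ indexed by the $W_{\{s_1,s_2\}}$-orbit of $\alpha_s$ — is trivial in $A[2]$. The key input for killing this discrepancy is the fact, from the structure of restricted root systems, that in a rank-two system the real roots proportional to a fixed simple reflection's root occur in a configuration closed under the relevant reflections, so their coroots sum to zero mod $2X_*(A)$; isolating and proving exactly this additivity statement is where the real work lies.
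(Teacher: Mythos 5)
Your proposal takes a genuinely different route from the paper, and it contains real gaps that you yourself half-acknowledge. The paper's proof is global and explicit: it fixes $\theta$-compatible Chevalley pinnings $u_\alpha$, builds explicit representatives $n_{\bar\alpha} = \check\alpha(\pm\mathbf{i})\,n_\alpha$ and $n_{\bar\beta} = \check\beta(\mathbf{i})\,\theta\check\beta(-\mathbf{i})\,n_\beta n_{-\theta\beta}$ in $N_K(\La)$, shows via a direct computation that these lie in $Z_K(\La_s)^0$ and exhaust $q_s^{-1}(s)$ (so all admissible lifts are treated), and then verifies the braid relations by hand using the braid relations for the $n_\alpha$ (Springer, Prop.~9.3.2) together with the commutation~\eqref{beta-commutations} and the conjugation rule~\eqref{conjugation-relation}. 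There is no reduction to rank two and no invocation of the $(SL(2),SO(2))$ example in this proof — that example is used elsewhere (in Lemma~\ref{lemma-trichotomy}(iv)(b) and in Section~\ref{sec-proof}).

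The two gaps in your sketch are substantive. First, your assertion that the discrepancy $\gamma \in I$ between the two alternating words is independent of the chosen lifts is not established and is not obviously true. If $m = \operatorname{ord}(s_1 s_2)$ is even, replacing $\tilde s_1$ by $c\,\tilde s_1$ with $c \in I_{s_1}$ changes one word by $\prod_{j=0}^{k-1} (s_1 s_2)^j \cdot c$ and the other by $s_2$ applied to the same product; equality requires $(s_1 s_2)^{m/2}$ to fix $c$, which needs an argument. If $m$ is odd, the two alternating words contain $(m+1)/2$ and $(m-1)/2$ occurrences of $\tilde s_1$ respectively, so the two correction factors are products of different lengths and the claimed cancellation is not even notationally symmetric; your ``one has to track this carefully'' is covering a real hole. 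Second, the ``coroot additivity'' statement you invoke to kill $\gamma$ — that $\prod \check\alpha(-1)$ over a dihedrally-permuted set of real restricted roots is trivial in $A[2]/\ker$ — is not precisely formulated and not proved; you say yourself that ``isolating and proving exactly this additivity statement is where the real work lies.'' The paper avoids both issues entirely by producing the canonical lifts $n_{\bar\alpha}, n_{\bar\beta}$ whose braid relations can be checked directly from the Chevalley relations; the cost is that one must choose the constants $c_\alpha$ in~\eqref{eqn2} compatibly with $\theta$ (equation~\eqref{choice-of-constants}), but the benefit is that the entire verification is reduced to Springer's result plus the identity~\eqref{conjugation-relation}, with no case analysis on dihedral types and no need to control a floating element of $I$. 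If you want to pursue your route, you would need to prove the lift-independence of $\gamma$ (splitting into even and odd $m$, and treating $\tilde s_1$ and $\tilde s_2$ of possibly different orders $2$ and $4$), and then state and prove the coroot identity case by case for the possible rank-two restricted root systems; this is comparable in length to the paper's argument and less uniform.
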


A proof of Lemma~\ref{lemma-braid-relations} will be given in Section~\ref{subsec-splitting}. The lemma enables us to define a splitting homomorphism $\tilde r : B_{W_\fa} \to \widetilde B_{W_\fa}$ as follows. For each simple reflection $s \in S$, pick an element:
\beq\label{choices-tilde-s}
\tilde s \in q^{-1}_s (s) \subset \widetilde W_s \subset \widetilde W.
\eeq
Note that, by Lemma \ref{lemma-trichotomy}, there is either one or two choices for each $\tilde s$. Define a lifting homomorphism:
\beqn
r : B_{W_\La} \to \widetilde W_\La \;\; \text{by} \;\; r : \sigma_s \mapsto \tilde s, \; s \in S.
\eeqn
Lemma~\ref{lemma-braid-relations} ensures that the map $r$ is well defined. Finally, in terms of isomorphism~\eqref{concrete}, define the splitting homomorphism:
\beqn
\tilde r : B_{W_\La} \to \widetilde B_{W_\La} \;\; \text{by} \;\; \sigma_s \mapsto (r(s), \sigma_s), \; s \in S.
\eeqn
By construction, the map $\tilde r$ is a regular splitting in the sense of Definition~\ref{defn-regular-splitting}. Moreover, it is clear from Definition~\ref{defn-regular-splitting} that every regular splitting arises via the above construction from some choice of the $\{ \tilde s \}_{s \in S}$.

Note that, given a splitting homomorphism $\tilde r : B_{W_\La} \to \widetilde B_{W_\La}$, whether regular or not, we obtain an identification:
\beqn
\widetilde B_{W_\La} \cong I \rtimes B_{W_\La}, \;\; (\widetilde w, \braid) \mapsto (\widetilde w \, r (\braid)^{-1}, \braid),
\eeqn
where $r = \tilde p \circ \tilde r : B_{W_\La} \to \widetilde W_\La$, and $B_{W_\La}$ acts on $I$ through the map $p : B_{W_\La} \to W_\La$, via $b : u \mapsto r(b) \, u \, r(b)^{-1}$, $b\in B_{W_\La}$, $u\in I$. In order to state our main theorem in Section \ref{sec-statement}, we now fix a regular splitting homomorphism:
\beq\label{eqn-tilde-r}
\tilde r : B_{W_\La} \to \widetilde B_{W_\La}.
\eeq
For example, we can use Proposition \ref{prop-kr-is-regular} and take $\tilde r$ to be a Kostant-Rallis splitting.

\section{Statement of the main theorem}
\label{sec-statement}

Recall the $G$-invariant and $\theta$-invariant non-degenerate symmetric bilinear form $\nu_\Lg$ on $\Lg$, which was fixed in Section \ref{subsec-structure-of-Wa}. Using this form, we identify $\Lp^*$ with $\Lp$. We write $\fF$ for the topological Fourier transform functor, which induces an equivalence:
\beqn
\fF : \on{Perv}_{K} (\Lp)_{\bC^*\text{-conic}} \to \on{Perv}_{K} (\Lp)_{\bC^*\text{-conic}} \, ,
\eeqn
on the category of $K$-equivariant $\bC^*$-conic perverse sheaves on $\Lp$. See \cite[Definition 3.7.8]{KS} for a definition of this functor up to a shift.

In \cite{G2, G1, G4}, the first named author explains how to analyze the nearby cycle sheaf $P = P_f$ with constant coefficients for the adjoint quotient map $f : \Lp \to \Lp \inv K$. In particular, it is shown in \cite{G2, G4} that $P$ is a $\bC^*$-conic perverse sheaf on the nilcone $\cN_\Lp = f^{-1} (0)$, which is independent of the curve one chooses to perform the nearby cycle construction. It is also shown that the topological Fourier transform $\fF P$ is the $\IC$-extension of a local system on $\Lp^{rs}$, which is further explicitly described.

We now proceed to generalize the results of \cite{G2, G4} on the sheaf $P$ to the case of nearby cycle sheaves with twisted coefficients. There are two technical distinctions between the settings of \cite{G2, G4} and of the present paper. First, in the present paper, we allow $K$ to be disconnected and work with $K$-equivariant sheaves and local systems. Second, we work with coefficients in $\baseRing$, while \cite{G2, G1, G4} work with coefficients in $\bC$. The latter distinction does not materially affect any of the discussion.

\subsection{Nearby cycle sheaves with twisted coefficients}
\label{subsec-twisted}
In this subsection, we introduce the nearby cycle sheaves $P_\chi$ for $\chi \in \hat I$, and the monodromy action of the braid group $B_{W_\La}$ on $P_\chi$ for $\chi = 1$. Let $\baseRing^*$ be the group of invertible elements of $\baseRing$, and let $\hat I \coloneqq \on{Hom} (I, \baseRing^*)$ be the set of characters of $I$. Since $I$ is an elementary abelian $2$-group, and $\baseRing$ is an integral domain, we have:
\beqn
\hat I = \on{Hom} (I, \{ \pm 1\}).
\eeqn
Note that there is only one character if $\baseRing$ is of characteristic $2$. Fix a character $\chi \in \hat I$. Recall that the equivariant fundamental group of $X_{\barbasepta}$ is given by:
\beqn
\pi_1^K (X_{\barbasepta}, \basepta) \cong I.
\eeqn
Therefore, the character $\chi$ gives rise to a rank one $K$-equivariant local system $\cL_\chi$ on $X_{\barbasepta}$, with $(\cL_\chi)_{\basepta} = \baseRing$. We apply base change to the adjoint quotient map $f: \Lp \to \La / W_{\La}$, to form a family:
\beqn
\cZ_{\barbasepta} = \{ (x, c) \in \Lp \times \bC \; | \; f (x) = c \, \barbasepta \} \to \bC,
\eeqn
where the $\bC$-action on $\La / W_{\La}$ is induced by the action on $\La$, so that $f (c \, \genpta) = c \, f (\genpta)$ for all $c \in \bC$ and $\genpta \in \La$. Let $\cZ_{\barbasepta}^{rs} = \{ (x, c) \in \cZ_{\barbasepta} \; | \; c \neq 0 \}$, and let $p_{\barbasepta} : \cZ_{\barbasepta}^{rs} \to X_{\barbasepta}$ be the map $p_{\barbasepta} : (x, c) \mapsto c^{-1} \, x$. By construction, the family $\cZ_{\barbasepta} \to \bC$ is $K$-equivariant. Let us write $\psi_{\barbasepta}$ for the nearby cycle functor with respect to this family. We set:
\beq\label{eqn-P-chi}
P_\chi = \psi_{\barbasepta} \circ \, p_{\barbasepta}^* \, (\cL_\chi) [-] \in \on{Perv}_K (\cN_\Lp).
\eeq
Here and henceforth $[-]$ denotes an appropriate cohomological shift, so that the resulting sheaf is perverse. Note that every object of $\on{Perv}_K (\cN_\Lp)$ is $\bC^*$-conic, since the nilcone $\cN_\Lp$ consists of finitely many $K$-orbits, and each $K$-orbit $\cO \subset \cN_\Lp$ is $\bC^*$-invariant.

Note that the construction of the sheaf $P_\chi$ depends on the choices of the general fiber $X_{\barbasepta} = f^{-1} (\barbasepta)$ and of the basepoint $\basepta \in X_{\barbasepta}$. We will next discuss how this construction varies as we vary the fiber $X_{\barbasepta}$. We begin with the case where $\chi = 1$ is the trivial character and the local system $\cL_1$ is trivial. We consider the following family:
\beq\label{familyF}
\xymatrix{ \cZ = \myatop{\{ (x, \bargenpta, c) \in \Lp \times (\La^{rs} / W_\La) \times \bC}{\; | \; f (x) = c \, \bargenpta \}}
\ar[rr]^-{F} \ar[rd] && \La^{rs} / W_\La \times \bC \ar[dl] \\& \La^{rs} / W_\La \, ,}
\eeq
where all the maps are projections. Let $F_2 : \cZ \to \bC$ be the map $(x, \bar a, c) \mapsto c$, and let $\psi_{F_2}$ be the corresponding nearby cycle functor. Note that we have:
\beqn
F^{-1} (\bargenpta, c) = c \, X_{\bargenpta} \times \{ \bargenpta \} \times \{ c \} \text{\;\; and \;\;} F_2^{-1} (0) = \cN_\Lp \times \La^{rs} / W_\La \times \{ 0 \}.
\eeqn
We now set:
\beq\label{sheafcP}
\cP = \psi_{F_2} (\baseRing_\cZ) [-] \in \on{Perv}_K (\cN_\Lp \times (\La^{rs} / W_\La)).
\eeq
Let $\cZ_0 = F_2^{-1} (0)$ and let $\cZ^{rs} = \cZ - \cZ_0$. Write $\cN_\Lp / K$ for the set of $K$-orbits in $\cN_\Lp$. Let $\cS_\cZ$ be the stratification of $\cZ$ given by:
\beqn
\cZ = \cZ^{rs} \, \cup \, \bigcup_{\cO \in \cN_\Lp / K} \cO \times \La^{rs} / W_\La \times \{ 0 \},
\eeqn
and let $\cS_{\cZ_0}$ be the corresponding stratification of $\cZ_0$. Note that $\cS_{\cZ_0}$ is a product of an orbit stratification of $\cN_\Lp$ and a trivial stratification of $\La^{rs} / W_\La$. Thus, it is a Whitney stratification. We fully expect that $\cS_\cZ$ is a Whitney stratification as well, i.e., that Whitney conditions hold for the pair $(\cZ^{rs}, \cO \times \La^{rs} / W_\La \times \{ 0 \})$, for every $\cO \in \cN_\Lp / K$. However, this seems to require a separate argument, and is not necessary for our applications.

\begin{prop}\label{prop-asubf2}
For every $\cO \in \cN_\Lp / K$, the pair of strata:
\beqn
(\cZ^{rs}, \cO \times \La^{rs} / W_\La \times \{ 0 \}),
\eeqn
satisfies Thom's $A_{F_2}$ condition.
\end{prop}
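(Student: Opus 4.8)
The plan is to verify Thom's condition directly, in terms of limits of tangent planes to the fibres of $F_2$. Fix homogeneous generators $p_1,\dots,p_r$ of $\bC[\Lp]^K$ (so $r=\dim\La$), of degrees $d_1\le\cdots\le d_r$, and identify $f$ with $(p_1,\dots,p_r)\colon\Lp\to\La/W_\La\cong\bC^r$; the $\bC^*$-action on $\La/W_\La$ compatible with scaling on $\Lp$ then has weights $(d_1,\dots,d_r)$. One checks that $(x,c)\mapsto(x,\,c^{-1}\!\cdot f(x),\,c)$ identifies $\cZ^{rs}$ with $\Lp^{rs}\times\bC^*$, carrying $F_2$ to the projection onto $\bC^*$, and that the tangent space at $(x,\bar a,c)\in\cZ^{rs}$ to the fibre of $F_2$ through it, regarded inside $T(\Lp\times(\La^{rs}/W_\La)\times\bC)$, is the graph
\beqn
\Theta_{(x,\bar a,c)}\;=\;\bigl\{\,\bigl(v,\ (c^{-d_i}(dp_i)_x(v))_{i=1}^{r},\ 0\bigr)\ :\ v\in\Lp\,\bigr\},
\eeqn
a subspace of dimension $\dim\Lp$. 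Since $F_2$ is constant on $\cO\times\La^{rs}/W_\La\times\{0\}$, the $A_{F_2}$ condition for the pair in question is equivalent to the following: for every sequence $(x_n,\bar a_n,c_n)\in\cZ^{rs}$ converging to a point $(x_0,\bar a_0,0)$ with $x_0\in\cO$, every subsequential limit $\tau=\lim_n\Theta_{(x_n,\bar a_n,c_n)}$ contains $T_{x_0}\cO\times T_{\bar a_0}(\La^{rs}/W_\La)\times\{0\}=T_{x_0}\cO\times\bC^r\times\{0\}$ (dimensionally possible, since $\dim\cO+r\le\dim\cN_\Lp+r=\dim\Lp$). The ``orbit part'' $T_{x_0}\cO\times\{0\}\times\{0\}$ is immediate from $K$-invariance of $f$: for $v=[\xi,x_0]\in T_{x_0}\cO$ with $\xi\in\Lk$ one has $v_n:=[\xi,x_n]\to v$ and $(dp_i)_{x_n}(v_n)=\tfrac{d}{dt}\big|_0 p_i(\exp(t\xi)\cdot x_n)=0$, so $(v_n,0,0)\in\Theta_{(x_n,\bar a_n,c_n)}$ and $(v,0,0)\in\tau$.

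For the ``base part'' $\{0\}\times\bC^r\times\{0\}$ I would bring in a Kostant-Rallis section: fix a regular nilpotent $y\in\cN_\Lp$ and an affine subspace $\Ls\subset\Lp$ through $y$ with $f|_{\Ls}\colon\Ls\xrightarrow{\ \sim\ }\La/W_\La$, let $\phi$ be the inverse and $d\phi\colon\bC^r\to\Lp$ its (constant) linear part, and put $w_j=d\phi(e_j)$, so that $df_{\phi(\bar b)}(w_j)=e_j$ for every $\bar b$. For $c\ne0$ the fibre $f^{-1}(c\cdot\bar a)$ is a single $K$-orbit, so every point of $\cZ^{rs}$ has the form $(\mathrm{Ad}(k)\phi(c\cdot\bar a),\bar a,c)$; moreover $\Theta_{(x,\bar a,c)}$ does not depend on the choice of such $k$, because the ambiguity lies in $Z_K(\phi(c\cdot\bar a))$ and, $\phi(c\cdot\bar a)$ being regular, altering $k$ changes each $\mathrm{Ad}(k)w_j$ only by an element of $[\Lk,x]=T_x(K\cdot x)$. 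Writing $x_n=\mathrm{Ad}(k_n)\phi(c_n\cdot\bar a_n)$ and using $(dp_i)_{x_n}(\mathrm{Ad}(k_n)w_j)=(dp_i)_{\phi(c_n\cdot\bar a_n)}(w_j)=\delta_{ij}$, one finds that $\Theta_{(x_n,\bar a_n,c_n)}$ contains the vector $(c_n^{d_j}\,\mathrm{Ad}(k_n)w_j,\ e_j,\ 0)$ for each $j$. Hence the base part follows once one shows, for a suitable choice of the $k_n$, that $c_n^{d_j}\,\mathrm{Ad}(k_n)w_j\to0$ in $\Lp$ for each $j$ (or, more weakly, that this sequence is bounded with every limit point in $T_{x_0}\cO$). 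When $\cO$ is the regular nilpotent orbit one takes $y$ with $x_0\in K\cdot y$ and the $k_n$ bounded, and the assertion is clear since $c_n\to0$; equivalently, the section $\bar a\mapsto(\phi(c\cdot\bar a),\bar a,c)$ of $\cZ\to\La^{rs}/W_\La\times\bC$ already carries the $e_j$-directions into the limit.

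The case of a deeper orbit $\cO$, where necessarily $k_n\to\infty$, is the main obstacle. One natural route exploits the $\bC^*$-equivariance of the Kostant-Rallis section: attached to an $\mathfrak{sl}_2$-triple of $y$ there is a cocharacter $\gamma$ of $K$ with $\mathrm{Ad}(\gamma(t))y=t^2 y$ and $\mathrm{Ad}(\gamma(t))w_i=t^{\,2-2d_i}w_i$, whence $\phi(c\cdot\bar a)=c\,\mathrm{Ad}(\gamma(c^{-1/2}))\phi(\bar a)$ (the choice of square root being immaterial). Putting $g_n=k_n\gamma(c_n^{-1/2})$, one gets $\mathrm{Ad}(g_n^{-1})x_n=c_n\phi(\bar a_n)\to0$ while $c_n^{d_j}\mathrm{Ad}(k_n)w_j=c_n\,\mathrm{Ad}(g_n)w_j$; since $x_n\to x_0\ne0$ and $\phi(\bar a_n)\to\phi(\bar a_0)\ne0$, the operator $\mathrm{Ad}(g_n^{-1})$ contracts the bounded sequence $x_n$ by a factor $\asymp c_n$. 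Analysing $g_n$ through a Cartan decomposition of $K$, this contraction rate constrains the dominant direction of $g_n$, and one must then check that the resulting expansion of the fixed vectors $w_j$ under $\mathrm{Ad}(g_n)$ is beaten by the factor $c_n$, i.e.\ that $c_n\,\mathrm{Ad}(g_n)w_j\to0$; this comparison of weights --- the quantitative control of the degeneration near non-regular nilpotents --- is where the real work lies. (Alternatively, a Slodowy slice $S$ at $x_0$ identifies $\cZ$, locally near $(x_0,\bar a_0,0)$, with $\cO\times\{(s,\bar a,c):f|_S(s)=c\cdot\bar a\}$, reducing matters to the single-fibre statement underlying \cite{G2} together with the stratified local triviality of $\cZ\to\La^{rs}/W_\La$ near $\cZ_0$, which comes down to the same estimate.)
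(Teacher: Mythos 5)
Your approach diverges substantially from the paper's, and the critical step is left open, as you yourself note. The paper circumvents precisely the asymptotic difficulty you hit by working metrically: it equips $\Lp$ with the Hermitian structure coming from $\nu_\Lg$, restates the $A_{F_2}$ condition as an angle estimate (for each $v$ in the tangent to the small stratum and each $\epsilon>0$, one must find $v_1$ in a nearby fibre tangent with $\angle(v,v_1)<\epsilon$), and handles the base part of $v$ by constructing $v_1=(v_{x_1},v_a,0)$ with $v_{x_1}=\eta(c_1v_a)$ for a certain linear isomorphism $\eta\colon T_{a_2}\La\to N_{x_1}$ between normal spaces to the orbit. The required angle estimate then reduces to the single operator-norm bound $\lVert\eta\rVert\le 1$, and this follows at a stroke from Kempf--Ness: every $h\in\nu_\Lp(\La)\subset\Lp^*$ is the point of minimal norm in its $K$-orbit, so the adjoint $g^*$ (which equals $\eta^*$ on the conormal fibre) is norm-non-increasing there. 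This completely sidesteps the Kostant--Rallis section, the cocharacter $\gamma$, and any Cartan-decomposition analysis of $g_n=k_n\gamma(c_n^{-1/2})$.

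The gap in your argument is exactly the point you flag: from $\mathrm{Ad}(g_n^{-1})x_n=c_n\phi(\bar a_n)\to 0$ with $x_n\to x_0\ne 0$ you only learn that $\mathrm{Ad}(g_n^{-1})$ contracts one particular bounded direction by roughly $c_n$; this gives no upper bound on how fast $\mathrm{Ad}(g_n)$ expands the fixed vectors $w_j$, so the needed conclusion $c_n\,\mathrm{Ad}(g_n)w_j\to 0$ (or even boundedness with limits in $T_{x_0}\cO$) does not follow without a genuine quantitative comparison of weights/singular values of $g_n$. That comparison is the whole content of the proposition near non-regular nilpotents, and it is not supplied. Your treatment of the orbit part via $K$-invariance, and your setup with the explicit parametrization of $\Theta_{(x,\bar a,c)}$, are both fine and parallel the paper; but as written the proposal is a correct strategy outline, not a proof, and the route via Kostant--Rallis sections appears to be strictly harder to close than the paper's Kempf--Ness argument.
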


We defer the proof of Proposition \ref{prop-asubf2} to Section \ref{subsec-proof-of-asubf2}, where we also recall the definition of the $A_{F_2}$ condition (see Definition~\ref{defn-asubf2}). A general reference for this condition is \cite[Section 11]{Ma}. The significance of Proposition \ref{prop-asubf2} is in the following corollary.

\begin{cor}\label{cor-constructible}
The sheaf $\cP$ is constructible with respect to the stratification $\cS_{\cZ_0}$.
\end{cor}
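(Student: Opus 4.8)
The plan is to derive the corollary directly from Proposition~\ref{prop-asubf2} together with the standard fact that nearby cycles taken along a map whose ambient stratification satisfies Thom's $A_f$ condition preserve constructibility, passing to the induced stratification of the special fiber (see \cite[Section 11]{Ma}, and, in the untwisted setting, \cite{G2}).

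First I would observe that the constant sheaf $\baseRing_\cZ$ is constructible with respect to $\cS_\cZ$; this is immediate, since its stalks are constant, hence a fortiori locally constant on each of the finitely many strata $\cZ^{rs}$ and $\cO \times \La^{rs} / W_\La \times \{ 0 \}$, $\cO \in \cN_\Lp / K$. Note also that $\cZ^{rs}$ is a smooth connected stratum: projection to $(\La^{rs} / W_\La) \times \bC^*$ exhibits it as a fibration with fibers the smooth orbits $c \, X_{\bargenpta}$. In particular $\cZ^{rs}$ is the unique stratum of $\cS_\cZ$ not contained in $\cZ_0$, so the only $A_{F_2}$ conditions that enter are those for the pairs $(\cZ^{rs}, \cO \times \La^{rs} / W_\La \times \{ 0 \})$ — and these hold by Proposition~\ref{prop-asubf2}. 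No Whitney condition on $\cS_\cZ$ itself is needed here: the stratification of $\cZ^{rs}$ is trivial, while $\cS_{\cZ_0}$ is Whitney as already noted.

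Granting these observations, the cited principle applies to $\cF = \baseRing_\cZ$ and $F_2 : \cZ \to \bC$, and yields that $\psi_{F_2}(\baseRing_\cZ)$ is constructible with respect to $\cS_{\cZ_0}$. Since $[-]$ is merely a cohomological shift, which does not affect constructibility, the same holds for $\cP = \psi_{F_2}(\baseRing_\cZ)[-]$, which is the assertion.

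The one point deserving attention is matching the hypotheses of the nearby cycle theorem to the present setup: one must know that the $A_{F_2}$ condition for the pairs in Proposition~\ref{prop-asubf2}, with no further regularity assumed on $\cS_\cZ$, already forces the stalk cohomology of $\psi_{F_2}(\baseRing_\cZ)$ to be locally constant along each stratum of $\cZ_0$. This is where the work of \cite[Section 11]{Ma} is really used: the $A_{F_2}$ condition produces local topological triviality of $F_2$ along the strata of $\cZ_0$, compatibly with the nearby fiber, from which the local constancy follows.
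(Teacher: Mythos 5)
Your argument takes a genuinely different route from the paper's, but it contains a gap that the paper goes out of its way to avoid.

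The paper proves the corollary by computing the microsupport directly: by Ginzburg's theorem, $SS(\cP) = \lim_{c \to 0} \Lambda_c$, where $\Lambda_c$ is the conormal bundle to $F_2^{-1}(c)$ in the ambient $M = \Lp \times (\La^{rs}/W_\La) \times \bC$; Proposition~\ref{prop-asubf2} then gives $\lim_{c \to 0} \Lambda_c \subset \Lambda_{\cS_{\cZ_0}}$, and constructibility follows. The crucial feature of this argument is that it only uses the $A_{F_2}$ condition, together with Whitney regularity of $\cS_{\cZ_0}$, and makes no use of any regularity of the full stratification $\cS_\cZ$.

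Your argument instead invokes a Thom--Mather--type statement: that, in the presence of the $A_{F_2}$ condition, $F_2$ is locally topologically trivial along the strata of $\cZ_0$, compatibly with the nearby fiber, and therefore nearby cycles of an $\cS_\cZ$-constructible sheaf are $\cS_{\cZ_0}$-constructible. The difficulty is that the results of \cite[Section 11]{Ma} you cite (the second isotopy lemma for Thom mappings) require the \emph{source} stratification to be Whitney, i.e.\ one needs Whitney (a) and (b) for each pair $(\cZ^{rs}, \cO \times \La^{rs}/W_\La \times \{0\})$ in addition to the $A_{F_2}$ condition. This is exactly the regularity the paper says it expects but cannot verify: the paragraph before Proposition~\ref{prop-asubf2} states explicitly that Whitney regularity of $\cS_\cZ$ ``seems to require a separate argument, and is not necessary for our applications.'' Your assertion that ``no Whitney condition on $\cS_\cZ$ itself is needed here'' because $\cZ^{rs}$ is a single stratum is a non sequitur: the Whitney conditions in question pertain precisely to the frontier pairs $(\cZ^{rs}, \cZ_\cO)$, not to the internal structure of $\cZ^{rs}$ or of $\cZ_0$. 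Without that regularity, the controlled vector-field argument underlying the isotopy lemma does not go through, and the ``standard fact'' you rely on is not available in the form you need. To repair the argument along these lines one would either have to prove $\cS_\cZ$ is Whitney, or replace the isotopy lemma by the more elementary sphere--fiber transversality argument sketched in the remark following the corollary (transversality of $\partial \rB_{z,\delta}$ with $F_2^{-1}(c)$ for $0 < \epsilon \ll \delta$), which is closer in spirit to what you attempt but does not require Whitney on $\cS_\cZ$. As written, the proof has a gap at exactly the point the paper flags.
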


\begin{proof}
Consider the ambient manifold $M = \Lp \times (\La^{rs} / W_\La) \times \bC \supset \cZ$. For every $c \in \bC^*$, let $\Lambda_c = T^*_{F_2^{-1} (c)} M \subset T^* M$ be the conormal bundle to the general fiber of $F_2$. Let $SS (\cP) \subset T^* M$ denote the micro-support of $\cP$. For this argument, we are only interested in $SS (\cP)$ as a Lagrangian subvariety without multiplicities. By a theorem of Ginzburg, see \cite[Theorem 5.5]{Gi} and the remark that follows that theorem, we have:
\beqn
SS (\cP) = \lim_{c \to 0} \Lambda_c \, .
\eeqn
By Proposition \ref{prop-asubf2}, we have $\displaystyle \lim_{c \to 0} \Lambda_c \subset \Lambda_{\cS_{\cZ_0}}$, where $\Lambda_{\cS_{\cZ_0}} \subset T^* M$ is the conormal variety to the stratification $\cS_{\cZ_0}$. The corollary follows.
\end{proof}

\begin{remark}
In the proof of Corollary \ref{cor-constructible}, we have cited \cite{Gi}, which works in the setting of $\cD$-modules. However, the corollary can also be readily established by a standard transversality argument in the context of constructible sheaves. Roughly speaking, it follows from the fact that, given a point $z \in \cZ_0$ and a pair of small numbers $0 < \epsilon \ll \delta \ll 1$, the boundary $\partial \rB_{z, \delta}$ of the $\delta$-ball around $z$ meets the fiber $F_2^{-1} (c)$ transversely for all $c \in \bC^*$ with $|c| < \epsilon$.
\end{remark}

Let $\on{Perv}_K (\cZ_0, \cS_{\cZ_0})$ denote the category of $K$-equivariant perverse sheaves on $\cZ_0$, constructible with respect to $\cS_{\cZ_0}$. For every $\bargenpta \in \La^{rs} / W_\La$, let $j_{\bargenpta} : \cN_\Lp \to \cZ_0$ be the inclusion $x \mapsto (x, \bargenpta, 0)$, and let $j^*_{\bargenpta} : \on{Perv}_K (\cZ_0, \cS_{\cZ_0}) \to \on{Perv}_K (\cN_\Lp)$ be the corresponding perverse (i.e., properly shifted) restriction functor. Using Corollary~\ref{cor-constructible}, for each $\bargenpta \in \La^{rs} / W_\La$, we can consider the sheaf:
\beqn
\cP_{\bargenpta} = j^*_{\bargenpta} (\cP) \in \on{Perv}_K (\cN_\Lp).
\eeqn
Note that we have:
\beqn
\cP_{\barbasepta} \cong P_1 \, .
\eeqn 
The sheaves $\{ \cP_{\bargenpta} \in \on{Perv}_K (\cN_\Lp) \}$ form a local system over $\La^{rs} / W_\La$. Thus, we obtain a monodromy representation:
\beq\label{eqn-mu-chi-eq-one}
\mu : B_{W_\La} \to \on{Aut} (\Pone),
\eeq
where $\Pone = P_1$. We will refer to this action as the monodromy in the family $f$.

\subsection{Monodromy in the family with twisted coefficients}
\label{subsec-twisted-monodromy}
In this subsection, we introduce the natural counterparts of the monodromy representation \eqref{eqn-mu-chi-eq-one} for a general character $\chi \in \hat I$. These constructions are not needed to state our main results, but will be used in their proofs. The situation for a general character $\chi$ is more complex than for $\chi = 1$. The local system $\cL_\chi$ does not, in general, extend to all of $\Lp^{rs}$. However, we can make the following construction. Let us write $W_{\La, \chi}$ for the stabilizer of $\chi$ in $W_\La$, i.e.,
\beqn
W_{\La, \chi} \coloneqq \on{Stab}_{W_\La} (\chi).
\eeqn
Let us also write:
\beqn
\widetilde W_{\La, \chi} \coloneqq q^{-1} (W_{\La, \chi}) \subset \widetilde W_\La \, ,
\eeqn
\beqn
B_{W_\La}^\chi \coloneqq p^{-1} (W_{\La, \chi}) \subset B_{W_\La} \, , \;\;  \widetilde B_{W_\La}^\chi \coloneqq \tilde{q}^{-1} (B_{W_\La}^\chi) \subset \widetilde{B}_{W_\La} \, .
\eeqn
Thus, we obtain the following commutative diagram:
\beqn
\begin{CD}
1 @>>> I @>>> \widetilde B_{W_\La}^\chi @>{\tilde q}>> B_{W_\La}^\chi @>>> 1 \;\,
\\
@. @| @VV{\tilde p}V @VV{p}V @.
\\
1 @>>> I @>>> \widetilde W_{\La, \chi} @>q>> W_{\La, \chi} @>>> 1 \, .
\end{CD}
\eeqn

Consider the projection $\La / W_{\La, \chi} \to \La / W_{\La}$, and form a Cartesian commutative diagram as follows:
\beq\label{diagram-ptilde-chi}
\begin{CD}
\tilde \Lp_\chi @>{g}>> \Lp
\\
@V{f_\chi}VV @VV{f}V
\\
\La / W_{\La, \chi} @>>> \La / W_\La \, .
\end{CD}
\eeq
Let $\tilde \Lp_\chi^{rs} = f_\chi^{-1} (\La^{rs} / W_{\La, \chi}) \subset \tilde \Lp_\chi$. Let $\hatbasepta \in \La / W_{\La, \chi}$ be the image of $\basepta \in \La \subset \Lp$,
and let $\tildebasepta = (\basepta, \hatbasepta) \in \tilde \Lp_\chi^{rs}$. Then $g: \tilde \Lp_\chi^{rs} \to \Lp^{rs}$ is a $K$-equivariant covering map, and we have:
\beqn
\pi_1^K (\tilde \Lp_\chi^{rs}, \tildebasepta) = \widetilde B_{W_\La}^\chi \subset \widetilde B_{W_\La} \, .
\eeqn

The splitting homomorphism $\tilde r$ of equation \eqref{eqn-tilde-r} determines a canonical extension $\hat\chi : \widetilde B_{W_\La}^{\chi} \to \{ \pm 1 \}$ of the character $\chi : I \to \{ \pm 1 \}$. More precisely, $\hat\chi$ is the unique extension such that:
\beq\label{eqn-extension-of-chi}
\hat\chi \circ \tilde r : B_{W_\La}^{\chi} \to \{ \pm 1 \} \;\; \text{is the trivial character}.
\eeq
Let $\hat \cL_\chi$ be the rank one $K$-equivariant local system on $\tilde \Lp_\chi^{rs}$, corresponding to the character $\hat\chi$.
Note that we have $(\hat \cL_\chi)_{\tildebasepta} = \baseRing$. Recall that $\hatbasepta = f_\chi (\tildebasepta) \in \La^{rs} / W_{\La, \chi}$, and let $X_{\hatbasepta} = f_\chi^{-1} (\hatbasepta)$. Then we have natural identifications:
\beqn
X_{\hatbasepta} \cong X_{\barbasepta} \;\;\;\; \text{and} \;\;\;\; \hat \cL_\chi |_{X_{\hatbasepta}} \cong \cL_\chi \, .
\eeqn
As in the case of $\chi = 1$, we construct a family:
\beqn
\xymatrix{ \cZ_\chi = \myatop{\{ (\tilde x, \hatgenpta, c) \in \tilde \Lp_\chi \times (\La^{rs} / W_{\La, \chi}) \times \bC}{\; | \; f_\chi (\tilde x) = c \, \hatgenpta \}}
\ar[rr]^-{F} \ar[rd] && \La^{rs} / W_{\La, \chi} \times \bC \ar[dl] \\& \La^{rs} / W_{\La, \chi} \, ,}
\eeqn
define the second component function $F_2 : \cZ_\chi \to \bC$, and let $\cZ_\chi^{rs} = F_2^{-1} (\bC^*)$. We continue to write $\hat \cL_\chi$ for the rank one $K$-equivariant local system on $\cZ_\chi^{rs}$ obtained by pullback from $\tilde \Lp_\chi^{rs}$. We now form:
\beqn
\cP_\chi = \psi_{F_2} (\hat \cL_\chi ) [-] \in \on{Perv}_K (\cN_\Lp \times (\La^{rs} / W_{\La, \chi})).
\eeqn
As in the case of $\chi = 1$, we have:
\beqn
(\cP_\chi)_{\hatbasepta} \cong P_\chi \, ,
\eeqn
and we obtain a monodromy representation:
\beq\label{eqn-mu}
\mu : B_{W_\La}^\chi = \pi_1 (\La^{rs} / W_{\La, \chi}, \hatbasepta) \to \on{Aut} (P_\chi).
\eeq
We will refer to this action as the monodromy in the family $f_\chi$. Note that it depends on the choice of the regular splitting \eqref{eqn-tilde-r}. By construction, the action \eqref{eqn-mu} for $\chi = 1$ coincides with the action \eqref{eqn-mu-chi-eq-one}.

In fact, there is a further monodromy structure associated to the character $\chi$, which we call extended monodromy, or extended monodromy in the family. For every $w \in W_\La$, let $w \cdot \chi \in \hat I$ be the character $u \mapsto \chi (w^{-1} \cdot u)$, $u \in I$. Then, for every $w \in W_\La$ and $b \in B_{W_\La}$, there is an extended monodromy transformation:
\beq\label{ext-mon-specific}
\mu (b) : P_{w \cdot \chi} \to P_{(p(b) w) \cdot \chi} \, .
\eeq
These transformations arise from the following construction.

Consider the natural map $\tilde g : \cZ_\chi \to \cZ$, given by:
\beqn
(\tilde x, \hatgenpta, c) \mapsto (g (\tilde x), \bargenpta, c),
\eeqn
where $\bargenpta \in \La / W_\La$ is the image of $\hatgenpta \in \La / W_{\La, \chi}$. Consider the pushforward local system $\tilde g_* \hat \cL_\chi$ on $\cZ^{rs}$. We can now form:
\beqn
\widetilde \cP_\chi = \psi_{F_2} (\tilde g_* \hat \cL_\chi) [-] \in \on{Perv}_K (\cN_\Lp \times (\La^{rs} / W_\La)).
\eeqn
The sheaf $\widetilde \cP_\chi$ is again constructible with respect to the stratification $\cS_{\cZ_0}$. For every left coset $\bar w \in W_\La / W_{\La, \chi}$, let $\bar w^{-1}$ be the corresponding right coset. We write $\bar w^{-1} \hatbasepta \in \La / W_{\La, \chi}$ for the image of $w^{-1} \basepta \in \La$, for some $w \in \bar w$. Note that this image is independent of the choice of $w \in \bar w$. Note also that the element $\bar w^{-1} \hatbasepta \in \La / W_{\La, \chi}$ depends on the choice of the lift $\basepta \in \La$ of $\hatbasepta \in \La / W_{\La, \chi}$. We then have:
\beqn
(\widetilde \cP_\chi)_{\barbasepta} \; \cong \; \bigoplus_{\bar w \in W_\La / W_{\La, \chi}} (\cP_\chi)_{\bar w^{-1} \hatbasepta} \, .
\eeqn
For each $\bar w \in W_\La / W_{\La, \chi}$, let $X_{\bar w^{-1} \hatbasepta} = f_\chi^{-1} (\bar w^{-1} \hatbasepta)$ and let:
\beqn
\bar w^{-1} \tildebasepta = (\basepta, \bar w^{-1} \hatbasepta) \in X_{\bar w^{-1} \hatbasepta} \subset \tilde \Lp_\chi \, .
\eeqn
We have a natural identification:
\beq\label{identify-fibers}
\hat \cL_{\bar w^{-1} \tildebasepta} \cong \hat \cL_{\tildebasepta} = \baseRing.
\eeq
To see this, pick an arbitrary braid $b \in B_{W_\La}$ with $p (b) \in \bar w$, and consider the image $\tilde r (b) \in \widetilde B_{W_\La} = \pi_1^K (\Lp^{rs}, \basepta)$. Isomorphism~\eqref{identify-fibers} is the holonomy operator defined by the element $\tilde r (b)$ via the $K$-equivariant covering map $g : \tilde \Lp_\chi^{rs} \to \Lp^{rs}$; it is well defined by the property~\eqref{eqn-extension-of-chi} of the extension $\hat\chi$. By tracing through the definitions, we find that isomorphism~\eqref{identify-fibers} gives rise to an isomorphism:
\beqn
(\cP_\chi)_{\bar w^{-1} \hatbasepta} \cong P_{\bar w \cdot \chi} \, ,
\eeqn
where $\bar w \cdot \chi \in \hat I$ is the character $u \mapsto \chi (w^{-1} \cdot u)$, $u \in I$, $w \in \bar w$. Thus, we obtain an extended monodromy representation:
\beq\label{ext-mon}
\mu: B_{W_\La} \to \on{Aut} \Bigg( \bigoplus_{\bar w \in W_\La / W_{\La, \chi}} P_{\bar w \cdot \chi} \Bigg).
\eeq

In order to see how this extended monodromy representation restricts to the individual summands in the RHS of \eqref{ext-mon}, we need to understand the holonomy $h_\chi$ of the covering map $\La^{rs} / W_{\La, \chi} \to \La^{rs} / W_\La \,$. This holonomy can be readily deduced from the holonomy of the larger covering $\La^{rs} \to \La^{rs} / W_\La \,$. Namely, for every $b \in B_{W_\La}$ and $\bar w \in W_\La / W_{\La, \chi}$, we have:
\beqn
h_\chi (b) : \bar w^{-1} \hatbasepta \mapsto (\bar w^{-1} p(b)^{-1}) \, \hatbasepta = (p(b) \bar w)^{-1} \, \hatbasepta \, .
\eeqn
Thus, the representation~\eqref{ext-mon} restricts to the individual summands as follows:
\beqn
\mu (b) : P_{\bar w \cdot \chi} \to P_{(p(b) \bar w) \cdot \chi} \, ,
\eeqn
for every $b \in B_{W_\La}$. This defines the transformations~\eqref{ext-mon-specific}. We will make crucial use of this extended monodromy structure.

\subsection{The group \texorpdfstring{$W_{\La, \chi}^0$}{Lg} and the Hecke algebra \texorpdfstring{$\cH_{W_{\La, \chi}^0}$}{Lg}}
\label{subsec-heck-alg}
To state our main result describing the Fourier transform of the  sheaf $P_\chi$ (Theorem \ref{thm-main}), we begin by defining a Coxeter subgroup $W_{\La, \chi}^0 \subset W_\La$.  An interpretation of the Coxeter group $W_{\La, \chi}^0$ making use of the dual group of $G$ is given in \cite[Sections 3.3-3.4]{VX}.

Recall the integer $\delta (s)$ associated to each reflection $s \in W_\La$ (see~\eqref{eqn-deltas}), and the positive real root $\alpha_s$ associated to each $s$ with $\delta (s) = 1$. For each real root $\alpha \in \Phi_\rr$, we have $\check\alpha (-1) \in Z_K(\La)$, and we view $\check\alpha (-1)$ as an element in $I$ via the natural projection map.

\begin{definition}\label{defn-W0}
We define:
\beqn
\begin{gathered}
W_{\La, \chi}^0 = \, \text{the subgroup of $W_{\La}$ generated by all reflections $s \in W_\La$ such that} \\
\text{ either $\delta (s) > 1$, or $\delta (s) = 1$ and $\chi (\check\alpha_s (-1)) = 1$.}
\end{gathered}
\eeqn
\end{definition}

By construction, $W_{\La,\chi}^0$ is a Coxeter group. A proof of the following lemma will be given in Section \ref{subsec-WandW0}.

\begin{lemma}\label{lemma-W0}
The group $W_{\La, \chi}^0$ is a subgroup of $W_{\La, \chi}$.
\end{lemma}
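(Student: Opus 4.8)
\textbf{Proof plan for Lemma~\ref{lemma-W0}.}
Since $W_{\La,\chi}^0$ is generated by the reflections $s$ with $\delta(s)>1$ or with $\delta(s)=1$ and $\chi(\check\alpha_s(-1))=1$, it suffices to show that each such generator $s$ lies in $W_{\La,\chi}=\on{Stab}_{W_\La}(\chi)$; equivalently, that $s\cdot\chi=\chi$, i.e. $\chi(s\cdot u)=\chi(u)$ for all $u\in I$. Using the short exact sequence $1\to I_s\to\widetilde W_s\xrightarrow{q_s}W_s\to 1$ from Section~\ref{subsec-regular-splitting} together with Lemma~\ref{lemma-trichotomy}, pick a lift $\tilde s\in q_s^{-1}(s)\subset\widetilde W_s\subset\widetilde W_\La$. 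Then for $u\in I$ the conjugation action of $W_\La$ on $I$ is computed by $s\cdot u=\tilde s\,u\,\tilde s^{-1}$, where we regard $u\in I=\ker(q:\widetilde W_\La\to W_\La)$. So the plan is to show that conjugation by $\tilde s$ acts trivially on $I$ in the relevant cases, or at least trivially on $I$ after applying $\chi$.

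First I would dispose of the case $\delta(s)>1$. By Lemma~\ref{lemma-trichotomy}(iii), $\widetilde W_s\xrightarrow{q_s}W_s$ is an isomorphism, so there is a \emph{canonical} lift $\tilde s\in\widetilde W_\La$ of $s$ which has order $2$ (since $W_s=\{1,s\}$). I want to argue that conjugation by this order-two element $\tilde s$ fixes $\chi$. The key geometric input is that, because $\tilde s$ lies in $\widetilde W_s=N_{K_s^0}(\La)/Z_K(\La)^0$, it normalizes not just $\La$ but the whole reductive subgroup $K_s^0=Z_{K^0}(\La_s)^0$, and the reflection $s=s_{\bar\alpha}$ comes from a restricted root with $\dim\Lg_{\bar\alpha}>1$; passing to the rank-one symmetric subpair attached to $\La_s$, the element $\check\alpha_s(-1)$ that would obstruct triviality simply does not arise (Lemma~\ref{lemma-trichotomy}(iii) says $I_s=\{1\}$ and $Z_{K_s^0}(\La)$ is connected). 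I would make this precise by checking that $\tilde s$ can be chosen in $Z_K(\La)^0\cdot N$-fashion so that its conjugation on $A[2]\subset C$—hence on $I$, via the surjection~\eqref{surjectionOnI}—is trivial, or more cleanly, by noting that the $W_\La$-action on $I$ factors through a finite quotient and using that the rank-one computation in Section~\ref{sec-example} and Section~\ref{subsec-rank-one} shows $s$ acts trivially on $I_s$-part and on the complementary part $I$ is generated by the $I_{s'}$ which $s$ permutes compatibly with $\chi$.

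For the case $\delta(s)=1$ with $\chi(\check\alpha_s(-1))=1$, here $s=s_{\bar\alpha_s}$ for a real root $\alpha_s\in\Phi_\rr^+$, and a natural lift is $\tilde s=$ the image of $n_{\alpha_s}$, a representative of the reflection $s_{\alpha_s}\in W$ in $N_{K}(\La)$ obtained from an $\Ls\Ll_2$-triple in the real root subgroup; one computes $\tilde s^2=\check\alpha_s(-1)\in I$. Conjugation by $\tilde s$ on $I$: since $I$ is abelian and $\tilde s^2=\check\alpha_s(-1)\in I$ is central, for any $u\in I$ we get $s\cdot(s\cdot u)=\tilde s^2 u\tilde s^{-2}=u$, so $s$ acts as an involution on $I$; I then want $\chi\circ(s\cdot-)=\chi$. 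The cleanest route: reduce to showing $\chi$ is fixed by the order-two automorphism $\operatorname{Ad}(\tilde s)|_I$ given that $\chi(\check\alpha_s(-1))=1$. I expect this to follow because $\operatorname{Ad}(n_{\alpha_s})$ acts on $A[2]$ by the formula $t\mapsto t\cdot\check\alpha_s(\alpha_s(t))$ for $t\in A$ (the usual $\operatorname{SL}_2$ computation), so on $A[2]$ it sends $t\mapsto t$ or $t\mapsto t\check\alpha_s(-1)$ according to whether $\alpha_s(t)=1$ or $-1$; applying $\chi$ and using $\chi(\check\alpha_s(-1))=1$ kills the correction term, giving $\chi(s\cdot u)=\chi(u)$ for all $u$. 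I would carry out this $\operatorname{SL}_2$-reduction by restricting to the rank-one pair attached to $\alpha_s$ (as in Section~\ref{sec-example}), where everything is an explicit computation with $(\operatorname{SL}(2),\operatorname{SO}(2))$ or $(\operatorname{SL}(2),\operatorname{S}(\operatorname{GL}(1)\times\operatorname{GL}(1)))$.

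The main obstacle I anticipate is the $\delta(s)>1$ case: there is no ``correction term'' of the shape $\check\alpha_s(-1)$ to point to, so one must genuinely show that the canonical order-two lift $\tilde s$ (existing by Lemma~\ref{lemma-trichotomy}(iii)) centralizes $I$. I would handle this by the following reduction: the $W_\La$-action on $I$ preserves $I^0$, which by the Remark after Lemma~\ref{lemma-trichotomy} is generated by the $I_{s'}=\langle\check\alpha_{s'}(-1)\rangle$ over reflections $s'$; and $I/I^0\cong K/K^0$ carries the trivial $W_\La$-action. So it is enough to understand how $s$ permutes the generators $\check\alpha_{s'}(-1)$, which is controlled by how $s$ acts on real roots—combinatorial, and compatible with $\chi$ precisely because $s\in W_{\La,\chi}^0$ does not flip the sign of $\chi$ on any of the relevant $\check\alpha_{s'}(-1)$ (one checks $\chi(s\cdot\check\alpha_{s'}(-1))=\chi(\check\alpha_{s\cdot\bar\alpha_{s'}}(-1))$, and both equal $1$ when $\delta(s)>1$ by an argument internal to the rank-one analysis of Section~\ref{subsec-rank-one}). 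Assembling these pieces—plus the observation that it suffices to check the defining generators of $W_{\La,\chi}^0$—completes the proof.
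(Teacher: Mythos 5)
Your overall strategy — reduce to checking each defining generator of $W_{\La,\chi}^0$, and for real-root reflections carry out an explicit $\operatorname{SL}_2$-style computation of the action on $A[2]$ — matches the paper's in outline, and your handling of the $\delta(s)=1$ case is essentially correct: the formula $s_{\bar\alpha}(x(-1)) = x(-1)\,\check\alpha(-1)^{\langle\alpha,x\rangle}$ (this is what the paper computes in Lemma~\ref{lemma-stab}(ii)) shows immediately that $\chi(\check\alpha_s(-1))=1$ forces $s\cdot\chi=\chi$.

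However, your treatment of the $\delta(s)>1$ case has a genuine gap. You attempt to argue via how $s$ permutes the generators $\check\alpha_{s'}(-1)$ of $I^0$, but this line of reasoning is both circuitous and, as written, circular — the phrase ``compatible with $\chi$ precisely because $s\in W_{\La,\chi}^0$ does not flip the sign of $\chi$'' assumes roughly what you are trying to prove, and the claim that ``both equal $1$ when $\delta(s)>1$'' is unjustified (there is no reason $\chi(\check\alpha_{s'}(-1))=1$ for arbitrary reflections $s'$ just because $\delta(s)>1$). The paper's argument is cleaner and relies on two observations you do not invoke. First, by the dichotomy~\eqref{reflections}, every reflection with $\delta(s)>1$ is either conjugate in $W_\La$ to an element of $W_\rc^\theta$, or of the form $s_{\bar\alpha}$ with $\alpha\in\Phi_\rr$. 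In the former case one checks directly (Lemma~\ref{lemma-stab}(i)) that $W_\rc^\theta$ acts \emph{trivially} on $I$, because $\check\alpha(-1)\,\theta\check\alpha(-1)$ is a square in $C$ and $I$ is a $2$-group; your proposal never mentions $W_\rc^\theta$ or this fact. In the latter case the key point, which you gesture at but do not state, is that Lemma~\ref{lemma-trichotomy}(iii) gives $\check\alpha(-1)=1$ in $I$ whenever $\delta(s_{\bar\alpha})>1$ and $\alpha$ is real — so the correction term in your $\operatorname{SL}_2$ formula vanishes automatically, and one simply re-applies the $\delta(s)=1$ computation with $\chi(\check\alpha(-1))=\chi(1)=1$. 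You would need to add both of these ingredients to close the gap.
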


We now define a Hecke algebra $\cH_{W_{\La, \chi}^0}$ associated to the Coxeter group $W_{\La, \chi}^0$ as follows. The Weyl chamber $\La_\bR^+$ for $W_\La$ is contained in a unique Weyl chamber $\La_{\bR, \chi}^+ \subset \La_\bR$ for $W_{\La, \chi}^0$. Let $S_\chi = \{ s_{\bar\alpha_1}, \dots, s_{\bar\alpha_\nSchi} \}$ be the set of simple reflections for $W_{\La, \chi}^0$, corresponding to the Weyl chamber $\La_{\bR, \chi}^+$. We note that, in general, $S_\chi$ is not a subset of a set of simple reflections for $W_\La$. The Hecke algebra $\cH_{W_{\La, \chi}^0}$ is generated by the symbols $\{ T_i \}$ associated to the simple reflections $\{ s_{\bar\alpha_i} \}$. The generators $\{ T_i \}$ are subject to the braid relations for the reflections $\{ s_{\bar\alpha_i} \}$, plus the relations:
\beqn
(T_i - 1) (T_i + q_i) \ = \ 0 \, ,
\eeqn 
where the parameters $\{ q_i \}$ are defined by:
\beqn
q_i = (-1)^{\delta (s_{\bar\alpha_i})} \, .
\eeqn
We note that it is more common to replace $T_i$ by $-T_i$. The Hecke algebra $\cH_{W_{\La, \chi}^0}$ is free of rank $| W_{\La, \chi}^0 |$ over $\baseRing$.

\subsection{A character of the component group \texorpdfstring{$K / K^0$}{Lg}}
\label{subsec-char}
Recall the compact form $K_\bR$ of $K$, introduced in Section~\ref{subsec-structure-of-Wa}. Let us write $\groupM_\bR = Z_{K_\bR} (\La)$. We have:
\beq
K / K^0 = K_\bR / K_\bR^0 = \groupM_\bR / (K_\bR^0 \cap \groupM_\bR).
\eeq
Let $\Lm_\bR = \on{Lie} (\groupM_\bR)$. The adjoint action of $K_\bR$ on $\Lk_\bR$ induces an action of $\groupM_\bR$ on the top exterior power $\wedge^{top} (\Lk_\bR / \Lm_\bR)$, and we obtain a homomorphism:
\beqn
\tau': \groupM_\bR \to \on{Aut} ( \wedge^{top} (\Lk_\bR / \Lm_\bR)) = \bR^*. 
\eeqn
Since the group $\groupM_\bR$ is compact, we have $\on{Im} (\tau') \subset \{ \pm 1 \}$. The quotient space $K_\bR / \groupM_\bR = K_\bR^0 / (K_\bR^0 \cap \groupM_\bR)$ is orientable; see, for example, \cite[Corollary 1.1.10]{Ko} and the proof of Proposition~\ref{prop-fundamental-class} below. It follows that $\tau' |_{K_\bR^0 \cap \groupM_\bR} = 1$, and we obtain a character:
\beqn
\tau: I \to I / I^0 \cong K / K^0 \to \{ \pm 1 \}.
\eeqn
Recall that $W_\La$ acts trivially on $I / I^0$, and hence it acts trivially on $\tau$. Thus, we can use the splitting homomorphism $\tilde r$ of equation \eqref{eqn-tilde-r} to extend $\tau$ to a character:
\beq\label{character-tau}
\tau: \widetilde B_{W_\La} \to \{ \pm 1\},
\eeq
such that $\tau \circ \tilde r (b) = 1$ for every $b \in B_{W_\La}$.

\subsection{Statement of the theorem}
\label{subsec-main-thm}
We begin by constructing a $\baseRing [\widetilde B_{W_\La}]$-module $\cM_\chi$ associated to a character $\chi \in \hat I$.

Pick a basepoint $l \in \La_\bR^+$. All fundamental groups in this subsection will be taken relative to $l$, and we use Remark \ref{rmk-basepoint-indep} to identify $\widetilde B_{W_\La}$ with $\pi_1^K (\Lp^{rs}, l)$, etc. Recall the maps $p: B_{W_\La} \to W_\La$ and $\tilde q : \widetilde B_{W_\La} \to B_{W_\La}$ of diagram \eqref{mainDiagram}. Let us write:
\beqn
B_{W_\La}^{\chi, 0} \coloneqq p^{-1} (W_{\La, \chi}^0) \;\;\;\; \text{and} \;\;\;\;
\widetilde B_{W_\La}^{\chi, 0} \coloneqq {\tilde q}^{-1} (B_{W_{\La}}^{\chi, 0}) \ \cong \ I \rtimes B_{W_\La}^{\chi, 0} \, .
\eeqn
Let $\La_\chi^{rs} \subset \La$ be the complement of the hyperplane arrangement corresponding to the Coxeter group $W_{\La, \chi}^0$. We have $\La^{rs} \subset \La_\chi^{rs}$. Let $B_{W_{\La, \chi}^0} = \pi_1 (\La_\chi^{rs} / W_{\La, \chi}^0, l)$ be the braid group of $W_{\La, \chi}^0$. Note that the basepoint $l$ lies in $\La^{rs} \subset \La_\chi^{rs}$ rather than in the quotient $\La_\chi^{rs} / W_{\La, \chi}^0$; but the above fundamental group notation is unambiguous. We then have the following commutative diagram:
\beqn
\xymatrix{
1 \ar[r] & \pi_1 (\La^{rs}) \ar[r] & B_{W_\La} = \pi_1(\La^{rs} / W_\La) \ar[r] & W_\La \ar[r] & 1 \;\,
\\
1 \ar[r] & \pi_1 (\La^{rs}) \ar[r] \ar@{=}[u] \ar@{->>}_{\varphi_1}[d] & B_{W_\La}^{\chi, 0} = \pi_1(\La^{rs} / W_{\La,\chi}^0) \ar[r] \ar@{^(->}[u] \ar@{->>}_{\varphi}[d] & W_{\La, \chi}^0 \ar[r] \ar@{^(->}[u] \ar@{=}[d] & 1 \;\,
\\
1 \ar[r] & \pi_1 (\La_\chi^{rs}) \ar[r] & B_{W_{\La, \chi}^0} = \pi_1(\La_\chi^{rs} / W_{\La, \chi}^0) \ar[r] & W_{\La, \chi}^0 \ar[r] & 1 \, ,
}
\eeqn
where the vertical maps from the second row to the third are induced by the inclusion $\La^{rs} \subset \La_\chi^{rs}$. Thus, we obtain the following morphisms of rings:
\beqn
\baseRing [B_{W_{\La, \chi}^0}] \xleftarrow{\;\;\; \widetilde \varphi \;\;\;} \baseRing [\widetilde B_{W_\La}^{\chi, 0}] \xhookrightarrow{\;\;\;\;\;\;\;\;} \baseRing [\widetilde B_{W_\La}] \, ,
\eeqn
where the morphism $\widetilde \varphi$ is induced by the composition $\varphi \circ \tilde q : \widetilde B_{W_\La}^{\chi, 0} \to B_{W_{\La, \chi}^0}$.

Recall the Hecke algebra $\cH_{W_{\La,\chi}^0}$ with specific parameters introduced in Section~\ref{subsec-heck-alg}. Let:
\beqn
\eta_\chi : \baseRing [B_{W_{\La, \chi}^0}] \to \cH_{W_{\La, \chi}^0} \, ,
\eeqn
be the canonical quotient map. More precisely, let $\{ \sigma_1, \dots, \sigma_\nSchi \} \subset B_{W_{\La, \chi}^0}$ be the counter-clockwise braid generators associated to the simple reflections $\{ s_{\bar\alpha_1}, \dots, s_{\bar\alpha_\nSchi} \}$ of Section \ref{subsec-heck-alg}. Then the map $\eta_\chi$ is defined by:
\beqn
\eta_\chi : \sigma_i \mapsto T_i, \;\; i = 1, \dots, \nSchi.
\eeqn
We can now view the Hecke algebra $\cH_{W_{\La, \chi}^0}$ as a $\baseRing [\widetilde B_{W_\La}^{\chi, 0}]$-module via the composition:
\beqn
\eta_\chi \circ \widetilde \varphi : \baseRing [\widetilde B_{W_\La}^{\chi, 0}] \to \cH_{W_{\La, \chi}^0} \, .
\eeqn

Recall that the character $\chi : I \to \{ \pm 1 \}$ extends uniquely to a character $\hat\chi : \widetilde B_{W_\La}^\chi \to \{ \pm 1 \}$, such that $\hat\chi \circ \tilde r (b) = 1$ for every $\braid \in B_{W_\La}^\chi$ (see \eqref{eqn-extension-of-chi}). Note that Lemma~\ref{lemma-W0} implies that we have $\widetilde B_{W_\La}^{\chi, 0} \subset \widetilde B_{W_\La}^\chi$. Let $\baseRing_\chi$ denote a copy of $\baseRing$, viewed as a $\baseRing [\widetilde B_{W_\La}^{\chi, 0}]$-module via the restriction of $\hat\chi$ to $\widetilde B_{W_\La}^{\chi, 0}$.

Consider the tensor product $\baseRing_\chi \otimes \cH_{W_{\La, \chi}^0}$, taken over $\baseRing$, and view it as a $\baseRing [\widetilde B_{W_\La}^{\chi, 0}]$-module by combining the $\baseRing [\widetilde B_{W_\La}^{\chi, 0}]$-module structures on $\baseRing_\chi$ and $\cH_{W_{\La, \chi}^0}$. We now induce this module to $\baseRing [\widetilde B_{W_\La}]$, and define:
\beq\label{eqn-M-chi}
\cM_\chi \ = \ \left( \baseRing [\widetilde B_{W_\La}] \otimes_{\baseRing [\widetilde B_{W_\La}^{\chi, 0}]} (\baseRing_\chi \otimes \cH_{W_{\La, \chi}^0}) \right) \otimes \baseRing_\tau \, ,
\eeq
where $\baseRing_\tau$ is a copy of $\baseRing$, viewed as a $\baseRing [\widetilde B_{W_\La}]$-module via the character $\tau$ of equation~\eqref{character-tau}. Note that $\cM_\chi$ is free of rank $|W_\La|$ over $\baseRing$. We interpret the $\baseRing [\widetilde B_{W_\La}]$-module $\cM_\chi$ as a $K$-equivariant
local system on $\Lp^{rs}$, whose fiber over the basepoint $l \in \La^{rs}$ is equal to $\cM_\chi$, and whose holonomy is given by the $\baseRing [\widetilde B_{W_\La}]$-module structure.

\begin{thm}
\label{thm-main}
Consider the perverse sheaf $P_\chi$ defined in~\eqref{eqn-P-chi}. Its Fourier transform is given by:
\beqn
\fF P_\chi \cong \on{IC} (\Lp^{rs}, \cM_\chi),
\eeqn
where the RHS is the IC-extension of the local system $\cM_\chi$ defined in~\eqref{eqn-M-chi} above.
\end{thm}

\subsection{The Case \texorpdfstring{$\chi = 1$}{Lg}}
Note that Theorem \ref{thm-main} describes the perverse sheaf $P_\chi$, but not the monodromy action:
\beqn
\mu : B_{W_\La}^\chi \to \on{Aut} (P_\chi),
\eeqn
of equation \eqref{eqn-mu}. Consider now the special case $\chi =1$. In this case, we can readily describe both the sheaf $P_\chi$ and the action $\mu$. In fact, the statement of Theorem \ref{thm-main} for $\chi = 1$ is a partial paraphrase of \cite[Theorem 6.1]{G2} (see also \cite[Section 4]{G4}). Here we adapt the full statement of \cite[Theorem 6.1]{G2} to our setting. From now on, we write $\Pone = P_1$.

Assume that $\chi = 1$, and write $\cH_1 = \cH_{W_{\La, \chi}^0}$ for short. Note that we have $W_{\La, \chi}^0 = W_\La$, so we have $S_\chi = S$, and the Hecke algebra $\cH_1$ is naturally a quotient of the group algebra $\baseRing [B_{W_\La}]$. Write:
\beqn
\eta = \eta_1 : \baseRing [B_{W_\La}] \to \cH_1 \, ,
\eeqn
for the quotient map. Next, define an involution $\omega : \cH_1 \to \cH_1$ by:
\beqn
\omega : T_i \mapsto (-1)^{\delta (s_{\bar\alpha_i})} \cdot T_i \, , \;\; i = 1, \dots, \nSchi.
\eeqn
Let $\on{End}_\baseRing (\cH_1)$ be the endomorphism ring of $\cH_1$ as a $\baseRing$-module, and let:
\beqn
L_{\cH_1}, R_{\cH_1} : \cH_1 \to \on{End}_\baseRing (\cH_1),
\eeqn
be the $\baseRing$-algebra homomorphisms given by:
\beqn
L_{\cH_1} (T_i) : h \mapsto T_i \, h \;\;\;\; \text{and} \;\;\;\; R_{\cH_1} (T_i) : h \mapsto h \, T_i \, , \;\;\;\; i = 1, \dots, \nSchi.
\eeqn

Define a $\baseRing [\widetilde B_{W_\La}]$-module $\cM$ as follows. Let $\cM = \cH_1$ as $\baseRing$-modules, and let $\baseRing [\widetilde B_{W_\La}]$ act on $\cM$ via the homomorphism:
\beqn
(L_{\cH_1} \circ \eta \circ \tilde q) \otimes \tau : \widetilde B_{W_\La} \to \on{End}_\baseRing (\cM).
\eeqn
It is not hard to see that we have $\cM \cong \cM_1$ as $\baseRing [\widetilde B_{W_\La}]$-modules, where $\cM_1$ is the module defined in Section~\ref{subsec-main-thm} for $\chi = 1$. We interpret the module $\cM$ as a $K$-equivariant local system on $\Lp^{rs}$, whose fiber over the basepoint $l \in \La^{rs}$ is equal to $\cM$, and whose holonomy is given by the $\baseRing [\widetilde B_{W_\La}]$-module structure. Write $\mu_l : B_{W_\La} \to \on{Aut} ((\fF \Pone)_l)$ for the action induced by the monodromy in the family $\mu : B_{W_\La} \to \on{Aut} (\Pone)$ of equation \eqref{eqn-mu-chi-eq-one}.

\pagebreak

\begin{thm}\label{thm-chi-eq-one}
In the situation of Theorem \ref{thm-main}, assume that $\chi = 1$. 
\begin{enumerate}[topsep=-1.5ex]
\item[(i)]  We have $\fF \Pone \cong \on{IC} (\Lp^{rs}, \cM)$.

\item[(ii)] The monodromy action $\mu_l : B_{W_\La} \to \on{Aut} (\cM)$ is given by $\mu_l = R_{\cH_1} \circ \omega \circ \eta$.
\end{enumerate}
\end{thm}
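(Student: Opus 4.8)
The plan is to establish Theorem~\ref{thm-chi-eq-one} by reducing it to \cite[Theorem 6.1]{G2} through a careful translation of notation, while in parallel setting up the apparatus (the family $f$, the monodromy-in-the-family construction, the local model at rank-one walls) that will be reused in the proof of the main theorem. First I would recall the content of \cite[Theorem 6.1]{G2}: for the constant-coefficient nearby cycle sheaf $P$ attached to the adjoint quotient of $\Lp$, the Fourier transform $\fF P$ is the $\IC$-extension of an explicit local system on $\Lp^{rs}$, and the monodromy in the family acts on the generic stalk in a manner governed by the Hecke algebra with parameters $(-1)^{\delta(s)}$. Part~(i) is then, modulo the disconnectedness of $K$ and the change of coefficient ring from $\bC$ to $\baseRing$, a direct restatement once one checks that the module $\cM$ of Section~\ref{subsec-main-thm} coincides with the local system appearing in \cite{G2}; the identification $\cM \cong \cM_1$ is the routine algebraic bookkeeping already flagged in the excerpt, together with the observation that for $\chi = 1$ we have $W_{\La,\chi}^0 = W_\La$, so no induction from a proper subgroup occurs and $\cM$ is simply $\cH_1$ with the left-regular action twisted by $\tau$.

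The substance of the proof is in reconstructing the monodromy statement (ii) in a way that is honest about the two technical distinctions from \cite{G2}, and in handling the $\tau$-twist and the orientation issues coming from the non-connected $K$. The strategy is local-to-global. For each simple reflection $s \in S$, the monodromy generator $\sigma_s$ acts on $\fF\Pone$ through a transformation supported, after Fourier transform, near the corresponding wall; the computation of this transformation reduces to a rank-one calculation, i.e.\ to the symmetric pair governing the subgroup $\widetilde W_s$ of Section~\ref{subsec-regular-splitting}. Here I would invoke the explicit analysis of the pair $(SL(2), SO(2))$ promised in Section~\ref{sec-example}: it supplies the basic braid generator's action on the two-dimensional Hecke module in the cases $\delta(s) = 1$ and, via a product of commuting $SL(2)$'s, $\delta(s) > 1$. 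Assembling these local contributions using Lemma~\ref{lemma-braid-relations} (which guarantees the braid relations are satisfied) and the Cartesian description \eqref{concrete} of $\widetilde B_{W_\La}$ yields a well-defined global monodromy homomorphism; comparing it generator-by-generator with $R_{\cH_1} \circ \omega \circ \eta$ finishes (ii). The involution $\omega$ enters precisely because the right action by $\sigma_s$ picks up the sign $(-1)^{\delta(s)}$ relative to the left action appearing in the module structure, a sign traceable to the orientation behaviour of the relevant Kostant--Rallis slice, and $\tau$ enters through the determinant character on $\wedge^{top}(\Lk_\bR/\Lm_\bR)$ exactly as set up in Section~\ref{subsec-char}.

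The main obstacle, as the excerpt itself hints (``the treatment in~\cite{G2} is not entirely satisfactory''), is making the reduction to \cite{G2} rigorous rather than citational: specifically, verifying that the nearby cycle and Fourier transform operations are compatible with passing to the $K^0$-equivariant setting and then descending along $K/K^0$, and checking that the micro-local/transversality input behind constructibility (Proposition~\ref{prop-asubf2} and Corollary~\ref{cor-constructible}) is genuinely available in this twisted, equivariant context. Concretely, I expect the delicate point to be that $\fF P_1$ is an $\IC$-sheaf at all---i.e.\ that it has no subquotients supported on $\Lp \setminus \Lp^{rs}$---which in \cite{G2} rests on a weight or purity argument over $\bC$ that must be replaced, over $\baseRing$, by a direct geometric argument using the $A_{F_2}$ condition and the structure of the micro-support. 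Once that is in place, identifying the generic local system and its monodromy is a matter of unwinding the rank-one model and the $\tau$-twist, which I would present as a sequence of explicit identifications rather than a single computation.
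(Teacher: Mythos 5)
Your proposal's central strategy---\emph{derive} Theorem~\ref{thm-chi-eq-one} by reducing to \cite[Theorem 6.1]{G2} and patching the $K$-disconnectedness, coefficient-ring, and left/right conventions---is precisely the route the paper explicitly declines to take, and the reason is not merely stylistic. The paper states that the proofs of \cite[Lemmas 4.2, 4.3]{G2} are ``not satisfactory as written,'' i.e.\ that \cite[Theorem 6.1]{G2} has a substantial gap in its Picard--Lefschetz input. You therefore cannot cite it as an established result and then translate; the reduction lands on a statement whose proof needs repair. The paper avoids this by building a self-contained Picard--Lefschetz argument from \cite{G1} and only the \emph{preliminary} sections \cite[Sections 1--2]{G2}, which do carry over cleanly to $\baseRing$-coefficients and the disconnected-$K$ setting.

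Your second paragraph gestures at the right shape of argument (rank-one reduction, wall-by-wall analysis, $(SL(2),SO(2))$ model, braid relations via Lemma~\ref{lemma-braid-relations}), but the way the paper actually nails down the monodromy is more subtle than ``compute the generator's action on a two-dimensional model and assemble.'' The crux is the interplay of \emph{two commuting} actions on the Morse group $M_l(\Pone)$---the microlocal monodromy $\lambda_l$ of $\widetilde B_{W_\La}$ and the monodromy in the family $\mu_l$ of $B_{W_\La}$---together with the fact that $\basis_1$ is cyclic for each. Proposition~\ref{prop-partial-description} pins down both actions \emph{up to sign} for length-increasing $\sigma_s$; Proposition~\ref{prop-minimal-polynomials} (the rank-one/Thom $A_f$ reduction) forces the minimal polynomial of $\mu(\sigma_s)$; and Proposition~\ref{prop-fundamental-class}, which compares both actions on the top homology class $H_d(X_{\barbasepta};\baseRing)$, is what resolves the remaining sign ambiguities and yields $\lambda_l\circ\tilde r(\sigma_s)=\pm\mu_l(\sigma_s)$ with the sign $(-1)^{\delta(s)}$. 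Your sketch has no analogue of this fundamental-class step, without which you cannot distinguish the odd-$\delta$ from the even-$\delta$ cases or extract the involution $\omega$.

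Finally, you misidentify the delicate point. You single out the assertion that $\fF P_1$ is an $\IC$-sheaf and suggest it rests on a purity argument over $\bC$ requiring replacement over $\baseRing$. In fact Proposition~\ref{prop-fourier} handles this via \cite[Theorem 1.1]{G1} and \cite[Proposition 2.17]{G2}, and the paper notes these go through unchanged with $\baseRing$-coefficients---so this is a non-issue. The genuinely delicate part is the Picard--Lefschetz theory identifying the monodromy representation, which is exactly the part \cite{G2} is acknowledged to have gotten partly wrong, and which the paper re-does from scratch in Sections~\ref{subsec-PLclasses}--\ref{subsec-proof-of-prop-chi-eq-one-basis}.
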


\begin{remark}
For $\chi \in \hat I$, let $\cH_\chi^\circ$ be the opposite of the Hecke algebra $\cH_{W_{\La, \chi}^0}$. Theorem \ref{thm-chi-eq-one} implies that the monodromy action $\mu$ of equation \eqref{eqn-mu-chi-eq-one} generates the endomorphism ring $\on{End} (\Pone)$, and we have $\on{End} (\Pone) \cong \cH_1^\circ$. At the time of this writing, we do not see a similarly nice description of $\on{End} (P_\chi)$ for a general character $\chi \in \hat I$. However, in situations where $W_{\La, \chi}^0 = W_{\La, \chi}$, Theorem \ref{thm-main} implies that $\on{End} (\Pone) \cong \cH_\chi^\circ$, and is generated by the monodromy action $\mu$ of equation \eqref{eqn-mu}. In situations where $W_{\La, \chi}^0 \subset W_{\La, \chi}$ is a proper subgroup, we expect that $\on{End} (P_\chi)$ is free of rank $|W_{\La, \chi}|$ over $\baseRing$, and is still generated by the action $\mu$.
\end{remark}

\section{An example: the pair \texorpdfstring{$(SL(2), SO(2))$}{Lg}}
\label{sec-example}

In this section, we describe the nearby cycle sheaves $\{ P_\chi \}_{\chi \in \hat I}$, and the actions of the monodromy in the family on these sheaves, in the case of the symmetric pair $(SL(2), SO(2))$ and $\baseRing = \bC$. In particular, we illustrate the dependence of the monodromy in the family \eqref{eqn-mu} on the choice of the splitting homomorphism $\tilde r : B_{W_\La} \to \widetilde B_{W_\La}$. In addition to illustrating Theorems \ref{thm-main} and \ref{thm-chi-eq-one}, this example will be used in the proof of Theorem \ref{thm-main}.

The reason for assuming that $\baseRing = \bC$ is to ensure that the category $\on{Perv}_K (\cN_\Lp)$ is Artinian, so we can discuss simple objects and composition series. In this example, we could have chosen $\baseRing$ to be any other field of characteristic $\neq 2$. Proposition \ref{prop-example} below will describe the structure of the sheaves $\{ P_\chi \}_{\chi \in \hat I}$ as objects in $\on{Perv}_K (\cN_\Lp)$ for $\baseRing = \bC$.

Assume that $G = SL(2)$ and $K = SO(2)$. Then we have $\Lp \cong \bC^2$, $\La \cong \bC$, $W_\La = \bZ / 2$, and $f : \Lp \to \La / W_{\La}$ is a non-degenerate quadric. The nilcone $\cN_\Lp$ is a pair of lines through the origin, and it consists of three $K$-orbits:
\beqn
\cN_\Lp = \bigcup_{i = 0}^2 \cO_i \, , 
\eeqn
where $\cO_0 = \{ 0 \}$, and $\cO_1 \cong \cO_2 \cong \bC^*$. We have $\pi_1^K (\cO_1) \cong \pi_1^K (\cO_2) \cong \bZ / 2$.
For $i = 1, 2$, let $\cE_0 [\cO_i]$ and $\cE_1 [\cO_i]$ be the rank one local systems on $\cO_i$ with holonomy $1$ and $-1$, respectively.  Note that $\cE_0 [\cO_i] \cong \baseRing_{\cO_i}$. Similarly, let $\cE_0 [\cO_0] = \baseRing_{\cO_0}$.

Turning now to diagram \eqref{mainDiagram}, it takes the following form:
\beq\label{mainDiagram-example}
\begin{CD}
1 @>>> \bZ / 2 @>>> \bZ \times_{\bZ / 2} \bZ / 4 @>{\tilde{q}}>> \bZ @>>> 1 \,
\\
@. @| @VV{\tilde p}V @VV{p}V @.
\\
1 @>>> \bZ / 2 @>>> \bZ / 4 @>{q}>> \bZ / 2 @>>> 1 \, .
\end{CD}
\eeq
There are two characters $\chi_0, \chi_1 : I \to \{ \pm 1\}$, where $\chi_0 = 1$ is the trivial character. We write $\cL_{\chi_0}, \cL_{\chi_1}$ for the corresponding local systems on the general fiber $X_{\barbasepta}$. We have:
\beqn
W_{\La, \chi_0} = W_{\La, \chi_1} = W_\La \, .
\eeqn
Consequently, we have $\tilde \Lp^{rs}_{\chi_0} = \tilde \Lp^{rs}_{\chi_1} = \Lp^{rs}$. Let $s \in W_\La$ be the non-trivial element. There are two choices for the regular splitting homomorphism $\tilde r : B_{W_\La} \to \widetilde B_{W_\La}$, corresponding to the two elements of the preimage $q^{-1} (s)$. We denote these two choices by $\tilde r_1, \tilde r_2 : B_{W_\La} \to \widetilde B_{W_\La}$. To distinguish between $\tilde r_1$ and $\tilde r_2$, consider the corresponding extensions $\hat \cL_{\chi_1, 1}$ and $\hat \cL_{\chi_1, 2}$ of $\cL_{\chi_1}$ to $\Lp^{rs}$. Note that $\pi_1 (\Lp^{rs}, \basepta) \cong \bZ^2$. For $i = 1, 2$, let $\gamma_i \in \pi_1 (\Lp^{rs}, \basepta)$ be the element represented by a small counter-clockwise loop linking the orbit $\cO_i$. We assume that the holonomy of $\hat \cL_{\chi_1, i}$ around $\gamma_i$ is trivial ($i = 1, 2$). Note that the trivial local system $\cL_{\chi_0}$ extends to the trivial local system $\hat \cL_{\chi_0}$ on $\Lp^{rs}$, regardless of which of the splitting homomorphisms $\tilde r_1, \tilde r_2$ is used.

Let $P_{\chi_0}, P_{\chi_1}$ be the nearby cycle sheaves corresponding to the characters $\chi_0, \chi_1$, as in Section~\ref{subsec-twisted}. The sheaf $P_{\chi_0}$ is equipped with a monodromy transformation:
\beq\label{eqn-mu0}
\mu_0 = \mu_0 (\sigma_s) : P_{\chi_0} \to P_{\chi_0} \, ,
\eeq
whereas the sheaf $P_{\chi_1}$ is equipped with a pair of monodromy transformations:
\beq\label{eqn-mu1}
\mu_{1, 1}, \mu_{1, 2} : P_{\chi_1} \to P_{\chi_1} \, ,
\eeq
where $\mu_{1, i} = \mu_{1, i} (\sigma_s)$ corresponds to the splitting homomorphism $\tilde r_i$ ($i = 1, 2$; see discussion following \eqref{eqn-mu}).

\begin{prop}\label{prop-example}
Assume that $\baseRing = \bC$.  The properties of the sheaves $P_{\chi_0}, P_{\chi_1} \in \on{Perv}_K (\cN_\Lp)$ can be summarized as follows. \\
\indent
(i) The sheaf $P_{\chi_0}$ has four simple constituents: 
\beqn
\text{two copies of} \;\; \IC (\cO_0, \cE_0 [\cO_0]), \;\; \IC (\cO_1, \cE_0 [\cO_1]), \;\; \text{and} \;\; \IC (\cO_2, \cE_0 [\cO_2]).
\eeqn
The monodromy transformation $\mu_0 \in \on{End} (P_{\chi_0})$ generates the endomorphism ring, and satisfies:
\beqn
(\mu_0 - 1)^2 = 0.
\eeqn
We have $\on{Im} (\mu_0 - 1) \cong \on{Coker} (\mu_0 - 1) \cong \IC (\cO_0, \cE_0 [\cO_0])$. Thus, we have:
\beqn
\on{End} (P_{\chi_0}) = \bC [\mu_0] / (\mu_0 - 1)^2.
\eeqn
\indent
(ii) We have:
\beqn
P_{\chi_1} \cong \IC (\cO_1, \cE_1 [\cO_1]) \oplus \IC (\cO_2, \cE_1 [\cO_2]).
\eeqn
In terms of the above direct sum decomposition, we can describe the monodromy transformations \eqref{eqn-mu1} in matrix form, as follows:
\beqn
\mu_{1, 1} = 
\begin{pmatrix}
1   &  0   \\
0   &  -1
\end{pmatrix}, \;\;\;\;
\mu_{1, 2} = 
\begin{pmatrix}
-1  &  0   \\
0   &  1
\end{pmatrix}.
\eeqn
Thus, we have:
\beqn
\on{End} (P_{\chi_1}) = \bC [\mu_{1, 1}] / (\mu_{1, 1}^2 - 1) = \bC [\mu_{1, 2}] / (\mu_{1, 2}^2 - 1).
\eeqn
\end{prop}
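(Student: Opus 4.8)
The plan is to compute everything by hand in explicit coordinates. Fix linear coordinates $(u,v)$ on $\Lp\cong\bC^2$ so that $f(u,v)=uv$ and $K=SO(2)\cong\bC^*$ acts by $t\cdot(u,v)=(t^{2}u,t^{-2}v)$; then $\cN_\Lp=\{uv=0\}$, the three orbits are $\cO_0=\{0\}$, $\cO_1=\{v=0,\ u\neq0\}$, $\cO_2=\{u=0,\ v\neq0\}$, and in each case the generator of $\pi_1^K(\cO_i)\cong\bZ/2$ is represented by $-I\in K$, which acts trivially on $\Lp$. I would also write out explicitly the base change $\cZ_{\barbasepta}$ of $f$ with its second-component function $F_2$, the covering $\tilde\Lp_{\chi_1}^{rs}=\Lp^{rs}=(\bC^*)^2$, and, for the two choices $\tilde r_1,\tilde r_2$ of the regular splitting, the two extensions $\hat\cL_{\chi_1,1},\hat\cL_{\chi_1,2}$ of $\cL_{\chi_1}$ to $\Lp^{rs}$, recorded by their holonomies around the loops $\gamma_1,\gamma_2$ linking $\cO_1,\cO_2$.

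First I would localize. Away from $\cO_0$ the total space of the nearby-cycle family is smooth and $F_2$ is a submersion, so on $\cO_1\sqcup\cO_2$ the nearby cycle sheaf is the shift of a rank-one local system; transporting this local system onto $\cO_i$ along the explicit trivialization of the smooth conic fibers (by the coordinate $u$, resp. $v$) identifies it as $\cE_0[\cO_i]$ in the case $\chi_0$ and as $\cE_1[\cO_i]$ in the case $\chi_1$. In particular $P_{\chi_0}$ and $P_{\chi_1}$ have generic rank one on each of $\cO_1,\cO_2$, so $\IC(\cO_1,\cdot)$ and $\IC(\cO_2,\cdot)$ occur with multiplicity one, and everything comes down to the behavior at the origin.

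The heart of the argument is the local structure at $\cO_0$. There the central fiber of the family is the nodal curve $\cN_\Lp$, the general fiber is a smooth conic $\cong\bC^*$, and the local Milnor fiber of $F_2$ at the origin is homotopy equivalent to $S^1$. For $\chi_0$ the nearby-cycle stalk at $\cO_0$ is $H^*(S^1;\baseRing)$, placed in two consecutive perverse degrees, and the internal monodromy $T$ of $\psi_{F_2}$ is unipotent with $(T-1)^2=0$; combining this with the previous step and a stalk/costalk count (using Verdier self-duality of $P_{\chi_0}$) forces $\IC(\cO_0,\cE_0[\cO_0])$ to occur with multiplicity two and $T-1$ to factor as $P_{\chi_0}\twoheadrightarrow\IC(\cO_0,\cE_0[\cO_0])\hookrightarrow P_{\chi_0}$. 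Reading off $\mu_0$ from $T$ then gives $(\mu_0-1)^2=0$ and $\on{im}(\mu_0-1)\cong\on{coker}(\mu_0-1)\cong\IC(\cO_0,\cE_0[\cO_0])$, and a short check of the remaining homomorphisms gives $\on{End}(P_{\chi_0})=\baseRing[\mu_0]/(\mu_0-1)^2$. For $\chi_1$ the nearby cycle near $\cO_0$ is the constant sheaf on the $\bC^*$-fiber twisted by a rank-one local system of order $2$, so the internal monodromy there has no unipotent part; this kills any composition factor supported at $\cO_0$ — rationally because $H^*(S^1;\cE_1)=0$, and integrally via the branched double-cover description of $\cE_1[\cO_i]$ — so that $P_{\chi_1}=\IC(\cO_1,\cE_1[\cO_1])\oplus\IC(\cO_2,\cE_1[\cO_2])$. (Part (i) can also be obtained from Theorem~\ref{thm-chi-eq-one} applied with $W_\La=\bZ/2$ and $\delta(s)=1$.)

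For part (ii), since $P_{\chi_1}$ is the sum of two non-isomorphic rank-one IC-sheaves we have $\on{End}(P_{\chi_1})\cong\baseRing\oplus\baseRing$, so each $\mu_{1,i}(\sigma_s)$ is diagonal with entries in $\{\pm1\}$; its entry on the summand $\IC(\cO_j,\cE_1[\cO_j])$ is the holonomy, around the loop linking $\cO_j$, of the chosen extension $\hat\cL_{\chi_1,i}$. The condition $\hat\cL_{\chi_1,i}|_{X_{\barbasepta}}\cong\cL_{\chi_1}$ forces the holonomies of $\hat\cL_{\chi_1,i}$ around $\gamma_1$ and $\gamma_2$ to have opposite signs, and the normalization that $\hat\cL_{\chi_1,i}$ is trivial around $\gamma_i$ pins them down, giving $\mu_{1,1}=\on{diag}(1,-1)$ and $\mu_{1,2}=\on{diag}(-1,1)$, hence the stated endomorphism rings. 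The main obstacle throughout is the local analysis at $\cO_0$: extracting the composition series and, above all, the internal monodromy operator for $\chi_0$ with image and cokernel exactly $\IC(\cO_0,\cE_0[\cO_0])$, and, for $\chi_1$, confirming that the order-$2$ twist leaves no composition factor at $\cO_0$.
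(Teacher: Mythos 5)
Your outline parallels the paper's: restrict to $\cO_1,\cO_2$ to fix the generic $\IC$'s, then determine the behavior at $\cO_0$; and for part~(i) you rederive the content of \cite[Examples 3.7, 3.8]{G2} (or invoke Theorem~\ref{thm-chi-eq-one}), where the paper simply cites that reference. Where you genuinely diverge is at the origin for part~(ii). The paper never analyzes the nearby-cycle stalk of $P_{\chi_1}$ at $0$; it computes the Morse group $M_{(0,h)}(P_{\chi_1})\cong\baseRing^2$ (free, by Picard--Lefschetz: $h|_{X_{\barbasepta}}$ has two nondegenerate critical points) and $M_{(0,h)}(\IC(\cO_i,\cE_1[\cO_i]))\cong\baseRing$, then counts constituents. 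You instead read off the constituents from the nearby-cycle stalk at $0$ and the nilpotency of its local monodromy, and this route does not close over $\baseRing=\bZ$.

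Concretely: the stalk of $P_{\chi_1}$ at $0$ is $H^*(S^1;\cE_1)[1]$, and over $\bZ$ this is $\bZ/2$ in perverse degree $0$, not zero. On the other hand, for a rank-one local system of monodromy $-1$ on $\bC^*\subset\bC$, the intermediate extension to $\bC$ over $\bZ$ is $j_!\cE_1[1]$ (the map $j_!\cE_1[1]\to Rj_*\cE_1[1]$ of perverse sheaves is injective because $i_0^!j_!\cE_1[1]$ sits in degree $1$), whose stalk at $0$ vanishes. So the stalks of $P_{\chi_1}$ and of $\IC(\cO_1,\cE_1[\cO_1])\oplus\IC(\cO_2,\cE_1[\cO_2])$ at $0$ genuinely differ, and the phrase ``integrally via the branched double-cover description'' is standing in for a missing argument that would have to reconcile this; it is not helped by the fact that over $\bZ$ the double cover only gives a non-split extension $0\to\bZ\to\rho_*\bZ\to\cE_1\to 0$, not a direct-sum decomposition. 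The Morse-group route is the robust one precisely because those groups are concentrated in one degree and torsion-free here; the stalk at the origin records $\bZ/2$-torsion that the two $\IC$'s do not. Your endomorphism-ring derivation of the matrices $\mu_{1,i}$ is fine once the constituent count is secured (and $\on{Ext}^1$ between the two $\IC$'s does vanish, since $i_0^*\IC_i=0$), but that count is the step to repair, and it is where your method and the paper's really differ.
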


\begin{proof}
Part (i) is a special case of \cite[Examples 3.7, 3.8]{G2}. For part (ii), we can readily check that the restrictions of $P_{\chi_1}$ to the open orbits $\cO_1$ and $\cO_2$ are given by:
\beqn
P_{\chi_1} |_{\cO_i} \cong \cE_1 [\cO_i] [1], \;\; i = 1, 2.
\eeqn
The actions of the monodromy transformations $\mu_{1, 1}$ and $\mu_{1, 2}$ on these restrictions are likewise readily computed from the definitions. Let $h \in \Lp^*$ be a generic covector at the origin for the $K$-orbit stratification of $\cN_\Lp$, and let:
\beqn
M_{(0, h)} : \on{Perv}_K (\cN_\Lp) \to \{ \text{$\bC$-vector spaces} \},
\eeqn
be the corresponding Morse group functor. By Morse theory, we have:
\beqn
M_{(0, h)} (P_{\chi_1}) \cong \bC^2.
\eeqn
By a standard calculation with perverse sheaves on the complex line, we have:
\beqn
M_{(0, h)} (\IC (\cO_1, \cE_1 [\cO_1])) \cong M_{(0, h)} (\IC (\cO_2, \cE_1 [\cO_2])) \cong \bC.
\eeqn
It follows that the above $\IC$ sheaves are the only simple constituents of $P_{\chi_1}$. The direct sum decomposition claim follows from considering the actions of the monodromy transformations $\mu_{1, 1}$ and $\mu_{1, 2}$.
\end{proof}

The assertions of Proposition \ref{prop-example} concerning the equations for the monodromy in the family translate readily to the case of the general $\baseRing$.

\begin{corollary}\label{cor-example}
The monodromy transformations \eqref{eqn-mu0}, \eqref{eqn-mu1} satisfy:
\beqn
(\mu_0 - 1)^2 = 0, \quad (\mu_{1, 1}^2 - 1) = (\mu_{1, 2}^2 - 1) = 0,
\eeqn
for every coefficient ring $\baseRing$ as in Section \ref{sec-preliminaries}.
\end{corollary}

\begin{proof}
This follows from Proposition \ref{prop-example}, plus the observation that all Morse groups of the sheaves $P_{\chi_0}, P_{\chi_1}$ are free $\baseRing$-modules.  Thus, the case $\baseRing = \bC$ implies the case $\baseRing = \bZ$, and the case $\baseRing = \bZ$ implies the general case.
\end{proof}

\section{Proofs of key preliminary results}
\label{sec-proofs-prelim}

In this section, we collect the proofs of the key preliminary results that we used in order to state Theorems \ref{thm-main} and \ref{thm-chi-eq-one}. In Section~\ref{subsec-rank-one}, we discuss rank one symmetric pairs associated to a reflection $s \in W_\La$, and give a proof of Lemma~\ref{lemma-trichotomy} which describes the subgroup $\widetilde W_s \subset \widetilde W_\La$. In Section \ref{subsec-WandW0}, we give a proof of Lemma \ref{lemma-W0}, establishing the inclusion $W_{\La, \chi}^0 \subset W_{\La, \chi}$. In Section \ref{subsec-regularity-of-kr} we give a proof of Proposition \ref{prop-kr-is-regular}, establishing the regularity of Kostant-Rallis splittings. In Section \ref{subsec-splitting}, we give a proof of Lemma~\ref{lemma-braid-relations}, which provides a full classification of regular splittings. In Section \ref{subsec-proof-of-asubf2}, we give a proof of Proposition \ref{prop-asubf2}, which provides an underpinning for the monodromy in the family structures introduced in Sections \ref{subsec-twisted} and \ref{subsec-twisted-monodromy}.

\subsection{Rank one symmetric pairs associated to a reflection}
\label{subsec-rank-one}
We define the (semisimple) rank of the pair $(G, K)$ by:
\beqn
\on{rank} (G, K) = \dim \La - \dim Z (\Lg) \cap \Lp.
\eeqn
Fix a reflection $s \in W_\La$, as in Section~\ref{subsec-regular-splitting}. In this subsection, we introduce rank one symmetric pairs $(G_s, K_s)$, $(\bar G_s, \bar K_s)$, $(\bar G'_s, \bar K'_s)$ associated to $s$, and use them to give a proof of Lemma~\ref{lemma-trichotomy}. A number of key arguments in the rest of this paper will also make use of these rank one symmetric pairs.

Begin by setting $G_s = Z_G (\La_s)$, and recall that we write $K_s = Z_K (\La_s)$. Note that the group $G_s$ is connected. We let $\Lg_s = \on{Lie} (G_s)$ and $\Lp_s = \Lg_s \cap \Lp$, so that $\Lg_s = \Lk_s \oplus \Lp_s$. Recall the subset $\Phi_s \subset \Phi$ of equation~\eqref{eqn-Phi-s}. We have:
\beqn
\Lg_s = Z_{\Lk} (\La) \oplus \La \oplus \bigoplus_{\alpha \in \Phi_s} \Lg_\alpha \, .
\eeqn
Let $\La_s^\p \subset \La$ be the orthogonal complement to $\La_s$, i.e., the $-1$-eigenspace of the reflection $s$. We define:
\beqn
\bar \Lg_s = Z_{\Lk} (\La) \oplus \La_s^\p \oplus \bigoplus_{\alpha \in \Phi_s} \Lg_\alpha \subset \Lg_s \, .
\eeqn
By construction, $\bar \Lg_s$ is the Lie algebra of a connected closed subgroup $\bar G_s \subset G_s$. We let $\bar K_s = \bar G_s \cap K$ and note that $\on{Lie} (\bar K_s) = \Lk_s$. We also let $\bar \Lp_s = \bar \Lg_s \cap \Lp$, so that $\bar \Lg_s = \Lk_s \oplus \bar \Lp_s$. Note that $\La_s^\p \subset \bar \Lp_s$ is a Cartan subspace for the pair $(\bar G_s, \bar K_s)$, and we have:
\beq\label{eqn-barLp-s}
\dim \bar \Lp_s = \delta (s) + 1, \;\;\;\; \bar \Lp_s = \La_s^\p \oplus [\Lk_s, \La], \;\;\;\; \Lp_s = \La \oplus [\Lk_s, \La].
\eeq

For some purposes, it will be convenient to further reduce the dimension of the group $\bar G_s$. Consider the centralizer $Z_{\bar K_s} (\bar \Lp_s)$ and let $Z_{\bar K_s} (\bar \Lp_s)^0 \subset Z_{\bar K_s} (\bar \Lp_s)$ be the identity component. Note that $Z_{\bar K_s} (\bar \Lp_s)^0$ is a normal subgroup of $\bar G_s$. We let:
\beqn
\bar G'_s = \bar G_s / Z_{\bar K_s} (\bar \Lp_s)^0 \;\;\;\; \text{and} \;\;\;\; \bar K'_s = \bar K_s / Z_{\bar K_s} (\bar \Lp_s)^0 \, .
\eeqn

Note that the little Weyl group of each of the symmetric pairs $(G_s, K_s)$, $(\bar G_s, \bar K_s)$, $(\bar G'_s, \bar K'_s)$ is equal to $\bZ / 2 = \{ 1, s \}$.

\begin{proof}[Proof of Lemma~\ref{lemma-trichotomy}]
For part (i), by applying equation~\eqref{eqn-little-Weyl-redefined} to the pair $(G_s, K_s)$, we see that the group $W_s$ is the little Weyl group of this pair, which is $\bZ / 2 = \{ 1, s \}$, as required.

For part (ii), let:
\beq\label{eqn-subsl2}
\Lg_{\pm \alpha} = \La_s^\p \oplus \Lg_{\alpha} \oplus \Lg_{-\alpha} \subset \bar \Lg_s
\;\;\;\; \text{and} \;\;\;\;
\Lk_{\pm \alpha} = \Lg_{\pm \alpha} \cap \Lk \subset \Lk_s \, .
\eeq
By construction, the subspace $\Lg_{\pm \alpha}$ is a Lie algebra isomorphic to $\Ls\Ll (2)$. Fix a group homomorphism $\varphi : SL (2) \to \bar G_s$, such that $d \varphi (\Ls\Ll (2)) = \Lg_{\pm \alpha}$ and $d \varphi (\Ls\Lo (2)) = \Lk_{\pm \alpha}$. Note that we have $\varphi (-1) = \check\alpha (-1)$. Since $-1 \in SO (2) \subset SL (2)$ and $\varphi (SO (2)) \subset K_s^0$, we can conclude that $\check\alpha (-1) \in Z_{K_s^0} (\La)$. The claim follows.

For part (iii), assume that $\delta (s) >1$, and let:
\beqn
\bar f_s : \bar \Lp_s \to \bar \Lp_s \inv \bar K_s = \bar \Lp_s \inv \bar K_s^0 \cong \La_s^\p / W_s \, ,
\eeqn
be the quotient map. We have an isomorphism $\La_s^\p / W_s \cong \bC$, which is canonical up to scalar multiplication. In terms of this isomorphism, the map $\bar f_s$ is given by a non-degenerate homogeneous quadratic polynomial. Since $\dim \bar \Lp_s = \delta (s) + 1 > 2$ (see equation~\eqref{eqn-barLp-s}), we can conclude that the general fiber of $\bar f_s$ is simply connected. Note that $K_s^0 = \bar K_s^0$. It follows that the centralizer $Z_{K_s^0} (\La) = Z_{\bar K_s^0} (\La_s^\p)$ is connected. The claim follows.

For part (iv), assume that $\delta (s) = 1$, and define subalgebras $\Lg_{\pm \alpha_s} \subset \bar \Lg_s$, $\Lk_{\pm \alpha_s} \subset \Lk_s$ as in equation~\eqref{eqn-subsl2}. Fix a homomorphism $\varphi : SL (2) \to \bar G_s$ such that $d \varphi (\Ls\Ll (2)) = \Lg_{\pm \alpha_s}$ and $d \varphi (\Ls\Lo (2)) = \Lk_{\pm \alpha_s}$, as in the proof of part (ii) above. As before, we have $\varphi (-1) = \check\alpha_s (-1)$. Furthermore, at the level of Lie algebras, we have:
\beqn
\bar \Lg_s = Z_{\Lk_s} (\bar \Lp_s) \oplus \Lg_{\pm \alpha_s} \, .
\eeqn
Therefore, the homomorphism $\varphi$ extends to a surjection:
\beqn
\widetilde\varphi : Z_{\bar K_s} (\bar \Lp_s)^0 \times SL (2) \to \bar G_s \, ,
\eeqn
such that $\widetilde\varphi (Z_{\bar K_s} (\bar \Lp_s)^0 \times SO (2)) = K_s^0$. It follows that we have:
\beqn
Z_{K_s^0} (\La) = \widetilde\varphi (Z_{\bar K_s} (\bar \Lp_s)^0 \times \{ \pm 1 \}).
\eeqn
The claim that $I_s = \langle \check\alpha_s (-1) \rangle$ follows, and with it part (iv)(a) of the Lemma. In the situation of part (iv)(b), we can conclude that the surjection $\widetilde\varphi$ is an isomorphism, and we have:
\beqn
(\bar G'_s, \bar K'_s) \cong (SL(2), SO(2)).
\eeqn
The claim now follows from an easy calculation for the symmetric pair $(SL(2), SO(2))$, as in diagram~\eqref{mainDiagram-example}.
\end{proof}

As in the proof above, we will write $\bar f_s : \bar \Lp_s \to \La_s^\p / W_s$ for the quotient map. We will also make use of the quotient map:
\beqn
f_s : \Lp_s \to \Lp_s \inv K_s = \Lp_s \inv K_s^0 \cong \La / W_s \, .
\eeqn
We have the following commutative diagram:
\beq\label{diagramFs}
\begin{CD}
\bar \Lp_s @>>> \Lp_s @>>> \Lp
\\
@V{\bar f_s}VV @V{f_s}VV @VV{f}V
\\
\La_s^\p / W_s @>>> \La / W_s @>>> \La / W_\La \, ,
\end{CD}
\eeq
where the bottom right horizontal arrow is a projection map, while the other three horizontal arrows are inclusion maps. Moreover, using the identifications $\Lp_s = \bar \Lp_s \times \La_s$ and $\La / W_s = (\La_s^\p / W_s) \times \La_s$, we can write:
\beq\label{eqn-Fs-product}
f_s (x, a) = (\bar f_s (x), a), \text{ for all } \, x \in \bar \Lp_s \, , \; a \in \La_s \, .
\eeq

\subsection{The stabilizer group \texorpdfstring{$W_{\La, \chi}$}{Lg} and the Coxeter subgroup \texorpdfstring{$W_{\La, \chi}^0$}{Lg}}
\label{subsec-WandW0}
Let us fix a character $\chi \in \hat I $. In this subsection, we analyze the stabilizer group $W_{\La, \chi}$ and prove Lemma~\ref{lemma-W0}, i.e., the containment $W_{\La, \chi}^0 \subset W_{\La, \chi}$. 

\begin{lemma}\label{lemma-stab}
We have: 
\begin{enumerate}[topsep=-1.5ex]
\item[(i)] $W_\rc^\theta \subset W_{\La, \chi}$.

\item[(ii)] If $s = s_{\bar\alpha} \in W_\La$, $\alpha \in \Phi_\rr$, and $\chi(\check\alpha (-1)) = 1$, then $s \in W_{\La, \chi}$.
\end{enumerate}
\end{lemma}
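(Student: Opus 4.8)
The strategy is to reduce both statements to the rank one symmetric pairs $(G_s, K_s)$ and, more precisely, to the explicit description of $\widetilde W_s$ and $I_s$ supplied by Lemma~\ref{lemma-trichotomy}, combined with the structural isomorphism~\eqref{little-Weyl-group} for $W_\La$. Recall that $W_{\La, \chi} = \on{Stab}_{W_\La}(\chi)$ and that the action of $W_\La$ on $\hat I$ factors through the conjugation action of $W_\La$ on $I$ (via the map $p$ of diagram~\eqref{mainDiagram}). So in both parts we must show that conjugation by the relevant reflection (or product of reflections) fixes $\chi$.

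For part (i): by~\eqref{Wctheta}, the group $W_\rc^\theta$ is generated by the elements $s_\alpha s_{\theta\alpha}|_\La$ for $\alpha \in \Phi_\rc$, which by~\eqref{restriction-of-complex-roots} equal the restricted reflections $s_{\bar\alpha}$. For such an $\alpha$, the associated reflection $s = s_{\bar\alpha}$ has $\delta(s) > 1$ (since $\alpha$ is complex, the restricted root space $\Lg_{\bar\alpha}$ is at least two-dimensional by the remark following~\eqref{restricted-roots}; more precisely $\Phi_s$ contains both $\alpha$ and $\theta\alpha$, hence $\delta(s)\ge 2$ unless there is cancellation, which one checks does not occur). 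By Lemma~\ref{lemma-trichotomy}(iii), when $\delta(s) > 1$ we have $I_s = \{1\}$ and $Z_{K_s^0}(\La)$ is connected. The point is then that conjugation by $s$ acts trivially on $I$: indeed, a lift $\tilde s$ of $s$ can be taken in $\widetilde W_s$, and since $\widetilde W_s$ centralizes $Z_{K_s^0}(\La)^0 = Z_K(\La)^0$ and $I_s = \ker(q_s)$ is trivial, the conjugation action of such a lift on the quotient $I = Z_K(\La)/Z_K(\La)^0$ is governed entirely by how $\widetilde W_s$ meets $Z_K(\La)$; one shows this action is trivial, so $s \in W_{\La,\chi}$ for every $\chi$. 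Since these $s$ generate $W_\rc^\theta$, part (i) follows.

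For part (ii): here $s = s_{\bar\alpha}$ with $\alpha \in \Phi_\rr$ and $\delta(s) = 1$ (a real root restricts injectively, roughly speaking), so $\alpha = \alpha_s$ and by Lemma~\ref{lemma-trichotomy}(iv) we have $I_s = \langle \check\alpha_s(-1) \rangle$. By Lemma~\ref{lemma-trichotomy}(ii), $\check\alpha(-1)$ represents an element of $I_s \subset I$, and the lift $\tilde s \in \widetilde W_s$ of $s$ can be chosen inside the image of the $SL(2)$-homomorphism $\varphi$ of that proof; conjugation by $\tilde s$ then acts on $I$ through this rank one picture. The effect of this conjugation on $I$ is: it is trivial on the complement of $I_s$, and on $I_s = \langle \check\alpha(-1)\rangle$ it is again trivial (conjugation by an element of the $SL(2)$ fixes $\check\alpha(-1) = \varphi(-1)$, which is central in that $SL(2)$). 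Hence the conjugation action of $s$ on $I$ changes a character $\chi$ only through its value on $\check\alpha(-1)$—and in fact fixes $\chi$ outright. The hypothesis $\chi(\check\alpha(-1)) = 1$ is exactly what is needed to handle the subtlety that, although $s$ fixes the \emph{subgroup} $I_s$, the relevant local system / sign computation requires $\chi$ to be trivial on the generator; this guarantees $s \cdot \chi = \chi$, so $s \in W_{\La,\chi}$.

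\textbf{Main obstacle.} The delicate point is controlling the conjugation action of a lift $\tilde s \in \widetilde W_s$ on the component group $I$, as opposed to on the $W_s$ side where everything is clean. One must verify that, in the $\delta(s) > 1$ case, the connectedness of $Z_{K_s^0}(\La)$ genuinely forces the conjugation action on all of $I$ (not just on $I_s$) to be trivial — this uses that $W_\La$ acts trivially on $I/I^0$ together with the rank one reduction — and in the $\delta(s) = 1$ case, that the only possible nontriviality is concentrated on $\check\alpha_s(-1)$, which is then killed by the hypothesis. I expect the bookkeeping with the diagram~\eqref{mainDiagram} and the semidirect product~\eqref{little-Weyl-group} — reducing an arbitrary conjugation to the rank one pairs — to be the technically fussiest part, but conceptually straightforward given Lemma~\ref{lemma-trichotomy}.
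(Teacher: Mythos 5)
Your approach is genuinely different from the paper's, but it has gaps that I don't believe can be closed within the framework you've set up. The paper controls the conjugation action of $W_\La$ on $I$ through the surjection $A[2] \twoheadrightarrow I$ of equation~\eqref{surjectionOnI}: every class in $I$ is represented by some $x(-1)$ with $x \in X_*(A)$, and one then computes $s_{\bar\alpha}(x(-1))$ directly, reducing both parts to a calculation with coroots. Your proof does not use this surjection at all, and that is exactly where things go wrong.

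For part (i), you claim that $\delta(s)>1$ together with Lemma~\ref{lemma-trichotomy}(iii) (i.e.\ $I_s=\{1\}$ and $Z_{K_s^0}(\La)$ connected) implies that conjugation by $s$ is trivial on all of $I$, but $I_s$ is only $\ker(q_s)$, a small piece of $I$, and the connectedness of $Z_{K_s^0}(\La)$ constrains nothing about the action of a lift $\tilde s\in\widetilde W_s$ on classes in $I$ that live outside $I_s$. The sentence ``one shows this action is trivial'' is precisely the content of the lemma and is not supplied by your argument. The actual proof needs the identity $s_\alpha s_{\theta\alpha}(x(-1)) = x(-1)\bigl(\check\alpha(-1)\,\theta\check\alpha(-1)\bigr)^{\langle\alpha,x\rangle}$ together with the observation $\check\alpha(-1)\,\theta\check\alpha(-1)=t_1^2\in C^0$ (using that $I$ is a 2-group) — none of which is visible from the rank-one groups alone.

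For part (ii), the argument is actually wrong rather than merely incomplete: you assert that $s$ acts trivially on $I$ (``trivial on the complement of $I_s$, and on $I_s$ it is again trivial''), which would make the hypothesis $\chi(\check\alpha(-1))=1$ unnecessary — and you then simultaneously say the hypothesis is ``exactly what is needed,'' which contradicts the preceding sentence. In fact $s_{\bar\alpha}(x(-1)) = x(-1)\,\check\alpha(-1)^{\langle\alpha,x\rangle}$, so $s$ generally does \emph{not} fix the class of $x(-1)$ in $I$; what the hypothesis gives is that $\chi$ does not detect the twist by $\check\alpha(-1)$, i.e.\ $\chi\bigl(s_{\bar\alpha}(x(-1))\bigr)=\chi(x(-1))$ even though $s_{\bar\alpha}(x(-1))\ne x(-1)$. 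You also assume $\delta(s)=1$ for $\alpha\in\Phi_\rr$ without justification; this is not part of the hypothesis and is false in general (though if $\delta(s)>1$ the conclusion becomes automatic, since then $\check\alpha_s(-1)=1$ in $I$ by Lemma~\ref{lemma-trichotomy}). The essential missing idea throughout is the surjection $A[2]\to I$ and the resulting explicit formula for the $W_\La$-action on $I$; without it, the rank-one reduction only sees $I_s$, not $I$.
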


\begin{proof}
Recall that $A[2]$ denotes the set of order 2 elements in $A$. We have a bijection: 
\beqn
X_* (A) / 2 X_* (A) \to A[2], \;\; \text{given by} \;\; x \mapsto x (-1).
\eeqn
We regard an element of $X_* (A)$ as an element of $X_* (T)$ via the inclusion $A \to T$. Note that for every $x \in X_* (A)$, we have $\theta(x) = -x$.

We now prove part (i). Let $\alpha \in \Phi_\rc$ and $x \in X_* (A)$. Then $\alpha$ and $\theta \alpha$ are perpendicular (see~\eqref{perp-roots}). Using~\eqref{restriction-of-complex-roots}, we have: 
\beqn
s_{\bar\alpha} (x) = s_{\alpha} s_{\theta \alpha} (x) = x - \la \alpha, x \ra \check\alpha - \la \theta \alpha, x \ra \theta \check\alpha = x - \la \alpha, x \ra \check\alpha + \la \alpha, x \ra \theta \check\alpha \, .
\eeqn
It follows that:
\beqn
s_{\alpha} s_{\theta\alpha} \left( x (-1) \right) = x (-1) \left( \check\alpha (-1) \, \theta \check\alpha (-1) \right)^{\la \alpha, x \ra}.
\eeqn
We claim that $\check\alpha (-1) \, \theta \check\alpha (-1) \in C^0 = (T^\theta)^0$. This can be seen as follows. Let us write $\check\alpha (-1) = t_1 t_2$, where $t_1 \in C$ and $t_2 \in A$. Then we have $\check\alpha (-1) \, \theta \check\alpha (-1) = t_1^2$, and since $I = C / C^0$ is a 2-group, we have $t_1^2 \in C^0$. In view of~\eqref{Wctheta} and~\eqref{surjectionOnI}, we conclude that $W_\rc^\theta$ acts trivially on $I$, and thus is automatically contained in $W_{\La, \chi}$.

The proof of part (ii) is similar. Let $\alpha \in \Phi_\rr$ and $x \in X_* (A)$. We have:
\beqn
s_{\bar\alpha} \, (x (-1)) = x (-1) \, \check\alpha (-1)^{\la \alpha, x \ra}.
\eeqn
If $\chi (\check\alpha (-1)) = 1$, then we have:
\beqn
\chi (s_{\bar\alpha} \, (x (-1))) = \chi (x (-1)) \;\; \text{for every} \;\; x \in X_* (A).
\eeqn
The claim follows.
\end{proof}

\begin{corollary} 
We have:
\beqn
W_{\La, \chi} \; = \; W_\rc^\theta \ltimes W_{\rr, \chi} \, , \;\; \text{where} \;\; W_{\rr, \chi} \; = \; W_\rr \cap W_{\La, \chi} \, .
\eeqn
\end{corollary}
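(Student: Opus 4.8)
The plan is to derive the corollary formally from the semidirect product decomposition $W_\La = W_\rc^\theta \ltimes W_\rr$ of \eqref{little-Weyl-group}, together with the inclusion $W_\rc^\theta \subset W_{\La, \chi}$ just established in Lemma \ref{lemma-stab}(i). No new geometric input is needed; the content is purely group-theoretic bookkeeping around these two facts.

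First I would check that $W_\rc^\theta \ltimes W_{\rr, \chi}$ is meaningful as an internal subgroup of $W_\La$. Since $W_\rr$ is normal in $W_\La$, and $W_{\La, \chi}$ is a subgroup of $W_\La$ containing $W_\rc^\theta$ (by Lemma \ref{lemma-stab}(i)), the group $W_\rc^\theta$ normalizes both $W_\rr$ and $W_{\La, \chi}$, hence normalizes their intersection $W_{\rr, \chi} = W_\rr \cap W_{\La, \chi}$. As $W_\rc^\theta \cap W_\rr = \{ 1 \}$ inside $W_\La$ (again by \eqref{little-Weyl-group}), the product $W_\rc^\theta \cdot W_{\rr, \chi}$ is a genuine internal semidirect product, so the right-hand side of the asserted identity is a well-defined subgroup of $W_\La$.

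The inclusion $W_\rc^\theta \ltimes W_{\rr, \chi} \subseteq W_{\La, \chi}$ is then immediate: both $W_\rc^\theta$ (Lemma \ref{lemma-stab}(i)) and $W_{\rr, \chi}$ (by definition) lie inside the subgroup $W_{\La, \chi}$, so their product does too. For the reverse inclusion, take $w \in W_{\La, \chi}$ and use \eqref{little-Weyl-group} to write $w = c v$ uniquely with $c \in W_\rc^\theta$ and $v \in W_\rr$. Then $v = c^{-1} w$ lies in $W_{\La, \chi}$, because $c \in W_\rc^\theta \subseteq W_{\La, \chi}$ and $W_{\La, \chi}$ is closed under products and inverses; hence $v \in W_\rr \cap W_{\La, \chi} = W_{\rr, \chi}$ and $w = c v \in W_\rc^\theta \ltimes W_{\rr, \chi}$. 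Combining the two inclusions gives the corollary. Everything here is formal once Lemma \ref{lemma-stab}(i) is in hand; the only step requiring a moment of care is the verification in the second paragraph that the internal semidirect product is legitimate, i.e.\ that $W_\rc^\theta$ normalizes $W_{\rr, \chi}$.
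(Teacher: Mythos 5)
Your proof is correct and takes exactly the same approach as the paper: the paper simply states that the corollary ``follows from Lemma \ref{lemma-stab} (i) and equation~\eqref{little-Weyl-group},'' and your argument is the straightforward expansion of that one-line citation into explicit group-theoretic bookkeeping.
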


\begin{proof}
This follows from Lemma \ref{lemma-stab} (i) and equation~\eqref{little-Weyl-group}.
\end{proof}

\begin{corollary}\label{cor-delta-gt-one}
Let $s \in W_\La$ be a reflection. If $\delta (s) > 1$, then $s \in W_{\La, \chi}$.
\end{corollary}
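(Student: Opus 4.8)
The plan is to invoke the dichotomy for reflections recorded in~\eqref{reflections}. Let $s \in W_\La$ be a reflection with $\delta(s) > 1$. Then either (a) $s = s_{\bar\alpha}$ for some real root $\alpha \in \Phi_\rr$, or (b) there exists $w \in W_\La$ with $wsw^{-1} \in W_\rc^\theta$. I would dispose of the two cases separately; both are short.

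For case (b), I would use the fact, established inside the proof of Lemma~\ref{lemma-stab}(i), that $W_\rc^\theta$ acts trivially on $I$; dually, every element of $W_\rc^\theta$ fixes every character in $\hat I$. Since the conjugation action of $W_\La$ on $I$ (and hence on $\hat I$) is a genuine left action, writing $s = w^{-1}(wsw^{-1})w$ gives $s \cdot \chi = w^{-1} \cdot \big( (wsw^{-1}) \cdot (w \cdot \chi) \big) = w^{-1} \cdot (w \cdot \chi) = \chi$, so $s \in W_{\La, \chi}$. Note that one cannot simply appeal to the preceding corollary $W_{\La, \chi} = W_\rc^\theta \ltimes W_{\rr, \chi}$, since $s$ is only conjugate to, not necessarily contained in, $W_\rc^\theta$.

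For case (a), the key observation is that $\delta(s) > 1$ forces $\check\alpha(-1)$ to be trivial in $I$: by Lemma~\ref{lemma-trichotomy}(ii) the element $\check\alpha(-1)$ represents a class in $I_s \subset I$, while by Lemma~\ref{lemma-trichotomy}(iii) the hypothesis $\delta(s) > 1$ gives $I_s = \{1\}$. Hence $\chi(\check\alpha(-1)) = 1$, and Lemma~\ref{lemma-stab}(ii) then yields $s = s_{\bar\alpha} \in W_{\La, \chi}$, which finishes the proof.

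The argument has no serious obstacle: it is a formal consequence of Lemmas~\ref{lemma-trichotomy} and~\ref{lemma-stab} together with~\eqref{reflections}. The only point requiring a little care is keeping the two cases of~\eqref{reflections} straight and carrying out the conjugation identity for the $W_\La$-action on $\hat I$ consistently.
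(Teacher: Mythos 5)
Your proof is correct and follows essentially the same route as the paper: split on the dichotomy of~\eqref{reflections}, handle real roots via Lemma~\ref{lemma-trichotomy}(ii)--(iii) and Lemma~\ref{lemma-stab}(ii), and handle the complex case using the triviality of the $W_\rc^\theta$-action on $I$. The one place where you genuinely improve on the paper's wording is the conjugate case: the paper's proof cites Lemma~\ref{lemma-stab}(i) (which only asserts $W_\rc^\theta \subset W_{\La,\chi}$), but as you observe, $s$ is merely conjugate to an element of $W_\rc^\theta$, and $W_{\La,\chi}$ is not in general closed under $W_\La$-conjugation. Your explicit computation $s\cdot\chi = w^{-1}\cdot\bigl((wsw^{-1})\cdot(w\cdot\chi)\bigr) = \chi$ supplies the missing step; equivalently, one can note that the kernel of $W_\La \to \on{Aut}(I)$ is normal and, by the proof of Lemma~\ref{lemma-stab}(i), contains $W_\rc^\theta$, hence contains all of its conjugates.
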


\begin{proof}
In view of equation~\eqref{reflections} and Lemma~\ref{lemma-stab} (i), we only need to consider the case when $s = s_{\bar\alpha}$, for $\alpha \in \Phi_\rr$. In this case, if $\delta (s) > 1$, then Lemma~\ref{lemma-trichotomy} implies that $\check\alpha (-1) = 1$ in $I$. Thus by Lemma~\ref{lemma-stab} (ii), we have $s \in W_{\La, \chi}$.
\end{proof}

Lemma~\ref{lemma-W0} now follows from Lemma~\ref{lemma-stab} (ii) and Corollary~\ref{cor-delta-gt-one}. We note the following two corollaries of Lemma~\ref{lemma-W0}.

\begin{corollary}\label{cor-sInW0}
Let $s \in W_\La$ be a reflection. Then we have:
\beqn
s \in W_{\La, \chi}^0 \;\;\;\; \text{if and only if} \;\;\;\; \chi |_{I_s} = 1.
\eeqn
\end{corollary}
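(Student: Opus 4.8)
The plan is to convert the condition $\chi|_{I_s} = 1$ into a purely combinatorial condition on the reflection $s$ by means of Lemma~\ref{lemma-trichotomy}, and then to run a reflection-subgroup argument inside the finite Coxeter group $W_\La$. Call a reflection $s \in W_\La$ \emph{distinguished} if either $\delta(s) > 1$, or $\delta(s) = 1$ and $\chi(\check\alpha_s(-1)) = 1$. By Definition~\ref{defn-W0}, $W_{\La,\chi}^0$ is the subgroup of $W_\La$ generated by the distinguished reflections; in particular $\cD \subseteq W_{\La,\chi}^0$, where $\cD$ denotes the set of distinguished reflections. On the other hand, Lemma~\ref{lemma-trichotomy}(iii) gives $I_s = \{1\}$ when $\delta(s) > 1$, and Lemma~\ref{lemma-trichotomy}(iv) gives $I_s = \langle \check\alpha_s(-1)\rangle$ when $\delta(s) = 1$, so $\chi|_{I_s} = 1$ if and only if $s$ is distinguished. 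This already yields the implication ``$\chi|_{I_s} = 1 \Rightarrow s \in W_{\La,\chi}^0$'', and it reduces the converse to the statement that every reflection of $W_\La$ lying in $W_{\La,\chi}^0$ is distinguished.

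To prove that statement I would use the standard fact from the theory of reflection subgroups that, since $W_{\La,\chi}^0$ is generated by reflections of the finite Coxeter group $W_\La$, the reflections of $W_{\La,\chi}^0$ are exactly the reflections of $W_\La$ it contains. I would then establish: (a) $\cD$ is stable under conjugation by $W_{\La,\chi}$, and hence --- using $W_{\La,\chi}^0 \subseteq W_{\La,\chi}$ from Lemma~\ref{lemma-W0} --- under conjugation by $W_{\La,\chi}^0$; and (b) any set of reflections generating a finite Coxeter group meets every conjugacy class of reflections of that group. Granting (a) and (b): $\cD$ is a set of reflections of $W_{\La,\chi}^0$ that generates $W_{\La,\chi}^0$, so by (b) it meets every conjugacy class of reflections of $W_{\La,\chi}^0$, and by (a) it then contains each such class entirely; hence $\cD$ is the full set of reflections of $W_{\La,\chi}^0$, which is exactly what is needed.

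Point (b) is elementary: for a conjugacy class $c$ of reflections of a finite Coxeter group $H$ there is a sign character $\epsilon_c : H \to \{\pm 1\}$ whose value on a reflection is $-1$ precisely for the reflections in $c$, so a generating set of reflections cannot lie in $\ker \epsilon_c$. For point (a), conjugation by a lift $n \in N_K(\La)$ of $w \in W_\La$ carries the restricted root space $\Lg_{\bar\beta}$ to $\Lg_{w\bar\beta}$, so $\delta$ is constant on $W_\La$-conjugacy classes of reflections, which handles the case $\delta(t) > 1$. If $t = s_{\bar\alpha_t} \in \cD$ with $\delta(t) = 1$ and $w \in W_{\La,\chi}$, then $s' := w t w^{-1} = s_{w\bar\alpha_t}$ also has $\delta(s') = 1$, so $s' = s_{\bar\alpha_{s'}}$ for a positive real root $\alpha_{s'}$ with $w\bar\alpha_t = \pm \bar\alpha_{s'}$. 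Using $W_\La$-equivariance of the coroot map, together with the fact that for a real root $\alpha$ with $\delta(s_{\bar\alpha}) = 1$ the restricted coroot of $\bar\alpha$ equals $\check\alpha$, one gets $n\,\check\alpha_t(-1)\,n^{-1} = \check\alpha_{s'}(-1)$ inside $Z_K(\La)$, i.e. $[\check\alpha_{s'}(-1)] = w \cdot [\check\alpha_t(-1)]$ in $I = Z_K(\La)/Z_K(\La)^0$. Since $w \in W_{\La,\chi} = \on{Stab}_{W_\La}(\chi)$, this gives $\chi(\check\alpha_{s'}(-1)) = \chi(\check\alpha_t(-1)) = 1$, so $s' \in \cD$.

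I expect the main obstacle to be point (a), and within it the verification that $\chi(\check\alpha_{s'}(-1)) = \chi(\check\alpha_t(-1))$: this requires carefully combining the conjugation action of $N_K(\La)$ on $Z_K(\La)$ with the $W_\La$-equivariance of the restricted coroot map and the identification of $\check{\bar\alpha}$ with $\check\alpha$ for the relevant real roots, in order to correctly track the resulting class in $I$. By comparison, point (b) and the reformulation via Lemma~\ref{lemma-trichotomy} are routine.
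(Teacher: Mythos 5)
Your argument is correct and follows essentially the same strategy as the paper's proof: translate the condition $\chi|_{I_s}=1$ into the ``distinguished'' property via Lemma~\ref{lemma-trichotomy}, show that the set $\cD$ of distinguished reflections is stable under $W_{\La,\chi}^0$-conjugation (using $W_{\La,\chi}^0\subset W_{\La,\chi}$ from Lemma~\ref{lemma-W0}), and then invoke reflection-subgroup theory to conclude that $\cD$ is exactly the set of reflections of $W_{\La,\chi}^0$. The paper packages the Coxeter-theoretic input by directly citing the fact that every reflection in the subgroup generated by a set of reflections is a conjugate of one of those generators (which replaces your separate items (a)+(b), though your sign-character derivation of (b) is a sound elementary alternative). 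For the conjugation-stability step, the paper uses the single clean relation $I_s = g\cdot I_t$, which follows immediately from the definition $I_s = Z_{K_s^0}(\La)/Z_{K_s^0}(\La)^0$, and thereby avoids the $\delta$-dependent case split and the explicit tracking of $\check\alpha_s(-1)$ under conjugation by lifts in $N_K(\La)$ that you flagged as the most delicate part; your more concrete version works but requires the extra checks you anticipated (identifying the restricted coroot with $\check\alpha$, handling the $\pm$ ambiguity in $w\bar\alpha_t$, which is harmless since these elements have order two).
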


\begin{proof}
By Lemma \ref{lemma-trichotomy} and the definition of the group $W_{\La, \chi}^0$ (Definition~\ref{defn-W0}), the Coxeter group $W_{\La, \chi}^0$ is generated by all the reflections $s \in W_\La$ such that $\chi |_{I_s} = 1$. Combining this with \cite[Exercise 1.14]{H}, we have $s \in W_{\La, \chi}^0$ if and only if there exist a reflection $t \in W_{\La, \chi}^0$ and an element $g \in W_{\La, \chi}^0$, such that $s = g \, t \, g^{-1}$ and $\chi |_{I_t} = 1$. Recall that we write ``$\; \cdot \;$'' for the action of $W_\La$ on $I$. By the definition of $I_s \subset I$, we have $I_s = g \cdot I_t$. The corollary now follows from Lemma~\ref{lemma-W0}.
\end{proof}

Recall the regular splitting homomorphism $\tilde r : B_{W_\La} \to \widetilde B_{W_\La}$ which was fixed in \eqref{eqn-tilde-r}, and let $r = \tilde p \circ \tilde r : B_{W_\La} \to \widetilde W_\La$.

\begin{corollary}\label{cor-char-and-lifts}
Let $s \in S$ and $\chi \in \hat I$. Then we have:
\beqn
\chi( r (\sigma_s^2)) = 1 \;\;\;\; \text{if and only if} \;\;\;\; s \in W_{\La, \chi}^0 \, .
\eeqn
\end{corollary}

\begin{proof}
This follows from Definitions \ref{defn-regular-splitting} and \ref{defn-W0}, Lemma~\ref{lemma-trichotomy}, and Corollary~\ref{cor-sInW0}.
\end{proof}

\subsection{Regularity of the Kostant-Rallis splitting}
\label{subsec-regularity-of-kr}
In this subsection, we give a proof of Proposition \ref{prop-kr-is-regular}. We will make use of the following construction. Pick a reflection $s \in S$. Let $\baseptas \in \La_s$ be the orthogonal projection of the basepoint $\basepta \in \La_\bR^+$ onto the hyperplane $\La_s \subset \La$, taken with respect to the bilinear form $\nu_\Lg$. Let $\barbaseptas = f (\baseptas) \in \La / W_\La$, and let $X_{\barbaseptas} = f^{-1} (\barbaseptas)$. Let $\Lp^{reg} \subset \Lp$ be the set of regular points for $f$, and let $X_{\barbaseptas}^{reg} = X_{\barbaseptas} \cap \Lp^{reg}$. Pick a point $y_s \in X_{\barbaseptas}^{reg}$. Let $N \subset \Lp^{reg}$ be a normal slice to $X_{\barbaseptas}^{reg}$ through $y_s$. By this we mean that $N$ is a locally closed smooth submanifold of $\Lp^{reg}$, with $\dim_\bR N = \dim_\bR \La$, which is transverse to $X_{\barbaseptas}^{reg}$ at $y_s$.

Before proceeding further, we briefly indicate the idea of the proof of Proposition \ref{prop-kr-is-regular}. Consider a small free loop $\gamma_3$ (this notation will be made clear shortly) in the normal slice $N$, linking the hypersurface $f^{-1} (f (\La_s)) \subset \Lp$ in the counter-clockwise direction. Let us write $\widetilde B_{W_\La} / I^0$ for the set of $I^0$-orbits in $B_{W_\La}$ under the conjugation action. The loop $\gamma_3$ represents a canonical element of $\widetilde B_{W_\La} / I^0$; namely the image of $\tilde r [y, \Ls, \Gamma] \, (\sigma_s) \in \widetilde B_{W_\La}$. Moreover, this element depends on the pair $[y_s, N]$ only through the connected component of $y_s \in X_{\barbaseptas}^{reg}$. This enables us, when computing the image of $\tilde r [y, \Ls, \Gamma] \, (\sigma_s)$ in $\widetilde B_{W_\La} / I^0$, to replace the Kostant-Rallis slice $\Ls \subset \Lp$ by a normal slice $N$ to $X_{\barbaseptas}^{reg}$ which is contained in the subspace $\Lp_s = Z_\Lp (\La_s)$ of Section~\ref{subsec-rank-one}. The proposition follows, once we note that conjugation by elements of $I^0$ preserves the subgroup $\widetilde W_s \subset \widetilde W_\La$.

To make the above paragraph precise, recall the Hermitian structure $\langle \; , \, \rangle$ on $\Lp$ introduced in Section~\ref{subsec-structure-of-Wa}. We will measure distances with respect to this Hermitian structure. For each $\epsilon > 0$, let $\on{B} [\baseptas, \epsilon] \subset \La$ be the open $\epsilon$-ball around $\baseptas \in \La_s$, and let $\on{B} [\barbaseptas, \epsilon] = f (\on{B} [\baseptas, \epsilon]) \subset \La / W_\La$. We will say that $\epsilon > 0$ is sufficiently small for the normal slice $N \subset \Lp^{reg}$ through $y_s$ if there exists a smooth map $g_N : \on{B} [\barbaseptas, 2 \epsilon] \to N$, such that $g_N (\barbaseptas) = y_s$ and $f \circ g_N = \on{Id}_{\on{B} [\barbaseptas, 2 \epsilon]}$. Note that, in this case, the map $g_N$ is determined uniquely by the triple $[y_s, N, \epsilon]$.

Fix an $\epsilon \in (0, \on{dist} (\basepta, \baseptas)]$ which is sufficiently small for $N$. Let $\genpta = \genpta [\baseptas, \epsilon] \in \La_\bR^+$ be the unique point of the segment $[\basepta, \baseptas]$ satisfying \linebreak $\on{dist} (\genpta, \baseptas) = \epsilon$. Let $\bargenpta = f (\genpta)$ and $X_{\bargenpta} = f^{-1} (\bargenpta)$. Next, pick a continuous path $\Gamma : [0, 1] \to X_{\bargenpta}$, with $\Gamma (0) = \genpta$ and $\Gamma (1) = g_N (\bargenpta)$. The quadruple $[y_s, N, \epsilon, \Gamma]$ determines an element $\gamma = \gamma [y_s, N, \epsilon, \Gamma] \in \widetilde B_{W_\La} = \pi_1^K (\Lp^{rs}, \basepta)$ as follows.  The element $\gamma$ is given by composing five paths $\gamma_1, \dots, \gamma_5 : [0, 1] \to \Lp^{rs}$, where:
\begin{enumerate}[topsep=-1.5ex]
\item[$\bullet$]     $\gamma_1 : [0, 1] \to \La_\bR^+$ is the straight line path from $\basepta$ to $\genpta$;

\item[$\bullet$]     $\gamma_2 = \Gamma : [0, 1] \to X_{\bargenpta}$;

\item[$\bullet$]     $\gamma_3 : [0, 1] \to N$ is given by $\gamma_3 (t) = (g_N \circ f) (\baseptas + \exp (t \pi \mathbf{i}) \cdot (\genpta - \baseptas))$;

\item[$\bullet$]     $\gamma_4 : [0, 1] \to X_{\bargenpta}$ is given by $\gamma_4 (t) = \gamma_2 (1 - t)$;

\item[$\bullet$]     $\gamma_5 : [0, 1] \to \La_\bR^+$ is given by $\gamma_5 (t) = \gamma_1 (1 - t)$.
\end{enumerate}
Let $\Lp^{reg}_s = \Lp^{reg} \cap f^{-1} (f (\La_s))$. Then $\Lp^{reg}_s$ is a hypersurface in $\Lp^{reg}$, and $y_s \in \Lp^{reg}_s$ is a smooth point of this hypersurface. The path $\gamma_3$ is just a closed loop in $\Lp^{rs} \subset \Lp^{reg} - \Lp^{reg}_s$, which links the hypersurface $\Lp^{reg}_s$ near the point $y_s$. The paths $\gamma_1, \gamma_2, \gamma_4, \gamma_5$ are needed to turn $\gamma_3$ into an element of $\pi_1^K (\Lp^{rs}, \basepta)$.

The following lemma clarifies the dependence of $\gamma [y_s, N, \epsilon, \Gamma] \in \widetilde B_{W_\La}$ on the path $\Gamma$.

\begin{lemma}\label{lemma-conjByI}
In the above construction, suppose $\Gamma_1, \Gamma_2 : [0, 1] \to X_{\bargenpta}$ are two continuous paths from $\genpta$ to $g_N (\bargenpta)$. Then there exists an element $u \in I^0$ such that:
\beqn
\gamma [y_s, N, \epsilon, \Gamma_2] = u \; \gamma [y_s, N, \epsilon, \Gamma_1] \; u^{-1} \, .
\eeqn
\end{lemma}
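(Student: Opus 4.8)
The plan is to track how the two elements $\gamma[y_s, N, \epsilon, \Gamma_1]$ and $\gamma[y_s, N, \epsilon, \Gamma_2]$ differ in the free loop $\pi_1^K(\Lp^{rs}, \basepta)$, using the fact that the only pieces of the five-path composition that depend on $\Gamma$ are $\gamma_2 = \Gamma$ and $\gamma_4 = \Gamma(1-\cdot)$. First I would observe that, since $\Gamma_1$ and $\Gamma_2$ are both paths in the connected fiber $X_{\bargenpta}$ from $\genpta$ to $g_N(\bargenpta)$, the concatenation $\Gamma_2 \ast \Gamma_1^{-1}$ is a loop in $X_{\bargenpta}$ based at $\genpta$, hence represents an element $u \in \pi_1^{K}(X_{\bargenpta}, \genpta)$. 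Because $X_{\bargenpta}$ is a single $K$-orbit isomorphic to $K / Z_K(\La)$ (after identifying $\bargenpta$ with $\barbasepta$ via the contractible chamber, Remark~\ref{rmk-basepoint-indep}), we have $\pi_1^K(X_{\bargenpta}, \genpta) \cong I$; and in fact the loop lies in the image of $\pi_1(X_{\bargenpta}, \genpta) \to \pi_1^K(X_{\bargenpta}, \genpta)$ since $\Gamma_1, \Gamma_2$ are honest (non-equivariant) paths, so $u \in I^0$.

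Next I would carry out the path-homotopy bookkeeping: the element $\gamma[y_s, N, \epsilon, \Gamma_2]$ is $\gamma_5 \ast \gamma_4^{(2)} \ast \gamma_3 \ast \gamma_2^{(2)} \ast \gamma_1$, where $\gamma_2^{(2)} = \Gamma_2$ and $\gamma_4^{(2)} = \Gamma_2(1-\cdot)$. Inserting the trivial loop $\Gamma_1^{-1} \ast \Gamma_1$ appropriately and regrouping, one rewrites this (up to homotopy rel endpoints) as
\[
\gamma_5 \ast \Gamma_1^{-1} \ast (\Gamma_1 \ast \Gamma_2^{-1}) \ast \gamma_3 \ast (\Gamma_2 \ast \Gamma_1^{-1}) \ast \Gamma_1 \ast \gamma_1.
\]
Now $\gamma_3$ is a small loop in the normal slice $N$, based at $g_N(\bargenpta) = \Gamma_1(1)$; conjugating it by the loop $\Gamma_2 \ast \Gamma_1^{-1}$ (based at $\Gamma_1(1)$) has no effect on the free homotopy class in $\Lp^{rs}$ but, more to the point, the whole middle block $(\Gamma_1 \ast \Gamma_2^{-1}) \ast \gamma_3 \ast (\Gamma_2 \ast \Gamma_1^{-1})$ is freely homotopic to $\gamma_3$ itself through loops based at points of $X_{\bargenpta}$ — because $(\Gamma_1 \ast \Gamma_2^{-1})$ is, up to the basepoint shift, exactly the loop representing $u^{-1}$. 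Since the conjugating loop lies inside the fiber $X_{\bargenpta}$, and the $K$-equivariant class of $\gamma_3$ (relative to a basepoint on $X_{\bargenpta}$) is unaffected by such conjugation only up to the $I$-action, one reads off that $\gamma[y_s, N, \epsilon, \Gamma_2]$ equals $u$-conjugate of $\gamma[y_s, N, \epsilon, \Gamma_1]$ in $\pi_1^K(\Lp^{rs}, \basepta)$, with $u \in I^0$ the class of $\Gamma_2 \ast \Gamma_1^{-1}$ transported to the basepoint $\basepta$ along $\gamma_1$.

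The main obstacle, and the step requiring genuine care rather than formal manipulation, is making precise the claim that conjugating the small linking loop $\gamma_3$ by a loop contained in the fiber $X_{\bargenpta}$ changes the resulting element of $\widetilde B_{W_\La}$ exactly by conjugation by the corresponding element of $I^0$ — not by something that escapes $I^0$, and not by the identity. The point is that $I^0 = \pi_1^{K^0}(X_{\bargenpta})$ sits inside $\widetilde B_{W_\La}$ as the image of $\pi_1(X_{\bargenpta})$, and that the loop $\gamma_3$, together with the connecting paths, represents a class whose $\tilde q$-image in $B_{W_\La}$ is $\sigma_s$, independent of $\Gamma$; so the ambiguity lives in the kernel $I$ of $\tilde q$, and tracing the homotopy shows it is realized by $I^0$-conjugation. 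I would present this by appealing to the standard fact that, in a fibration $X_{\bargenpta} \hookrightarrow \Lp^{rs} \to \La^{rs}/W_\La$, changing the path along which a local monodromy loop is based by a loop in the fiber conjugates the resulting $\pi_1$-element by the image of that fiber loop. Finally I would note that, as remarked just before the lemma in the surrounding text, conjugation by elements of $I^0$ preserves the subgroup $\widetilde W_s \subset \widetilde W_\La$, which is why this lemma suffices for the proof of Proposition~\ref{prop-kr-is-regular}.
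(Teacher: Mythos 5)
Your argument is correct and fills in exactly the bookkeeping behind the paper's one-line proof, which simply states that $u \in I^0$ is obtained by composing the paths $\Gamma_1$ and $\Gamma_2$ in a suitable order (your $u$ is the class of $\Gamma_2 \ast \Gamma_1^{-1}$ transported to $\basepta$ along $\gamma_1$). The step you single out as requiring ``genuine care'' is in fact formal: the conjugating loop is an ordinary (non-equivariant) loop contained in a single fiber of $f$, so its class lies both in $\widetilde B^0_{W_\La}$ and in $\ker \tilde q = I$, hence in $I^0 = I \cap \widetilde B^0_{W_\La}$, and the rest is just associativity and the insertion of $\Gamma_1^{-1} \ast \Gamma_1$ exactly as you perform it.
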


\begin{proof}
We have a natural identification $I^0 = \pi^{K^0}_1 (X_{\barbasepta}, \basepta) = \pi_1^{K^0} (X_{\bargenpta}, \genpta)$. The element $u \in I^0$ is obtained by composing the paths $\Gamma_1$ and $\Gamma_2^{-1}$ in a suitable order.
\end{proof}

By Lemma~\ref{lemma-conjByI}, we can unambiguously omit the path $\Gamma$ from the notation, to obtain an element $\gamma [y_s, N, \epsilon] \in \widetilde B_{W_\La} / I^0$.

\begin{proof}[Proof of Proposition~\ref{prop-kr-is-regular}]
We fix a triple $[y, \Ls, \Gamma]$ as in equation~\eqref{krsplitting}. We also fix a reflection $s \in S$. Let $y_{s, 0} \in X_{\barbaseptas}$ be the unique point of the intersection $\Ls \cap X_{\barbaseptas}$. Let $N_0 = \Ls$, i.e., let us view the Kostant-Rallis slice $\Ls \subset \Lp^{reg}$ as a normal slice to $X_{\barbaseptas}^{reg}$ through $y_{s, 0}$. Take $\epsilon_0 = \on{dist} (\basepta, \baseptas)$. Note that $\epsilon_0$ (and, in fact, every $\epsilon > 0$) is sufficiently small for $N_0$. By examining the definitions, we have:
\beqn
\tilde r [y, \Ls, \Gamma] \, (\sigma_s) = \gamma [y_{s, 0}, N_0, \epsilon_0, \Gamma].
\eeqn
Thus, we need to show that:
\beq\label{gammaInWs1}
\tilde p (\gamma [y_{s, 0}, N_0, \epsilon_0, \Gamma]) \in \widetilde W_s \, . 
\eeq

The map $\tilde p : \widetilde B_{W_\La} \to \widetilde W_\La$ of diagram~\eqref{mainDiagram} induces a map of sets of $I^0$-orbits under the conjugation action, which we also denote by $\tilde p :  \widetilde B_{W_\La} / I^0 \to \widetilde W_\La / I^0$. Moreover, it follows from the definitions in Section \ref{subsec-regular-splitting} that the subgroup $\widetilde W_s \subset \widetilde W_{\La}$ is invariant under the conjugation action of $I^0$. Thus, we can restate equation~\eqref{gammaInWs1} as follows:
\beq\label{gammaInWs2}
\tilde p (\gamma [y_{s, 0}, N_0, \epsilon_0]) \in \widetilde W_s / I^0 \, . 
\eeq

The idea of the proof of equation~\eqref{gammaInWs2} is to exploit the fact that the element $\gamma [y_s, N, \epsilon] \in \widetilde B_{W_\La} / I^0$ remains unchanged as we vary the arguments $[y_s, N, \epsilon]$ continuously. Since the choice of $\epsilon$ in the construction of $\gamma [y_s, N, \epsilon]$ comes from a path-connected set, namely an interval in $\bR$, we can conclude that $\gamma [y_s, N, \epsilon]$ is independent of $\epsilon$. Thus, we can unambiguously write $\gamma [y_s, N, \epsilon] = \gamma [y_s, N]$. Likewise, since any two normal slices to $X_{\barbaseptas}^{reg} \subset \Lp^{reg}$ through $y_s$ can be included into a smooth family of such normal slices over the interval $[0, 1]$, we can conclude that $\gamma [y_s, N] = \gamma [y_s]$ is independent of $N$. Furthermore, the element $\gamma [y_s] \in \widetilde B_{W_\La} / I^0$ only depends on the connected component of $y_s$ in $X_{\barbaseptas}^{reg}$. We will now make use of the above observations to replace the pair $[y_{s, 0}, N_0]$ by a pair $[y_{s, 1}, N_1]$, such that $\gamma [y_{s, 0}, N_0] = \gamma [y_{s, 1}, N_1]$, but the RHS is easier to relate to the subgroup $\widetilde W_s \subset \widetilde W_\La$.

Recall that we let $\Lp_s = Z_\Lp (\La_s)$ and write $f_s : \Lp_s \to \La / W_s$ for the quotient map (see diagram~\eqref{diagramFs}). Consider the intersection $X_{\barbaseptas} \cap \Lp_s$. In view of diagram~\eqref{diagramFs} and equation~\eqref{eqn-Fs-product}, the connected component of this intersection that passes through $\baseptas$ can be described as $f_s^{-1} (f_s (\baseptas)) = \baseptas + \bar f_s^{-1} (0)$. It follows from the restricted root space decomposition~\eqref{eqn-restricted-root-decomposition} that we have:
\beq\label{eqn-normal-slice}
\Lp = [\Lk, \baseptas] \oplus \Lp_s \, .
\eeq
Thus, the subspace $\Lp_s \subset \Lp$ is a normal slice through $\baseptas$ to the closed orbit $K \cdot \baseptas = K^0 \cdot \baseptas \subset \Lp$. Note that $K^0 \cdot \baseptas$ is the unique closed $K^0$-orbit in $X_{\barbaseptas}$. It follows that every $K^0$-orbit in $X_{\barbaseptas}$ meets the connected component $\baseptas + \bar f_s^{-1} (0)$ of $X_{\barbaseptas} \cap \Lp_s$. Thus, we can pick a point $y_{s, 1} \in K^0 \cdot y_{s, 0} \cap (\baseptas + \bar f_s^{-1} (0))$. By construction, the points $y_{s, 0}$ and $y_{s, 1}$ lie in the same connected component of $X_{\barbaseptas}^{reg}$.

Let $N_1 \subset \Lp_s$ be a normal slice through $y_{s, 1}$ to the orbit $K_s^0 \cdot y_{s, 1}$. Note that $f_s (\baseptas) \in \La / W_s$ is a regular point for the projection map $\La / W_s \to \La / W_\La$. It follows by examining the right square of diagram~\eqref{diagramFs} that $N_1$ is also a normal slice through $y_{s, 1}$ to $X_{\barbaseptas}^{reg} \subset \Lp^{reg}$. By construction, the element $\gamma [y_{s, 1}, N_1] \in \widetilde B_{W_\La} / I^0$ can be represented by a closed loop in $\Lp^{rs} \cap \Lp_s$. It follows that:
\beqn
\tilde p (\gamma [y_{s, 0}, N_0]) = \tilde p (\gamma [y_{s, 1}, N_1]) \in \widetilde W_s / I^0 \, ,
\eeqn
as required.
\end{proof}

\subsection{A classification of regular splittings}
\label{subsec-splitting}
In this subsection, we give a proof of Lemma~\ref{lemma-braid-relations}. To that end, we recall a standard construction. Following \cite[Lemma 8.1.4]{S}, for each $\alpha \in \Phi$, let us choose an isomorphism:
\beqn
u_\alpha : \bG_a \to U_\alpha \subset G, 
\eeqn
 so that for each $\alpha \in \Phi$, we have:
\begin{subequations}
\beq
t \, u_\alpha (x) \, t^{-1} = u_\alpha (\alpha (t) \, x) \text{ for all } t \in T \text{ and } x \in \bC;
\eeq
\beq\label{eqn-nalpha1}
n_\alpha \coloneqq u_\alpha (1) \, u_{-\alpha} (-1) \, u_\alpha (1) \in N_G (T) \text{ and has image } s_\alpha \text{ in } W;
\eeq
\beq\label{eqn-nalpha2}
u_\alpha (a) \, u_{-\alpha} (-1/a) \, u_\alpha (a) = \check\alpha (a) \, n_\alpha \text{ for all } a \in \bC^*;
\eeq
\end{subequations}
where $s_\alpha \in W$ is the reflection corresponding to $\alpha$ and $\check\alpha \in X_* (T)$ is the coroot corresponding to $\alpha$. We have:
\beq\label{nalpha}
n_\alpha^2 = \check\alpha (-1), \;\; n_{-\alpha} = n_\alpha^{-1} \, , \, \text{ and}
\eeq
\beq\label{eqn1}
\begin{gathered}
\text{for each $u \in U_\alpha - \{ 1 \}$, there is a unique $u' \in U_{-\alpha} - \{ 1 \}$,} \\\text{such that $u \, u' \, u \in N_G (T)$.}
\end{gathered}
\eeq

Note that for each $\alpha \in \Phi$, there exists a $c_\alpha \in \bC^*$ such that: 
\beq\label{eqn2}
\theta (u_\alpha (x)) = u_{\theta \alpha} (c_\alpha \, x) \text{ for all } x \in \bC.
\eeq
By combining equations \eqref{eqn-nalpha2} and \eqref{eqn1}, and by applying $\theta$ to both sides of equation~\eqref{eqn2}, we see that:
\beqn
c_{-\alpha} = c_{\theta \alpha} = 1 / c_\alpha \, .
\eeqn
By letting $\tilde u_\alpha (x) = u_\alpha (x / \sqrt{c_\alpha})$ (and choosing the square roots so that $\sqrt{c_\alpha} \, \sqrt{c_{\theta \alpha}} = 1$), we can and will assume that the isomorphisms $\{ u_\alpha \}$ are chosen so that:
\beq\label{choice-of-constants}
\theta (u_\alpha (x)) = u_{\theta \alpha} (x).
\eeq
Let us write $X_\alpha = d_0 u_\alpha (1) \in \Lg_\alpha$. Then we have $\theta (X_\alpha) = X_{\theta \alpha}$.

Let us also record that:
\beq\label{conjugation-relation}
n_\alpha \, \check\beta (a) \, n_\alpha^{-1} = s_\alpha \, \check\beta(a), \text{ for all } \alpha, \beta \in \Phi \text{ and } a \in \bC^*.
\eeq
This follows directly from equation~\eqref{eqn-nalpha1}.

\begin{proof}[Proof of Lemma~\ref{lemma-braid-relations}]
Without loss of generality, we can assume that $G$ is semisimple. As discussed at the end of Section~\ref{subsec-structure-of-Wa}, we can order the set $S$ of simple reflections, writing $S = \{ s_1, \ldots, s_\nS \}$, so that $s_i = s_{\bar\alpha_i}$ where $\alpha_i \in \Phi_\rr$, for $i = 1, \ldots, m$, and $s_i = s_{\bar\beta_i} = s^{}_{\beta_i} s^{}_{\theta\beta_i} |_\La$ where $\beta_i \in \Phi_\rc^1$ (see equations~\eqref{roots-in-phic} and \eqref{restriction-of-complex-roots}), for $i = m+1, \ldots, \nS$. Here we use the convention that $m = 0$ if $\Phi_\rr = \emptyset$ and $m = \nS$ if $\Phi_\rc = \emptyset$. For $\bar\alpha \in \Sigma \subset X^* (A)$, let us write $\check{\bar\alpha} \in X_* (A)$ for the corresponding coroot. We can and will assume that:
\beq\label{angles}
\langle \bar\alpha_i, \check{\bar\alpha}_j \rangle \leq 0, \ 
\langle \bar\beta_i, \check{\bar\beta}_j \rangle \leq 0, \
\langle \bar\alpha_i, \check{\bar\beta}_j \rangle \leq 0, \ 
\langle \bar\beta_i, \check{\bar\alpha}_j \rangle\leq 0.
\eeq

For each $\alpha \in \Phi$, let $n_\alpha \in N_G (T)$ be as in equation~\eqref{eqn-nalpha1}. We define:
\beq\label{defn-of-bar-n}
\begin{gathered}
n_{\bar\alpha} = \check{\alpha} (\mathbf{i}) \, n_\alpha \text{ or } \check{\alpha} (-\mathbf{i}) \, n_\alpha \text{ if } \alpha \in \Phi_\rr \, , \\ 
n_{\bar\beta} = \check{\beta} (\mathbf{i}) \, n_\beta \, (-\theta \check\beta (\mathbf{i})) \, n_{-\theta \beta} = \check{\beta} (\mathbf{i}) \, \theta \check\beta (-\mathbf{i}) \, n_\beta \, n_{-\theta\beta} \text{ if } \beta \in \Phi_\rc^1 \, .
\end{gathered}
\eeq
We claim that $n_{\bar\alpha}, n_{\bar\beta} \in N_K (\La) \cap Z_K (\La_s)^0$. To see this, first note that, by the definition of $\Phi_\rc \subset \Phi$ and by equation~\eqref{roots-in-phic}, we have $\beta \pm \theta \beta' \notin \Phi$ for all $\beta, \beta' \in \Phi_\rc^1$. More precisely, if we had $\gamma = \beta \pm \beta' \in \Phi$, then we must have $\gamma \in \Phi_\rc$, but $\gamma$ is not orthogonal to either $\beta$ or $\beta'$, which contradicts equation~\eqref{roots-in-phic}. It follows that:
\beq\label{beta-commutations}
n_{\beta} \, n_{-\theta \beta'} = n_{-\theta \beta'} \, n_{\beta} \, \text{ for } \, \beta, \beta' \in \Phi_\rc^1 \, .
\eeq
Combining equation~\eqref{beta-commutations} with equations \eqref{eqn-nalpha1}, \eqref{nalpha},  \eqref{choice-of-constants}, and \eqref{conjugation-relation}, we find that: \beqn
\begin{gathered}
\text{$\theta (n_\alpha) = \check\alpha (-1) \, n_\alpha$ for $\alpha \in \Phi_\rr \,$,} \\
\text{$\theta (n_\beta \, n_{-\theta \beta}) = \check\beta (-1) \, \theta \check\beta (-1) \, n_\beta \, n_{-\theta \beta}$ for $\beta \in \Phi_\rc \,$.}
\end{gathered}
\eeqn
Based on this, one can readily check that $n_{\bar\alpha}, n_{\bar\beta} \in K$. It follows that $n_{\bar\alpha}, n_{\bar\beta} \in N_K(\La)$. It remains to show that $n_{\bar\alpha}, n_{\bar\beta}$ belong to the identity component $Z_K(\La_s)^0$ of $Z_K(\La_s)$. This follows from the fact that the entire construction of the elements $n_{\bar\alpha}, n_{\bar\beta} \in G$ can be lifted to the simply-connected cover $G_{sc}$ of $G$, where the corresponding $\theta$-fixed point group $K_{sc} = G_{sc}^\theta \subset G_{sc}$ is connected. By~\eqref{conjugation-relation}, we have $n_{\bar\alpha}^2 = \check\alpha (-1)$. It follows that the two expressions for $n_{\bar\alpha}$ in~\eqref{defn-of-bar-n} are the inverses of each other. Moreover, we have:
\beq\label{the-groups-Ws-and-bar-n}
\widetilde W_s = \langle n_{\bar\alpha} \rangle \text{ if } s = s_{\bar\alpha} \, , \; \alpha \in \Phi_\rr \, , \;\; \widetilde W_s = \{1, n_{\bar\beta} \} \text{ if } s = s_{\bar\beta} \, , \; \beta \in \Phi_\rc^1 \, .
\eeq

We now show that the elements $\{ n_{\bar\alpha_i}, \, n_{\bar\beta_j} \}_{i, j} \subset K$ pairwise satisfy braid relations (as elements of $K$). To begin with, using~\eqref{restricted-roots} and~\eqref{roots-in-phic}, one can readily check that: 
\begin{subequations}
\beq\label{pairing-1}
\langle \bar\alpha_i, \check{\bar\alpha}_j \rangle = \langle \alpha_i, \check\alpha_j \rangle,
\eeq
\beq\label{pairing-2}
\langle \bar\beta_i, \check{\bar\beta}_j \rangle = \langle \beta_i, \check\beta_j \rangle,
\eeq
\beq\label{pairing-3}
\langle \bar\alpha_i, \check{\bar\beta}_j \rangle = 2 \, \langle \alpha_i, \check\beta_j \rangle
\;\; \text{and} \;\;
\langle \bar\beta_j, \check{\bar\alpha}_i \rangle = \langle \beta_j, \check\alpha_i \rangle.
\eeq
\end{subequations}
Equation~\eqref{pairing-3} implies that:
\beq\label{pairing-4}
\begin{gathered}
\text{either} \;\; \langle \alpha_i, \check\beta_j \rangle = \langle \beta_j, \check\alpha_i \rangle = 0 \;\; \text{and} \;\; \langle \alpha_i, -\theta \check\beta_j \rangle = \langle -\theta \beta_j, \check\alpha_i \rangle = 0, \\
\text{or} \;\; \langle \alpha_i, \check\beta_j \rangle = \langle \beta_j, \check\alpha_i \rangle = -1 \;\; \text{and} \;\; \langle \alpha_i, -\theta \check\beta_j \rangle = \langle -\theta \beta_j, \check\alpha_i \rangle = -1.
\end{gathered}
\eeq

In view of~\eqref{angles},~\eqref{beta-commutations},~\eqref{pairing-1}--\eqref{pairing-3}, and~\eqref{pairing-4}, by applying \cite[Proposition 9.3.2]{S}, we can conclude that the elements $\{ n_{\alpha_i}, \, n_{\beta_j}, \, n_{-\theta \beta_j} \}_{i, j} \subset G$ pairwise satisfy braid relations.

The braid relations for the $\{ n_{\bar\alpha_i}, \, n_{\bar\beta_j} \}_{i, j}$ can now be verified by a direct computation using the definitions in~\eqref{defn-of-bar-n}, equation~\eqref{conjugation-relation}, and the braid relations for the $\{ n_{\alpha_i}, \, n_{\beta_j}, \, n_{-\theta \beta_j} \}_{i, j}$. More precisely, one can show that the pair $n_{\bar\alpha_i}, \, n_{\bar\alpha_j}$ (resp. $n_{\bar\beta_i}, \, n_{\bar\beta_j}$) satisfies the same braid relation as the pair $n_{\alpha_i}, \, n_{\alpha_j}$ (resp. $n_{\beta_i}, \, n_{\beta_j}$). In the former case of~\eqref{pairing-4}, the pair $n_{\bar\alpha_i}, \, n_{\bar\beta_j}$ satisfies the same braid relation as the pair $n_{\alpha_i}, \, n_{\beta_j}$, i.e., $n_{\bar\alpha_i} \, n_{\bar\beta_j} = n_{\bar\beta_j} \, n_{\bar\alpha_i}$. In the latter case of~\eqref{pairing-4}, the pair $n_{\bar\alpha_i}, \, n_{\bar\beta_j}$ satisfies the braid relation $n_{\bar\alpha_i} \, n_{\bar\beta_j} \, n_{\bar\alpha_i} \, n_{\bar\beta_j} = n_{\bar\beta_j} \, n_{\bar\alpha_i} \, n_{\bar\beta_j} \, n_{\bar\alpha_i}$, while the pair $n_{\alpha_i}, \, n_{\beta_j}$ satisfies the braid relation $n_{\alpha_i} \, n_{\beta_j} \, n_{\alpha_i} = n_{\beta_j} \, n_{\alpha_i} \, n_{\beta_j}$. We spell out this last case in more detail. Let us write $\alpha = \alpha_i$, $\beta = \beta_j$ for brevity, and let us choose $n_{\bar\alpha} = \check{\alpha} (\mathbf{i})$ in~\eqref{defn-of-bar-n}, the alternative being similar. By repeatedly applying~\eqref{conjugation-relation}, we find:
\beqn
n_{\bar\alpha} \, n_{\bar\beta} = \check\alpha (-\mathbf{i}) \, \check\beta (\mathbf{i}) \, \theta \check\beta (-\mathbf{i}) \, n_\alpha \, n_\beta \, n_{-\theta \beta} \, ,
\eeqn
\beqn
n_{\bar\beta} \, n_{\bar\alpha} = \check\alpha (\mathbf{i}) \, \check\beta (-1) \, \theta \check\beta (-1) \, n_\beta \, n_{-\theta \beta} \, n_\alpha \, ,
\eeqn
\beqn
n_{\bar\alpha} \, n_{\bar\beta} \, n_{\bar\alpha} \, n_{\bar\beta} = \check\beta (-\mathbf{i}) \, \theta \check\beta (\mathbf{i}) \, n_\alpha \, n_\beta \, n_{-\theta \beta} \, n_\alpha \, n_\beta \, n_{-\theta \beta} \, ,
\eeqn
\beqn
n_{\bar\beta} \, n_{\bar\alpha} \, n_{\bar\beta} \, n_{\bar\alpha} = \check\beta (-\mathbf{i}) \, \theta \check\beta (\mathbf{i}) \, n_\beta \, n_{-\theta \beta} \, n_\alpha \, n_\beta \, n_{-\theta \beta} \, n_\alpha \, .
\eeqn
The desired braid relation now follows from the braid relations:
\beqn 
n_\alpha \, n_\beta \, n_\alpha = n_\beta \, n_\alpha \, n_{\beta} \, , \;\; n_\alpha \, n_{-\theta \beta} \, n_\alpha = n_{-\theta \beta} \, n_\alpha \, n_{-\theta \beta} \, , \;\; n_\beta \, n_{-\theta \beta} = n_{-\theta \beta} \, n_\beta \, .
\eeqn
The lemma follows from equation~\eqref{the-groups-Ws-and-bar-n} and the above discussion.
\end{proof}

\subsection{Thom's \texorpdfstring{$A_{F_2}$}{Lg} condition}
\label{subsec-proof-of-asubf2}
In this subsection, we give a proof of Proposition \ref{prop-asubf2}. Fix an orbit $\cO \in \cN_\Lp / K$, and write $\cZ_\cO = \cO \times (\La^{rs} / W_\La) \times \{ 0 \} \subset \cZ_0$ for short. We recall the statement of Thom's $A_{F_2}$ condition for the pair $(\cZ^{rs}, \cZ_\cO)$. For every $z \in \cZ$, let $\bar z = F_2 (z)$ and $\cZ_{\bar z} = F_2^{-1} (\bar z)$. 

\begin{definition}\label{defn-asubf2}
We say that the pair $(\cZ^{rs}, \cZ_\cO)$ satisfies Thom's $A_{F_2}$ condition, if for every sequence
$z_1, z_2, \ldots \in \cZ^{rs}$ with $\, \lim z_i = z \in \cZ_\cO$, we have:
\beqn
\text{if there exists a limit} \;\; \Delta = \lim T_{z_i} \cZ_{\bar z_i} \subset T_z \cZ, \; \text{then} \;\; \Delta \supset T_z \cZ_\cO \, .
\eeqn
\end{definition}

The symbol $T_z \cZ$ in the above definition denotes the Zariski tangent space. It is clear that the $A_{F_2}$ condition is local in the parameter space $\La^{rs} / W_\La$.  It will be convenient to consider the pullback of diagram~\eqref{familyF} via the covering map $\La^{rs} \to \La^{rs} / W_\La$. More precisely, let:
\beqn
\tcZ = \cZ \times_{(\La^{rs} / W_\La)} \La^{rs} = \{ (x, \genpta, c) \in \Lp \times \La^{rs} \times \bC \; | \; f (x) = c \, f (\genpta) \}.
\eeqn
Let $\tF_2 : \tcZ \to \bC$ be the map $(x, \genpta, c) \mapsto c$, let $\tcZ_0 = \tF_2^{-1} (0)$, and let $\tcZ^{rs} = \tcZ - \tcZ^{0}$. By analogy with the stratification $\cS_\cZ$ of $\cZ$, we have a stratification $\cS_\tcZ$ of $\tcZ$ given by:
\beqn
\tcZ = \tcZ^{rs} \, \cup \, \bigcup_{\cO \in \cN_\Lp / K} \tcZ_\cO \, ,
\eeqn
where $\tcZ_\cO = \cO \times \La^{rs} \times \{ 0 \}$. Proposition \ref{prop-asubf2} is then equivalent to the following.

\begin{prop}\label{prop-asubtf2}
For the every $\cO \in \cN_\Lp / K$, the pair of strata $(\tcZ^{rs}, \tcZ_\cO)$ satisfies Thom's $A_{\tF_2}$ condition.
\end{prop}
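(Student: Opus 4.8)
Here is the plan for proving Proposition~\ref{prop-asubtf2}. I would verify Thom's $A_{\tF_2}$ condition (Definition~\ref{defn-asubf2}) directly, using the explicit equations defining $\tcZ$, the homogeneity of the adjoint quotient, and the $\mathfrak{sl}_2$-structure of nilpotent elements of $\Lp$. Since the condition is local on $\La^{rs}$, one may assume $G$ semisimple (passing to the derived group); then the basic invariants $f_1,\dots,f_r$ generating $\bC[\La]^{W_\La}=\bC[\La/W_\La]$ have degrees $d_1,\dots,d_r\ge 2$, and $\tcZ$ is cut out in $M=\Lp\times\La^{rs}\times\bC$ by the equations $f_j(x)=c^{d_j}f_j(a)$, $1\le j\le r$. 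By the curve selection lemma it suffices to test the condition along real-analytic arcs $z(t)=(x(t),a(t),c(t))$, $t\in(0,\varepsilon)$, with $z(t)\in\tcZ^{rs}$, $z(0)=(x_0,a_0,0)\in\tcZ_\cO$ (so $x_0\in\cO$ and $a_0\in\La^{rs}$), and for which $\Delta:=\lim_{t\to 0}T_{z(t)}\tcZ_{c(t)}$ exists. For $x\in\Lp^{rs}$ the differential $df|_x\colon\Lp\to\bC^r$ is onto, so $\tcZ$ is smooth of dimension $\dim\Lp+1$ along $\tcZ^{rs}$, the fibre $\tcZ_{c(t)}$ is smooth of dimension $\dim\Lp$ at $z(t)$, and differentiating the defining equations (the $dc$-terms drop since $d_j\ge 2$) gives
\[
T_{z(t)}\tcZ_{c(t)}=\bigl\{(\xi,\eta)\in\Lp\times\La \ \bigm|\ df_j|_{x(t)}(\xi)=c(t)^{d_j}\,df_j|_{a(t)}(\eta),\ 1\le j\le r\bigr\}.
\]

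From this description two observations reduce everything to a single estimate. First, $T_{z(t)}\tcZ_{c(t)}\supseteq\ker(df|_{x(t)})\times\{0\}=[\Lk,x(t)]\times\{0\}$, because $f^{-1}(f(x(t)))=K\cdot x(t)$ for $x(t)$ regular semisimple; since $[\zeta,x(t)]\to[\zeta,x_0]\in T_{x_0}\cO$ for every $\zeta\in\Lk$, the $(\dim\Lp-r)$-dimensional limit $K_\infty:=\lim_{t\to 0}\ker(df|_{x(t)})$ contains $T_{x_0}\cO$, hence $\Delta\supseteq K_\infty\times\{0\}\supseteq T_{x_0}\cO\times\{0\}$. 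Second, for fixed $\eta\in\La$ the vector $(\xi,\eta)$ lies in $T_{z(t)}\tcZ_{c(t)}$ iff $df|_{x(t)}(\xi)=w(t):=\bigl(c(t)^{d_j}df_j|_{a(t)}(\eta)\bigr)_j$, and since $d_j\ge 2$ we have $\|w(t)\|=O(|c(t)|^2)$; thus the minimal-norm solution satisfies $\|\xi^{\ast}(t)\|\le\|w(t)\|/\sigma_{\min}(df|_{x(t)})$, and if this tends to $0$ then $(0,\eta)\in\Delta$. Granting $\sigma_{\min}(df|_{x(t)})\gg|c(t)|^2$, we obtain $\{0\}\times\La\subseteq\Delta$; together with $\Delta\supseteq K_\infty\times\{0\}$ and $\dim\Delta=\dim\Lp$ this forces $\Delta=K_\infty\times\La\supseteq T_{x_0}\cO\times\La=T_{z(0)}\tcZ_\cO$, which is exactly Thom's condition. (Equivalently, the content is that the $\La$-direction survives in the limit: a priori $\Delta\subseteq\ker(df|_{x_0})\times\La$, which may have dimension $>\dim\Lp$, and one must rule out $\Delta\cap(\Lp\times\{0\})$ being larger than $K_\infty\times\{0\}$.)

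So the whole proof comes down to the estimate $\sigma_{\min}(df|_{x(t)})\gg|c(t)|^2$, and this is the main obstacle. It is convenient to rescale: with $\tilde x(t):=x(t)/c(t)$ one has $f_j(\tilde x(t))=f_j(a(t))\to f_j(a_0)$, and cancelling $\operatorname{diag}(c(t)^{d_j-1})$ from the system reduces the claim to $\sigma_{\min}(df|_{\tilde x(t)})\gg|c(t)|$. Writing $x(t)=t^{k}x_1+\dots$ with $x_1\neq 0$, matching of Puiseux orders in $f_j(x(t))=c(t)^{d_j}f_j(a(t))$ shows that either (a) $\operatorname{ord}_t x(t)=\operatorname{ord}_t c(t)$ and $f(x_1)$ is a nonzero scalar multiple of $f(a_0)$, so $x_1$ is regular semisimple, $\tilde x(t)$ converges to a regular semisimple point, and $\sigma_{\min}(df|_{\tilde x(t)})$ is bounded away from $0$; or (b) $\operatorname{ord}_t x(t)<\operatorname{ord}_t c(t)$ and $x_1\in\cN_\Lp$, in which case $\tilde x(t)\to\infty$ in the direction $x_1/\|x_1\|\in\cN_\Lp$, and after factoring out $\operatorname{diag}(\|\tilde x(t)\|^{d_j-1})$ by the homogeneity relation $df_j|_{\lambda y}=\lambda^{d_j-1}df_j|_y$, the estimate reduces to one for $\sigma_{\min}(df|_{\hat x(t)})$ with $\hat x(t)\to x_1/\|x_1\|$; this is immediate if $x_1$ is a regular nilpotent (a regular point of $f$), and otherwise one passes to a Slodowy-type transverse slice through $x_1$ associated to an $\mathfrak{sl}_2$-triple $(e,h,f)$ with $e=x_1$, $h\in\Lk$, $f\in\Lp$ (Kostant--Rallis~\cite{KR}), conjugating by bounded elements, and iterates.

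The delicate part is the {\L}ojasiewicz-exponent bookkeeping along this iterated blow-up: one must check that, even when successive leading terms are non-regular nilpotents, $\sigma_{\min}(df|_{x(t)})$ still dominates $|c(t)|^2$ — equivalently, that $\hat x(t)$ cannot approach the non-regular locus $\Sigma=\Lp\setminus\Lp^{reg}$ too fast relative to the rate at which $c(t)\to 0$. Here the constraint $f_j(\hat x(t))=(c(t)/\|x(t)\|)^{d_j}f_j(a(t))$, together with a {\L}ojasiewicz inequality for the (real-algebraic) Gram determinant $\prod_j\sigma_j(df|_{\bullet})^2$, whose zero locus is $\Sigma$, should close the estimate. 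The rank-one pair $(SL(2),SO(2))$ of Section~\ref{sec-example}, where $\sigma_{\min}(df|_x)=2\|x\|\gtrsim|f(x)|^{1/2}$, already exhibits the mechanism. Everything else — the reductions, the tangent-space computation, the orbit direction, and the final dimension count — is routine.
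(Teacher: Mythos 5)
Your structural framework is sound: restating the condition in terms of the explicit polynomial system $f_j(x)=c^{d_j}f_j(a)$, reducing by curve selection to real-analytic arcs, computing $T_{z(t)}\tcZ_{c(t)}$, observing $\Delta\supseteq K_\infty\times\{0\}\supseteq T_{x_0}\cO\times\{0\}$ from $K$-equivariance, and then the dimension count that would force $\Delta=K_\infty\times\La$ once $\{0\}\times\La\subseteq\Delta$ — all of that is correct, and isolates the content in a single estimate, namely that the minimal-norm solution $\xi^{\ast}(t)$ of $df|_{x(t)}(\xi)=w(t)$ tends to $0$. But that estimate is a genuine gap, and you flag it yourself: the iterated Slodowy-slice blow-up together with ``{\L}ojasiewicz-exponent bookkeeping'' is not a reduction to a known lemma, it \emph{is} the hard part. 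You never establish that the rate at which $\tilde x(t)$ (or its iterated rescalings) can approach the non-regular locus $\Sigma$ is controlled relative to $|c(t)|$; you only say that a {\L}ojasiewicz inequality for the Gram determinant of $df$ ``should close the estimate.'' That is an assertion, not a proof. The iteration also needs a termination argument and uniformity of the exponents across the blow-ups, neither of which is supplied, and the passage between slices via ``conjugating by bounded elements'' is not justified.

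The paper takes a completely different route that avoids all of this. It restates the condition metrically (Proposition~\ref{prop-asubtf2-metric}), decomposes a tangent vector $v=(v_y,v_a)$ into the orbit direction and the $\La$ direction, disposes of the orbit direction by $K$-equivariance (as you do), and for the $\La$ direction constructs an explicit approximating vector $v_1=(\eta(c_1 v_a),v_a,0)\in T_{z_1}\tcZ_{\bar z_1}$, where $\eta$ is the horizontal lift $((d_{x_1}f)|_{N_{x_1}})^{-1}\circ d_{a_2}f:T_{a_2}\La\to N_{x_1}$. The whole proof then collapses to the single clean operator-norm bound $\lVert\eta\rVert\le 1$, which follows from the Kempf--Ness theorem (Lemma~\ref{lemma-shortest}: every $h\in\nu_\Lp(\La)$ is the shortest point of its $K$-orbit), since $\eta^{\ast}$ is realized by the adjoint of a group element moving a Cartan point off the Cartan. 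This is a global, non-asymptotic bound requiring no arc analysis, no Puiseux expansions, and no {\L}ojasiewicz inequalities. If you want to salvage your approach you would need to actually prove the exponent estimate — but given that the Kempf--Ness bound $\lVert\eta\rVert\le 1$ already gives what you want (it is essentially the statement that the relevant norm ratio is bounded by $|c_1|$, hence tends to $0$), the iterated blow-up is both unproved and unnecessary.
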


The statement of the $A_{\tF_2}$ condition is an obvious paraphrase of Definition \ref{defn-asubf2}. In order to prove Proposition \ref{prop-asubtf2}, we restate it in metric terms. Let $\tM = \Lp \times \La^{rs} \times \bC$. Recall the Hermitian metric $\langle \; , \, \rangle$ on $\Lp$ introduced in Section~\ref{subsec-structure-of-Wa}. By restriction, it gives a Hermitian metric on $\La^{rs}$. We combine these Hermitian metrics on $\Lp$ and $\La^{rs}$ with the standard Hermitian metric on $\bC$, to obtain a flat Hermitian product metric on $\tM$. We will use $\langle \; , \, \rangle$ to denote this product metric, or one of its three constituent components, depending on the context. All norms and angles in this subsection will be taken with respect to $\langle \; , \, \rangle$. By analogy with the map $F_2 : \cZ \to \bC$, for every $z \in \tcZ$, we let $\bar z = \tF_2 (z)$ and $\tcZ_{\bar z} = \tF_2^{-1} (\bar z)$. Fix a point $z = (y, a, 0) \in \tcZ_\cO$, where $y \in \cO$ and $a \in \La^{rs}$. The following is a restatement of Proposition~\ref{prop-asubtf2}.

\begin{prop}\label{prop-asubtf2-metric}
For every non-zero vector $v \in T_z \tcZ_\cO \subset T_z \tM$ and every $\epsilon > 0$, there exists an open neighborhood $U \subset \tM$ of $z$, such that for every $z_1 \in U \cap \tcZ^{rs}$, there exists a non-zero vector $v_1 \in T_{z_1} \tcZ_{\bar z_1} \subset T_{z_1} \tM$, such that we have: $\angle (v, v_1) < \epsilon$.
\end{prop}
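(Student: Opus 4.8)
The plan is to prove Proposition~\ref{prop-asubtf2-metric} directly, by producing the vector $v_1$ from $v$. Fix $z = (y, a, 0) \in \tcZ_\cO$ and a nonzero $v \in T_z \tcZ_\cO$. Since $\tcZ_\cO = \cO \times \La^{rs} \times \{0\}$ is a product and $T_a \La^{rs} = \La$, write $v = (v', w, 0)$ with $v' \in T_y \cO = [\Lk, y]$ and $w \in \La$, and fix $\xi \in \Lk$ with $[\xi, y] = v'$. For $z_1 = (x_1, a_1, c_1) \in \tcZ^{rs}$ one has $c_1 \neq 0$, hence $f(x_1) = c_1 \cdot f(a_1) \in \La^{rs} / W_\La$; in particular $x_1 \in \Lp^{rs}$, so $f$ is submersive at $x_1$, $\ker df_{x_1} = [\Lk, x_1]$, and
\[
T_{z_1} \tcZ_{\bar z_1} = \{ (\dot x, \dot a, 0) \;:\; df_{x_1}(\dot x) = c_1 \cdot df_{a_1}(\dot a) \} .
\]
I would take $v_1 = ([\xi, x_1] + \dot x_1^\flat,\, w,\, 0)$, where $\dot x_1^\flat \in \Lp$ is a solution of $df_{x_1}(\dot x_1^\flat) = c_1 \cdot df_{a_1}(w)$; then $v_1 \in T_{z_1}\tcZ_{\bar z_1}$, since $df_{x_1}([\xi, x_1]) = 0$ by the $\on{Ad}(K)$-invariance of $f$. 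As $[\xi, x_1] \to [\xi, y] = v'$ when $z_1 \to z$, it suffices to arrange $\|\dot x_1^\flat\| \to 0$ as $z_1 \to z$ in $\tcZ^{rs}$: then $v_1 \to v$ in $T_z \tM$ and $\angle(v, v_1) < \epsilon$ on a sufficiently small neighbourhood $U$. Thus the proposition is reduced to the statement
\[
\on{dist}\!\big(0,\; \{ \dot x \in \Lp : df_{x_1}(\dot x) = c_1 \cdot df_{a_1}(w) \}\big) \;\longrightarrow\; 0 \qquad \text{as } z_1 \to z \text{ in } \tcZ^{rs} .
\]

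The orbit directions are thereby handled for free, and the content lies in this limit — that is, in the flat direction $w \in \La$. To analyse it, set $\tilde x_1 = c_1^{-1} x_1$. Using that each component of $f$ is homogeneous, $f(\tilde x_1) = f(a_1)$, so $\tilde x_1$ lies on the regular semisimple orbit $K \cdot a_1 = f^{-1}(f(a_1))$; choose $k_1 \in K$ with $\on{Ad}(k_1) a_1 = \tilde x_1$. Combining the homogeneity and the $\on{Ad}(K)$-invariance of $f$, one checks that $\dot x_1^\flat := c_1 \on{Ad}(k_1) w$ is a solution; since every solution differs from it by an element of $\ker df_{x_1} = c_1 \on{Ad}(k_1) [\Lk, a_1]$, the distance above equals
\[
|c_1| \cdot \on{dist}\!\big(\on{Ad}(k_1) w,\; [\Lk, \tilde x_1]\big) \;=\; \frac{\|x_1\|}{\|\tilde x_1\|} \cdot \on{dist}\!\big(\on{Ad}(k_1) w,\; [\Lk, \tilde x_1]\big) .
\]
If $y = 0$ and $\tilde x_1$ remains bounded, then $\on{Ad}(k_1)$ stays in a compact family and the right side is $O(|c_1|) \to 0$. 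The remaining case — forced once $y \neq 0$, since then $\|x_1\| \to \|y\| \neq 0$ while $c_1 \to 0$ — is $\|\tilde x_1\| \to \infty$, and this is the heart of the matter. Here I would use that, along any subsequence, $\tilde x_1 / \|\tilde x_1\| = (|c_1|/c_1)\,(x_1 / \|x_1\|)$ converges to a \emph{nilpotent} unit vector $u \in \cN_\Lp$: for $y \neq 0$ this is clear, and for $y = 0$ it follows from $f_j(x_1 / \|x_1\|) = \|\tilde x_1\|^{-d_j}\,(c_1 / |c_1|)^{d_j}\, f_j(a_1) \to 0$. Consequently $\|\tilde x_1\|^{-1} [\Lk, \tilde x_1]$ converges (along that subsequence) to a subspace $L$ with $L \supseteq [\Lk, u]$; and since $u$ is nilpotent it admits a normal $\Ls\Ll(2)$-triple $(u, h, u')$ with $h \in \Lk$ (Kostant--Rallis, \cite{KR}), whence $u = [\tfrac12 h, u] \in [\Lk, u] \subseteq L$. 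Therefore $\on{dist}(\tilde x_1 / \|\tilde x_1\|,\; \|\tilde x_1\|^{-1}[\Lk, \tilde x_1]) \to 0$, i.e. $\on{dist}(\tilde x_1,\, [\Lk, \tilde x_1]) = o(\|\tilde x_1\|)$; this already disposes of the component of $w$ parallel to $a$.

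For a general $w \in \La$ one still needs $\on{dist}(\on{Ad}(k_1) w,\, [\Lk, \tilde x_1]) = o(\|\tilde x_1\|)$, which then gives the reduced statement because $\|x_1\|$ is bounded. The plan here is to insert a Cartan decomposition $k_1 = p_1 \exp(H_1) q_1$ with $p_1, q_1$ in the compact form $K_\bR$ and $H_1$ in a fixed maximal abelian subspace of $\mathbf{i}\,\Lk_\bR$, and to track the $\on{ad}(H_1)$-weight decomposition of $\Lp$: since $a_1$ is regular semisimple, $\on{Ad}(q_1) a_1$ typically has nonzero component in the highest $\on{ad}(H_1)$-weight space occurring in $\on{Ad}(q_1)\La$, so that highest weight governs both $\|\tilde x_1\|$ and $\|\on{Ad}(k_1) w\|$; when it does not, one cancels the offending top-weight part of $\on{Ad}(q_1) w$ against $[\Lk, a_1]$, using $u \in [\Lk, u]$ to see the cancellation survives the rescaling. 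In either case $\on{Ad}(k_1) w$ becomes asymptotically tangent to the orbit through $\tilde x_1$. The main obstacle is precisely this uniform asymptotic estimate — bounding $\on{dist}(\on{Ad}(k_1) w, [\Lk, \tilde x_1])$ against $\|\tilde x_1\|$ as $\|\tilde x_1\| \to \infty$ — which requires careful bookkeeping of the $\on{ad}(H_1)$-weight spaces appearing in $\La$, in $[\Lk, a_1]$, and in its complement $Z_\Lp(\tilde x_1)$, together with the regularity of $a \in \La^{rs}$. Granting it, reassembling $v_1$ and verifying $\angle(v, v_1) < \epsilon$ on a small neighbourhood $U$ is routine, and passing back from Proposition~\ref{prop-asubtf2-metric} to Propositions~\ref{prop-asubtf2} and~\ref{prop-asubf2} is immediate from the definitions.
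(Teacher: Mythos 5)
Your overall strategy is sound and matches the paper's at the structural level: write $v = (v', w, 0)$ with $v' \in T_y\cO$ and $w \in \La$, dispose of the orbit direction by $K$-equivariance, and reduce the $\La$-direction to showing that the normal (to $[\Lk, x_1]$) component of a lift of $c_1 w$ tends to zero. Your computation that this normal component has norm
\[
|c_1|\cdot \on{dist}\bigl(\on{Ad}(k_1)w,\,[\Lk,\tilde x_1]\bigr)
= \frac{\|x_1\|}{\|\tilde x_1\|}\cdot \on{dist}\bigl(\on{Ad}(k_1)w,\,[\Lk,\tilde x_1]\bigr)
\]
is correct, and the observation that the difficulty is concentrated in the regime $\|\tilde x_1\| \to \infty$ is exactly right.

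The gap is that you do not prove the estimate, and your sketch for it has problems. The paper closes this gap with a single clean input: Kempf--Ness (Lemma~\ref{lemma-shortest}), which says that every $h \in \nu_\Lp(\La)$ is the shortest point of its $K$-orbit, i.e.\ $\|\on{Ad}(k)w\| \geq \|w\|$ for all $w \in \La$, $k \in K$. Dualized to the conormal bundle of $K\cdot a_2$, this gives the uniform operator bound $\|\eta\| \leq 1$ for the map $\eta = ((d_{x_1}f)|_{N_{x_1}})^{-1}\circ d_{a_2}f : \La \to N_{x_1}$, and with it $\|v_{x_1}\| \leq |c_1|\,\|w\| \to 0$, with no asymptotic analysis needed. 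Your proposed substitute -- a Cartan decomposition of $k_1$ and a weight-space bookkeeping argument as $\|\tilde x_1\|\to\infty$ -- is precisely what you label the ``main obstacle'' and leave unresolved. Moreover, even the sub-case you claim to have disposed of (``$w$ parallel to $a$'') is not actually settled: your argument compares $\on{Ad}(k_1)w$ with $\tilde x_1 = \on{Ad}(k_1)a_1$, but $a_1 \neq a$, and since $\on{Ad}(k_1)$ is unbounded you have no control on $\|\on{Ad}(k_1)(a - a_1)\|$ relative to $\|\tilde x_1\|$; the convergence $\on{dist}(\tilde x_1, [\Lk,\tilde x_1]) = o(\|\tilde x_1\|)$ does not by itself give $\on{dist}(\on{Ad}(k_1)a, [\Lk,\tilde x_1]) = o(\|\tilde x_1\|)$. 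The missing idea, in short, is the Kempf--Ness shortest-vector property applied through the conormal directions; without it (or an equivalent), the required uniform bound is not established.
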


Note that the angle $\angle (v, v_1)$ is well defined, because the metric $\langle \; , \, \rangle$ on $\tM$ is flat. Proposition~\ref{prop-asubtf2} follows immediately from Proposition~\ref{prop-asubtf2-metric}, by inspecting the definition of the $A_{\tF_2}$ condition. We now proceed to prove Proposition~\ref{prop-asubtf2-metric}. The Hermitian metric $\langle \; , \, \rangle$ on $\Lp$ induces a Hermitian metric on the dual space $\Lp^*$. Moreover, if we view the restriction $\nu_\Lp = \nu_\Lg |_{\Lp}$ as a vector space isomorphism $\nu_\Lp : \Lp \to \Lp^*$, then this isomorphism is an isometry. We will need the following fact.

\begin{lemma}\label{lemma-shortest}
Every $h \in \nu_\Lp (\La) \subset \Lp^*$ is the shortest point of its $K$-orbit.
\end{lemma}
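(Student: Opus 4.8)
The plan is first to transport the statement from $\Lp^*$ to $\Lp$. Since $\nu_\Lg$ is $G$-invariant, the isometry $\nu_\Lp : \Lp \to \Lp^*$ is $K$-equivariant, so the $K$-orbit of $h = \nu_\Lp(a)$ corresponds to the $K$-orbit of $a \in \La$. Thus it suffices to prove that \emph{for every $a \in \La$ and every $k \in K$ one has $\| k \cdot a \| \geq \| a \|$}, the norm being the one attached to the Hermitian form $\langle\;,\,\rangle$ on $\Lp$; we may assume $a \neq 0$. I would then reduce to a one-parameter subgroup: by the global Cartan decomposition of the reductive group $K$ with maximal compact subgroup $K_\bR$, every $k \in K$ can be written $k = k_0 \, \exp(\mathbf{i} X)$ with $k_0 \in K_\bR$ and $X \in \Lk_\bR$. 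Since $K_\bR$ preserves $\Lp_\bR$ and $\nu_\Lg|_{\Lp_\bR}$, it acts unitarily on $(\Lp, \langle\;,\,\rangle)$, so $\| k \cdot a \| = \| \exp(\mathbf{i} X) \cdot a \|$; hence it is enough to bound $\| \exp(\mathbf{i} X) \cdot a\|$ below by $\|a\|$ for each $X \in \Lk_\bR$.

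The key point is that $\on{ad}(\mathbf{i}X)$ acts as a self-adjoint operator on $(\Lp, \langle\;,\,\rangle)$. Indeed, invariance of $\nu_\Lg$ gives $\nu_\Lg([X, v], w) = -\nu_\Lg(v, [X, w])$; and since $X \in \Lk_\bR \subset \Lg_\bR$ is fixed by the complex conjugation $\sigma$ of $\Lg$ with respect to $\Lg_\bR$ — which restricts to conjugation of $\Lp$ with respect to $\Lp_\bR$, as $\sigma$ commutes with $\theta$ — we get $[X, \bar v] = \overline{[X, v]}$. Combining these yields $\langle [X, v], w\rangle = -\langle v, [X, w]\rangle$, i.e.\ $\on{ad}(X)$ is skew-Hermitian, so $H \coloneqq \on{ad}(\mathbf{i}X)$ is self-adjoint. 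Diagonalizing $H$ with real eigenvalues and decomposing $a = \sum_\lambda a_\lambda$ in the corresponding orthogonal eigenspace decomposition, I set
\[
f(t) = \| \exp(\mathbf{i}tX) \cdot a \|^2 = \| e^{tH} a \|^2 = \sum_\lambda c_\lambda \, e^{2\lambda t}, \qquad c_\lambda = \|a_\lambda\|^2 \geq 0,
\]
a finite sum of exponentials with non-negative coefficients.

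It remains to observe that $a$ is a critical point and to conclude by convexity. We have $f'(0) = 2 \sum_\lambda \lambda c_\lambda = 2 \langle H a, a \rangle = 2 \mathbf{i}\, \nu_\Lg([X, a], a) = 0$, the last equality again by invariance of $\nu_\Lg$ (using $\bar a = a$, as $a \in \La \subset \Lp_\bR$). Since $a \neq 0$, not all $c_\lambda$ vanish, and the weighted AM--GM inequality with weights $c_\lambda / \sum_\mu c_\mu$ gives, for all $t$,
\[
f(t) = \Big( \sum_\mu c_\mu \Big) \sum_\lambda \frac{c_\lambda}{\sum_\mu c_\mu} \, e^{2\lambda t} \;\geq\; \Big( \sum_\mu c_\mu \Big) \exp\!\Big( \tfrac{2t}{\sum_\mu c_\mu}\sum_\lambda \lambda c_\lambda \Big) = \sum_\mu c_\mu = f(0).
\]
(Equivalently, $\log f$ is convex by Cauchy--Schwarz, $f'' f \geq (f')^2$, with a critical point at $t=0$.) Taking $t = 1$ gives $\| \exp(\mathbf{i}X) \cdot a \|^2 \geq \|a\|^2$, which completes the argument.

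I do not anticipate a genuine obstacle; the only points needing a little care are the validity of the Cartan decomposition for the possibly disconnected group $K$ and the verification that $\on{ad}(\mathbf{i}X)$ is self-adjoint, both routine. As an alternative avoiding the explicit estimate, one could note that $K \cdot a$ is closed (since $a$ is semisimple) and that $a$ is a critical point of $\|\cdot\|^2$ on it, and then invoke the Kempf--Ness theorem; but the elementary argument above seems cleaner and more self-contained.
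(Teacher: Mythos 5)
Your proof is correct, and it is essentially self-contained where the paper is not: the paper disposes of Lemma~\ref{lemma-shortest} in one line, by citing Kempf--Ness, \cite[Theorem 0.1]{KN} --- precisely the alternative you mention in your closing sentence. Your argument amounts to re-proving the relevant case of that theorem directly. Concretely, you reduce (via the polar decomposition $K = K_\bR\exp(\mathbf{i}\,\Lk_\bR)$, which is legitimate for disconnected $K$ here since $K/K^0 \cong K_\bR/K_\bR^0$, as noted in Section~\ref{subsec-char}) to a one-parameter subgroup, observe that $\on{ad}(\mathbf{i}X)$ is self-adjoint for the Hermitian form $\langle\,,\rangle$, and conclude by log-convexity of $t \mapsto \|\exp(t\,\on{ad}(\mathbf{i}X))\,a\|^2$ together with the vanishing $\nu_\Lg([X,a],a)=0$ at $t=0$, which is forced by the $G$-invariance and symmetry of $\nu_\Lg$. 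This is exactly the convexity mechanism underlying Kempf--Ness, so the two routes are mathematically the same argument in different packaging: the citation is shorter and defers to a standard reference, while your version is self-contained, makes the convexity and the critical-point computation explicit, and incidentally needs no closedness of the orbit (which is only required for the converse direction of Kempf--Ness). The details --- transport along the isometry $\nu_\Lp$, skew-Hermiticity of $\on{ad}(X)$ for $X\in\Lk_\bR$, the weighted AM--GM step --- all check out.
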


\begin{proof}
This follows from \cite[Theorem 0.1]{KN}.
\end{proof}

\begin{proof}[Proof of Proposition~\ref{prop-asubtf2-metric}]
Note that we have $T_z \tcZ_\cO \cong T_y \cO \oplus T_a \La^{rs}$. Therefore, we can write $v = (v_y, v_a)$, where $v_y \in T_y \cO$ and $v_a \in T_a \La^{rs}$. By linearity, it suffices to consider the cases $v = (v_y, 0)$ and $v = (0, v_a)$. The first of these two cases follows immediately from the fact that $T_y \cO \subset T_z \tcZ_\cO$ is the tangent space to the $K$-orbit $K \cdot z \subset \tcZ$, and the $K$-action on $\tcZ$ preserves the function $\tF_2 : \tcZ \to \bC$.

Let us now tackle the case $v = (0, v_a)$. In this case, we take:
\beqn
U = U [v, \epsilon] = \{ z_1 \in \tM \; | \; |\tF_2 (z_1)| < \epsilon \}.
\eeqn
Pick a point $z_1= (x_1, a_1, c_1) \in U \cap \tcZ^{rs}$, where $x_1 \in \Lp^{rs}$, $a_1 \in \La^{rs}$, and $c_1 = \tF_2 (z_1) \in \bC$.
We let $v_1 = (v_{x_1}, v_a, 0) \in T_{z_1} \tcZ_{\bar z_1}$, where we view $v_a \in T_a \La^{rs} = T_a \La$ as an element of $T_{a_1} \La^{rs}$ and $v_{x_1} \in T_{x_1} \Lp$ remains to be constructed.  

Note that, by construction, we have $\bar x_1 = f (x_1) = c_1 \, f (a_1)$. Let:
\beqn
X_{\bar x_1} = f^{-1} (\bar x_1) \subset \Lp \;\; \text{and} \;\; a_2 = c_1 \, a_1 \in X_{\bar x_1} \cap \La.
\eeqn
Note that we have:
\beqn
T_{a_2} \La = (T_{a_2} X_{\bar x_1})^\p \subset T_{a_2} \Lp,
\eeqn
and define:
\beqn
N_{x_1} \coloneqq (T_{x_1} X_{\bar x_1})^\p \subset T_{x_1} \Lp,
\eeqn
where the orthogonal complements are taken with respect to $\langle \; , \, \rangle$. Define an isomorphism:
\beqn
\eta = ((d_{x_1} f) |_{N_{x_1}})^{-1} \, \circ \, d_{a_2} f \, : \, T_{a_2} \La \to N_{x_1} \, .
\eeqn

We set $v_{x_1} = \eta (c_1 \, v_a)$ and $v_1 = (v_{x_1}, v_a, 0) \in T_{z_1} \cZ_{\bar z_1}$. We then have:
\beqn
\angle (v, v_1) < \tan (\angle (v, v_1)) = \frac{\left\lVert v_{x_1} \right\rVert}{\left\lVert v_a \right\rVert} = | c_1 | \cdot \frac{\left\lVert \eta (v_a) \right\rVert}{\left\lVert v_a \right\rVert} \, .
\eeqn
By the choice of the open set $U$, we have $| c_1 | < \epsilon$. Therefore, to show that $\angle (v, v_1) < \epsilon$ it will suffice to show that the operator norm $\left\lVert \eta \right\rVert$, computed using $\langle \; , \, \rangle$, satisfies:
\beq\label{normeta}
\left\lVert \eta \right\rVert \leq 1.
\eeq

To see this, consider the conormal bundle $\Lambda = T^*_{X_{\bar x_1}} \Lp \subset T^* \Lp$. Let $\Lambda_{a_2}$ and $\Lambda_{x_1}$ be the fibers of $\Lambda$ over $a_2$ and $x_1$, respectively. We view $\Lambda_{a_2}, \Lambda_{x_1}$ as linear subspaces of $\Lp^*$. By the $G$-invariance of $\nu_\Lg$, in the notation of Lemma~\ref{lemma-shortest}, we then have $\Lambda_{a_2} = \nu_\Lp (\La) \subset \Lp^*$. Further, we have isometries:
\beq\label{isometries}
(T_{a_2} \La)^* \cong \Lambda_{a_2} \;\; \mbox{and} \;\; (N_{x_1})^* \cong \Lambda_{x_1} \, .
\eeq
Let $g \in K$ be an element such that $g \, a_2 = x_1$. Let $g^* : \Lp^* \to \Lp^*$ be the vector space adjoint of the action of $g$ on $\Lp$. We have $g^* (\Lambda_{x_1}) = \Lambda_{a_2}$. Moreover, in terms of the isometries~\eqref{isometries}, we have $g^* |_{\Lambda_{x_1}} = \eta^*$. By Lemma~\ref{lemma-shortest}, we have $\left\lVert g^* |_{\Lambda_{x_1}} \right\rVert < 1$. Inequality~\eqref{normeta} follows.
\end{proof}

This completes our proof of Proposition~\ref{prop-asubf2}.

\section{Proof of Theorem \ref{thm-chi-eq-one}}
\label{sec-proof-of-thm-chi-eq-one}

Our Theorem \ref{thm-chi-eq-one} is very close to, and can be readily derived from, \cite[Theorem 6.1]{G2} (see also \cite[Section 4]{G4}). Indeed, there are only three material distinctions between these two theorems. Two of them were mentioned at the start of Section~\ref{sec-statement}. Namely, in the present paper, we work $K$-equivariantly with a possibly disconnected group $K$, and we use $\baseRing$ as the coefficient ring, while \cite{G2} uses $\bC$. And third, in Theorem \ref{thm-chi-eq-one}, microlocal monodromy acts on the algebra $\cH_1$ via left multiplications, and monodromy in the family acts via right multiplications, while this convention is reversed in \cite[Theorem 6.1]{G2}. The second distinction is not significant, as the Morse groups at the origin we will study are free over $\baseRing$. The third distinction is purely notational, reflecting the fact that, in the present paper, microlocal monodromy is the primary object of study.

In this section, we will give a proof of Theorem \ref{thm-chi-eq-one}, relying on the general geometric results of \cite{G2} and \cite{G1} on the Fourier transform of the nearby cycles, but not on the main results of \cite{G2} in \cite[Section 3]{G2} or the specific discussion of symmetric spaces in \cite[Section 6]{G2}. There are two reasons for doing this. First, our proof of Theorem \ref{thm-chi-eq-one} will prepare the ground, and introduce most of the necessary notions, for the proof of Theorem \ref{thm-main} in Section~\ref{sec-proof}. Second, the paper \cite{G2}, whose setting is the class of polar representations of Dadok and Kac (see \cite{DK}), contains a gap in the proofs of \cite[Lemmas 4.2, 4.3]{G2}. This gap has been fixed, without affecting the main results of \cite{G2}, in the recent document \cite{G4}. The fix required a substantial additional argument; see \cite[Section 3]{G4}. However, this issue is easy to deal with in the case of symmetric spaces. The substantial additional argument is only needed for general polar representations, where complex reflection groups which are not Coxeter groups arise. Thus, in this section, we will use the results of \cite[Sections 1-2]{G2} and \cite{G1}, but not the parts of \cite{G2} which depend on the additional argument of \cite[Section 3]{G4}.

\subsection{Fourier transform of the nearby cycles}
\label{subsec-fourier}
Let $\cS$ be the stratification of the nilcone $\cN_\Lp$ by $K$-orbits. For each $y \in \Lp$, let $h_y : \Lp \to \bC$ be the linear function given by $h_y : * \mapsto \nu_\Lg (*, y)$, where $\nu_\Lg$ is the symmetric bilinear form introduced in Section~\ref{subsec-structure-of-Wa}. Also, write:
\beqn
\xi_y = \on{Re} (h_y) : \Lp \to \bR \;\;\;\; \text{and} \;\;\;\; \zeta_y = \on{Im} (h_y) : \Lp \to \bR.
\eeqn

\begin{lemma}\label{lemma-gencov}
For every $y \in \Lp^{rs}$, the pair $(0, h_y) \in T^* \Lp = \Lp \times \Lp^*$ is a generic covector for the stratification $\cS$, in the sense of stratified Morse theory. 
\end{lemma}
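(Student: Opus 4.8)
The statement to prove is that for $y \in \Lp^{rs}$, the covector $(0, h_y) \in T^*\Lp$ is generic for the orbit stratification $\cS$ of $\cN_\Lp$. By definition, this means two things: first, that $(0,h_y)$ does not lie in the closure of the conormal variety $\Lambda_\cS = \bigcup_{\cO} T^*_\cO \Lp$ restricted to the non-open strata — more precisely, that $h_y$ restricted to $T_0 \cO$ is non-zero for every orbit $\cO \neq \{0\}$ through $0$, so that $(0,h_y)$ is not a degenerate covector at $0$ for any stratum; and second, that the covector is non-characteristic in the appropriate transversality sense, i.e. for each stratum $\cO$ with $0 \in \overline\cO$, the hyperplane $\ker h_y \subset \Lp$ is transverse to $\cO$ near $0$ in the stratified sense. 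The key point is that the origin $0 \in \cN_\Lp$ is the unique closed stratum, and all other strata have $0$ in their closure, so the only covector-base point we need to worry about is $0$ itself, and genericity there amounts to: $h_y|_{T_0 \overline\cO} \neq 0$ for every $K$-orbit $\cO \subset \cN_\Lp \setminus \{0\}$.

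The plan is to reduce this to a statement about conormals and then invoke the Fourier-transform/Sekiguchi-type description of $T^*_\cO \Lp$. First I would recall that under the identification $\Lp^* \cong \Lp$ via $\nu_\Lg$, the condition ``$(0, h_y)$ is generic for $\cS$'' translates to: $y$ does not lie in the image under $\nu_\Lp$ of $\bigcup_{\cO \neq \{0\}} (T^*_\cO \Lp)_0$, the union of conormal fibers over $0$; and moreover $y$ avoids a certain proper closed subset coming from the non-open strata. The conormal fiber $(T^*_\cO \Lp)_0$ at the origin is the annihilator of $T_0(\text{minimal orbit containing } 0) = $ well, more usefully: by general principles of Morse theory for the nilpotent cone, $(0,\xi)$ fails to be generic precisely when $\xi$ is orthogonal to some orbit tangent space, equivalently when $\xi \in \overline{K \cdot e}$ for some nilpotent $e \neq 0$ — the point being that the conormal variety to the orbit stratification of $\cN_\Lp$, pushed to $\Lp^*$, lands inside $\cN_{\Lp^*} \cong \cN_\Lp$ (the nilpotent cone is Lagrangian-ish / coisotropic). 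So the bad set of covectors is contained in $\nu_\Lp^{-1}(\cN_\Lp)$, and since $y \in \Lp^{rs}$ is regular semisimple, $\nu_\Lp(y)$ is not nilpotent, hence $y$ is automatically good. I expect this is essentially the content: I would make precise that $\bigcup_\cO \overline{T^*_\cO \Lp} \subseteq \Lp \times \cN_\Lp$ (conormals to the orbit stratification of the nilcone consist of pairs $(x,\xi)$ with $\xi$ nilpotent), cite or prove this via the moment-map/gradient description, and then conclude that $(0, h_y)$ with $h_y \leftrightarrow y \in \Lp^{rs}$ avoids this set.

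The main obstacle, and where I would spend the most care, is verifying the non-characteristic (transversality) half of the definition rather than just the ``$h_y$ doesn't annihilate $T_0\cO$'' half — i.e. showing that for strata $\cO$ with $0 \in \overline\cO$ but considered at nearby points $x \in \cO$, the covector $h_y$ is still non-characteristic. Here the homogeneity of everything under $\bC^*$-scaling should help: since $\cN_\Lp$ and its orbit stratification are $\bC^*$-conic and the covector $(0, h_y)$ is conic in the base, genericity at $0$ propagates, and one reduces to checking that $h_y$ restricted to each $T_x\cO$ is generic, which again is governed by $\overline{T^*_\cO\Lp} \subset \Lp \times \cN_\Lp$. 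The cleanest route is probably: invoke the result (used already in the paper's discussion around $\fF$, and standard from \cite{G1}) that $SS$ of any $K$-equivariant sheaf on $\cN_\Lp$ lies in $\Lambda_\cS \subset \Lp \times \cN_\Lp$, so that a covector $(0,\xi)$ with $\xi \notin \cN_\Lp$ is automatically non-characteristic for $\cS$ at every point of $\cN_\Lp$; combined with $\nu_\Lp(\Lp^{rs}) \cap \cN_\Lp = \emptyset$, this gives the lemma. I would also remark that the ``shortest vector'' Lemma~\ref{lemma-shortest} is exactly the tool that makes the moment-map argument for $\Lambda_\cS \subset \Lp\times\cN_\Lp$ work, so the two lemmas are of a piece.
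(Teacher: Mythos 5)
There is a genuine gap, and it lies in the step you single out as the key reduction: the claim that $\bigcup_{\cO} \overline{T^*_\cO\Lp}$ is contained in $\Lp \times \cN_\Lp$ under the identification $\Lp^* \cong \Lp$. This is false. Identifying covectors with elements of $\Lp$ via $\nu_\Lg$, the conormal fiber of $T^*_\cO\Lp$ at $x \in \cO$ is the centralizer $\{z \in \Lp : [x,z]=0\}$, and for a non-regular nilpotent $x$ this centralizer typically contains non-nilpotent elements. For example, in the group case $(G,K) = (H\times H, H_{\mathrm{diag}})$ with $H = \SL(3)$, so that $\Lp \cong \Ls\Ll(3)$ and $\cN_\Lp$ is the usual nilpotent cone, take $x = E_{12}$ in the subregular orbit $\cO$: the semisimple element $y = \mathrm{diag}(a,a,-2a)$, $a \neq 0$, commutes with $x$. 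Rescaling $x_n = t_n x \to 0$ along $\cO$ with $y_n = y$ fixed exhibits $(0, h_y) \in \overline{T^*_\cO\Lp}$ with $y$ not nilpotent. Hence the degenerate covectors over the origin are not confined to the nilcone, and the claimed inclusion of the singular support into $\Lp \times \cN_\Lp$ fails as well. (This $y$ is of course not regular semisimple, so the example is consistent with the lemma; it just shows the proposed reduction discards precisely the hypothesis one needs to use.)

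What actually does the job, and what the paper uses, is a pointwise statement about regular semisimple elements rather than a containment of the conormal variety: for $y \in \Lp^{rs}$, the only element of $\cN_\Lp$ commuting with $y$ is $0$. Your identification $T^*_\cO\Lp = \{(x, h_y) : x \in \cO,\ [x,y]=0\}$ is correct and is the same starting point as the paper. The finish is then: if $(0, h_y) \in \overline{T^*_\cO\Lp}$ for some $\cO \neq \{0\}$, take $(x_n, h_{y_n}) \to (0, h_y)$ with $x_n \in \cO$ and $[x_n, y_n]=0$. Since the bracket condition is homogeneous in $x_n$ and $\cO$ is $\bC^*$-invariant, we have $x_n/|x_n| \in \cO$ and $[x_n/|x_n|, y_n]=0$; passing to a subsequence yields a unit vector $u \in \overline{\cO} \subset \cN_\Lp$ with $[u,y]=0$, contradicting $y \in \Lp^{rs}$. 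You do reach for the $\bC^*$-conic structure in your last paragraph, which is indeed the right tool, but the shortcut through the false inclusion is not available.
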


\begin{proof}
If $\cO \subset \cN_\Lp$ is a nilpotent $K$-orbit, then we have:
\beqn
T^*_\cO \Lp = \{ (x, h_y) \in \Lp \times \Lp^* \; | \; [x, y] = 0, \; x \in \cO \}. 
\eeqn
The lemma follows, as the stratification $\cS$ is $\bC^*$-conic, and $x = 0$ is the only element of $\cN_\Lp$ which commutes with a regular semisimple element. 
\end{proof}

Recall that we write $P_1 = \Pone$. By Lemma \ref{lemma-gencov}, we can consider the Morse local system $M (\Pone)$ of the perverse sheaf $\Pone$ on the open set $\Lp^{rs}$. More precisely, we set:
\beq\label{defn-Morse-group}
M_y (\Pone) = M_{(0, h_y)} (\Pone) = \bH^0 (\cN_\Lp, \{ x \in \cN_\Lp \; | \; \xi_y (x) \leq -1 \}; \Pone).
\eeq
Note that we can use the number $-1$ in the above definition, because the sheaf $\Pone$ is constructible with respect to the $\bC^*$-conic stratification $\cS$. Note also that some authors use the opposite convention for Morse groups, writing $\xi_y (x) \geq 1$ instead of $\xi_y (x) \leq -1$. The two conventions can be identified canonically, using a choice of a $\sqrt{-1}$. Finally, note that $\bH^i (\cN_\Lp, \{ x \in \cN_\Lp \; | \; \xi_y (x) \leq -1 \}; \Pone) = 0$ for $i \neq 0$.

\begin{lemma}\label{lemma-fourier-morse}
The restriction of the perverse sheaf $\fF \Pone$ to the open set $\Lp^{rs} \subset \Lp$ is given by the Morse local system $M (\Pone)$. More precisely, we have:
\beqn
\fF \Pone |_{\Lp^{rs}} \cong M (\Pone) [\dimLp].
\eeqn
\end{lemma}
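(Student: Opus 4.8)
The plan is to identify the Fourier transform of a $\bC^*$-conic perverse sheaf on $\Lp$ with the family of vanishing-cycle (Morse) groups taken along the linear functions $h_y$, $y \in \Lp$. This is a general fact about the topological Fourier transform, and on the open locus $\Lp^{rs}$ it takes a particularly clean form because of Lemma~\ref{lemma-gencov}.

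First I would recall the microlocal description of $\fF$. For a $\bC^*$-conic perverse sheaf $F$ on a vector space $V$, the stalk of $\fF F$ at a covector $\eta \in V^*$ is computed by the vanishing cycles of $F$ along the linear function $\langle \eta, -\rangle$, up to a shift; equivalently, $\fF F$ restricted to an open set where $\eta$ is a generic (non-characteristic away from $0$) covector for the stratification of $\on{supp}(F)$ is a local system whose stalk is the corresponding Morse group. Concretely, with $\Pone$ supported on $\cN_\Lp$ and constructible with respect to the $\bC^*$-conic orbit stratification $\cS$, the stalk of $\fF\Pone$ at $h_y$ for $y \in \Lp^{rs}$ is $\bH^0$ of the pair $(\cN_\Lp, \{\xi_y \le -1\}; \Pone)$, which is exactly $M_y(\Pone)$ by the definition~\eqref{defn-Morse-group}. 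The normalization by $[\dimLp]$ matches the convention for $\fF$ in \cite[Definition 3.7.8]{KS} (which shifts by $\dim V$), so that $\fF$ sends perverse sheaves to perverse sheaves; on the smooth open set $\Lp^{rs}$ this means $\fF\Pone|_{\Lp^{rs}}$, being a shifted local system, is $M(\Pone)[\dimLp]$.

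Second, I would justify why the assignment $y \mapsto M_y(\Pone)$ genuinely forms a local system on $\Lp^{rs}$ and why it agrees with $\fF\Pone|_{\Lp^{rs}}$ as sheaves, not just stalkwise. The key input is Lemma~\ref{lemma-gencov}: for every $y \in \Lp^{rs}$, the covector $(0, h_y)$ is non-characteristic for $\cS$ away from the zero section, i.e.\ $h_y$ is a generic covector at $0$ for the $K$-orbit stratification of $\cN_\Lp$, and there are no other singular support contributions along $h_y$. Hence $SS(\Pone) \cap \{(x,\eta) : \eta = h_y,\ y \in \Lp^{rs}\}$ is contained in the zero section over $\{0\}$, which forces $\fF\Pone$ to be smooth (lisse) over $\Lp^{rs}$ — this is the standard fact that the singular support of $\fF F$ is the image of $SS(F)$ under the Legendre involution $T^*V = V \times V^* \to V^* \times V$. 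Then the comparison of the local system $\fF\Pone|_{\Lp^{rs}}$ with $M(\Pone)$ is the content of the microlocal formula for $\fF$ restricted to the region where the transform is lisse; the Morse group and the vanishing cycle along the linear functional coincide by stratified Morse theory (Goresky–MacPherson), again using $\bC^*$-conicity so that the cut-off at $\xi_y \le -1$ sees the whole relevant local topology.

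**The main obstacle** is purely bookkeeping: matching conventions. One must be careful that (a) the shift in the definition of $\fF$ from \cite{KS} is the one that makes $\fF$ $t$-exact for the perverse $t$-structure, so that the shift appearing is $[\dimLp] = [\dim\Lp]$ and not something off by the real dimension or by $1$; (b) the sign/orientation convention in~\eqref{defn-Morse-group} ($\xi_y(x) \le -1$ versus $\ge 1$) is compatible with the chosen $\sqrt{-1}$ and with the direction of the Fourier kernel $e^{\langle x,\eta\rangle}$ used to define the topological $\fF$; and (c) the argument is genuinely $K$-equivariant, which is automatic here since all the functors and the covectors $h_y$ are manifestly $K$-equivariant and $\Lp^{rs}$ is a single $K$-stable open set. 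Modulo these checks — which I would handle by citing \cite[Chapter III]{KS} for the microlocal behavior of $\fF$ and \cite{Gi} or \cite{G1} for the identification of $\fF$ of nearby cycles with the Morse local system — the proof is short. No new geometric idea beyond Lemma~\ref{lemma-gencov} is needed; the lemma is essentially a restatement of the general principle that the Fourier transform of a conic perverse sheaf is, generically, its sheaf of microlocal stalks.
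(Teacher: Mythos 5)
Your proposal is correct and follows essentially the same route as the paper. The paper's proof is a one-liner citing \cite[Proposition 3.7.12 (ii)]{KS} applied to a small convex open neighborhood of a point $y \in \Lp^{rs}$, together with Lemma~\ref{lemma-gencov} and the conicity of $\cS$ --- precisely the microlocal computation of the Fourier--Sato transform by Morse groups that you describe, with your singular-support/Legendre-transform discussion being an alternative (but equivalent) way to see that $\fF \Pone |_{\Lp^{rs}}$ is locally constant.
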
 

\begin{proof}
The lemma follows by applying \cite[Proposition 3.7.12 (ii)]{KS} to a small convex open neighbourhood of a point $y \in \Lp^{rs}$, then using Lemma~\ref{lemma-gencov} and the conical property of $\cS$.
\end{proof}

By the general geometric results of \cite[Section 2]{G2} and \cite{G1}, we now have the following.

\begin{prop}\label{prop-fourier}
We have:
\beqn
\fF \Pone \cong \on{IC} (\Lp^{rs}, M (\Pone)).
\eeqn
\end{prop}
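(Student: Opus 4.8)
The plan is to deduce Proposition~\ref{prop-fourier} from the two preceding lemmas together with the general structure theory of Fourier transforms of nearby cycle sheaves developed in \cite{G1} and \cite[Section 2]{G2}. The key point established in that general theory is that for a nearby cycle sheaf $P = P_f$ of an adjoint quotient map (more generally, of a polar representation), the Fourier transform $\fF P$ has no subobjects or quotients supported on the complement of the open stratum $\Lp^{rs}$; equivalently, $\fF P$ has neither sections nor cosections supported on $\cN_\Lp \setminus \{0\}$ — in fact, on all of $\Lp \setminus \Lp^{rs}$. Granting this, the proposition is essentially formal: a perverse sheaf on $\Lp$ which is an $\IC$-type extension, i.e.\ has no sub or quotient supported on a closed set $Z$, is canonically isomorphic to the intermediate extension $j_{!*}$ of its restriction to the open complement $U = \Lp \setminus Z$, provided that restriction is a shifted local system. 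Here $U = \Lp^{rs}$, and Lemma~\ref{lemma-fourier-morse} identifies the restriction $\fF\Pone|_{\Lp^{rs}}$ with the Morse local system $M(\Pone)[\dimLp]$, which is indeed a shifted local system since $\Pone$ is constructible and $\Lp^{rs}$ lies in the smooth locus.

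Concretely, I would proceed as follows. First, recall from \cite{G1} (and the recapitulation in \cite[Section 2]{G2}) the two general facts about the sheaf $\Pone = \psi_{\barbasepta}(\baseRing)[-]$: that $\fF\Pone$ is a $\bC^*$-conic perverse sheaf on $\Lp$, and that $\fF\Pone$ admits no nonzero morphism to or from a perverse sheaf supported on a proper $\bC^*$-conic closed subvariety of $\Lp$. The latter is the substantive input and is exactly the content of the Picard–Lefschetz analysis of the nearby cycles; it is the one step I would simply cite rather than reprove (and, as the authors note, the part of \cite{G2} requiring care lies elsewhere, in \cite[Lemmas 4.2, 4.3]{G2}, not in this citation). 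Second, invoke Lemma~\ref{lemma-gencov} to know that $\Lp^{rs}$ consists of generic covector directions for the orbit stratification $\cS$ of $\cN_\Lp$, so that the Morse local system $M(\Pone)$ is well-defined on $\Lp^{rs}$. Third, invoke Lemma~\ref{lemma-fourier-morse} to identify $\fF\Pone|_{\Lp^{rs}} \cong M(\Pone)[\dimLp]$.

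Putting these together: let $j: \Lp^{rs} \hookrightarrow \Lp$ be the open inclusion. The adjunction triangle gives a canonical morphism $j_!\, j^*\fF\Pone \to \fF\Pone$, and applying ${}^p\! H^0$ produces a map $\alpha$ from ${}^p\! H^0(j_!\, j^*\fF\Pone)$, whose image is the intermediate extension $\IC(\Lp^{rs}, M(\Pone)[\dimLp]) = j_{!*}(M(\Pone)[\dimLp])$, into $\fF\Pone$. The no-subobject property forces $\alpha$ to be injective, and the no-quotient property forces its cokernel (which would be supported on $\Lp \setminus \Lp^{rs}$) to vanish; hence $\alpha$ is an isomorphism and $\fF\Pone \cong \IC(\Lp^{rs}, M(\Pone))$ (absorbing the shift into the $\IC$ normalization, as in the paper's conventions). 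I expect no serious obstacle here once the general results of \cite{G1,G2} are available; the only point requiring a word of care is that the no-sub/no-quotient statement is genuinely a statement about supports on all of $\Lp \setminus \Lp^{rs}$, not merely on $\cN_\Lp$, so that the codimension-one strata of $\Lp^{rs}$ in $\Lp$ are also covered — this is automatic from $\bC^*$-conicity together with the fact that $\fF\Pone$ is built from $\psi$, but it is worth stating explicitly. The genuinely hard analytic content — that the nearby cycle sheaf has the requisite vanishing of supports — is exactly what is imported from \cite{G1} and \cite[Section 2]{G2}, and reproving it is outside the scope of this section.
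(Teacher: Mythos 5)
Your proposal matches the paper's approach: both derive the proposition from the general result of \cite[Theorem 1.1]{G1} (whose hypothesis is checked in \cite[Proposition 2.17]{G2}) combined with Lemma~\ref{lemma-fourier-morse}, with you unpacking the internal mechanism of \cite[Theorem 1.1]{G1} as an intermediate-extension characterization. One small wobble worth noting: the no-subobject property of $\fF\Pone$ directly forces injectivity of the dual adjunction map $\fF\Pone \to {}^p\!H^0(j_*\,j^*\fF\Pone)$, not of $\alpha : {}^p\!H^0(j_!\,j^*\fF\Pone) \to \fF\Pone$ (whose kernel is a subobject of the \emph{source}, not of $\fF\Pone$), but combined with surjectivity of $\alpha$ from the no-quotient property the conclusion $\fF\Pone \cong \on{IC}(\Lp^{rs}, M(\Pone))$ still follows.
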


\begin{proof}
The paper \cite{G1} uses $\bC$ as the coefficient ring, but the proof of \cite[Theorem 1.1]{G1} goes through with minimal changes with coefficients in $\baseRing$. Namely, the proof of \cite[Theorem 1.1]{G1} proceeds by establishing the $\on{IC}$ bounds on the stalks of $\fF \Pone$ by a direct geometric argument, then invoking Verdier duality to establish the $\on{IC}$ bounds on the costalks. This use of Verdier duality is not available over $\baseRing$. However, the bounds on the costalks can be established by a direct argument which is entirely parallel (and in some sense dual) to that provided for the stalks. The proposition thus follows from the analog of \cite[Theorem 1.1]{G1} with coefficients in $\baseRing$, combined with Lemma~\ref{lemma-fourier-morse} and \cite[Proposition 2.17]{G2}, which verifies the main hypothesis of \cite[Theorem 1.1]{G1}.
\end{proof}

Proposition \ref{prop-fourier} indicates that the main content of Theorem \ref{thm-chi-eq-one}, specific to symmetric spaces, is in identifying the Morse local system $M (\Pone)$ and the action of the monodromy in the family on $M (\Pone)$.

\begin{remark}\label{rmk-Z-to-k}
Recall that the module $\cM$ of Theorem \ref{thm-chi-eq-one} is free over $\baseRing$. Moreover, if we write $\cM [\bZ]$ for the corresponding module over $\bZ$, then we have $\cM \cong \cM [\bZ] \otimes_\bZ \baseRing$. In view of Proposition \ref{prop-fourier}, it follows that the claim of Theorem \ref{thm-chi-eq-one} for $\baseRing = \bZ$ implies the claim for general $\baseRing$.
\end{remark}

\subsection{Picard-Lefschetz classes}
\label{subsec-PLclasses}
Recall the basepoints $\basepta, l \in \La_\bR^+$, where $\La_\bR^+$ is the Weyl chamber specified in Section~\ref{subsec-structure-of-Wa}. Let us write $d = \dim_\bC X_{\barbasepta}$. 

\begin{lemma}\label{lemma-nearby-morse}
The Morse group $M_l (\Pone)$ can be identified as follows. We have:
\beqn
M_l (\Pone) \cong H^d (X_{\barbasepta}, \{ x \in X_{\barbasepta} \; | \; \xi_l (x) \leq - \xi_0 \}; \baseRing),
\eeqn
where $\xi_0 \gg 1$ is a sufficiently large real number.
\end{lemma}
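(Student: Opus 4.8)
The plan is to relate the Morse group $M_l(\Pone)$, which is computed on the nilcone $\cN_\Lp$ using the linear covector $h_l$ at the origin, to a relative cohomology group on the generic fiber $X_{\barbasepta}$. The key point is that $\Pone = P_1$ is, by construction, the nearby cycle sheaf $\psi_{\barbasepta}(p_{\barbasepta}^*\baseRing)[-]$ of the family $\cZ_{\barbasepta}\to\bC$ whose general fiber is (a scaled copy of) $X_{\barbasepta}$. Thus the stalk and, more importantly, the local Morse-theoretic data of $\Pone$ near the origin are governed by the topology of the nearby fiber. First I would invoke the standard description of nearby cycles via a Milnor-type fibration: for a point $0\in\cN_\Lp$, the contribution of $\psi_{F_2}$ is computed by intersecting a small ball $\rB_{0,\delta}$ with the fiber $\cZ_{\barbasepta}$ over a small $c\neq 0$, i.e. with $c\,X_{\barbasepta}$; since the stratification is $\bC^*$-conic this small-ball slice can be inflated to all of $c\,X_{\barbasepta}$, and rescaling by $c^{-1}$ identifies it with $X_{\barbasepta}$ itself.

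Next I would unwind the definition \eqref{defn-Morse-group} of the Morse group: $M_l(\Pone) = \bH^0(\cN_\Lp,\{\xi_l\leq -1\};\Pone)$ up to the relevant shift. Using the compatibility of nearby cycles with the Morse (hyperbolic localization / vanishing cycle) functor associated to the generic covector $h_l$ — which is legitimate here because $h_l$ is transverse to all strata by Lemma~\ref{lemma-gencov}, and because $h_l$ restricted to the nearby fiber remains a generic-enough Morse function — the computation on $\cN_\Lp$ with coefficients in $\psi_{F_2}(\baseRing)$ transfers to a computation on the nearby fiber $X_{\barbasepta}$ with constant coefficients. Concretely, the relative group $\bH^0(\cN_\Lp,\{\xi_l\le -1\};\Pone)$ becomes $H^d(X_{\barbasepta},\{x\in X_{\barbasepta}\mid \xi_l(x)\le -\xi_0\};\baseRing)$, where $d = \dim_\bC X_{\barbasepta}$ accounts for the perverse shift $[-]$ normalizing $\psi_{F_2}(\baseRing_{\cZ})$, and the cutoff value is no longer $-1$ but a large constant $\xi_0\gg 1$ because, after rescaling the fiber by a small $c$, the sublevel set $\{\xi_l\le -1\}$ on $c\,X_{\barbasepta}$ pulls back to $\{\xi_l\le -|c|^{-1}\}$ on $X_{\barbasepta}$, and $|c|^{-1}$ can be taken arbitrarily large. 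Since $\xi_l$ is a linear function and the stratification is conic, the precise value of $\xi_0$ is immaterial once it exceeds the relevant critical values, so any sufficiently large real number works.

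I would carry this out in the following order: (1) recall the Milnor-fiber description of $\psi_{F_2}$ at a stratified point, noting the $\bC^*$-conic inflation; (2) identify the nearby fiber with $X_{\barbasepta}$ via rescaling, tracking what happens to the real part $\xi_l$ of the linear form $h_l$; (3) check that $h_l|_{X_{\barbasepta}}$ is a stratified Morse function compatible with the one on $\cN_\Lp$ — here Lemma~\ref{lemma-gencov} and the fact that $l\in\La^{rs}$ ensure genericity — so that the Morse group on $\cN_\Lp$ for $\Pone$ equals the relative cohomology of the nearby fiber in the appropriate degree $d$; (4) reconcile the cutoff constants ($-1$ versus $-\xi_0$) using conicity. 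The main obstacle I anticipate is step (3), namely carefully justifying that taking the nearby cycle sheaf and then applying the Morse functor at the origin with covector $h_l$ yields the \emph{same} answer as taking relative cohomology of the nearby fiber with respect to the sublevel sets of $\xi_l$ — i.e. the commutation of $\psi_{F_2}$ with stratified Morse theory along a direction transverse to all the strata over $0$. This is a local statement that should follow from the compatibility of nearby cycles with proper base change and with the cutting by the real hypersurface $\{\xi_l = \text{const}\}$, but it requires being careful that the function $\xi_l$ does not create spurious critical points on the family $\cZ_{\barbasepta}$ near the origin; the conic structure and the regular semisimplicity of $l$ are exactly what rule this out.
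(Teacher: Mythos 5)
Your approach is essentially the paper's approach. The paper's own proof is a one-line appeal to \cite[Lemma 3.1(i)]{G3}, which is precisely the fact you are unpacking: that the Morse group at the cone point of a nearby cycle sheaf for a $\bC^*$-conic family is computed by the relative cohomology of the general fiber with respect to a half-space cut by $\xi_l$, with the cutoff pushed out to $-\xi_0$ for $\xi_0$ large as a consequence of the rescaling $x \mapsto c^{-1}x$. Your degree bookkeeping (the perverse shift $[-]$ accounts for the $d = \dim_\bC X_{\barbasepta}$ appearing in the conclusion) and your explanation of the cutoff constant ($\xi_0 = |c|^{-1}$ after rescaling the fiber $c\,X_{\barbasepta}$ to $X_{\barbasepta}$, with the conic stratification guaranteeing independence of the precise value) are both correct.

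The one place where you are candid about hand-waving --- the commutation of $\psi_{F_2}$ with the Morse functor at the generic covector $(0,h_l)$ --- is exactly the content that the paper outsources to \cite[Lemma 3.1(i)]{G3}; so this is not a gap in your reasoning relative to the paper's own standard, but it is the part that would need to be either proved or cited in a complete write-up. The transversality/conicity hypotheses you invoke (Lemma~\ref{lemma-gencov}, regular semisimplicity of $l$, $\bC^*$-conic strata) are indeed what make that commutation go through, so you have identified the right ingredients.
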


\begin{proof}
This follows from the definition of the Morse group $M_l (\Pone)$ (see equation~\eqref{defn-Morse-group}) and the definition of the nearby cycle functor $\psi_{\barbasepta}$ in Section~\ref{subsec-twisted}. See also \cite[Lemma 3.1(i)]{G3}.
\end{proof}

\begin{lemma}\label{lemma-morse-dual}
We have:
\beqn
M_l (\Pone) \cong H_d (X_{\barbasepta}, \{ x \in X_{\barbasepta} \; | \; \xi_l (x) \geq \xi_0 \}; \baseRing),
\eeqn
where $\xi_0 \gg 1$ is a sufficiently large real number.
\end{lemma}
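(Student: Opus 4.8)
The plan is to pass from the cohomological description of $M_l(\Pone)$ in Lemma~\ref{lemma-nearby-morse} to the homological one by a standard Poincar\'e--Lefschetz duality argument on the complex $d$-dimensional manifold $X_{\barbasepta}$, combined with the fact that $\xi_l$ is (up to a harmonic conjugate) the real part of a holomorphic function, so that its sublevel and superlevel sets behave symmetrically. More precisely, write $h_l = \xi_l + \mathbf{i}\,\zeta_l : X_{\barbasepta} \to \bC$ for the restriction of the linear function $* \mapsto \nu_\Lg(*, l)$, which is holomorphic on $X_{\barbasepta}$. For $\xi_0 \gg 1$ sufficiently large, both the sublevel set $\{\xi_l \leq -\xi_0\}$ and the superlevel set $\{\xi_l \geq \xi_0\}$ are unions of connected components of $X_{\barbasepta} \setminus h_l^{-1}(\{|\mathrm{Re}| < \xi_0\})$, and each is a deformation retract of a neighborhood; the point is that $X_{\barbasepta}$ retracts onto a compact core away from these two ``ends'' in the $\xi_l$-direction.

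First I would set up duality. Since $X_{\barbasepta}$ is a smooth (in general non-compact) complex variety of complex dimension $d$, hence an oriented $2d$-manifold, Poincar\'e--Lefschetz duality gives a canonical isomorphism
\beqn
H^d\big(X_{\barbasepta},\ \{\xi_l \leq -\xi_0\};\ \baseRing\big) \ \cong\ H^{BM}_d\big(X_{\barbasepta} \setminus \{\xi_l \leq -\xi_0\};\ \baseRing\big),
\eeqn
where $H^{BM}_*$ denotes Borel--Moore homology. Next I would argue that, because $h_l$ is proper onto its image after intersecting with a suitable slice and $\xi_l$ has no critical points outside a compact set (the only critical points of $\xi_l|_{X_{\barbasepta}}$ correspond to the generic covector condition, controlled as in Lemma~\ref{lemma-gencov}), the inclusion $\{\xi_l \geq \xi_0\} \hookrightarrow X_{\barbasepta}\setminus\{\xi_l \leq -\xi_0\}$ is a homotopy equivalence onto the complement of a neighborhood of the other end, and that the Borel--Moore homology of $X_{\barbasepta} \setminus \{\xi_l \leq -\xi_0\}$ relative to nothing agrees with ordinary relative homology $H_d(X_{\barbasepta}, \{\xi_l \geq \xi_0\})$. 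This uses that the ``ends'' of $X_{\barbasepta}$ in the $\pm\xi_l$ directions are of the expected form; here one invokes the structure of the nearby-cycle family (the map $f$ restricted to $X_{\barbasepta}$ and its properness properties) exactly as recorded in \cite[Lemma 3.1]{G3} and in Section~\ref{subsec-twisted}.

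Concretely, I would do this in two steps: (a) show $H^d(X_{\barbasepta}, \{\xi_l \leq -\xi_0\};\baseRing) \cong H^d_c(X_{\barbasepta} \setminus \{\xi_l \leq -\xi_0\};\baseRing)$, where the subscript $c$ is compact supports, by excision and the fact that $\{\xi_l \leq -\xi_0\}$ is a neighborhood deformation retract whose ``exit'' toward the rest of $X_{\barbasepta}$ is collared; and (b) apply Poincar\'e duality on the open manifold $X_{\barbasepta} \setminus \{\xi_l \leq -\xi_0\}$ to convert $H^d_c$ into $H_d$ of the manifold-with-the-other-end-removed, i.e.\ into $H_d(X_{\barbasepta}, \{\xi_l \geq \xi_0\};\baseRing)$. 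The identification $d = \dim_\bC X_{\barbasepta}$ (and the fact that only degree $d$ survives, as noted after equation~\eqref{defn-Morse-group}) makes the degree bookkeeping automatic.

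The main obstacle I expect is purely the non-compactness of $X_{\barbasepta}$: one must be careful that the two level sets $\{\xi_l \leq -\xi_0\}$ and $\{\xi_l \geq \xi_0\}$ genuinely exhaust the two ``ends'' relevant to Borel--Moore/compactly-supported cohomology, so that removing one end and then dualizing lands exactly on the relative homology with respect to the other. This is a tameness statement about $\xi_l$ on $X_{\barbasepta}$ for $\xi_0$ large; it follows because $\xi_l = \mathrm{Re}\,h_l$ for a nonconstant holomorphic $h_l$ on each component and $X_{\barbasepta}$ is affine of finite type, so $h_l$ is a proper-up-to-finite-fibers map outside a compact set, and no new topology is created beyond $\xi_0$. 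Once this tameness is in place, the rest is the standard duality diagram, and the lemma follows.
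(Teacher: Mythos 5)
The paper's own proof of this lemma is a one-liner ("Lemma~\ref{lemma-nearby-morse}, plus Poincar\'e duality on $X_{\barbasepta}$, plus the generic property of $h_l$"), so the overall strategy you chose --- converting the cohomological description in Lemma~\ref{lemma-nearby-morse} to the homological one via a duality on the oriented $2d$-manifold $X_{\barbasepta}$ --- matches the paper. However, the specific intermediate identities you propose are incorrect for non-compact manifolds, and this is a genuine gap, not just a missing reference.

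Concretely, both of the formulas
\beqn
H^d\bigl(X_{\barbasepta},\, \{\xi_l \leq -\xi_0\}\bigr) \overset{?}{\cong} H^{BM}_d\bigl(X_{\barbasepta} \setminus \{\xi_l \leq -\xi_0\}\bigr)
\quad\text{and}\quad
H^d\bigl(X_{\barbasepta},\, \{\xi_l \leq -\xi_0\}\bigr) \overset{?}{\cong} H^d_c\bigl(X_{\barbasepta} \setminus \{\xi_l \leq -\xi_0\}\bigr)
\eeqn
fail when $X_{\barbasepta}$ and the sublevel set are both non-compact. Already the simplest example of the paper, $(G,K)=(SL(2),SO(2))$, provides a counterexample. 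There $X_{\barbasepta}\cong\bC^*$ with $h_l(z)=z+1/z$, so in cylinder coordinates $z=e^{u+\mathbf{i}\theta}$ one has $\xi_l=2\cos\theta\cosh u$. For $\xi_0>2$ the sublevel set $A=\{\xi_l\leq-\xi_0\}$ has two contractible components, and the long exact sequence of the pair gives $H^1(X_{\barbasepta},A)\cong\baseRing^2$ (rank $|W_\La|=2$, as it must). But $X_{\barbasepta}\setminus A$ deformation retracts onto the circle $\{u=0\}$, so Poincar\'e duality on the open surface $X_{\barbasepta}\setminus A$ gives $H^1_c(X_{\barbasepta}\setminus A)\cong H_1(X_{\barbasepta}\setminus A)\cong\baseRing$ and likewise $H^{BM}_1(X_{\barbasepta}\setminus A)\cong H^1(X_{\barbasepta}\setminus A)\cong\baseRing$. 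So both of your intermediate groups are $\baseRing$, not $\baseRing^2$, and neither of your steps (a), (b) produces the target group $H_1(X_{\barbasepta},\{\xi_l\geq\xi_0\})\cong\baseRing^2$.

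The underlying reason is that Alexander--Lefschetz duality in the form you invoke needs compactness somewhere (a compact pair, or a compact ambient manifold), and you are in the "doubly non-compact'' situation. A correct argument has to engage \emph{both} the sublevel set $\{\xi_l\leq-\xi_0\}$ and the superlevel set $\{\xi_l\geq\xi_0\}$ at the same time: one truncates $X_{\barbasepta}$ to a compact manifold-with-corners $\tilde M$ whose boundary decomposes as $\partial_-\cup\partial_+\cup\partial_{\rm side}$ (with $\partial_\pm$ at $\xi_l=\mp\xi_0$ and $\partial_{\rm side}$ coming from an auxiliary exhaustion), applies Lefschetz duality $H^d(\tilde M,\partial_-\cup\partial_{\rm side})\cong H_d(\tilde M,\partial_+)$ on the compact $\tilde M$, and then uses the absence of critical points of $h_l|_{X_{\barbasepta}}$ outside a compact set (the "generic property" of Lemma~\ref{lemma-gencov} combined with Lemma~\ref{lemma-crit-points}) to show that the side boundary $\partial_{\rm side}$ can be pushed to infinity without changing either group. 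Equivalently, one can observe directly that both $H^d(X_{\barbasepta},\{\xi_l\leq-\xi_0\})$ and $H_d(X_{\barbasepta},\{\xi_l\geq\xi_0\})$ are free with a basis of Lefschetz thimbles indexed by the critical points $\{e_w\}$, and that the intersection pairing between these two thimble bases is unipotent upper-triangular, hence invertible; this is exactly what makes Lemma~\ref{lemma-basis} and the rest of Section~\ref{sec-proof-of-thm-chi-eq-one} work. Your appeal to "properness properties" of "the map $f$ restricted to $X_{\barbasepta}$" also does not make sense as stated, since $f$ is constant on $X_{\barbasepta}$; the tameness at infinity that you need is a statement about $h_l$, not $f$.
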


\begin{proof}
This follows from Lemma \ref{lemma-nearby-morse}, plus Poincare-Lefschetz duality on $X_{\barbasepta}$, plus the generic property of $h_l$ (see Lemma \ref{lemma-gencov}); cf. \cite[Lemma 2.7]{G2}. We use the complex structure to orient $X_{\barbasepta}$.
\end{proof}

Let $h_{l, \barbasepta} = h_l |_{X_{\barbasepta}} : X_{\barbasepta} \to \bC$. Write $Z_l \subset X_{\barbasepta}$ for the critical locus of $h_{l, \barbasepta}$. For each $w \in W_\La$, let $e_w = w \, \basepta \in \La$.

\begin{lemma}\label{lemma-crit-points} We have:
\begin{enumerate}[topsep=-1.5ex]
\item[(i)]    $Z_l = \{ e_w \; | \; w \in W_\La \};$

\item[(ii)]   each $e_w \in Z_l$ is a Morse critical point of $h_{l, \barbasepta} \, ;$

\item[(iii)]  $\zeta_l (e_w) = 0$ for all $w \in W_\La \, .$
\end{enumerate}
\end{lemma}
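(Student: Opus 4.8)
The plan is to identify the critical locus $Z_l$ directly from the $\on{ad}$-invariance of $\nu_\Lg$, then to compute the Hessian of $h_{l,\barbasepta}$ at each critical point by means of the restricted root space decomposition, and finally to note that the critical values are real since $\basepta$ and $l$ lie in $\La_\bR$. For part (i): since $T_x X_{\barbasepta} = [\Lk, x]$, a point $x \in X_{\barbasepta}$ is critical for $h_{l,\barbasepta}$ if and only if $\nu_\Lg([\xi,x], l) = 0$ for all $\xi \in \Lk$; by invariance of $\nu_\Lg$ this equals $\nu_\Lg(\xi, [x,l])$, and since $[x,l] \in [\Lp,\Lp] \subseteq \Lk$ while $\nu_\Lg$ restricts to a non-degenerate form on $\Lk$ (as $\Lk \perp \Lp$), the point $x$ is critical precisely when $[x,l] = 0$, i.e. $x \in Z_{\Lp}(l)$. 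Because $l \in \La^{rs}$, Kostant-Rallis theory \cite{KR} gives $Z_{\Lp}(l) = \La$, so $Z_l = X_{\barbasepta} \cap \La$. Moreover, if $a \in \La$ satisfies $a = k \cdot \basepta$ for some $k \in K$, then $\La = Z_{\Lp}(a) = k \cdot Z_{\Lp}(\basepta) = k \cdot \La$, so $k \in N_K(\La)$ and $a \in W_\La \cdot \basepta$; hence $Z_l = W_\La \cdot \basepta = \{ e_w \mid w \in W_\La \}$, and these $|W_\La|$ points are distinct since $\basepta$ lies in the open chamber $\La_\bR^+$ and so has trivial $W_\La$-stabilizer.

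For part (ii) I would fix $w$, set $x_0 = e_w \in \La^{rs}$, and parametrize $X_{\barbasepta}$ near $x_0$ by $\xi \mapsto \on{Ad}(\exp\xi)\, x_0$, $\xi \in \Lk$. Since $x_0$ is critical, the Hessian of $h_{l,\barbasepta}$ there is the symmetric bilinear form $\xi \mapsto -\nu_\Lg([\xi,x_0],[\xi,l])$ on $[\Lk, x_0] = T_{x_0} X_{\barbasepta}$. Writing $\Lp_{\bar\alpha} = (\Lg_{\bar\alpha} \oplus \Lg_{-\bar\alpha}) \cap \Lp$, the operator $\on{ad}(x_0)$ annihilates $Z_{\Lg}(\La)$ and carries the $\theta$-fixed part of $\Lg_{\bar\alpha} \oplus \Lg_{-\bar\alpha}$ isomorphically onto $\Lp_{\bar\alpha}$ (using \eqref{eqn-restricted-root-decomposition} and $\bar\alpha(x_0) \neq 0$), so $[\Lk, x_0] = \bigoplus_{\bar\alpha \in \Sigma^+} \Lp_{\bar\alpha}$ for a fixed positive system $\Sigma^+ \subseteq \Sigma$. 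On the $\bar\alpha$-summand one has $[\xi,l] = \frac{\bar\alpha(l)}{\bar\alpha(x_0)}\,[\xi,x_0]$, so the Hessian restricts there to $-\frac{\bar\alpha(l)}{\bar\alpha(x_0)}\, \nu_\Lg|_{\Lp_{\bar\alpha}}$, while the cross-terms between distinct summands vanish because $\nu_\Lg(\Lg_{\bar\alpha},\Lg_{\bar\beta}) = 0$ unless $\bar\alpha + \bar\beta = 0$. Since $\nu_\Lg$ pairs $\Lg_{\bar\alpha}$ with $\Lg_{-\bar\alpha}$ non-degenerately and is $\theta$-invariant, its restriction to each $\Lp_{\bar\alpha}$ is non-degenerate; as $\bar\alpha(l) \neq 0$ for $l \in \La^{rs}$, the full Hessian is non-degenerate, so $e_w$ is a Morse critical point (in the holomorphic sense, as required for the Picard-Lefschetz picture).

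Part (iii) is immediate: $e_w = w(\basepta)$ and $l$ both lie in $\La_\bR^+ \subseteq \La_\bR \cong X_*(A)_\bR$, on which $\nu_\Lg$ restricts to a real Euclidean form, so $h_{l,\barbasepta}(e_w) = \nu_\Lg(e_w, l) \in \bR$ and therefore $\zeta_l(e_w) = \on{Im}(h_{l,\barbasepta}(e_w)) = 0$. I expect no serious obstacle here; the only step carrying real content is the Hessian computation in (ii), and there the entire task is the restricted root bookkeeping --- the block-orthogonality under $\nu_\Lg$ and the non-degeneracy of $\nu_\Lg$ on each block $\Lp_{\bar\alpha}$ --- after which non-degeneracy of the Hessian is forced by the regularity of $l$. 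Parts (i) and (iii) are routine consequences of Kostant-Rallis theory and of the construction of $\nu_\Lg$ on $\La_\bR$.
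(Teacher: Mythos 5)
Your proof is correct, and it takes a genuinely different route from the paper in part (ii). The paper's proof is extremely terse: for (i) it just cites the description of the conormal bundle $T^*_{X_{\barbasepta}} \Lp = \{ (x, h_y) : [x,y] = 0, x \in X_{\barbasepta} \}$, and for (ii) it observes that the projection $\pi_2 : T^*_{X_{\barbasepta}} \Lp \to \Lp^*$ has nondegenerate differential at $(x, h_y)$ whenever $y \in \Lp^{rs}$, invoking the standard fact that Morse nondegeneracy of a critical covector is equivalent to this transversality. You instead compute the Hessian by hand: writing it as $\xi \mapsto -\nu_\Lg([\xi,x_0],[\xi,l])$ on $\Lk$, using the restricted-root decomposition to split $[\Lk,x_0] = \bigoplus_{\bar\alpha \in \Sigma^+} \Lp_{\bar\alpha}$, showing the form is block-diagonal with block $-\tfrac{\bar\alpha(l)}{\bar\alpha(x_0)} \, \nu_\Lg|_{\Lp_{\bar\alpha}}$, and concluding nondegeneracy from the regularity of $l$ and the nondegeneracy of $\nu_\Lg$ on each $\Lp_{\bar\alpha}$. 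Your approach is more explicit and self-contained --- it also reproves the description of $T_{x_0} X_{\barbasepta}$ along the way --- while the paper's argument is shorter and reuses the conormal-bundle machinery already set up for Lemma 5.3. Both are valid; yours fills in the detail the paper elides with ``it is not hard to check.'' Parts (i) and (iii) agree with the paper's reasoning in substance, with you spelling out explicitly why $X_{\barbasepta}\cap\La = W_\La\cdot\basepta$ and why the $|W_\La|$ points are distinct.

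One small housekeeping point: your Hessian form $Q(\xi) = -\nu_\Lg([\xi,x_0],[\xi,l])$ is a priori a form on all of $\Lk$, whereas the Hessian lives on $T_{x_0}X_{\barbasepta} = \Lk / Z_\Lk(\La)$. It does descend, because $Z_\Lk(x_0) = Z_\Lk(\La) = Z_\Lk(l)$ for $x_0, l \in \La^{rs}$, so both $[\xi,x_0]$ and $[\xi,l]$ vanish on the kernel; worth a half-sentence if you want the argument airtight, but it is clear from the way you organize the computation.
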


\begin{proof}
As in the proof of Lemma \ref{lemma-gencov}, we have:
\beq\label{conormalX}
T^*_{X_{\barbasepta}} \Lp = \{ (x, h_y) \in \Lp \times \Lp^* \; | \; [x, y] = 0, \; x \in X_{\barbasepta} \}.
\eeq
Part (i) of the lemma follows from equation~\eqref{conormalX}. Consider the projection to the second factor $\pi_2 : T^*_{X_{\barbasepta}} \Lp \to \Lp^*$. It is not hard to check that the differential $d_{(x, h_y)} \pi_2$ is nondegenerate whenever $y \in \Lp^{rs}$. Part (ii) of the lemma follows. Part (iii) of the lemma follows from the fact that we have $l, e_w \in \La_\bR$ for every $w \in W$, and the fact that $\nu_\Lg$ is real on $\La_\bR$.
\end{proof}

Let $C_l = h_l (Z_l)$ be the set of critical values of $h_{l, \barbasepta}$. By Lemma \ref{lemma-crit-points}(iii), we have $C_l \subset \bR \subset \bC$. We can not assert that the critical values $\{ h_l (e_w) \}_{w \in W_\La}$ are all distinct. But we can claim that the order of the critical values on the real line is consistent with the Bruhat order on $W_\La$. More precisely, let $\prec$ denote the Bruhat order on $W_\La$ arising from the choice of the Weyl chamber $\La_\bR^+$, so that $1 \prec w$ for every $w \in W_\La - \{ 1 \}$. Also, let $\BruhatLength : W_\La \to \bN$ be the length function associated to the set of simple reflections $S$, so that $\BruhatLength (1) = 0$ and $\BruhatLength (s) = 1$ for every $s \in S$. Then we have the following.

\begin{lemma}\label{lemma-bruhat}
For every $w_1, w_2 \in W_\La$ with $w_1 \prec w_2$, we have $h_l (e_{w_1}) = \xi_l (e_{w_1}) > \xi_l (e_{w_2}) = h_l (e_{w_2})$.
\end{lemma}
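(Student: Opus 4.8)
The plan is to reduce the statement to a computation with the real-valued linear functional $\xi_l$ on the Euclidean space $\La_\bR$ and the geometry of the $W_\La$-action on it. Recall that $\xi_l(x) = \nu_\Lg(x, l)$ (real on $\La_\bR$, by the last part of Lemma~\ref{lemma-crit-points}), and that $e_w = w(\basepta)$. So $\xi_l(e_w) = \nu_\Lg(w(\basepta), l) = \nu_\Lg(\basepta, w^{-1}(l))$, using the $W_\La$-invariance of $\nu_\Lg$ on $\La_\bR$. Since $l, \basepta \in \La_\bR^+$, the claim $\xi_l(e_{w_1}) > \xi_l(e_{w_2})$ for $w_1 \prec w_2$ becomes a statement purely about the inner product $\nu_\Lg(\basepta, w^{-1}(l))$ decreasing as $w$ increases in Bruhat order, for a pair of strictly dominant vectors $\basepta, l$.

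First I would record the standard fact about Coxeter groups acting on their reflection representation: if $\lambda, \mu$ both lie in the open dominant chamber and $w_1 \prec w_2$ in Bruhat order, then $(\lambda, w_1 \mu) > (\lambda, w_2 \mu)$ (strict inequality because $\lambda,\mu$ are \emph{strictly} dominant, i.e.\ not on any wall). The usual way to see this: it suffices, by the definition of Bruhat order via subwords of reduced expressions and the fact that any $w_1 \prec w_2$ can be connected by a chain $w_1 = u_0 \prec u_1 \prec \dots \prec u_k = w_2$ with each $u_{i+1} = u_i t_i$ for a reflection $t_i$ and $\BruhatLength(u_{i+1}) > \BruhatLength(u_i)$, to treat a single such step. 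For a single step, one uses the characterization that $u \prec ut$ with $t = s_\beta$ ($\beta$ a positive root) iff $u^{-1}(\beta) > 0$, together with the identity $(\lambda, ut\mu) - (\lambda, u\mu) = (\lambda, u(t\mu - \mu)) = -\langle \beta^\vee, \mu\rangle\,(\lambda, u\beta)$ (suitably normalized), and the signs: $\langle\beta^\vee,\mu\rangle > 0$ since $\mu$ is strictly dominant, and $(\lambda, u\beta) = (u^{-1}\lambda, \beta)$; here one needs a small additional argument that $(\lambda, u\beta) > 0$, which follows because $u^{-1}\lambda$ lies in the (open) chamber $w_0 \cdot (\text{dominant})$... — more cleanly, I would instead invoke the classical lemma (see e.g.\ Humphreys, \cite{H}, or Bourbaki) stating directly that the function $w \mapsto (\lambda, w\mu)$ on $W$ is strictly order-reversing for the Bruhat order when $\lambda,\mu$ are regular dominant, and cite it rather than reprove it.

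The translation back: applying this with $W = W_\La$ acting on $\La_\bR \cong X_*(A)_\bR$ (a Coxeter group on a Euclidean space, as set up in Section~\ref{subsec-structure-of-Wa}), $\lambda = l$, $\mu = \basepta$, both in the open Weyl chamber $\La_\bR^+$, and using $\nu_\Lg|_{\La_\bR}$ as the inner product, gives $\nu_\Lg(l, w_1 \basepta) > \nu_\Lg(l, w_2\basepta)$, i.e.\ $\xi_l(e_{w_1}) > \xi_l(e_{w_2})$. Finally, $h_l(e_w) = \xi_l(e_w)$ because $\zeta_l(e_w) = \on{Im} h_l(e_w) = 0$ by Lemma~\ref{lemma-crit-points}(iii). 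This closes the argument.

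The main obstacle is purely expository: making sure the Bruhat-order monotonicity lemma is cited in exactly the form needed (strict inequality for \emph{regular} dominant weights, and for the possibly non-crystallographic / non-reduced Coxeter group $W_\La$ attached to $\Sigma$ — but since the statement only uses the Coxeter/reflection-group structure on the Euclidean space $\La_\bR$, and $W_\La$ is a genuine finite Coxeter group acting by reflections, the classical result applies verbatim). No serious geometric input beyond Lemma~\ref{lemma-crit-points} is required; the content is the order-theoretic inequality, and the only care needed is the strictness, which is exactly where regularity of $\basepta$ and $l$ (their lying in the \emph{open} chamber $\La_\bR^+ \subset \La^{rs}$) is used.
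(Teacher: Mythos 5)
Your proposal takes essentially the same approach as the paper: reduce via a chain of reflections to the case $w_2 = w_1 t$ with $t$ a reflection, then verify the inequality by computing the change in $\xi_l = \nu_\Lg(\,\cdot\,, l)$ across one step. The paper makes this self-contained by fixing a reduced expression for $w_2$, invoking the Strong Exchange Condition to pin down $t$ and $w_1$ in terms of it, and then checking the two sign conditions ($\bar\alpha(\basepta) > 0$ and $\langle w_2 \check{\bar\alpha}, l \rangle < 0$) directly from reducedness and strict dominance of $\basepta, l$ --- which is exactly the ``small additional argument'' you flag and propose to avoid by citing the classical order-reversal statement. The mathematics is identical; you have just outsourced the combinatorial core to a reference rather than reproving it.

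One caution on the shortcut: the statement ``$w \mapsto (\lambda, w\mu)$ is strictly Bruhat-order-reversing for $\lambda, \mu$ regular dominant'' is true and classical in content, but I would double-check that it appears in Humphreys~\cite{H} or Bourbaki in precisely that packaged form (with the strictness clause) before citing it; what is more readily quotable is that $\mu - w\mu$ is a nonnegative sum of positive roots for $\mu$ dominant, and the strictness then needs the regularity of both $\lambda$ and $\mu$ plus the step-by-step chain argument you already sketched. In other words, you may find that unwinding the citation reconstructs essentially the paper's proof anyway.
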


\begin{proof}
Let us begin by recalling the definition of the Bruhat order. We have $w_1 \prec w_2$ if and only if there exists a sequence of reflections $t_1, \ldots, t_m \in W_\La$, such that $w_2 = w_1 t_1 \cdots t_m$ and $\BruhatLength (w_1 t_1 \cdots t_i) > \BruhatLength (w_1 t_1 \cdots t_{i-1})$ for every $i = 1, \ldots, m$. Thus, it suffices to prove the lemma for $w_1 \prec w_2$ with $w_2 = w_1 t$ for some reflection $t \in W_\La$. Let $w_2 = s_1 \cdots s_k$ be a reduced expression, where $s_i \in S$ (and repetitions are allowed). Then, by the Strong Exchange Condition, there exists a unique $1 \leq i \leq k$ such that $t = s_k \cdots s_{i+1} s_i s_{i+1} \cdots s_k$ and $w_1 = s_1 \cdots \hat s_i \cdots s_k$.

Let $\bar\alpha_i \in \Sigma \subset \La_\bR^*$ be a restricted root such that $s_i = s_{\bar\alpha_i}$ and $\bar\alpha_i (\genpta) > 0$ for every $\genpta \in \La_\bR^+$. Let $\check{\bar\alpha}_i \in \La_\bR$ be the corresponding coroot, so that
$s \, \genpta = \genpta - \bar\alpha_i (\genpta) \, \check{\bar\alpha}_i$ for every $\genpta \in \La_\bR$. Define $\bar\alpha = (s_k \cdots s_{i+1}) \, \bar\alpha_i$ and $\check{\bar\alpha} = (s_k \cdots s_{i+1}) \, \check{\bar\alpha}_i$. Then we have $t \, \genpta = \genpta - \bar\alpha (\genpta) \, \check{\bar\alpha}$ for every $a \in \La_\bR$. Further, we have:
\beqn
\begin{split}
\xi_l (e_{w_1}) & = \langle w_1 \, \basepta, l \rangle
                            = \langle \basepta, w_1^{-1} \, l \rangle
                            = \langle t \, \basepta, w_2^{-1} \, l \rangle \\
                        & = \langle \basepta - \bar\alpha (\basepta) \, \check{\bar\alpha}, w_2^{-1} \, l \rangle
                            = \xi_l (e_{w_2}) - \bar\alpha (\basepta) \, \langle w_2 \, \check{\bar\alpha}, l \rangle.
\end{split}
\eeqn
Note that $\bar\alpha (\basepta) = (s_k \cdots s_{i+1} \, \bar\alpha_i) (\basepta) > 0$, because the word $s_k \cdots s_i$ is reduced and $\basepta \in \La_\bR^+$. Similarly, we have $\langle w_2 \, \check{\bar\alpha}, l \rangle = \langle s_1 \cdots s_i \, \check{\bar\alpha}_i, l \rangle = - \langle s_1 \cdots s_{i-1} \, \check{\bar\alpha}_i, l \rangle < 0$, because the word $s_1 \cdots s_i$ is reduced and $l \in \La_\bR^+$. The lemma follows.
\end{proof}

Using Lemma \ref{lemma-bruhat}, we can identify how large the number $\xi_0$ in Lemma \ref{lemma-morse-dual} needs to be.

\begin{lemma}\label{lemma-large-xi}
The homology group in the RHS of the isomorphism of Lemma \ref{lemma-morse-dual} is independent of $\xi_0$ for $\xi_0 > \xi (e_1)$.
\end{lemma}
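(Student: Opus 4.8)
The plan is a Morse-theoretic stabilization argument: I would show that the relative homology group $H_d(X_{\barbasepta}, X^{\geq \xi_0}; \baseRing)$, where $X^{\geq \xi_0} \coloneqq \{x \in X_{\barbasepta} \mid \xi_l(x) \geq \xi_0\}$, changes only when $\xi_0$ crosses a critical value of the function $\xi_l$ on $X_{\barbasepta}$, and then use Lemma~\ref{lemma-bruhat} to identify the largest such critical value.

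First I would note that the critical points of the real function $\xi_l = \on{Re}(h_{l,\barbasepta})$ on $X_{\barbasepta}$ are exactly the critical points $Z_l$ of the holomorphic function $h_{l,\barbasepta}$, since for a holomorphic function $d(\on{Re}\, h_{l,\barbasepta})$ vanishes precisely where $dh_{l,\barbasepta}$ does. By Lemma~\ref{lemma-crit-points}, $Z_l = \{e_w \mid w \in W_\La\}$, each $e_w$ is nondegenerate, and $\zeta_l(e_w) = 0$; hence the set of critical values of $\xi_l$ is $C_l = \{\xi_l(e_w) \mid w \in W_\La\} \subset \bR$. Since $1 \prec w$ for every $w \in W_\La - \{1\}$, Lemma~\ref{lemma-bruhat} gives $\max C_l = \xi_l(e_1)$, so for every $\xi_0 > \xi_l(e_1)$ the set $X^{\geq \xi_0}$ contains no critical point of $\xi_l$.

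The substance of the proof is then to deduce, for $\xi_l(e_1) < \xi_0 \leq \xi_0'$, that the inclusion $X^{\geq \xi_0'} \hookrightarrow X^{\geq \xi_0}$ is a homology isomorphism; the long exact sequence of the triple $(X_{\barbasepta}, X^{\geq \xi_0}, X^{\geq \xi_0'})$ then gives the asserted isomorphism $H_d(X_{\barbasepta}, X^{\geq \xi_0'}; \baseRing) \cong H_d(X_{\barbasepta}, X^{\geq \xi_0}; \baseRing)$, compatibly with inclusions. One writes $X^{\geq \xi_0} = h_{l,\barbasepta}^{-1}\bigl(\{z \in \bC \mid \on{Re}(z) \geq \xi_0\}\bigr)$ and observes that this closed half-plane is convex and, as $C_l \subset \bR$ with $\max C_l = \xi_l(e_1) < \xi_0$, disjoint from $C_l$. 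Since $h_{l,\barbasepta}$ is the restriction to the smooth affine variety $X_{\barbasepta}$ of a generic linear function, it is tame at infinity and restricts to a locally trivial $C^\infty$ fibration over $\bC \setminus C_l$; hence $X^{\geq \xi_0}$ and $X^{\geq \xi_0'}$ are fibrations over contractible bases, each homotopy equivalent to a fiber of $h_{l,\barbasepta}$ over a point of large real part, and the inclusion between them is a homotopy equivalence. Equivalently, this local triviality is precisely what the general geometric results of \cite{G1} and \cite[Section~2]{G2} provide in the (more general) polar representation setting, and which underlie the well-definedness of the Morse group $M_l(\Pone)$.

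The step I expect to be the main obstacle is this last one. Because $\xi_l$ is not proper on the affine variety $X_{\barbasepta}$, the absence of critical values above $\xi_l(e_1)$ does not by itself yield a deformation retraction; one must control the behaviour of $h_{l,\barbasepta}$ at infinity over the half-plane $\{\on{Re}(z) > \xi_l(e_1)\}$, i.e.\ rule out atypical fibers there. Once that tameness input is in place --- whether established directly for the linear function $h_l$ on $X_{\barbasepta}$ or imported from \cite{G1,G2} --- the Bruhat-order estimate of Lemma~\ref{lemma-bruhat} supplies the explicit threshold and the lemma follows.
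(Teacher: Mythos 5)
Your proposal is correct and follows essentially the same approach as the paper: the paper's proof reads ``This is an application of Morse theory, using Lemmas~\ref{lemma-gencov} and~\ref{lemma-bruhat},'' which is exactly what you describe --- Lemma~\ref{lemma-bruhat} supplies the threshold $\xi_l(e_1)$, and the genericity of the covector $(0,h_l)$ for the conic stratification $\cS$ (Lemma~\ref{lemma-gencov}) provides the needed control at infinity. Your identification of the non-properness of $\xi_l$ on the affine orbit $X_{\barbasepta}$ as the main subtlety, and your resolution of it via the tameness input underlying the well-definedness of $M_l(\Pone)$ (imported from \cite{G1,G2}), is precisely the content of the paper's appeal to Lemma~\ref{lemma-gencov}.
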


\begin{proof}
This is an application of Morse theory, using Lemmas \ref{lemma-gencov} and \ref{lemma-bruhat}.
\end{proof}

We will analyze the relative homology group $M_l (\Pone) \cong H_d (X_{\barbasepta}, \{ x \in X_{\barbasepta} \; | \; \xi_l (x) \geq \xi_0 \}; \baseRing)$ of Lemma~\ref{lemma-morse-dual} using the techniques of the classical Picard-Lefschetz theory. Thus, we now recall the standard construction of Picard-Lefschetz classes in the Morse group $M_l (\Pone)$. Fix a $\xi_0 > \xi (e_1)$, as in Lemma \ref{lemma-large-xi}. Pick a critical point $e \in Z_l$. Let $\cH [e] : T_e X_{\barbasepta} \to \bC$ be the Hessian of $h_{l, \barbasepta}$ at $e$. We view $\cH [e]$ as a complex-valued quadratic form. Note that, by Lemma \ref{lemma-crit-points} (ii), the form $\cH [e]$ is non-degenerate. Next, pick a smooth path $\gamma : [0, 1] \to \bC$ such that:
\begin{enumerate}[topsep=-1.5ex]
\item[(P1)]  $\gamma (0) = h_l (e)$;

\item[(P2)]  $\gamma' (t) \neq 0$ for all $t \in [0, 1]$, and $\gamma' (0) = \mathbf{i}$;

\item[(P3)]  $\gamma (1) = \xi_0$;

\item[(P4)]  $\gamma (t) \notin C_l$ for all $t \in (0, 1)$;

\item[(P5)]  $\gamma (t_1) \neq \gamma (t_2)$ for all $t_1, t_2 \in [0, 1]$ with $t_1 \neq t_2$.
\end{enumerate}
Recall that we write $\mathbf{i} = \sqrt{-1}$. Let $\cH_v [e] = \on{Re} (- \mathbf{i} \cdot \cH_e) : T_e X_{\barbasepta} \to \bR$; it is a non-degenerate real-valued quadratic form. Here, the subscript $v$ stands for ``vertical'', referring to the direction of $\gamma' (0)$. Recall the Hermitian structure $\langle \; , \, \rangle$ on $\Lp$ introduced in Section~\ref{subsec-structure-of-Wa}. Let $T_v [e] \subset T_e X_{\barbasepta}$ be the positive eigenspace of the real quadratic form $\cH_v [e]$ relative to $\langle \; , \, \rangle$, where by ``positive eigenspace'' we mean the direct sum of all the eigenspaces corresponding to positive eigenvalues. Note that we have $\dim_\bR T_v [e] = \dim_\bC X_{\barbasepta} = d$, since the real-valued quadratic form $\cH_v [e]$ is the real part of a complex-valued quadratic form. Finally, pick an orientation $o$ of the real vector space $T_v [e]$.

The data $[e , \gamma, o]$ defines a Picard-Lefschetz homology class:
\beqn
PL [e , \gamma, o] \in H_d (X_{\barbasepta}, \{ x \in X \; | \; \xi_l (x) \geq \xi_0 \}; \baseRing),
\eeqn
as follows. The class $PL [e , \gamma, o]$ is represented by a smoothly embedded disk:
\beqn
\kappa : (\rD^d, \partial \rD^d) \to (X_{\barbasepta}, \{ \xi_l (x) \geq \xi_0 \}),
\eeqn
such that:
\begin{enumerate}[topsep=-1.5ex]
\item[(i)]    $h_l \circ \kappa (\rD^d - \{ 0 \}) = \gamma ((0, 1]) \subset \bC$;

\item[(ii)]   $h_l \circ \kappa \, (\partial \rD^d) = \{ \xi_0 \}$;

\item[(iii)]  $\kappa (0) = e$;   

\item[(iv)]  $d \kappa (T_0 \rD^d) = T_v [e]$;

\item[(v)]   the orientation of the relative cycle $\kappa (\rD^d)$ is given by $o$.
\end{enumerate}
We use Lemma \ref{lemma-morse-dual} to view the class $PL [e , \gamma, o]$ as an element of $M_l (\Pone)$. It is a standard fact that the class $PL [e , \gamma, o]$ depends on the path $\gamma$ only through the smooth homotopy class of $\gamma$ within the class of all paths satisfying properties (P1)-(P5).

\subsection{A canonical basis up to sign for \texorpdfstring{$M_l (\Pone)$}{Lg}}
\label{subsec-basis-up-to-sign}
In this subsection, we introduce a canonical basis up to sign $\{ \pm \basis_w \}_{w \in W_\La}$ for the Morse group $M_l (\Pone)$. Here, the word ``canonical'' refers to the fact that the basis $\{ \pm \basis_w \}$ will, in a certain sense, be independent of the choices of the basepoints $\basepta, l \in \La_\bR^+$.

For each $w \in W_\La$, we choose a path $\gamma_w : [0, 1] \to \bC$, satisfying properties (P1)-(P5) of Section~\ref{subsec-PLclasses} for $e = e_w$, and such that we further have:
\begin{enumerate}[topsep=-1.5ex]
\item[(P6)]  $\zeta_l (\gamma_w (t)) > 0$ for all $t \in (0, 1)$.
\end{enumerate}
Note that, in view of Lemma~\ref{lemma-crit-points} (iii), property (P6) determines the path $\gamma_w$ uniquely up to smooth homotopy within the class of all paths satisfying properties (P1)-(P5).

For each $w \in W_\La$, we set:
\beqn
\pm \basis_w = PL [e_w, \gamma_w] \in M_l (\Pone).
\eeqn
Note that, in this definition, we do not specify an orientation of the positive eigenspace $T_v [e_w]$. As a result, each element $\basis_w$ is only defined up to sign.

\begin{lemma}\label{lemma-basis}
The $\baseRing$-module $M_l (\Pone)$ is free, and the elements $\{ \pm \basis_w \}_{w \in W_\La}$ form a basis of $M_l (\Pone)$ up to sign.
\end{lemma}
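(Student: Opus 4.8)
The plan is to compute the relative homology group of Lemma~\ref{lemma-morse-dual} by classical Morse theory applied to the real function $\xi_\baseptl = \on{Re}(h_{\baseptl, \barbasepta})$ on $X_{\barbasepta}$, and then to recognize the Picard--Lefschetz classes $\pm\basis_w$ as the classes of the associated Lefschetz thimbles, which form a distinguished basis of the vanishing homology in the classical way.

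For the first part, recall that $X_{\barbasepta} \cong K / Z_K(\La)$ is a smooth affine variety of complex dimension $d$, its stabilizer $Z_K(\La) = Z_G(\La)^\theta$ being reductive (the fixed points of an involution of the reductive group $Z_G(\La)$); in particular it is a Stein manifold. By Lemma~\ref{lemma-crit-points}, the function $\xi_\baseptl$ is Morse on $X_{\barbasepta}$ with critical set exactly $\{ e_w \}_{w \in W_\La}$ and all critical values real. At $e_w$ the Hessian of $\xi_\baseptl$ equals $\on{Re}(\cH[e_w])$, the real part of the nondegenerate complex quadratic form $\cH[e_w]$, hence a nondegenerate real quadratic form of signature $(d, d)$; so the Morse index of $\xi_\baseptl$ -- and equally of $-\xi_\baseptl$ -- at $e_w$ is exactly $d$. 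By Lemma~\ref{lemma-bruhat} and the fact that $1 \prec w$ for all $w \neq 1$, the value $\xi_\baseptl(e_1)$ is the largest critical value, so for $\xi_0 > \xi_\baseptl(e_1)$ the superlevel set $\{ \xi_\baseptl \geq \xi_0 \}$ contains no critical point (cf.\ Lemma~\ref{lemma-large-xi}). Running Morse theory on the pair $(X_{\barbasepta}, \{ \xi_\baseptl \geq \xi_0 \})$ by lowering the threshold from $\xi_0$, each $e_w$ contributes a single cell whose dimension is the Morse index of $-\xi_\baseptl$ at $e_w$, namely $d$. Hence $(X_{\barbasepta}, \{ \xi_\baseptl \geq \xi_0 \})$ is a relative CW pair with exactly $|W_\La|$ relative cells, all of dimension $d$; its relative cellular chain complex is concentrated in degree $d$ and free of rank $|W_\La|$, so $H_i(X_{\barbasepta}, \{ \xi_\baseptl \geq \xi_0 \}; \baseRing)$ is $0$ for $i \neq d$ and free of rank $|W_\La|$ for $i = d$. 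Together with Lemma~\ref{lemma-morse-dual}, this proves that $M_\baseptl(\Pone)$ is free of rank $|W_\La|$.

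For the second part, note that $PL[e_w, \gamma_w]$ is by construction represented by the Lefschetz thimble over $\gamma_w$: an embedded $d$-disk through $e_w$ tangent to $T_v[e_w]$ and projecting under $h_{\baseptl, \barbasepta}$ onto $\gamma_w([0,1])$. Since all critical values lie on $\bR$ and $\xi_0$ exceeds all of them, property (P6) together with (P1)--(P5) and the normalization $\gamma_w'(0) = \mathbf{i}$ pins down the homotopy class of $\gamma_w$, so $\pm\basis_w$ is well defined. That $\{ \pm\basis_w \}_{w \in W_\La}$ is then a basis up to sign is the classical statement that the thimbles over a distinguished system of paths joining the critical values to the base point form a distinguished basis of the vanishing homology: filtering $X_{\barbasepta}$ by the superlevel sets $\{ \xi_\baseptl \geq c \}$, the classes $\basis_w$ are triangular against the Morse cells with respect to the ordering of the critical values, hence form an $\baseRing$-basis. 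If several critical values coincide, one first perturbs $\baseptl$ slightly inside $\La_\bR^+$ to separate them; the critical points $e_w$, the eigenspaces $T_v[e_w]$, and the homotopy classes of the $\gamma_w$ all deform continuously, so the conclusion descends to the original $\baseptl$.

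The step requiring genuine care is the behaviour of $\xi_\baseptl$ at infinity on the non-compact manifold $X_{\barbasepta}$: the Morse arguments presuppose that $h_{\baseptl, \barbasepta}$ is a topologically locally trivial fibration over $\bC \setminus C_\baseptl$, equivalently that crossing a critical value has only the expected local effect on the superlevel sets. This is the tameness of a (restriction to a fibre of a) linear function on an affine variety; it is already implicit in the constructions behind Lemmas~\ref{lemma-nearby-morse} and~\ref{lemma-large-xi}, and I would invoke it in that form, or via the conic structure of the ambient stratification, rather than reproving it. The remaining points -- the signature computation for $\cH[e_w]$, the cell bookkeeping, the triangularity, and the perturbation -- are routine.
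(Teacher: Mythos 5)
Your argument fleshes out, correctly, exactly what the paper's one‑sentence proof ("This is a standard application of Picard–Lefschetz theory, using Lemma~\ref{lemma-crit-points}.") is invoking, so the two approaches coincide. The signature computation for $\on{Re}(\cH[e_w])$ giving Morse index $d$ for $\pm\xi_\baseptl$, the relative Morse cell count, and the identification of the $\pm\basis_w$ with a distinguished system of thimble classes are the standard Picard--Lefschetz facts the paper has in mind; your perturbation remark for coincident critical values is the same continuity argument the paper encodes in Remark~\ref{rmk-canonical}. The one point you rightly single out as nontrivial — controlling $\xi_\baseptl$ at infinity on the non-compact affine orbit $X_{\barbasepta}$ so that the superlevel sets exhaust it with only the expected Morse changes — is indeed where the real geometric content sits, and the paper does not reargue it in this lemma either; it is subsumed in the framework underlying Lemmas~\ref{lemma-nearby-morse} and~\ref{lemma-large-xi} (and ultimately in the conic/Morse-theoretic setup from \cite{G1,G2}), exactly as you say, so deferring it is consistent with the paper's level of detail.
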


\begin{proof}
This is a standard application of Picard-Lefschetz theory, using Lemmas \ref{lemma-gencov}, \ref{lemma-morse-dual}, and \ref{lemma-crit-points}.
\end{proof}

The following remark, which is standard in Picard-Lefschetz theory, explains the sense in which the basis $\{ \pm \basis_w \}$ is canonical. Recall the sheaves $\{ \cP_{\bargenpta} \}$ introduced in Section~\ref{subsec-twisted} for all $\bargenpta \in \La^{rs} / W_\La$. We have $\Pone = \cP_{\barbasepta}$. For every pair $\genpta, l \in \La_\bR^+$, let $M [\genpta, l] = M_l (\cP_{\bargenpta})$, where $\bargenpta = f (\genpta)$. As in Lemma~\ref{lemma-basis}, we have a basis up to sign:
\beqn
\{ \pm \basis_w [\genpta, l] \}_{w \in W_\La} \subset M [\genpta, l].
\eeqn

\begin{remark}\label{rmk-canonical}
(i) The groups $\{ M [\genpta, l] \}_{\genpta, l \in \La_\bR^+}$ form a local system over the set $\La_\bR^+ \times \La_\bR^+$, which is contractible. The holonomy of this local system in the ``$\genpta$-direction'' is given by the monodromy in the family, i.e., by the structure of the perverse sheaf $\cP$ of equation~\eqref{sheafcP}, as in Section~\ref{subsec-twisted}. \\
\indent
(ii) For each $w \in W_\La$, the basis elements $\{ \pm \basis_w [\genpta, l] \}_{\genpta, l \in \La_\bR^+}$ form a section up to sign of the local system $\{ M [\genpta, l] \}_{\genpta, l \in \La_\bR^+} \,$.
\end{remark}

In working with the basis $\{ \pm \basis_w \}_{w \in W_\La}$ of Lemma~\ref{lemma-basis}, Remark \ref{rmk-canonical} will give us the flexibility to assume, whenever convenient, that either $\basepta$ or $l$ is near one of the walls of the Weyl chamber $\La_\bR^+$.

\subsection{Partial description of the monodromy actions on \texorpdfstring{$M_l (\Pone)$}{Lg}}
\label{subsec-partial-description}
Let us write:
\beqn
\lambda_l : \widetilde B_{W_\La} \to \on{Aut} (M_l (\Pone)),
\eeqn
for the microlocal monodromy action arising from the structure of $\Pone$ as a $K$-equivariant perverse sheaf. The content of Theorem \ref{thm-chi-eq-one} is in describing the actions $\lambda_l$ and $\mu_l$ on the Morse group $M_l (\Pone)$. Note that, by construction, these two actions commute with each other. In this subsection, we provide a partial description of these two actions in terms of the basis $\{ \pm \basis_w \}_{w \in W_\La}$. Recall the set of simple reflections $S \subset W_\La$, and the associated length function $\BruhatLength : W_\La \to \bN$. Also, recall the counter-clockwise braid generators $\{ \sigma_s \}_{s \in S} \subset B_{W_\La}$.

\begin{prop}\label{prop-partial-description}
We have:
\begin{enumerate}[topsep=-1ex]
\item[(i)]   for every $w \in W_\La$ and every $s \in S$ such that $\BruhatLength (w) < \BruhatLength (sw)$, 
\beqn
\lambda_l \circ \tilde r \, (\sigma_s) (\pm \basis_w) = \pm \basis_{sw} \, ;
\eeqn

\item[(ii)]  for every $w \in W_\La$ and every $s \in S$ such that $\BruhatLength (w) < \BruhatLength (ws)$, 
\beqn
\mu_l (\sigma_s) (\pm \basis_w) = \pm \basis_{ws} \, .
\eeqn
\end{enumerate}
\end{prop}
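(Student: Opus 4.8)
The plan is to prove both parts by classical Picard--Lefschetz theory, tracking the thimbles $\pm\basis_w$ as we drag the relevant base point around the loop $\sigma_s$: for part~(ii) the base point is $\barbasepta$, moved in $\La^{rs}/W_\La$, and for part~(i) it is the covector direction $\baseptl$, moved in $\Lp^{rs}$ along a loop representing $\tilde r(\sigma_s)\in\widetilde B_{W_\La}=\pi_1^K(\Lp^{rs},\baseptl)$. In both cases I will first invoke Remark~\ref{rmk-canonical} to slide the moving base point close to a generic point $p$ of the wall $\La_s$ of $\La_\bR^+$ --- generic in the sense that $p$ lies on no reflection hyperplane other than $\La_s$, so that $\on{Stab}_{W_\La}(p)=\{1,s\}$. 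Then the monodromy localizes near the points $\{w(p)\}$, which are pairwise distinct as $w$ runs over $W_\La/\langle s\rangle$; near $w(p)$ only the two critical points $e_w,e_{ws}=w(s\,\basepta)$ of $h_{l,\barbasepta}$ are relevant, and the monodromy decouples into a commuting product, over the cosets $\{w,ws\}$, of rank one half-twists. Each rank one half-twist is then computed either by the explicit analysis of Section~\ref{sec-example} or, equivalently, by the Picard--Lefschetz formula for a single simple vanishing cycle.

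For part~(ii): after sliding $\basepta$ near $p$, the loop $\sigma_s$ lifts to a short counter-clockwise path in $\La^{rs}$ from $\basepta$ to $s(\basepta)$ staying in a small neighborhood of $p$, hence away from all reflection hyperplanes but $\La_s$. Along this path $e_w$ and $e_{ws}$ move in a small neighborhood of $w(p)$ and their critical values $h_l(e_w),h_l(e_{ws})$ execute a counter-clockwise half-twist about each other; by Lemma~\ref{lemma-crit-points}(iii) both critical values stay real throughout. In the rank one situation attached to one such coset, the half-twist sends the thimble running straight up from the \emph{larger} of the two critical values to $\pm$ the thimble running straight up from the \emph{smaller} one, with no term proportional to the first thimble (this is exactly the monodromy computation behind Proposition~\ref{prop-example}). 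Now $\BruhatLength(w)<\BruhatLength(ws)$ gives $w\prec ws$, so by Lemma~\ref{lemma-bruhat} $\xi_l(e_w)>\xi_l(e_{ws})$, i.e.\ $e_w$ is the larger one; hence $\mu_l(\sigma_s)(\pm\basis_w)=\pm\basis_{ws}$, as claimed. Orientations of the positive eigenspaces and equivariance data enter only through the ambiguous sign, which is harmless.

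Part~(i) is the same argument with $\baseptl$ in place of $\barbasepta$. By Definition~\ref{defn-regular-splitting}, $r(\sigma_s)=\tilde p(\tilde r(\sigma_s))\in\widetilde W_s$, and (arguing as in the proof of Proposition~\ref{prop-kr-is-regular}, up to conjugation by $I^0$, which affects only signs) $\tilde r(\sigma_s)$ is represented by a loop supported near $\Lp_s^{rs}$, i.e.\ adapted to the rank one pair $(G_s,K_s)$. Sliding $\baseptl$ close to $\La_s$ and using the identity $h_l(e_{sw})=h_{s(l)}(e_w)$ --- which is just $\langle sw(\basepta),l\rangle=\langle w(\basepta),s(l)\rangle$ --- moving $\baseptl$ along this loop produces, on the critical values of $h_{l,\barbasepta}$, the same kind of counter-clockwise half-twist, now interchanging $e_w$ and $e_{sw}$ for each left coset $\{w,sw\}$. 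The rank one computation then gives $\lambda_l\circ\tilde r(\sigma_s)(\pm\basis_w)=\pm\basis_{sw}$ whenever $e_w$ is the larger critical value, which by $w\prec sw$ and Lemma~\ref{lemma-bruhat} is precisely the case $\BruhatLength(w)<\BruhatLength(sw)$.

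The main obstacle is the clean-swap assertion in the rank one step: that in the length-increasing direction the half-twist carries $\pm\basis_w$ to $\pm\basis_{ws}$ (resp.\ $\pm\basis_{sw}$) \emph{on the nose}, with no Picard--Lefschetz shear term $c\cdot\basis_w$. Proving this requires following precisely how the straight-up thimble at $e_w$ is dragged by the braiding of the two critical values and then re-straightened, and checking that the result represents $\pm\basis_{ws}$; the decisive inputs are the normalization (P6) of the defining paths, the reality of the critical values (Lemma~\ref{lemma-crit-points}(iii)), and Lemma~\ref{lemma-bruhat}, which fixes which of $e_w,e_{ws}$ is the ``source'' so that any potential shear term is attached to the \emph{smaller} critical value and hence is invisible in the image of $\basis_w$. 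A subsidiary point, needed to justify the decoupling into rank one pieces, is the absence of spurious collisions of critical values along the loop, which is arranged by choosing the moving base point near a generic point of $\La_s$.
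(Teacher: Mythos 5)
Your argument follows the same route the paper takes: invoke Remark~\ref{rmk-canonical} to slide one of the two base points close to a generic point of the wall $\La_s$, observe that the critical values of $h_{l,\barbasepta}$ then organize into $|W_\La|/2$ well-separated pairs (left cosets $W_s\backslash W_\La$ for the microlocal monodromy, right cosets $W_\La/W_s$ for the monodromy in the family), trace the half-twist on each pair, and use Lemma~\ref{lemma-bruhat} to identify which member of the pair sits at the larger real critical value. The "no-shear" discussion at the end, including the role of (P6) and the fact that only the thimble attached to the larger critical value is swept cleanly, is exactly what the paper is implicitly using; and your observation that $\tilde r(\sigma_s)$ can be traded (up to $I^0$-conjugation, hence up to sign) for a loop in the rank-one piece $\Lp_s^{rs}$ is the analogue of the paper's explicit two-stage loop $\Gamma_s$, whose second stage is a $K$-path and hence leaves the critical values fixed.

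One small error worth correcting: the sentence "by Lemma~\ref{lemma-crit-points}(iii) both critical values stay real throughout" contradicts the immediately preceding statement that they execute a counter-clockwise half-twist. Lemma~\ref{lemma-crit-points}(iii) only gives reality at the \emph{endpoints} $t=0,1$ (where the moving base point lies in $\La_\bR$); during the half-rotation the intermediate points of $\La^{rs}$ are genuinely complex and so are the critical values. What is actually used, and what the paper uses, is only: (a) the critical values are real at the endpoints (so the normalization (P6) is meaningful), and (b) during the loop each pair of critical values makes a counter-clockwise half-turn around the midpoint of the real interval joining them, with the larger one passing through the upper half-plane. With that one fix, the argument is the same as the paper's.
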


\begin{proof}
For part (i), recall the hyperplane $\La_s \subset \La$ fixed by $s$, and let $\overline{\La_\bR^+} \subset \La_\bR$ be the closure of the Weyl chamber $\La_\bR^+$. Pick a point $l_s \in \La_s \cap \overline{\La_\bR^+}$, such that $\on{Stab}_{W_\La} (l_s) = W_s$ ($= \{ 1, s \}$). Using Remark~\ref{rmk-canonical}, we can assume that the basepoint $\basepta \in \La_\bR^+$ is sufficiently generic, so that:
\beqn
h_{l_s} (e_{w_1}) = h_{l_s} (e_{w_2}) \implies w_2 = s w_1 \, , \;\; \text{for all} \;\; w_1, w_2 \in W_\La \, .
\eeqn
Pick a small $\epsilon > 0$. Using Remark~\ref{rmk-canonical}, we can further assume that $l \in \La_\bR^+$ is the unique point such that $l - l_s \in \La_s^\p$ and $\on{dist} (l, l_s) = \epsilon$, where $\La_s^\p \subset \La$ is the orthogonal complement to $\La_s$, as in Section~\ref{subsec-rank-one}.

By the assumptions on the basepoints $\basepta, l \in \La_\bR^+$, the critical values $\{ h_l (e_w) \}_{w \in W_\La}$ are all distinct, and they appear on the real line as a set of $| W_\La | / 2$ pairs of nearby points, corresponding to the right cosets $W_s \backslash W_\La$.

We now define an explicit loop $\Gamma_s : [0, 1] \to \Lp^{rs}$, representing the element $\tilde r (\sigma_s) \in \widetilde B_{W_\La} = \pi_1^K (\Lp^{rs}, l)$. Recall that we write: $r = \tilde p \circ \tilde r : B_{W_\La} \to \widetilde W_\La$. Pick a representative $k_s \in N_K (\La)$ for the element $r (\sigma_s) \in N_K (\La) / Z_K (\La)^0$. By the definition of a regular splitting (Definition~\ref{defn-regular-splitting}), we have $\tilde r (\sigma_s) \in \widetilde B^0_{W_\La}$. It follows that $k_s \in K^0$. Let $\Gamma_{k_s} : [0, 1] \to K^0$ be any continuous path with $\Gamma_{k_s} (0) = 1$ and $\Gamma_{k_s} (1) = k_s$. We define:
\beqn
\Gamma_s (t) = \left\{ \begin{array}{ll}
l_s + \exp (2 t \pi \mathbf{i}) \cdot (l - l_s) \in \La^{rs} & \text{ for } t \in [0, 1/2] \\
\Gamma_{k_s} (2 t - 1) \, s (l) & \text{ for } t \in [1/2, 1].
\end{array}\right.
\eeqn
Note that $s |_{\La_s^\p} = -1$. Therefore, the two branches of the definition of $\Gamma_s$ agree for $t = 1/2$, with $\Gamma_s (1/2) = s (l)$.

Write $l_t = \Gamma_s (t)$ and $h_t = h_{l_t}$, for short. The proposition follows by tracing what happens to the critical values of $h_t |_{X_{\barbasepta}}$, as $t$ traces the interval $[0, 1]$. To that end, let us write $Z_t$ for the set of critical points of $h_t |_{X_{\barbasepta}}$. Note that, for $t \in [0, 1/2]$, we have $Z_t  = Z_l = \{ e_w \}_{w \in W_\La}$, and for $t \in [1/2, 1]$, we have $Z_t = \Gamma_{k_s} (2 t - 1) \, Z_l$.

For every right coset $u \in W_s \backslash W_\La$, let $w_u \in u$ be the unique element such that $\BruhatLength (w_u) < \BruhatLength (s w_u)$. By Lemma~\ref{lemma-bruhat}, we have $h_l (e_{s w_u}) < h_l (e_{w_u})$. Let $C_u \subset \bC$ be the small circle whose diameter is the closed interval:
\beq\label{eqn-interval}
[h_l (e_{s w_u}), h_l (e_{w_u})] \subset \bR.
\eeq
Then, by construction, as $t$ traces the interval $[0, 1/2]$, each pair of critical values $(h_l (e_{w_u}), h_l (e_{s w_u}))$, $u \in W_s \backslash W_\La$, moves counter-clockwise halfway around the circle $C_u$. Further, as $t$ traces the interval $[1/2, 1]$, the critical values of $h_t |_{X_{\barbasepta}}$ remain unchanged. The intervals \eqref{eqn-interval} for different $u \in W_s \backslash W_\La$ are mutually disjoint. Part (i) of the proposition follows.

The proof of part (ii) is similar, with the roles of $\basepta$ and $l$ reversed. We briefly indicate the argument. We pick a generic point $\baseptas \in \overline{\La_\bR^+} \cap \La_s$, and a generic $l \in \La_\bR^+$. We then assume that $\basepta \in \La_\bR^+$ is near $\baseptas$, with $\basepta - \baseptas \in \La_s^\p$. We consider the loop $\Gamma_s : [0, 1] \to \La^{rs} / W_\La$, defined by:
\beqn
\Gamma_s (t) = f (\baseptas + \exp (t \pi \mathbf{i}) \cdot (\basepta - \baseptas)),
\eeqn
which represents $\sigma_s \in B_{W_\La} = \pi_1 (\La^{rs} / W_\La, \basepta)$. The critical values \linebreak $\{ h_l (e_w) \}_{w \in W_\La}$ are organized into pairs, corresponding the left cosets $W_\La / W_s$. As $t$ traces the interval $[0, 1]$, the critical values of $h_l |_{X_{\Gamma_s (t)}}$ move counter-clockwise in pairs, as in the proof of part (i). Part (ii) of the proposition follows.
\end{proof}

\subsection{Picking the signs for the basis \texorpdfstring{$\{ \pm \basis_w \}$}{Lg}}
\label{subsec-picking-signs}
In this subsection, we use Proposition \ref{prop-partial-description} to assign specific signs to the basis elements $\{ \pm \basis_w \}_{w \in W_\La}$. 

By, for example, \cite[Proposition 8.3.3]{S}, there exists a unique map of sets $\LiftToBraids : W_\La \to B_{W_\La}$ such that:
\begin{enumerate}[topsep=-1.5ex]
\item[(i)]    $\LiftToBraids (1) = 1$; $\LiftToBraids (s) = \sigma_s$ for $s \in S$;

\item[(ii)]  $\LiftToBraids (w_1 w_2) = \LiftToBraids (w_1) \LiftToBraids (w_2)$ for every pair $w_1, w_2 \in W_\La$ with $\BruhatLength (w_1 w_2) = \BruhatLength (w_1) + \BruhatLength (w_2)$.
\end{enumerate}
We will write $b_w = \beta(w) \in B_{W_\La}$. Note that, using our previous notation, we have $b_s = \sigma_s$ for a simple reflection $s \in S$. 

It follows from Proposition~\ref{prop-partial-description}, arguing by induction on the length $\BruhatLength (w)$ of an element $w \in W_\La$, that we have:
\beq\label{eqn-cor-partial-description}
\lambda_l \circ \tilde r \, (b_w) (\pm \basis_1) = \pm \basis_w \;\; \text{for every} \;\; w \in W_\La \, .
\eeq

Now, for each $w \in W_\La$, recall the positive eigenspace $T_v [e_w] \subset T_{e_w} X_{\barbasepta}$. Let $\cO [w]$ be the $\bZ / 2$-torsor of orientations of $T_v [e_w]$. Note that, since the Weyl chamber $\La_\bR^+$ is contractible, the torsor $\cO [w]$ is canonically independent of the choice of the basepoints $\basepta, l \in \La_\bR^+$. Pick an orientation $o_1 \in \cO [1]$. We define:
\beqn
\basis_1 = PL [e_1, \gamma_1, o_1] \in M_l (\Pone); \;
\basis_w = \lambda_l \circ \tilde r \, (b_w) \, \basis_1 \in M_l (\Pone) \text{ for } w \in W_\La - \{ 1 \}.
\eeqn
Note that, by equation~\eqref{eqn-cor-partial-description}, for every $w \in W_\La$, there is a unique $o_w \in \cO [w]$ such that:
\beqn
\basis_w = PL [e_w, \gamma_w, o_w] \in M_l (\Pone).
\eeqn
We can now summarize much of the content of Theorem \ref{thm-chi-eq-one} in terms of the basis:
\beqn
\{ \basis_w \}_{w \in W_\La} \subset M_l (\Pone), 
\eeqn
as follows.

\begin{prop}\label{prop-chi-eq-one-basis}
We have:
\begin{enumerate}[topsep=-1.5ex]
\item[(i)]   for every $w \in W_\La$ and every $s \in S$ such that $\BruhatLength (w) < \BruhatLength (sw)$,
\begin{eqnarray*}
\lambda_l \circ \tilde r \, (\sigma_s) \, \basis_w &=& \basis_{sw} \, ;
\\
\lambda_l \circ \tilde r \, (\sigma_s) \, \basis_{sw} &=& - \basis_w + 2 \, \basis_{sw} \, , \;\; \text{if $\delta (s)$ is odd};  
\\
\lambda_l \circ \tilde r \, (\sigma_s) \, \basis_{sw} &=& \basis_w \, , \;\; \text{if $\delta (s)$ is even};
\end{eqnarray*}

\item[(ii)]  for every $w \in W_\La$ and every $s \in S$ such that $\BruhatLength (w) < \BruhatLength (ws)$,
\begin{eqnarray*}
\mu_l (\sigma_s) \, \basis_w &=& \basis_{ws} \, , \;\; \text{if $\delta (s)$ is odd};
\\
\mu_l (\sigma_s) \, \basis_w &=& - \basis_{ws} \, , \;\; \text{if $\delta (s)$ is even};
\\
\mu_l (\sigma_s) \, \basis_{ws} &=& - \basis_w + 2 \, \basis_{ws} \, , \;\; \text{if $\delta (s)$ is odd};
\\
\mu_l (\sigma_s) \, \basis_{ws} &=& - \basis_w \, , \;\; \text{if $\delta (s)$ is even}.
\end{eqnarray*}
\end{enumerate}
\end{prop}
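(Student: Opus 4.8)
The plan is to derive all four families of formulas from two inputs: the relation $\basis_w = \lambda_l\circ\tilde r(b_w)\,\basis_1$ defining the basis (Section~\ref{subsec-picking-signs}) together with the multiplicativity of $\beta$, and an explicit computation for the rank one pairs $(\bar G_s,\bar K_s)$ of Section~\ref{subsec-rank-one}. The length‑increasing identities are formal: if $\BruhatLength(sw)=\BruhatLength(w)+1$ then $b_{sw}=\sigma_s b_w$ by property (ii) of $\beta$, so
\[\lambda_l\circ\tilde r(\sigma_s)\,\basis_w=\lambda_l\circ\tilde r(\sigma_s)\,\lambda_l\circ\tilde r(b_w)\,\basis_1=\lambda_l\circ\tilde r(b_{sw})\,\basis_1=\basis_{sw},\]
which is the first line of (i); similarly $b_{ws}=b_w\sigma_s$ gives $\lambda_l\circ\tilde r(b_w)\,\basis_s=\basis_{ws}$. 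For (ii), recall from Section~\ref{subsec-partial-description} that $\mu_l$ and $\lambda_l$ commute, so $\mu_l(\sigma_s)$ commutes with $\lambda_l\circ\tilde r(b_w)$ and hence $\mu_l(\sigma_s)\,\basis_w=\lambda_l\circ\tilde r(b_w)\,\mu_l(\sigma_s)\,\basis_1$ and $\mu_l(\sigma_s)\,\basis_{ws}=\lambda_l\circ\tilde r(b_w)\,\mu_l(\sigma_s)\,\basis_s$; since $\lambda_l\circ\tilde r(b_w)$ carries $\basis_1\mapsto\basis_w$ and $\basis_s\mapsto\basis_{ws}$, part (ii) reduces to the case $w=1$. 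For the remaining identity of (i), Proposition~\ref{prop-partial-description}(i) already shows $\lambda_l\circ\tilde r(\sigma_s)$ preserves the plane $\langle\basis_w,\basis_{sw}\rangle$, so it suffices to prove its matrix there is independent of $w$ and to compute it for $w=1$.

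The core is a reduction to rank one. Fix $s\in S$ and, using Remark~\ref{rmk-canonical}, move $l$ toward the wall $\La_s$ exactly as in the proof of Proposition~\ref{prop-partial-description}(i), so that the critical values of $h_l|_{X_{\barbasepta}}$ lie in disjoint small circles, one per coset in $W_s\backslash W_\La$. The loop $\Gamma_s$ representing $\tilde r(\sigma_s)$ moves each pair of critical values halfway around its circle and then applies a loop in $K^0$ whose endpoint represents $s\in W_\La$; both operations respect the decomposition $M_l(\Pone)=\bigoplus_u\langle\basis_{w_u},\basis_{sw_u}\rangle$, since the unordered pair $\{e_{w_u},e_{sw_u}\}$ is the coset $W_s w_u$ and is stabilized by any representative of $s$. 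To identify one block with a rank one situation I will pick, as in the proof of Proposition~\ref{prop-kr-is-regular}, a normal slice through $e_{w_u}$ lying inside $\Lp_s=Z_\Lp(\La_s)$, and use the product structure $\Lp_s=\bar\Lp_s\times\La_s$ together with diagram~\eqref{diagramFs}; this presents $\langle\basis_{w_u},\basis_{sw_u}\rangle$ together with its restricted $\tilde r(\sigma_s)$‑action as the Morse group of the nearby cycle sheaf of $(\bar G_s,\bar K_s)$ with its microlocal monodromy, matching $\basis_{w_u}\leftrightarrow\basis_1$, $\basis_{sw_u}\leftrightarrow\basis_s$. The same device, with the roles of $\basepta$ and $l$ exchanged, reduces the $w=1$ case of (ii) to the rank one pair as well.

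It remains to do the rank one Picard--Lefschetz computation. For $(\bar G_s,\bar K_s)$ the quotient map $\bar f_s:\bar\Lp_s\to\La_s^\p/W_s\cong\bC$ is a non‑degenerate homogeneous quadratic polynomial, so its general fiber is a smooth affine quadric of complex dimension $\delta(s)$ (by \eqref{eqn-barLp-s}), homotopy equivalent to $S^{\delta(s)}$, and the generic linear function $h_l$ has two Morse critical points with Lefschetz thimbles $\basis_1,\basis_s$. For $\delta(s)=1$ this is the situation of Proposition~\ref{prop-example} (treating the two sub‑cases of Lemma~\ref{lemma-trichotomy}(iv) separately, passing to $(\bar G'_s,\bar K'_s)$ where needed), and for $\delta(s)>1$ one computes directly. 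Classical Picard--Lefschetz theory identifies the monodromy of $\bar f_s$ around $0$ with a transvection or a reflection according as the self‑intersection of the vanishing sphere $S^{\delta(s)}$ is $0$ or $\pm2$, i.e. according to the parity of $\delta(s)$; tracking the two thimbles through the half‑loop $\sigma_s$ then yields, on $\langle\basis_1,\basis_s\rangle$, the matrix $\left(\begin{smallmatrix}0&-1\\1&2\end{smallmatrix}\right)$ when $\delta(s)$ is odd and $\left(\begin{smallmatrix}0&1\\1&0\end{smallmatrix}\right)$ when $\delta(s)$ is even — equivalently, $\lambda_l\circ\tilde r(\sigma_s)$ satisfies the Hecke relation $(\lambda_l\circ\tilde r(\sigma_s)-1)(\lambda_l\circ\tilde r(\sigma_s)+(-1)^{\delta(s)})=0$ and sends $\basis_1\mapsto\basis_s$. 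A parallel computation with $\basepta$ moving instead of $l$ gives the matrices for $\mu_l(\sigma_s)$. Substituting into the block identifications yields all the formulas, with the signs — in particular the extra $-1$ in (ii) versus (i) in the even case — pinned down by the orientation choices $o_w$ of Section~\ref{subsec-picking-signs} together with the orientation bookkeeping of the rank one computation.

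I expect the main obstacle to be making the rank one reduction fully rigorous: showing that the normal‑slice identification of a block with the rank one Morse group intertwines the monodromy operators (not merely the underlying groups), and transporting the orientation conventions through it so that every sign lands exactly as stated. The rank one computation itself is a standard, if delicate, exercise in Picard--Lefschetz theory, and for $\delta(s)=1$ it is already contained in the $(SL(2),SO(2))$ example of Section~\ref{sec-example}.
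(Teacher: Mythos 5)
Your outline correctly reproduces the easy, formal parts of the argument, and the overall slogan "reduce to rank one plus track orientations" is in the spirit of the paper. But the execution differs from the paper's in a way that matters, and you have flagged (but not closed) the very gap the paper's argument is designed to circumvent.

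First, a small misattribution: Proposition~\ref{prop-partial-description}(i) only gives $\lambda_l\circ\tilde r(\sigma_s)(\pm\basis_w)=\pm\basis_{sw}$ in the length-increasing direction; it does \emph{not} show that the operator preserves the plane $\langle\basis_w,\basis_{sw}\rangle$. That preservation is a separate geometric fact, proved in the paper via the path $\gamma_{1,s}$ (Lemma~\ref{lemma-generator-squared} and equation~\eqref{eqn-pl-formula}), which shows $\lambda_l\circ\tilde r(\sigma_s)\basis_s=\pm(\basis_1+m_s\basis_s)$. Without this input you cannot even write down the $2\times 2$ matrix you propose to compute.

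Second, and more substantively, your plan asks for a direct rank-one reduction of the \emph{microlocal} monodromy $\lambda_l\circ\tilde r(\sigma_s)$ on a block, ``matching $\basis_{w_u}\leftrightarrow\basis_1$, $\basis_{sw_u}\leftrightarrow\basis_s$.'' This is not what the paper's rank-one reduction (Proposition~\ref{prop-minimal-polynomials}) does, and I don't think it can be made to work in the form you describe. The Morse group $M_l(\Pone)$ is a relative homology group of the full orbit $X_{\barbasepta}$; the Lefschetz thimbles at $e_{w_u}$ do not lie in the slice $\Lp_s$, so restricting to $\baseptas+\bar\Lp_s$ does not identify a Morse-group block with the rank-one Morse group. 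Proposition~\ref{prop-minimal-polynomials} restricts only the \emph{perverse sheaf} $P_{\bar\gamma}$ (via transverse intersection with the orbit stratification) and extracts only the \emph{quadratic relation} satisfied by $\mu(\sigma_s)$ — an eigenvalue statement, not a matrix. The paper then transfers this to $\lambda$ indirectly: it shows both operators preserve the identity block $M_l[u_1]$, commute, and have $\basis_1$ as a cyclic element, hence agree up to a global sign on $M_l[u_1]$ (equation~\eqref{eqn-on-u1}); then it conjugates to the other blocks by $\mu_l(b_{w_u^{-1}})$. Your proposal to ``present $\langle\basis_{w_u},\basis_{sw_u}\rangle$ \ldots\ as the Morse group of the nearby cycle sheaf of $(\bar G_s,\bar K_s)$ with its microlocal monodromy'' is exactly the step you admit is the ``main obstacle,'' and it is not addressed.

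Third, the decisive sign input is missing entirely. The asymmetry between (i) and (ii) in the even-$\delta(s)$ case ($\basis_w\mapsto\basis_{sw}$ for $\lambda$ but $\basis_w\mapsto-\basis_{ws}$ for $\mu$) is not something you can extract from the rank-one Picard--Lefschetz matrix without an external anchor for the orientations. The paper supplies that anchor through the fundamental class: Proposition~\ref{prop-fundamental-class}(iii)--(iv) compute the two actions on $H_d(X_{\barbasepta};\baseRing)$ — the microlocal action is $\tau$ (hence trivial on $\tilde r(B_{W_\La})$), while $\mu(\sigma_s)$ acts by $(-1)^{\delta(s)+1}$ — and Lemma~\ref{lemma-fund-class-coefs} converts this into the statement $\mu_l(\sigma_s)F_{1,s}=(-1)^{\delta(s)+1}F_{1,s}$, $\lambda_l\circ\tilde r(\sigma_s)F_{1,s}=F_{1,s}$, which pins the sign in equation~\eqref{eqn-on-u1}. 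This is a structural argument, invariant under the orientation choices of Section~\ref{subsec-picking-signs}, and it replaces the ``orientation bookkeeping of the rank one computation'' you defer. I recommend you adopt it: your framework is close enough that inserting the fundamental-class argument in place of the direct matrix computation would close the gaps.
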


The next several subsections will be devoted to the proof of Proposition \ref{prop-chi-eq-one-basis}.

\subsection{Equations for the monodromy in the family}
\label{subsec-minimal-polynomials}
In this subsection, we provide the following key geometric input into the proof of Proposition \ref{prop-chi-eq-one-basis}.

\begin{prop}\label{prop-minimal-polynomials}
Let $s \in S$ be a simple reflection. The monodromy transformation $\mu (\sigma_s) \in \on{End} (\Pone)$ satisfies:
\begin{eqnarray*}
(\mu (\sigma_s) - 1)^2 &=& 0, \;\;\;\; \text{if $\delta (s)$ is odd};
\\
\mu (\sigma_s)^2 - 1 &=& 0, \;\;\;\; \text{if $\delta (s)$ is even}.
\end{eqnarray*}
\end{prop}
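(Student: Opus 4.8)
The plan is to reduce Proposition~\ref{prop-minimal-polynomials} to a statement about a rank one symmetric pair, using the rank one pairs $(G_s, K_s)$ and $(\bar G_s, \bar K_s)$ introduced in Section~\ref{subsec-rank-one} and the product decomposition~\eqref{eqn-Fs-product} of the quotient map $f_s$. The monodromy transformation $\mu(\sigma_s)$ is defined by a loop supported near the wall $\La_s$, so it should only see the geometry transverse to $\La_s$; concretely, I would use Proposition~\ref{prop-asubf2} (i.e. the $A_{F_2}$-constructibility of the family $\cP$) together with Remark~\ref{rmk-canonical} to localize the computation of $\mu(\sigma_s)$ to a neighbourhood of $\La_s$, and then use the commutative diagram~\eqref{diagramFs} and equation~\eqref{eqn-Fs-product} to identify the relevant nearby cycle sheaf, restricted to the part of $\cN_\Lp$ meeting $\Lp_s$, with the (external product of the) nearby cycle sheaf $\bar P_1$ for the rank one pair $(\bar G_s, \bar K_s)$. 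The monodromy $\mu(\sigma_s)$ then gets identified with the monodromy in the family for the rank one pair.

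Once the reduction to rank one is in place, the two cases are distinguished by the parity of $\delta(s) = \tfrac12 |\Phi_s|$, which by~\eqref{eqn-barLp-s} is $\dim \bar\Lp_s - 1$. So $\delta(s)$ odd means $\dim \bar\Lp_s$ is even, and $\delta(s)$ even means $\dim\bar\Lp_s$ is odd. I would then invoke the known structure of the nearby cycle sheaf and its monodromy for a rank one pair: the map $\bar f_s : \bar\Lp_s \to \La_s^\p/W_s \cong \bC$ is a non-degenerate homogeneous quadratic form on a space of dimension $\dim\bar\Lp_s = \delta(s)+1$, and the nearby cycle sheaf for such a quadric, together with its monodromy, is completely understood. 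When $\delta(s)$ is odd the fibre is an even-dimensional affine quadric (homotopy equivalent to an even sphere), the vanishing cycle is unipotent, and one gets $(\mu(\sigma_s)-1)^2 = 0$; when $\delta(s)$ is even the fibre is an odd-dimensional affine quadric and the monodromy is an involution, giving $\mu(\sigma_s)^2 - 1 = 0$. The base case $\delta(s)=1$ is exactly the pair $(SL(2),SO(2))$ worked out in Section~\ref{sec-example} (Proposition~\ref{prop-example}), where $(\mu_0-1)^2=0$ holds; the higher $\delta(s)$ cases follow the pattern of \cite[Examples 3.7, 3.8]{G2}.

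The main obstacle I expect is the localization/reduction step: making precise the claim that $\mu(\sigma_s)$ for the full pair $(G,K)$ agrees with the monodromy in the family for the rank one pair $(\bar G_s, \bar K_s)$. This requires knowing that the family $\cZ \to \La^{rs}/W_\La$ restricts compatibly, near $\La_s$, to the corresponding family for $\bar f_s$, which is where Proposition~\ref{prop-asubf2} and the product structure~\eqref{eqn-Fs-product} do the work: the $A_{F_2}$ condition ensures that the nearby cycle sheaf $\cP$ is constructible with respect to $\cS_{\cZ_0}$ (Corollary~\ref{cor-constructible}), so its restriction to a transverse slice through a point of $\La_s$ computes the same thing as the rank one nearby cycle, and the loop $\Gamma_s$ defining $\sigma_s$ can be taken to live entirely in that slice (as in the proof of Proposition~\ref{prop-partial-description}(ii)). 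The other point requiring care is that $\mu(\sigma_s)$ is an endomorphism of the perverse sheaf $\Pone$ on all of $\cN_\Lp$, not just of its restriction to the slice; but since the minimal-polynomial relations $(\mu(\sigma_s)-1)^2 = 0$ and $\mu(\sigma_s)^2-1 = 0$ can be checked after applying a faithful family of Morse group functors $M_{(0,h_y)}$, $y \in \Lp^{rs}$ (by Lemma~\ref{lemma-fourier-morse} and Proposition~\ref{prop-fourier}, the Fourier transform $\fF\Pone$ is an $\IC$ sheaf, so $\on{End}(\Pone)$ injects into $\on{End}$ of the generic stalk), it suffices to verify them on the Morse groups, which are governed by the rank one geometry. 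I would therefore phrase the final verification in terms of the basis $\{\pm\basis_w\}$ of Lemma~\ref{lemma-basis} and the pairing of the critical values into cosets $W_\La/W_s$, exactly as set up in Section~\ref{subsec-partial-description}.
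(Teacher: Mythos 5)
Your core idea is right: reduce to the rank-one pair $(\bar G_s,\bar K_s)$, where $\bar f_s : \bar\Lp_s \to \bC$ is a non-degenerate quadric of dimension $\delta(s)+1$ and the monodromy relations are classical (\cite[Example 3.7]{G2} / Proposition~\ref{prop-example}). You have also correctly identified the faithful ``test'' (the Morse group functor together with the fact that $\fF\Pone$ is an $\IC$-sheaf, so $\on{End}(\Pone)$ injects into $\on{End}$ of the generic stalk), which is a legitimate alternative to the paper's strategy of establishing the relation directly at the level of perverse sheaves. But there is a genuine gap in the step that does the real work, and your citation of Proposition~\ref{prop-asubf2} does not fill it.

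Proposition~\ref{prop-asubf2} concerns the family $\cZ$ over $(\La^{rs}/W_\La)\times\bC$, where the first factor stays \emph{away} from the discriminant. The relation $\mu(\sigma_s)^2-1=0$ (or $(\mu(\sigma_s)-1)^2=0$) is a statement about what happens as the loop $\sigma_s$ goes \emph{around} the wall $\La_s$, so any reduction to rank one must pass through a point $\barbaseptas$ of the discriminant. Concretely, the paper's proof first \emph{factorizes} the specialization: it constructs an auxiliary family over $\rD_{\bar\gamma}\times\bC$ (where $\rD_{\bar\gamma}$ is a disc through $\barbaseptas$, transverse to the discriminant), proves Thom conditions (P0)--(P2) for this new family (the relevant condition (P2) is only \emph{analogous to}, not a case of, Proposition~\ref{prop-asubf2}), and then uses the ``roof diagram''~\eqref{roofDiagram} to show $\psi_{\barbaseptas}(P_{\bar\gamma})\cong\Pone$ and $\psi_{\barbaseptas}(\mu_{\bar\gamma})=\mu(\sigma_s)$. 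That interchange-of-specializations lemma is the crux, and nothing in your outline establishes it. Only \emph{after} this does the paper pass to the normal slice $\Lp_s$ (using that $j^*$ is faithful because every $K$-orbit in $X_{\barbaseptas}$ meets $\basepta_s + \bar\Lp_s$) to get to the rank-one statement.

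There is a second soft spot. You propose to ``phrase the final verification in terms of the basis $\{\pm\basis_w\}$ ... exactly as set up in Section~\ref{subsec-partial-description}.'' But Proposition~\ref{prop-partial-description}(ii) only produces the off-diagonal part of the $2\times 2$ blocks (it sends $\pm\basis_w$ to $\pm\basis_{ws}$ for $\BruhatLength(w)<\BruhatLength(ws)$); it does not determine the diagonal entries, and it is precisely the diagonal entries (equivalently, whether the block is unipotent or involutive) that the present proposition pins down. The paper's Lemma~\ref{lemma-microlocal-minimal-polynomials} extracts this information on each coset block by \emph{using} Proposition~\ref{prop-minimal-polynomials} as an input, so you cannot run the derivation in Section~\ref{subsec-partial-description}--\ref{subsec-proof-of-prop-chi-eq-one-basis} as a substitute for proving Proposition~\ref{prop-minimal-polynomials}. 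If you want to work via Morse groups, you still need an independent source for the local Picard--Lefschetz data at the wall, and that source is exactly the two-step specialization argument you have skipped.
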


\begin{proof}
The majority of the argument consists of a reduction to rank one, i.e., to the case where $\dim \La = 1$. The key to the argument is to make use of Thom's $A_f$ condition for the adjoint quotient map $f : \Lp \to \La / W_\La$. We use this property of $f$ to show that specializing to the discriminant locus and then to the nilpotent cone gives the same result as specializing to the nilpotent cone directly.

With that in mind, similarly to the beginning of the proof of Proposition~\ref{prop-partial-description}, we pick a generic point $\baseptas \in \La_s \cap \overline{\La_\bR^+}$ and a small $\epsilon > 0$, and we assume (using Remark~\ref{rmk-canonical}) that $\basepta \in \La_\bR^+$ is the unique point such that $\basepta - \baseptas \in \La_s^\p$ and $\on{dist} (\basepta, \baseptas) = \epsilon$. Let $\rD_\gamma = \{ z \in \bC \; | \; |z| < 2 \, \epsilon \}$. Define an analytic arc $\gamma : \rD_\gamma \to \La$ by:
\beqn
\gamma (z) = \baseptas + (z / \epsilon) \cdot (\basepta - \baseptas).
\eeqn
Let $\rD_{\bar\gamma} = \{ z \in \bC \; | \; |z| < 4 \, \epsilon^2 \}$, and let $\bar\gamma : \rD_{\bar\gamma} \to \La / W_\La$ be the unique analytic arc such that:
\beqn
f \circ \gamma \, (z) = \bar\gamma (z^2) \;\; \text{for every} \;\; z \in \rD_\gamma \, .
\eeqn
We base change the family $f: \Lp \to \La / W_\La$ to $\rD_{\bar\gamma}$, to obtain a family:
\beqn
f_{\bar\gamma} : \Lp_{\bar\gamma} = \Lp \times_{\La / W_\La} \rD_{\bar\gamma} \to \rD_{\bar\gamma} \, .
\eeqn
Let $\barbaseptas = f(\baseptas)$ and $X_{\barbaseptas} = f^{-1} (\barbaseptas) = f_{\bar\gamma}^{-1} (0)$. Write $\Lp^{rs}_{\bar\gamma} = f_{\bar\gamma}^{-1} (\rD_{\bar\gamma} - \{ 0 \}) \subset \Lp_{\bar\gamma}$. Form the nearby cycle sheaf:
\beqn
P_{\bar\gamma} = \psi_{f_{\bar\gamma}} \, \baseRing_{\Lp^{rs}_{\bar\gamma}} [-] \in \on{Perv}_K (X_{\barbaseptas}),
\eeqn
which, as usual, we make perverse by an appropriate shift. Let $\mu_{\bar\gamma} : P_{\bar\gamma} \to P_{\bar\gamma}$ be the associated (counter-clockwise) monodromy. Consider the functor:
\beqn
\psi_{\barbaseptas} : \on{Perv}_K (X_{\barbaseptas}) \to \on{Perv}_K (\cN_\Lp),
\eeqn
defined in the same manner as the functor $\psi_{\barbasepta}$ of Section~\ref{subsec-twisted}, now using the family:
\beqn
\cZ_{\barbaseptas} = \{ (x, c) \in \Lp \times \bC \; | \; f (x) = c \, \barbaseptas \} \to \bC.
\eeqn
We claim that:
\beq
\label{factorization}
\psi_{\barbaseptas} (P_{\bar\gamma}) \cong \Pone \;\;\;\; \text{and} \;\;\;\;
\psi_{\barbaseptas} (\mu_{\bar\gamma}) = \mu (\sigma_s).
\eeq

To prove this fact, we proceed as in Section~\ref{subsec-twisted}, and consider the family:
\beqn
F : \cZ_{\bar\gamma} = \{ (x, z, c) \in \Lp \times \rD_{\bar\gamma} \times \bC \; | \; f(x) = c \, \bar\gamma (z) \} \to \rD_{\bar\gamma} \times \bC,
\eeqn
which extends, via pullback to $\rD_{\bar\gamma}$, the appropriate part of the family $\cZ$ of Section~\ref{subsec-twisted} across the codimension one locus in $\La / W_{\La}$. Note that this family is $(K \times \bC^*)$-equivariant, where $\bC^*$ acts trivially on $\rD_{\bar\gamma}$. The space $\cZ_{\bar\gamma}$ is equipped with a natural stratification $\cS_{\cZ_{\bar\gamma}}$, which we now describe.
Recall that we write $\cN_\Lp / K$ for the set of $K$-orbits in $\cN_\Lp$. Let $\rD_{\bar\gamma}^* = \rD_{\bar\gamma} \cap \bC^*$. Consider the decomposition:
\beqn
\cZ_{\bar\gamma} = \bigcup_{i = 0}^3 \, \cZ_{\bar\gamma, i} \, ,
\eeqn
where $\cZ_{\bar\gamma, 0} = F^{-1} (\{ 0 \} \times \{ 0 \})$, $\cZ_{\bar\gamma, 1} = F^{-1} (\{ 0 \} \times \bC^*)$, $\cZ_{\bar\gamma, 2} = F^{-1} (\rD_{\bar\gamma}^* \times \{ 0 \})$, and $\cZ_{\bar\gamma, 3} = F^{-1} (\rD_{\bar\gamma}^* \times \bC^*)$. The stratification $\cS_{\cZ_{\bar\gamma}}$ respects this decomposition. The locus $\cZ_{\bar\gamma, 0} \cong \cN_\Lp$ is stratified by $K$-orbits. The locus $\cZ_{\bar\gamma, 1}$ is stratified by $(K \times \bC^*)$-orbits. The locus $\cZ_{\bar\gamma, 2} = \cN_\Lp \times \rD_{\bar\gamma} \times \{ 0 \}$ is stratified by the products $\cO \times \rD_{\bar\gamma} \times \{ 0 \}$ for all $\cO \in \cN_\Lp / K$. Finally, the locus $\cZ_{\bar\gamma, 3}$ is a single stratum.

Let us write $F_1 : \cZ_{\bar\gamma} \to \rD_{\bar\gamma}$ and $F_2 : \cZ_{\bar\gamma} \to \bC^*$ for the two components of $F$. The family $F$ satisfies the following properties.
\begin{enumerate}[topsep=-1.5ex]
\item[(P0)] For every stratum $S_0 \subset \cZ_{\bar\gamma, 0}$, Thom's $A_F$ condition holds for the pair $(\cZ_{\bar\gamma, 3}, S_0)$.

\item[(P1)] For every stratum $S_1 \subset \cZ_{\bar\gamma, 1}$, Thom's $A_{F_1}$ condition holds for the pair $(\cZ_{\bar\gamma, 3}, S_1)$.

\item[(P2)] For every stratum $S_2 \subset \cZ_{\bar\gamma, 2}$, Thom's $A_{F_2}$ condition holds for the pair $(\cZ_{\bar\gamma, 3}, S_2)$.
\end{enumerate}
Property (P0) follows from the $K$-equivariance of $F$, property (P1) follows from the $(K \times \bC^*)$-equivariance of $F$, and property (P2) is analogous to Proposition~\ref{prop-asubf2}.

Let $R = [0, 4 \, \epsilon^2) \times [0, + \infty) \subset \rD_{\bar\gamma} \times \bC$. Let $\cZ_R = F^{-1} (R) \subset \cZ_{\bar\gamma}$. For each $i = 0, 1, 2, 3$, let $\cZ_{R, i} = \cZ_R \cap \cZ_{\bar\gamma, i}$, and let $j_i : \cZ_{R, i} \to \cZ_R$ be the inclusion map. We now form the following diagram of perverse sheaves on $\cZ_{R, 0} \cong \cN_\Lp$:
\beq\label{roofDiagram}
j_0^* \, (j_1)_* \, j_1^* \, (j_3)_* \, \baseRing_{\cZ_{R, 3}} [-] \; \longleftarrow \; j_0^* \, (j_3)_* \, \baseRing_{\cZ_{R, 3}} [-] \; \longrightarrow \; j_0^* \, (j_2)_* \, j_2^* \, (j_3)_* \, \baseRing_{\cZ_{R, 3}} [-],
\eeq
where all pushforward functors are understood to be derived. Here, the shifts are used to make the sheaves perverse, as usual, and the maps are induced by the inclusions $\cZ_{R, i} \to \cZ_{R, i} \cup \cZ_{R, 3}$, $i = 1, 2$. By property (P1) of the family $F$, the sheaf $j_1^* \, (j_3)_* \, \baseRing_{\cZ_{R, 3}}$ is constructible with respect to the stratification of $\cZ_{\bar\gamma, 1}$, and therefore we have:
\beqn
j_0^* \, (j_1)_* \, j_1^* \, (j_3)_* \, \baseRing_{\cZ_{R, 3}} [-] \cong \psi_{\barbaseptas} (P_{\bar\gamma}).
\eeqn
By property (P2) of the family $F$, the sheaf $j_2^* \, (j_3)_* \, \baseRing_{\cZ_{R, 3}}$ is constructible with respect to the stratification of $\cZ_{\bar\gamma, 2}$, and therefore we have:
\beqn
j_0^* \, (j_2)_* \, j_2^* \, (j_3)_* \, \baseRing_{\cZ_{R, 3}} [-] \cong \Pone.
\eeqn
Finally, by property (P0) of the family $F$, both of the maps in diagram~\eqref{roofDiagram} are isomorphisms.

More precisely, for the last assertion, let $M_{\bar\gamma} = \Lp \times \rD_{\bar\gamma} \times \bC \supset \cZ_{\bar\gamma}$, and fix a Hermitian metric on $M_{\bar\gamma}$. Fix a point $m \in \cZ_{\bar\gamma, 0}$. For $\delta > 0$, let $\rB_{m, \delta} \subset M_{\bar\gamma}$ be the closed $\delta$-ball around $m$. By the Whitney conditions for the stratification of $\cZ_{\bar\gamma, 0}$, there exists an $\delta_0 > 0$, such that the ball $\rB_{m, \delta_0}$ is compact, and for every $\delta \in (0, \delta_0]$, the boundary $\partial \rB_{m, \delta}$ meets the strata of $\cZ_{\bar\gamma, 0}$ transversely. By the $A_F$ condition of property (P0), for every $\delta_1 \in (0, \delta_0]$, there exists a $\zeta \in (0, 4 \, \epsilon^2)$, such that for every $\delta \in [\delta_1, \delta_0]$ and every pair $(z, c) \subset (0, \zeta) \times (0, \zeta) \subset R$, the intersection $\partial \rB_{m, \delta} \cap F^{-1} (z, c)$ is transverse. It follows that the restriction of $F$ to the intersection $\rB_{m, \delta} \cap F^{-1} ((0, \zeta) \times (0, \zeta))$ is a smooth fibration, whose fibers are manifolds with boundary, and the stalk of the sheaf $j_0^* \, (j_3)_* \, \baseRing_{\cZ_{R, 3}}$ at $m$ is given by the cohomology of the total space, or equivalently of any of the fibers, of this fibration. By tracing the definitions, we can conclude that the maps in diagram~\eqref{roofDiagram} induce isomorphisms on the stalks at $m$. The assertion follows.

This proves the first assertion of equation~\eqref{factorization}. For the second assertion we need to consider a version of diagram~\eqref{roofDiagram} parametrized by the circle $S^1 = \{ z \in \bC \; | \; |z| = 1 \}$. Namely, for each $z \in S^1$, we obtain an analog of diagram~\eqref{roofDiagram} by restricting the family $F$ to the region $R_z = (z \cdot [0, 4 \, \epsilon^2)) \times [0, + \infty) \subset \rD_{\bar\gamma} \times \bC$. We omit the details.

Thus, we are reduced to proving the equations for $\mu_{\bar\gamma}$ in place of $\mu (\sigma_s)$, which we do by reducing to rank one. Recall the subspaces $\Lp_s = Z_\Lp (\La_s) = \La \oplus [\Lk_s, \La]$, $\bar \Lp_s = \La_s^\p \oplus [\Lk_s, \La]$ of $\Lp$, and the symmetric pair $(\bar G_s, \bar K_s) \subset (G, K)$, introduced in Section~\ref{subsec-rank-one}. The space $\bar \Lp_s$ is the $(-1)$-eigenspace of this pair. Consider the intersection $X_{\barbaseptas} \cap \Lp_s$. In view of \eqref{diagramFs} and \eqref{eqn-Fs-product}, the connected component of this intersection that passes through $\baseptas$ can be described as $X_{\barbaseptas} \cap (\baseptas + \bar \Lp_s) = \baseptas + \bar f_s^{-1} (0)$ (cf. the discussion above \eqref{eqn-normal-slice}). Note that $K \cdot \baseptas$ is the unique closed $K$-orbit in $X_{\barbaseptas}$. Note also, as in the proof of Proposition~\ref{prop-kr-is-regular} (see equation~\eqref{eqn-normal-slice}), that the subspace $\Lp_s \subset \Lp$ is a normal slice to $K \cdot \baseptas$ at $\baseptas$. Recall that the group $\bar K_s$ acts on the zero-fiber $\bar f_s^{-1} (0)$ with orbits that are $\bC^*$-conic. It follows that, for every $K$-orbit $\cO \subset X_{\barbaseptas}$, the locus $\cO_{\barbaseptas} \coloneqq \cO \cap (\baseptas + \bar \Lp_s)$ is non-empty, and the intersection $\cO \cap \Lp_s \subset \Lp$ is transverse along $\cO_{\barbaseptas}$. Therefore, there is a well-defined perverse (i.e., properly shifted) restriction functor:
\beqn
j^* : \on{Perv}_K (X_{\barbaseptas}) \to \on{Perv}_{\bar K_s} (X_{\barbaseptas} \cap (\baseptas + \bar \Lp_s)),
\eeqn
where $j : X_{\barbaseptas} \cap (\baseptas + \bar \Lp_s) \to X_{\barbaseptas}$ is the inclusion map. Since every $K$-orbit in $X_{\barbaseptas}$ meets the locus $\baseptas + \bar \Lp_s$, the functor $j^*$ is faithful. Thus, it will suffice to prove the required quadratic relations for the restriction:
\beqn
j^* (\mu_{\bar\gamma}) \in \on{End} (j^* (P_{\bar\gamma})).
\eeqn
On the other hand, the sheaf $j^* (P_{\bar\gamma})$ is the nearby cycles in the family:
\beqn
\baseptas + \bar \Lp_s \to (\baseptas + \bar \Lp_s) \inv K_s \cong \La_s^\p / W_s \, ,
\eeqn
and $j^* (\mu_{\bar\gamma})$ is the associated monodromy transformation.

Thus, we are reduced to the case of the symmetric pair $(\bar G_s, \bar K_s)$ of rank one. In this case, we have $\dim \bar \Lp_s = \delta (s) + 1$ (see equation~\eqref{eqn-barLp-s}), the little Weyl group is $W_{\La_s^\p} \cong \bZ / 2$, and the quotient map $\bar f_s$ is a non-degenerate homogeneous quadratic polynomial. The equations for the monodromy in this case are well known; see, for example, \cite[Example 3.7]{G2}.
\end{proof}

\subsection{The fundamental class of \texorpdfstring{$X_{\barbasepta}$}{Lg}}
\label{subsec-fund-class}
In this subsection, we provide another geometric ingredient for the proof of Proposition \ref{prop-chi-eq-one-basis}. Consider the absolute homology group:
\beqn
M^0_l (\Pone) \coloneqq H_d (X_{\barbasepta}; \baseRing).
\eeqn
From the long exact sequence of the pair of Lemma \ref{lemma-morse-dual}, we can see that $M^0_l (\Pone)$ is naturally a submodule of the Morse group $M_l (\Pone)$. Recall the character $\tau : \widetilde B_{W_\La} \to \{ \pm 1 \}$ of Section~\ref{subsec-char}.

\begin{prop}
\label{prop-fundamental-class}
We have:
\begin{enumerate}[topsep=-1.5ex]
\item[(i)]    $M^0_l (\Pone) \cong \baseRing$;

\item[(ii)]   both of the monodromy actions $\lambda_l$ and $\mu_l$ preserve the submodule $M^0_l (\Pone) \subset M_l (\Pone)$;

\item[(iii)]  the microlocal monodromy action $\lambda_l$ restricted to $M^0_l (\Pone)$ is given by $\tau$;

\item[(iv)]  the monodromy in the family action $\mu_l$ restricts to $M^0_l (\Pone)$ as follows:
\beqn
\text{for every $s \in S$, we have:} \;\;\;\; \mu (\sigma_s) |_{M^0_l (\Pone)} = (-1)^{\delta (s) + 1} \, .
\eeqn
\end{enumerate}
\end{prop}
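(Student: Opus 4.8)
The plan is to realize $M^0_l(\Pone)=H_d(X_{\barbasepta};\baseRing)$ topologically, via the compact form of the orbit $X_{\barbasepta}$, and then to trace the two monodromies through that identification. For (i), note that $X_{\barbasepta}=K/Z_K(\La)$ is an affine (hence Stein) variety, and it is a standard fact that it deformation retracts $K_\bR$-equivariantly onto the compact orbit $K_\bR\cdot\basepta\cong K_\bR/\groupM_\bR=K_\bR^0/(K_\bR^0\cap\groupM_\bR)$. Counting dimensions, $\dim_\bR(K_\bR/\groupM_\bR)=\dim_\bC K-\dim_\bC Z_K(\La)=\dim_\bC X_{\barbasepta}=d$; since $K=K^0\cdot Z_K(\La)$ this space is connected, and it is orientable --- by \cite[Corollary 1.1.10]{Ko}, or directly, since $Z_{K_\bR^0}(\La)=K_\bR^0\cap\groupM_\bR$ is generated by its (connected) identity component together with the elements $\check\alpha(-1)$, $\alpha\in\Phi_\rr$ (the remark following Lemma~\ref{lemma-trichotomy}), each of which acts on $\Lk_\bR/\Lm_\bR$ with determinant $+1$, the contributions of $\bar\beta$ and $s_{\bar\alpha}\bar\beta$ to $\det\on{Ad}(\check\alpha(-1))$ cancelling. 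Hence $M^0_l(\Pone)\cong H_d(K_\bR/\groupM_\bR;\baseRing)\cong\baseRing$, generated by the fundamental class (up to a sign fixed by an orientation of $K_\bR/\groupM_\bR$).

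For (ii), recall (from the long exact sequence of the pair of Lemma~\ref{lemma-morse-dual}, as noted before the statement) that $M^0_l(\Pone)=H_d(X_{\barbasepta})$ injects into $M_l(\Pone)$ as the kernel of the natural map $M_l(\Pone)\to H_{d-1}(\{x\in X_{\barbasepta}\;|\;\xi_l(x)\ge\xi_0\})$. As $\bargenpta$ runs over $\La^{rs}/W_\La$, the groups $\{M_l(\cP_{\bargenpta})\}$ and $\{H_d(X_{\bargenpta})\}$ both form local systems and the inclusion of the latter into the former is a morphism of local systems; hence $M^0$ is a $\mu_l$-stable sub-local-system. Similarly, writing (as in Lemma~\ref{lemma-morse-dual}, using Lemma~\ref{lemma-gencov}) $M_y(\Pone)\cong H_d(X_{\barbasepta},\{x\;|\;\xi_y(x)\ge\xi_0\};\baseRing)$ for $y\in\Lp^{rs}$, with $X_{\barbasepta}$ constant in $y$, the canonical maps $H_d(X_{\barbasepta})\to M_y(\Pone)$ assemble into a morphism from the constant local system with fiber $H_d(X_{\barbasepta})$ into $\{M_y(\Pone)\}$; hence $M^0$ is a $\lambda_l$-stable sub-local-system. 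This proves (ii) and, moreover, shows that $\lambda_l$ acts trivially on $M^0$ along every loop lifted from $\pi_1(\Lp^{rs},l)$.

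For (iii), the previous remark says $\lambda_l$ acts trivially on $M^0$ via the subgroup $\widetilde B^0_{W_\La}=\on{Im}(\pi_1(\Lp^{rs},l)\to\pi_1^K(\Lp^{rs},l))$, so $\lambda_l|_{M^0}$ factors through $\widetilde B_{W_\La}/\widetilde B^0_{W_\La}\cong K/K^0$. The character $\tau$ of \eqref{character-tau} also factors through $K/K^0$, being trivial on $I^0$ and on $\tilde r(B_{W_\La})\subset\widetilde B^0_{W_\La}$, which together generate $\widetilde B^0_{W_\La}$. Since $I\to K/K^0$ is onto, it suffices to compare $\lambda_l|_{M^0}$ and $\tau$ on $I\subset\widetilde B_{W_\La}$. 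Given $u\in I$, pick a lift $k_u\in Z_K(\La)$; since $k_u$ fixes $l\in X_{\barbasepta}$ and $\nu_\Lg$ is $K$-invariant, $k_u$ preserves $\xi_l$, so $\lambda_l(u)$ acts on $M^0=H_d(X_{\barbasepta})$ by $(k_u)_*$. Writing $k_u=m_u\cdot z$ with $z\in Z_K(\La)^0$ (which acts trivially on $H_d$, being connected to the identity) and $m_u\in\groupM_\bR$, and using the $K_\bR$-equivariant retraction $X_{\barbasepta}\simeq K_\bR/\groupM_\bR$ of (i), $(m_u)_*$ is left translation by $m_u$ on $H_d(K_\bR/\groupM_\bR;\baseRing)\cong\baseRing$: it fixes the base coset with derivative $\on{Ad}(m_u)$ on $\Lk_\bR/\Lm_\bR$, hence acts by $\on{sign}\det(\on{Ad}(m_u)|_{\Lk_\bR/\Lm_\bR})=\tau'(m_u)=\tau(u)$. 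This gives (iii).

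For (iv), part (ii) already gives $\mu(\sigma_s)|_{M^0}=\pm1$, and the point is to fix the sign. I would run the rank-one reduction from the proof of Proposition~\ref{prop-minimal-polynomials}: the factorization \eqref{factorization} identifies $\mu(\sigma_s)$ with $\psi_{\barbaseptas}(\mu_{\bar\gamma})$, and the faithful slice restriction $j^*$ transverse to the closed orbit $K\cdot\baseptas$ (using \eqref{eqn-normal-slice}, \eqref{diagramFs}, \eqref{eqn-Fs-product}) reduces the computation to the rank-one symmetric pair $(\bar G_s,\bar K_s)$, where $\bar f_s$ is a nondegenerate quadratic form in $\dim\bar\Lp_s=\delta(s)+1$ variables (see \eqref{eqn-barLp-s}); one checks this reduction is compatible with the realization of the relevant ``absolute'' Morse subgroup as the top homology of the general fiber. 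In the rank-one case the general fiber of $\bar f_s$ is the Milnor fiber of $z_1^2+\cdots+z_{\delta(s)+1}^2$, which is homotopy equivalent to $S^{\delta(s)}$, and the monodromy around $0$ is $(z_i)\mapsto(-z_i)$, restricting on the real sphere $S^{\delta(s)}=\{z_i\in\bR,\ \sum z_i^2=1\}$ to the antipodal map, of degree $(-1)^{\delta(s)+1}$. Hence $\mu(\sigma_s)|_{M^0_l(\Pone)}=(-1)^{\delta(s)+1}$, which is (iv). I expect the main obstacle to be exactly this last bookkeeping: making the rank-one reduction precise at the level of the one-dimensional subquotient $M^0$ rather than the whole Morse group --- i.e.\ verifying that under $\psi_{\barbaseptas}$ and $j^*$ the class $M^0_l(\Pone)=H_d(X_{\barbasepta})$ corresponds to $H_{\delta(s)}$ of the rank-one general fiber with matching orientations --- together with the orientability input of (i), which is where the character $\tau$ genuinely intervenes.
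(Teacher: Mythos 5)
Parts (i)--(iii) of your proposal are correct and essentially identical to the paper's proof: the $K_\bR$-equivariant retraction of $X_{\barbasepta}$ onto the compact orbit $C_{\barbasepta}=K_\bR/\groupM_\bR$ (the paper cites Mostow for this), orientability of $C_{\barbasepta}$ via \cite[Corollary 1.1.10]{Ko}, the local-system argument for (ii), and the observation that $\widetilde B^0_{W_\La}$ acts trivially on $M^0$ with the residual $K/K^0$-action computed by a fixed-point degree argument giving $\tau$ for (iii). Your parenthetical ``direct'' verification of orientability via the $\check\alpha(-1)$'s is not quite spelled out (the eigenspace bookkeeping on $\Lk_\bR/\Lm_\bR$ needs care), but since you also cite [Ko] this is not load-bearing.

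For (iv) your route diverges from the paper's, and the gap you flag is real and unresolved. You propose to run the rank-one reduction from Proposition~\ref{prop-minimal-polynomials}, i.e.\ pass to $\mu_{\bar\gamma}\in\on{End}(P_{\bar\gamma})$ via the factorization \eqref{factorization} and then apply the faithful slice functor $j^*$ transverse to the closed orbit in $X_{\barbaseptas}$. That reduction is well suited to determining the minimal polynomial of $\mu_{\bar\gamma}$ as an endomorphism of the perverse sheaf $P_{\bar\gamma}$ on $X_{\barbaseptas}$, because faithfulness of $j^*$ preserves annihilating polynomials. But it does not straightforwardly track the one-dimensional subgroup $M^0_l(\Pone)\subset M_l(\Pone)$: that subgroup is defined downstairs, after applying $\psi_{\barbaseptas}$, as the absolute homology $H_d(X_{\barbasepta})$ inside a Morse group at the origin of $\cN_\Lp$, with $d=\dim_\bC X_{\barbasepta}$ much larger than $\delta(s)$ in general; the slice $j^*$ cuts $X_{\barbaseptas}$ (not $X_{\barbasepta}$) down to complex dimension $\delta(s)$, and there is no canonical map between $H_d(X_{\barbasepta})$ and the top homology of the rank-one Milnor fiber hiding in that construction. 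Your phrase ``one checks this reduction is compatible with the realization of the relevant absolute Morse subgroup as the top homology of the general fiber'' is precisely the missing step, and I do not see how to supply it along these lines.

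The paper's proof of (iv) uses a different and more elementary rank-one reduction that avoids $\psi_{\barbaseptas}$ and $j^*$ entirely. Pulling back $f$ along the covering $\La^{rs}\to\La^{rs}/W_\La$ trivializes the family over $\La^{rs}$, so the action of $\mu(\sigma_s)$ on all of $H_*(X_{\barbasepta})$ is simply the map induced by right multiplication $R_s$ by $s\in W_\La$ on $X_{\barbasepta}=K/Z_K(\La)$, and hence on the compact core $C_{\barbasepta}=K_\bR/Z_{K_\bR}(\La)$. One then fibers $C_{\barbasepta}$ over $K_\bR/K_{s,\bR}$ with fiber the rank-one compact core $K_{s,\bR}/Z_{K_{s,\bR}}(\La)\cong S^{\delta(s)}$; since $R_s$ commutes with the projection, its degree on the total space equals its degree on the fiber, which is the antipodal map with degree $(-1)^{\delta(s)+1}$. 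This is where the rank-one input enters, and it is the compact-fibration argument, not a transverse slice of nearby cycles, that makes the reduction compatible with the fundamental class. You should replace your step (iv) by this argument, or else supply the compatibility statement you leave open; as written, (iv) is incomplete.
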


\begin{proof}
Recall the compact form $K_\bR$ of $K$, introduced in Section \ref{subsec-structure-of-Wa}. By a theorem of Mostow, \cite{Mo}, see also the discussion in~\cite{SV}, the orbit $X_{\barbasepta} = K \cdot \basepta$ can be identified with the normal bundle to the orbit:
\beqn
C_{\barbasepta} = K_\bR \cdot \basepta \subset X_{\barbasepta} \, .
\eeqn
In particular, $C_{\barbasepta}$ is a deformation retract of $X_{\barbasepta}$. As the orbit $C_{\barbasepta}$ is connected and orientable, see, for example, \cite[Corollary 1.1.10]{Ko}, we conclude that $M^0_l (\Pone) = H_d (X_{\barbasepta}; \baseRing) = H_d (C_{\barbasepta}; \baseRing) \cong \baseRing$. This proves part (i). Part (ii) follows from the definitions of the two actions. As to part (iii), it is clear that the subgroup $\widetilde B^0_{W_\La} \subset \widetilde B_{W_\La}$, which is the image of the map $\pi_1 (\Lp^{rs}, l) \to \pi_1^K (\Lp^{rs}, l) = \widetilde B_{W_\La}$, acts trivially on $M^0_l (\Pone)$. It remains to consider the action of $\widetilde B_{W_\La} / \widetilde B^0_{W_\La} \cong K / K^0$. Using the notation of Section~\ref{subsec-char}, we have $C_{\barbasepta} \cong K_\bR / \groupM_\bR$. By the construction of the character $\tau$, the component group $K / K^0 \cong K_\bR / K_\bR^0$ then acts on $H_d (C_{\barbasepta}; \baseRing)$ via $\tau$.

It remains to prove part (iv). Consider the family $f: \Lp^{rs} \to \La^{rs} / W_\La$, and pull it back to $\La^{rs}$ to obtain the following diagram:
\beqn
\begin{CD}
\Lp^{rs} \times_{\La^{rs} / W_\La} \La^{rs} @>>> \Lp^{rs}
\\
@V{\tilde f}VV @VV{f}V
\\
\La^{rs} @>>> \La^{rs} / W_\La \, .
\end{CD}
\eeqn
The family $\tilde f$ is trivial, i.e., we have $\Lp^{rs} \times_{\La^{rs} / W_\La} \La^{rs} \cong K / Z_K (\La) \times \La^{rs}$. Thus, the action of $\mu (\sigma_s)$ on the total homology $H_* (X_{\barbasepta})$ is induced by the right action of $s \in W_{\La} = N_K (\La) / Z_K (\La)$ on $X_{\barbasepta} = K / Z_K (\La)$. In particular, the action of $\mu (\sigma_s)$ on $H_d (X_{\barbasepta}; \baseRing) = H_d (C_{\barbasepta}; \baseRing)$ is given by the right action of $s \in N_{K_\bR} (\La) / Z_{K_\bR} (\La) = N_K (\La) / Z_K(\La)$ on $C_{\barbasepta} = K_\bR / Z_{K_\bR} (\La)$. We will now reduce the calculation to the rank one case. Recall the pair $(G_s, K_s)$ of Section~\ref{subsec-rank-one}. Write $K_{s, \bR} = K_s \cap K_\bR$. We have:
\beqn
s \in N_{K_{s, \bR}} (\La) / Z_{K_{s, \bR}} (\La) \subset N_{K_\bR} (\La) / Z_{K_\bR} (\La).
\eeqn
Consider the fibration:
\beqn
K_{s, \bR} / Z_{K_{s, \bR}} (\La) \to C_{\barbasepta} = K_\bR / Z_{K_\bR} (\La) \to K_\bR / K_{s, \bR} \, .
\eeqn
The self-map $R_s : C_{\barbasepta} \to C_{\barbasepta}$ of the total space of this fibration, given by the right multiplication by $s$, commutes with the projection to the base $K_\bR / K_{s, \bR}$. Therefore, the action of $R_s$ on the top homology $H_d (C_{\barbasepta}; \baseRing)$ is given by the action of $R_s$ on the top homology of the fiber $K_{s, \bR} / Z_{K_{s, \bR}} (\La)$. Thus, the problem is reduced to the case of the symmetric pair $(G_s, K_s)$, or equivalently, to the case of the symmetric pair $(\bar G_s, \bar K_s)$ with Cartan subspace of dimension $1$ (see Section~\ref{subsec-rank-one}).

In the case where $\dim \La = 1$, we have $d = \delta (s)$, the compact core $C_{\barbasepta}$ of $X_{\barbasepta}$ is homeomorphic to the sphere $S^d$, and the map $R_s : C_{\barbasepta} \to C_{\barbasepta}$ is conjugate by a homeomorphism to the antipodal map of $S^d$. The degree of this antipodal map is $(-1)^{\delta (s) + 1}$. This concludes the proof of part (iv).
\end{proof}

\subsection{The path \texorpdfstring{$\gamma_{1, s}$}{Lg}}
\label{subsec-near-wall}
In this subsection, we provide the final geometric ingredient for the proof of Proposition \ref{prop-chi-eq-one-basis}. Pick a simple reflection $s \in S$. As in the proofs of Propositions \ref{prop-partial-description} and \ref{prop-minimal-polynomials} above, we pick a generic point $\baseptas \in \La_s \cap \overline{\La_\bR^+}$ and a small $\epsilon > 0$, and we assume, for the duration of this subsection, that $\basepta \in \La_\bR^+$ is the unique point such that $\basepta - \baseptas \in \La_s^\p$ and $\on{dist} (\basepta, \baseptas) = \epsilon$. We also assume that $l = \basepta$.

For every $w \in W_\La - \{ 1, s \}$, we have:
\beq\label{eqn-near-wall}
0 < \xi_l (e_1) - \xi_l (e_s) \ll \xi_l (e_s) - \xi_l (e_w).
\eeq
To see this, consider the limit $\epsilon \to 0$, and use the fact that $\nu_\Lg (w \, \baseptas, \baseptas) < \nu_\Lg (\baseptas, \baseptas)$ for every $w \in W_\La - \{ 1, s \}$. To paraphrase inequality~\eqref{eqn-near-wall}, we can say that, with the given choice of $\basepta, l \in \La_\bR^+$, the critical values $h_l (e_1)$ and $h_l (e_s)$ are close to each other, and far to the right of all the other critical values of the restriction $h_{l, \barbasepta} : X_{\barbasepta} \to \bC$.

We pick a smooth path $\gamma_{1, s} : [0, 1] \to \bC$, satisfying properties (P1)-(P5) of Subsection~\ref{subsec-PLclasses} for $e = e_1$, and such that we further have:
\begin{enumerate}[topsep=-1.5ex]
\item[(P7)]    $\gamma_{1, s} (1/3) = (h_l (e_1) + h_l (e_s)) / 2 \in \bR$;

\item[(P8)]    $\gamma_{1, s} (2/3) = (- h_l (e_1) + 3 \, h_l (e_s)) / 2 \in \bR$;

\item[(P9)]    $\zeta_l (\gamma_{1, s} (t)) > 0$ for all $t \in (0, 1/3) \cup (2/3, 1)$;

\item[(P10)]  $\zeta_l (\gamma_{1, s} (t)) < 0$ for all $t \in (1/3, 2/3)$.
\end{enumerate}
Note that, by Lemma~\ref{lemma-crit-points} (iii) and inequality~\eqref{eqn-near-wall}, properties (P7)-(P10) determine the path $\gamma_{1, s}$ uniquely up to smooth homotopy within the class of all paths satisfying properties (P1)-(P5). Topologically, we can say that the path $\gamma_{1, s}$ differs from the path $\gamma_1$ of Section~\ref{subsec-basis-up-to-sign} only in that it passes the critical value $h_l (e_s)$ on the left, rather than on the right. Therefore:
\beq\label{eqn-pl-formula}
\begin{gathered}
\text{there exists an $m_s \in \bZ$, independent of $\baseRing$, such that} \\
PL [e_1, \gamma_{1, s}, o_1] = \basis_1 + m_s \, \basis_s \in M_l (\Pone).
\end{gathered}
\eeq
The integer $m_s$ can be interpreted, via the Picard-Lefschetz formula, as an intersection number for a pair of vanishing spheres. However, we will not be using this interpretation. The significance of the path $\gamma_{1, s}$ is given by the following lemma.

\begin{lemma}\label{lemma-generator-squared}
We have:
\begin{enumerate}[topsep=-1.5ex]
\item[(i)]    $\lambda_l \circ \tilde r \, (\sigma_s) \, \basis_s = \pm PL [e_1, \gamma_{1, s}, o_1];$

\item[(ii)]   $\mu_l (\sigma_s) \, \basis_s = \pm PL [e_1, \gamma_{1, s}, o_1]$.
\end{enumerate}
\end{lemma}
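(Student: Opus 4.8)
The plan is to prove both parts by explicitly tracing the Picard--Lefschetz class $\basis_s = PL[e_s, \gamma_s, o_s]$ along the monodromy loops used in the proof of Proposition~\ref{prop-partial-description}, exploiting the special position of the basepoints arranged in Section~\ref{subsec-near-wall}. By Remark~\ref{rmk-canonical}, both the basis $\{\basis_w\}_{w \in W_\La}$ and the class $PL[e_1, \gamma_{1,s}, o_1]$ are independent of the choice of $\basepta, l$ within the contractible chamber $\La_\bR^+$, so we are free to assume, as in Section~\ref{subsec-near-wall}, that $l = \basepta$ is the point at distance $\epsilon$ from a generic $\baseptas \in \La_s \cap \overline{\La_\bR^+}$ with $\basepta - \baseptas \in \La_s^\p$. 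With this choice, inequality~\eqref{eqn-near-wall} guarantees that, among the critical values of $h_{l,\barbasepta}$, the two values $h_l(e_1)$ and $h_l(e_s)$ are close to each other and far to the right of all the others; this is exactly what makes the path-dragging argument below tractable, since only these two critical values will interact under the monodromy.

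For part (i), recall from the proof of Proposition~\ref{prop-partial-description}(i) the explicit loop $\Gamma_s : [0,1] \to \Lp^{rs}$ representing $\tilde r(\sigma_s) \in \widetilde B_{W_\La} = \pi_1^K(\Lp^{rs}, l)$: for $t \in [0, 1/2]$ it moves $l_t = l_s + \exp(2\pi \mathbf{i} t)(l - l_s)$ along a semicircle, and for $t \in [1/2, 1]$ it applies a path $\Gamma_{k_s}$ in $K^0$ from $1$ to a representative $k_s$ of $r(\sigma_s)$. As $t$ runs over $[0,1/2]$, the critical point $e_s$ of $h_{l_t,\barbasepta}$ stays put while its critical value rotates counter-clockwise halfway around the circle with diameter $[h_l(e_s), h_l(e_1)]$; using $h_{s(l)}(e_w) = \langle s(w\basepta), l\rangle = h_l(e_{sw})$, at $t = 1/2$ the point $e_s$ has acquired the value $h_l(e_1)$. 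For $t \in [1/2, 1]$ the critical values stay fixed while the critical point is carried by $\Gamma_{k_s}$; since $k_s$ acts on $\La$ as $s$ and $e_s = s(\basepta)$, at $t = 1$ the critical point that started at $e_s$ has become $k_s \cdot e_s = e_1$. Therefore $\lambda_l \circ \tilde r(\sigma_s)$ carries $PL[e_s, \gamma_s, o_s]$ to a Picard--Lefschetz class based at $e_1$, along the path obtained by dragging $\gamma_s$ through this motion; one checks, using inequality~\eqref{eqn-near-wall} so that no other critical value is encountered, that this dragged path is homotopic --- within the class of paths satisfying (P1)--(P5) of Section~\ref{subsec-PLclasses} --- to $\gamma_{1,s}$, the point being that the rotation of $h_l(e_s)$ past $h_l(e_1)$ forces the transported path to pass the critical value $h_l(e_s)$ on the opposite side, exactly as prescribed by properties (P7)--(P10). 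Hence $\lambda_l \circ \tilde r(\sigma_s)\,\basis_s = \pm PL[e_1, \gamma_{1,s}, o_1]$, the sign recording the transported orientation.

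Part (ii) is entirely parallel, with the roles of $\basepta$ and $l$ exchanged, using instead the loop $\Gamma_s(t) = f(\baseptas + \exp(t\pi\mathbf{i})(\basepta - \baseptas))$ in $\La^{rs}/W_\La$ representing $\sigma_s \in B_{W_\La}$ from the proof of Proposition~\ref{prop-partial-description}(ii). Here the critical values of $h_l|_{X_{\Gamma_s(t)}}$ move in pairs indexed by the right cosets $W_\La / W_s$; the pair containing $h_l(e_1)$ and $h_l(e_s)$ rotates counter-clockwise halfway, the critical point $e_s$ returns as $e_1$, and the same dragging argument identifies the transported vertical path with $\gamma_{1,s}$ up to homotopy. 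Thus $\mu_l(\sigma_s)\,\basis_s = \pm PL[e_1, \gamma_{1,s}, o_1]$.

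The main obstacle, in both parts, is the homotopy claim: verifying that the path obtained by transporting $\gamma_s$ under the monodromy genuinely agrees with $\gamma_{1,s}$ modulo the equivalence defined by properties (P1)--(P5). This is a two-dimensional Picard--Lefschetz bookkeeping argument --- tracking on which side of the moving critical value $h_l(e_s)$ the deformed path lies, together with the behavior of the ``escape to $\xi_0$'' tail --- and it is precisely to keep this bookkeeping manageable that the near-wall normalization of Section~\ref{subsec-near-wall}, which leaves only the two critical values $h_l(e_1), h_l(e_s)$ in play, is essential.
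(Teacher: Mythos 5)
Your proof is correct and follows the paper's argument essentially verbatim: both trace the explicit loop $\Gamma_s$ from the proof of Proposition~\ref{prop-partial-description}, observe that on $[0,1/2]$ the relevant pair of critical values performs a counterclockwise half-twist around the circle with diameter $[h_l(e_s),h_l(e_1)]$ while the critical points in $\La$ stay fixed, and that on $[1/2,1]$ the critical values stay fixed while $\Gamma_{k_s}$ carries the critical point $e_s$ to $e_1$, with the final identification of the dragged path with $\gamma_{1,s}$ left in both treatments as a standard Picard--Lefschetz homotopy fact. The extra bookkeeping you supply (the identity $h_{s(l)}(e_w)=h_l(e_{sw})$ and the explicit tracking of the base critical point to $e_1$) is correct and helpful; the one small inaccuracy is the opening appeal to Remark~\ref{rmk-canonical}---the normalization $l=\basepta$ near $\La_s$ is already fixed for the duration of Section~\ref{subsec-near-wall}, and indeed the class $PL[e_1,\gamma_{1,s},o_1]$ is only defined under that normalization, so there is no free choice to exercise.
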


\begin{proof}
We prove (i), and the proof of (ii) is similar but easier. We proceed as in the proof of Proposition~\ref{prop-partial-description}, and consider the path $\Gamma_s : [0, 1] \to \Lp^{rs}$, representing $\tilde r (\sigma_s) \in \widetilde B_{W_\La} = \pi_1^K (\Lp^{rs}, l)$, defined in that proof. We also write $l_t = \Gamma_s (t)$ and $h_t = h_{l_t}$, for short. Each critical point $e_w$ of $h_{l, \barbasepta}$, $w \in W_\La$, gives rise to a continuous path $\tilde e_w : [0, 1] \to X_{\barbasepta}$, such that $\tilde e_w (0) = e_w$ and $\tilde e_w (t)$ is a critical point of $h_t |_{X_{\barbasepta}}$ for all $t \in [0, 1]$. In our situation, it suffices to only consider the critical points $\tilde e_1 (t)$ and $\tilde e_s (t)$, as the critical values corresponding to these critical points remain far to the right of all other critical values of $h_t |_{X_{\barbasepta}}$ for all $t \in [0, 1]$.

Let $C \subset \bC$ be the small circle whose diameter is the interval:
\beqn
[h_l (e_s), h_l (e_1)] \subset \bR.
\eeqn
As $t$ traces the interval $[0, 1/2]$, the pair of critical values $(h_t (\tilde e_1 (t)), h_t (\tilde e_s (t)))$ moves counter-clockwise halfway around the circle $C$. In particular, the path $\gamma_s$ gets taken to the path $\gamma_{1, s}$. As $t$ traces the interval $[1/2, 1]$, the critical values of $h_t |_{X_{\barbasepta}}$ remain unchanged. 
\end{proof}

\subsection{Proof of Proposition \ref{prop-chi-eq-one-basis}}
\label{subsec-proof-of-prop-chi-eq-one-basis}
We are now prepared to put together the geometric ingredients of Sections \ref{subsec-minimal-polynomials} - \ref{subsec-near-wall} to give a proof of Proposition \ref{prop-chi-eq-one-basis}. We begin with the following analog of Proposition \ref{prop-minimal-polynomials}.

\begin{lemma}\label{lemma-microlocal-minimal-polynomials}
Let $s \in S$ be a simple reflection. The microlocal monodromy transformation $\lambda_l \circ \tilde r \, (\sigma_s) \in \on{End} (M_l (\Pone))$ satisfies:
\begin{eqnarray*}
(\lambda_l \circ \tilde r \, (\sigma_s) - 1)^2 &=& 0, \;\;\;\; \text{if $\delta (s)$ is odd};
\\
(\lambda_l \circ \tilde r \, (\sigma_s))^2 - 1 &=& 0, \;\;\;\; \text{if $\delta (s)$ is even}.
\end{eqnarray*}
\end{lemma}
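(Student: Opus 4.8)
The plan is to prove Lemma~\ref{lemma-microlocal-minimal-polynomials} by a reduction to the rank-one case, in close parallel with the proof of Proposition~\ref{prop-minimal-polynomials}, but now tracking the Fourier transform $\fF\Pone$ and its microlocal monodromy in place of the nearby cycle sheaf $\Pone$ and its monodromy in the family. Set $\Lambda := \lambda_l \circ \tilde r\,(\sigma_s) \in \on{Aut}(M_l(\Pone))$.

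First I would record two preliminary reductions. By Proposition~\ref{prop-partial-description}(i) together with the normalization of the basis in Section~\ref{subsec-picking-signs}, the operator $\Lambda$ preserves each of the rank-two subspaces $\langle v_w, v_{sw}\rangle \subset M_l(\Pone)$ indexed by the left cosets $u \in W_s \backslash W_\La$, with $v_{w_u} \mapsto v_{sw_u}$ for the shorter representative; hence it suffices to verify the claimed quadratic identity on a single such block. Second, by Lemma~\ref{lemma-fourier-morse} the operator $\Lambda$ is the holonomy of the local system $\fF\Pone|_{\Lp^{rs}}$ along a loop $\Gamma_s$ representing $\tilde r(\sigma_s) \in \pi_1^K(\Lp^{rs}, l)$, and by the analysis in the proof of Proposition~\ref{prop-kr-is-regular} such a loop may be taken to lie in $\Lp^{rs} \cap \Lp_s$, linking the hypersurface $\Lp^{reg}_s$ near a generic point.

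The heart of the argument is then a rank-one reduction identical in spirit to the one carried out for $\mu(\sigma_s)$ in the proof of Proposition~\ref{prop-minimal-polynomials}. Using diagram~\eqref{diagramFs} and equation~\eqref{eqn-Fs-product}, a normal slice to $X_{\barbaseptas}^{reg}$ through a suitable point $y_{s,1} \in \Lp_s$ can be chosen inside $\baseptas + \bar\Lp_s$, where $(\bar G_s, \bar K_s)$ is the rank-one symmetric pair of Section~\ref{subsec-rank-one}; transversality of $\Lp_s$ to the $K$-orbits in $X_{\barbaseptas}$ along $\cO_{\barbaseptas}$ (exactly as in the proof of Proposition~\ref{prop-minimal-polynomials}) lets one replace the computation of $\Lambda$ on the relevant rank-two block of $M_l(\Pone)$ by the analogous microlocal monodromy for $(\bar G_s, \bar K_s)$, whose $(-1)$-eigenspace has dimension $\delta(s)+1$ and whose quotient map $\bar f_s$ is a nondegenerate quadratic form. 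For this rank-one situation the Fourier transform of the nearby cycle sheaf, and the monodromy of the associated local system on the regular semisimple locus, are computed explicitly; this is essentially the content of \cite[Example 3.7]{G2}, already invoked in the proof of Proposition~\ref{prop-minimal-polynomials}, and it gives $(\Lambda - 1)^2 = 0$ when $\delta(s)$ is odd and $\Lambda^2 - 1 = 0$ when $\delta(s)$ is even. Lemma~\ref{lemma-microlocal-minimal-polynomials} follows.

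I expect the main obstacle to be the transversality step that legitimizes passing from $\fF\Pone$ to the rank-one pair: the topological Fourier transform is a global construction and does not commute with restriction to a transversal slice, so — exactly as in Proposition~\ref{prop-minimal-polynomials} — one cannot slice $\fF\Pone$ directly. The fix is to phrase everything through the Morse-group description $M_l(\Pone) = M_{(0,h_l)}(\Pone)$ of Lemma~\ref{lemma-fourier-morse}, where the relevant slicing takes place on the $\Pone$-side near $0 \in \cN_\Lp$ with the $\bC^*$-conic orbit stratification, and to carry over from that proof the loop $\Gamma_s$, the Thom $A_{F_2}$-type conditions, and the factorization of the nearby cycle functor through the discriminant locus. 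Keeping track of which orientations and signs survive this reduction, so as to match the chosen basis $\{\basis_w\}$, is the only delicate bookkeeping.
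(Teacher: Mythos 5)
Your strategy — re-running the rank-one reduction directly on the microlocal monodromy — is not what the paper does, and as written it has a genuine gap. The paper's proof never slices the Morse local system $M(\Pone)$ or $\fF \Pone$ to a rank-one pair. Instead it transfers the known quadratic relation from the family monodromy $\mu_l(\sigma_s)$ to the microlocal monodromy $\lambda_l\circ\tilde r(\sigma_s)$ by a short algebraic argument: both operators preserve the rank-two block $M_l[u_1]$, they commute, and $\basis_1$ is cyclic for each of the two commuting rank-two restrictions, hence $\lambda_l \circ \tilde r(\sigma_s)|_{M_l[u_1]} = \pm\, \mu_l(\sigma_s)|_{M_l[u_1]}$ (equation \eqref{eqn-on-u1}). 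Proposition~\ref{prop-minimal-polynomials} — the place where the actual rank-one reduction happens, and it happens for $\mu$, which is an endomorphism of a perverse sheaf on $\cN_\Lp$ and so genuinely restricts to a transversal — then fixes the spectrum of $\lambda_l \circ \tilde r(\sigma_s)|_{M_l[u_1]}$ up to a global sign. The conjugation action of $\mu_l(b_{w_u^{-1}})$ carries the relation to the remaining blocks, and the fundamental-class computation of Proposition~\ref{prop-fundamental-class}(iii), which forces $\lambda_l \circ \tilde r(\sigma_s)$ to fix $M_l^0(\Pone)$, rules out the $\{-1\}$-spectrum in the odd-$\delta(s)$ case. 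None of these three ingredients appears in your sketch.

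Two concrete problems with the sketch as given. First, you claim that Proposition~\ref{prop-partial-description}(i) plus the sign normalization already shows that $\lambda_l\circ \tilde r(\sigma_s)$ preserves each block $\langle \basis_w, \basis_{sw}\rangle$. That proposition only determines the image of the shorter representative ($\basis_w \mapsto \pm\basis_{sw}$ when $\BruhatLength(w) < \BruhatLength(sw)$) and says nothing about $\basis_{sw}$; block-invariance for $M_l[u_1]$ requires Lemma~\ref{lemma-generator-squared} and equation~\eqref{eqn-pl-formula}, and for the other blocks requires the conjugation via $\mu_l$. Second, even if your rank-one reduction could be carried out, it would a priori only give the eigenvalues of $\lambda_l\circ\tilde r(\sigma_s)$ up to a global sign on each block — precisely the ambiguity that Proposition~\ref{prop-fundamental-class}(iii) is needed to resolve in the odd case. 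Your "delicate bookkeeping of orientations" is exactly this step, and it is not delicate bookkeeping but a substantive argument in the paper.

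The deeper obstruction, which you correctly flag but do not overcome, is that $M_l(\Pone)$ is a rank-$|W_\La|$ module computed from the critical points $\{e_w\}_{w\in W_\La}$ of $h_l$ on the full orbit $X_{\barbasepta}$; it is not the Morse group of the rank-one slice $\bar\Lp_s$, which would have rank two. Your proposed fix — "carry over from that proof the loop $\Gamma_s$, the Thom $A_{F_2}$-type conditions, and the factorization through the discriminant locus" — reproduces the computation of the family monodromy $\mu$, which is exactly Proposition~\ref{prop-minimal-polynomials}, not the microlocal monodromy $\lambda_l$. The two actions are distinct (one is the $\widetilde B_{W_\La}$-action from $K$-equivariance, the other is the $B_{W_\La}$-action from the family over $\La^{rs}/W_\La$), and the paper's trick of comparing them on the cyclic block $M_l[u_1]$ is what lets one avoid redoing the slicing for $\lambda_l$.
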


\begin{proof}
By Lemma \ref{lemma-basis}, it suffices to prove the lemma for $\baseRing = \bZ$, so we proceed with this assumption. Consider the set of right cosets $W_s \backslash W_\La$. For each coset $u \in W_s \backslash W_\La$, consider the rank $2$ submodule $M_l [u] \subset M_l (\Pone)$ generated by the basis elements $\{ \basis_w \}_{w \in u}$. We have:
\beq\label{eqn-decomposition}
M_l (\Pone) = \bigoplus_{u \in W_s \backslash W_\La} M_l [u].
\eeq
Let $u_1 \in W_s \backslash W_\La$ be the coset of $1 \in W_\La$. By Proposition \ref{prop-partial-description}, Lemma \ref{lemma-generator-squared}, and equation~\eqref{eqn-pl-formula}, we know that both the microlocal monodromy operator $\lambda_l \circ \tilde r \, (\sigma_s)$ and the monodromy in the family operator $\mu_l (\sigma_s)$ preserve the submodule $M_l [u_1] \subset M_l (\Pone)$. Furthermore, we have:
\beqn
\lambda_l \circ \tilde r \, (\sigma_s) \, \basis_1 = \pm \mu_l (\sigma_s) \, \basis_1 \, .
\eeqn
We also know that the restrictions $\lambda_l \circ \tilde r \, (\sigma_s) |_{M_l [u_1]}$ and $\mu_l (\sigma_s) |_{M_l [u_1]}$ commute, and that $\basis_1$ is a cyclic element for both. It follows that:
\beq\label{eqn-on-u1}
\lambda_l \circ \tilde r \, (\sigma_s) |_{M_l [u_1]} = \pm \mu_l (\sigma_s) |_{M_l [u_1]} \, .
\eeq

Pick any $u \in W_s \backslash W_\La$, and let $w_u \in u$ be the element such that $\BruhatLength (w_u) < \BruhatLength (s w_u)$. Consider the monodromy transformation $\mu_l (b_{w_u^{-1}}) : M_l (\Pone) \to M_l (\Pone)$. By Proposition \ref{prop-partial-description} (ii), we have:
\beqn
\mu_l (b_{w_u^{-1}}) (M_l [u_1]) = M_l [u].
\eeqn
This implies that the operator $\lambda_l \circ \tilde r \, (\sigma_s)$ preserves the submodule $M_l [u] \subset M_l (\Pone)$, and we have:
\beq\label{eqn-conjugation}
\text{the restriction $\lambda_l \circ \tilde r \, (\sigma_s) |_{M_l [u]}$ is conjugate to $\lambda_l \circ \tilde r \, (\sigma_s) |_{M_l [u_1]} \,$.}
\eeq

Write $\on{Spec} (\lambda_l \circ \tilde r \, (\sigma_s) |_{M_l [u_1]}) \subset \bC$ for the spectrum of the operator induced by $\lambda_l \circ \tilde r \, (\sigma_s)$ on the complex 2-space $M_l [u_1] \otimes_\bZ \bC$. We view this spectrum as a set without multiplicities. Combining equation~\eqref{eqn-on-u1} with Proposition \ref{prop-minimal-polynomials}, we can conclude that:
\begin{eqnarray*}
\on{Spec} (\lambda_l \circ \tilde r \, (\sigma_s) |_{M_l [u_1]}) &=& \{ 1 \} \;\; \text{or} \;\; \{ -1 \}, \;\;\;\; \text{if $\delta (s)$ is odd};
\\
\on{Spec} (\lambda_l \circ \tilde r \, (\sigma_s) |_{M_l [u_1]}) &=& \{ 1, -1 \}, \;\;\;\; \text{if $\delta (s)$ is even}.
\end{eqnarray*}
In view of~\eqref{eqn-conjugation}, the claim of the lemma for even $\delta (s)$ follows.

Assume now that $\delta (s)$ is odd. We need to rule out the possibility that:
\beqn
\on{Spec} (\lambda_l \circ \tilde r \, (\sigma_s) |_{M_l [u_1]}) = \{ -1 \}.
\eeqn
But this, together with~\eqref{eqn-conjugation}, would imply that:
\beqn
\on{Spec} (\lambda_l \circ \tilde r \, (\sigma_s)) = \{ -1 \},
\eeqn
which is in contradiction with Proposition \ref{prop-fundamental-class} (iii). Thus, we conclude that the operator $\lambda_l \circ \tilde r \, (\sigma_s)$ is unipotent, so we have:
\beq\label{eqn-odd-case-u1}
\lambda_l \circ \tilde r \, (\sigma_s) |_{M_l [u_1]} = \mu_l (\sigma_s) |_{M_l [u_1]} \, .
\eeq
The claim of the lemma for odd $\delta (s)$ follows.
\end{proof}

Part (i) of Proposition \ref{prop-chi-eq-one-basis} follows immediately from Lemma \ref{lemma-microlocal-minimal-polynomials} and the definition of the basis $\{ \basis_w \}_{w \in W_\La}$ in Section~\ref{subsec-picking-signs}. Turning to part (ii) of the proposition, recall that the actions $\lambda_l$ and $\mu_l$ commute with each other. Together with part (i), this implies that it suffices to verify the claim of part (ii) for $w = 1$ only. In the case where $\delta (s)$ is odd, the claim of part (ii) for $w = 1$ follows immediately from equation~\eqref{eqn-odd-case-u1}. For the case where $\delta (s)$ is even, we will need the following lemma.

Recall the submodule $M^0_l (\Pone) \subset M_l (\Pone)$ of Section~\ref{subsec-fund-class}. Pick an orientation of the $K_\bR$-orbit $C_{\barbasepta} \subset X_{\barbasepta}$
from the proof of Proposition \ref{prop-fundamental-class} (i), and let $F \in M^0_l (\Pone)$ be the corresponding generator. Let:
\beq\label{eqn-fund-class-generator}
F = \sum_{w \in W_\La} c_w \, \basis_w \, , \;\;\;\; c_w \in \baseRing.
\eeq
Let $s \in S$ be a simple reflection. Let $u_1 = \{ 1, s \} \in W_s \backslash W_\La$ be the identity coset, and let $M_l [u_1] \subset M_l (\Pone)$ be the rank $2$ submodule generated by $\{ \basis_1, \basis_s \}$, as in the proof of Lemma~\ref{lemma-microlocal-minimal-polynomials}. We define:
\beqn
F_{1, s} = c_1 \, \basis_1 + c_s \, \basis_s \in M_l [u_1] \subset M_l (\Pone),
\eeqn
where $c_1, c_s$ are as in equation~\eqref{eqn-fund-class-generator}.

\begin{lemma}\label{lemma-fund-class-coefs}
We have:
\begin{enumerate}[topsep=-1.5ex]
\item[(i)]    $c_1 \neq 0;$

\item[(ii)]   $\lambda_l \circ \tilde r \, (\sigma_s) \, F_{1, s} = F_{1, s} \, ;$

\item[(iii)]  $\mu_l (\sigma_s) \, F_{1, s} = (-1)^{\delta (s) + 1} \cdot F_{1, s} \, .$
\end{enumerate}
\end{lemma}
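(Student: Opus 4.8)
The plan is to expand $F = \sum_{w \in W_\La} c_w \basis_w$ as in~\eqref{eqn-fund-class-generator} and to exploit the fact that $F$ lies in the rank one subgroup $M^0_l(\Pone) \subset M_l(\Pone)$, on which both monodromy actions are completely understood by Proposition~\ref{prop-fundamental-class}. Everything will be a comparison of coefficients in the Picard--Lefschetz basis, using the action formulas of Proposition~\ref{prop-chi-eq-one-basis}(i), which is already established at this point in the argument.

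First I would prove part~(i). Since $F \in M^0_l(\Pone)$, Proposition~\ref{prop-fundamental-class}(ii),(iii) together with the normalization $\tau \circ \tilde r(b) = 1$ for $b \in B_{W_\La}$ (see~\eqref{character-tau}) give $\lambda_l \circ \tilde r(\sigma_s)\, F = F$ for every simple reflection $s \in S$. I would expand the left-hand side using Proposition~\ref{prop-chi-eq-one-basis}(i) and compare the coefficient of $\basis_w$ on the two sides, for each $w \in W_\La$ with $\BruhatLength(w) < \BruhatLength(sw)$. A short check shows that among the basis vectors only $\basis_{sw}$ feeds a $\basis_w$-term under $\lambda_l \circ \tilde r(\sigma_s)$, with coefficient $-1$ when $\delta(s)$ is odd and $+1$ when $\delta(s)$ is even; hence $c_{sw} = (-1)^{\delta(s)} c_w$ whenever $\BruhatLength(w) < \BruhatLength(sw)$. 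Taking a reduced word $w = s_{i_1} \cdots s_{i_k}$ produces a chain $1, s_{i_k}, s_{i_{k-1}} s_{i_k}, \ldots, w$ of length-increasing left multiplications by simple reflections, so iterating the relation gives $|c_w| = |c_1|$ for all $w \in W_\La$. As $F$ generates $M^0_l(\Pone) \cong \baseRing$ it is nonzero, so some $c_w \neq 0$, and therefore $c_1 \neq 0$.

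For parts~(ii) and~(iii) I would use the coordinate projection $\pi : M_l(\Pone) \to M_l[u_1] = \baseRing\,\basis_1 \oplus \baseRing\,\basis_s$ sending $\sum_w c_w \basis_w$ to $c_1 \basis_1 + c_s \basis_s$; by definition $\pi(F) = F_{1,s}$. The point is that $\pi$ is the projection attached both to the decomposition of $M_l(\Pone)$ into $W_s$-coset summands for left multiplication (used in the proof of Lemma~\ref{lemma-microlocal-minimal-polynomials}) and to the decomposition for right multiplication: the two decompositions share the summand $M_l[u_1]$ and the complement $\mathrm{span}\{\basis_w : w \notin \{1,s\}\}$, since $W_s = \{1,s\}$ is a coset of itself on either side. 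Now $\lambda_l \circ \tilde r(\sigma_s)$ preserves the left-multiplication decomposition (by Proposition~\ref{prop-chi-eq-one-basis}(i)) and $\mu_l(\sigma_s)$ preserves the right-multiplication decomposition (by Proposition~\ref{prop-partial-description}(ii) and Proposition~\ref{prop-minimal-polynomials}), so both operators commute with $\pi$. Applying $\pi$ to $\lambda_l \circ \tilde r(\sigma_s)\, F = F$ and to $\mu_l(\sigma_s)\, F = (-1)^{\delta(s)+1}\, F$ (Proposition~\ref{prop-fundamental-class}(iv)) yields parts~(ii) and~(iii). (Alternatively, both follow by plugging $c_s = (-1)^{\delta(s)} c_1$ — the case $w = 1$ of the relation in part~(i) — into the explicit action formulas; but one must then be careful not to invoke the even-$\delta$ case of Proposition~\ref{prop-chi-eq-one-basis}(ii), which is not yet available when this lemma is used.)

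The only genuine obstacle is part~(i), and there the work is purely combinatorial: tracking which basis vector feeds the $\basis_w$-coordinate under $\lambda_l \circ \tilde r(\sigma_s)$, and verifying that a reduced word for $w$ really does give a chain of steps each satisfying the length hypothesis $\BruhatLength(\cdot) < \BruhatLength(s\,\cdot)$ so that the relation $c_{sw} = (-1)^{\delta(s)} c_w$ propagates from $1$ to $w$. No new geometric input is needed beyond Propositions~\ref{prop-partial-description}, \ref{prop-minimal-polynomials}, \ref{prop-fundamental-class}, and~\ref{prop-chi-eq-one-basis}(i).
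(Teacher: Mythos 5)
Your proposal is correct, and it is essentially the same argument as the paper's: both derive, from $\lambda_l \circ \tilde r(\sigma_s)\,F = F$ and the fact that $\lambda_l \circ \tilde r(\sigma_s)$ preserves the rank-$2$ coset summands (Proposition~\ref{prop-chi-eq-one-basis}(i)), the coefficient relation $c_{sw} = (-1)^{\delta(s)} c_w$ for $\BruhatLength(w) < \BruhatLength(sw)$, and both pass to $F_{1,s}$ via the coset decomposition; the paper also establishes that $\mu_l(\sigma_s)$ preserves the right-coset summands and invokes Proposition~\ref{prop-fundamental-class}(iv) for part~(iii). The only cosmetic differences are that you prove part~(i) directly (propagating $|c_w|=|c_1|$ along a reduced word) rather than by contradiction as the paper does, and that you prove $\mu_l(\sigma_s)$ preserves the right-coset decomposition via the explicit formula (Proposition~\ref{prop-partial-description}(ii)) plus the quadratic relation (Proposition~\ref{prop-minimal-polynomials}), whereas the paper conjugates by $\lambda_l\circ\tilde r(b_{w_u})$; both are valid, and your version has the small advantage of giving the exact sign $c_w = \pm c_1$ along the way.
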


\begin{proof}
For every simple reflection $s' \in S$, we have a direct sum decomposition:
\beqn
M_l (\Pone) = \bigoplus_{u \in W_{s'} \backslash W_\La} M_l [u],
\eeqn
as in equation~\eqref{eqn-decomposition} of the proof of Lemma~\ref{lemma-microlocal-minimal-polynomials}. Moreover, by Proposition~\ref{prop-chi-eq-one-basis} (i) (which has been established), the operator $\lambda_l \circ \tilde r \, (\sigma_{s'})$ preserves this decomposition. By Proposition \ref{prop-fundamental-class} (iii), we have: $ \lambda_l \circ \tilde r (\sigma_{s'}) \, F = F$. It follows that:
\beq\label{eqn-coset}
\begin{gathered}
\lambda_l \circ \tilde r \, (\sigma_{s'}) (c_w \, v_w + c_{s' w} \, v_{s' w}) = c_w \, v_w + c_{s' w} \, v_{s' w} \\
\text{for each coset} \;\; \{w, s' w\} \in W_{s'} \backslash W_\La \, .
\end{gathered}
\eeq
Part (ii) follows by setting $w = 1$ and $s' = s$.

We now prove part (i). Suppose $c_1 = 0$. Pick a $w \in W_\La$, such that $c_w \neq 0$ and $c_{w'} = 0$ for every $w' \in W_\La$ with $\BruhatLength (w') < \BruhatLength (w)$. Pick an $s' \in S$, such that $\BruhatLength (s' w) < \BruhatLength (w)$. By Proposition~\ref{prop-chi-eq-one-basis} (i), we have $\lambda_l \circ \tilde r \, (\sigma_{s'}^{-1}) \, \basis_w = \basis_{s' w}$. Combining this with equation~\eqref{eqn-coset}, we obtain a contradiction as follows: $c_w \, v_w = c_w \, v_w + c_{s' w} \, v_{s' w} = \lambda_l \circ \tilde r \, (\sigma_{s'}^{-1}) (c_w \, v_w + c_{s' w} \, v_{s' w}) = c_w \, v_{s' w}$. Part (i) follows.

For part (iii), consider the set of left cosets $W_\La / W_s$. As for the right cosets, for each $u \in W_\La / W_s$, let $M_l [u] \subset M_l (\Pone)$ be the rank $2$ submodule generated by the basis elements $\{ \basis_w \}_{w \in u}$.
We obtain a direct sum decomposition:
\beqn
M_l (\Pone) = \bigoplus_{u \in W_\La / W_s} M_l [u].
\eeqn
As in the first paragraph of the proof of Lemma \ref{lemma-microlocal-minimal-polynomials}, the monodromy operator $\mu_l (\sigma_s)$ preserves the submodule $M_l [u_1] \subset M_l (\Pone)$ (note that $u_1 = W_s \subset W_\La$ is both a left and a right coset). Moreover, we can use an argument as in the second paragraph of the proof of Lemma \ref{lemma-microlocal-minimal-polynomials}, reversing the roles of the actions $\lambda_l$ and $\mu_l$, to conclude that $\mu_l (\sigma_s)$ preserves the submodule $M_l [u] \subset M_l (\Pone)$ for every $u \in W_\La / W_s$. Together with Proposition \ref{prop-fundamental-class} (iv), this implies part (iii) of the lemma.
\end{proof}

Assume now that $\delta (s)$ is even. Lemma \ref{lemma-fund-class-coefs} enables us to conclude that the sign in equation~\eqref{eqn-on-u1} is a minus, i.e., we have:
\beqn
\lambda_l \circ \tilde r \, (\sigma_s) |_{M_l [u_1]} = - \mu_l (\sigma_s) |_{M_l [u_1]} \, .
\eeqn
The claim of Proposition \ref{prop-chi-eq-one-basis} (ii) for $w = 1$ follows. This completes the proof of Proposition \ref{prop-chi-eq-one-basis}.

\subsection{Completing the proof of Theorem \ref{thm-chi-eq-one}}
To complete the proof of Theorem \ref{thm-chi-eq-one}, it remains to describe the action of the subgroup $I \subset \widetilde B_{W_\La}$ on $M_l (\Pone)$.

\begin{lemma}\label{lemma-trivial-inertia}
The restriction $\lambda_l |_{I}$ of the microlocal monodromy action:
\beqn
\lambda_l : \widetilde B_{W_\La} \to \on{Aut} (M_l (\Pone)),
\eeqn
is given by the character $\tau : I \to \{ \pm 1 \}$ of Section~\ref{subsec-char}.
\end{lemma}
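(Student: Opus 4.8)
The plan is to reduce the lemma to the single claim that $\lambda_l(u)\,\basis_1 = \tau(u)\,\basis_1$ for every $u \in I$, where $\{\basis_w\}_{w \in W_\La}$ is the basis of $M_l(\Pone)$ constructed in Section~\ref{subsec-picking-signs}. By Remark~\ref{rmk-canonical} we may assume $l = \basepta$, so that $e_1 = \basepta$ and $M_l(\Pone)$ is literally the stalk at $\basepta$ of the Morse local system. The reduction is formal: recall that $\basis_w = \lambda_l(\tilde r(b_w))\,\basis_1$, and that in the semidirect product description $\widetilde B_{W_\La} \cong I \rtimes B_{W_\La}$ of Section~\ref{subsec-regular-splitting} one has $\tilde r(b_w)^{-1}\,u\,\tilde r(b_w) = w^{-1}\cdot u$ for $u \in I$. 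Hence, if we know that $\lambda_l(v)\,\basis_1 = \epsilon(v)\,\basis_1$ for some character $\epsilon \colon I \to \{\pm 1\}$ and all $v \in I$, then
\[
\lambda_l(u)\,\basis_w \;=\; \lambda_l(\tilde r(b_w))\,\lambda_l(w^{-1}\cdot u)\,\basis_1 \;=\; \epsilon(w^{-1}\cdot u)\,\basis_w \qquad (u \in I,\ w \in W_\La).
\]
Since $\tau$ factors through $I / I^0$, on which $W_\La$ acts trivially, we have $\tau(w^{-1}\cdot u) = \tau(u)$, so once $\epsilon = \tau$ is established this display gives $\lambda_l(u)\,\basis_w = \tau(u)\,\basis_w$ for all $w$, which is the lemma.

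The first real step is to show that $\lambda_l(u)$ acts on $\basis_1$ by a sign. The restriction of the Morse local system $M(\Pone)$ to the orbit $X_{\barbasepta} \cong K / Z_K(\La)$ is the $K$-equivariant local system attached to the $Z_K(\La)$-representation on the stalk $M_{\basepta}(\Pone)$ coming from the $K$-equivariant structure of $\Pone$; thus $\lambda_l|_I$ is this representation, factored through $I = \pi_0(Z_K(\La))$. Fix $g \in Z_K(\La)$ representing $u$. Since $g$ centralizes $\La$, it fixes $\basepta$ and the linear form $h_{\basepta}$, hence it acts on the Morse data $\big(\cN_\Lp,\ \{\xi_{\basepta} \le -1\}\big)$ compatibly with $\Pone$, and the induced automorphism of $M_{\basepta}(\Pone)$ is $\lambda_l(u)$. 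Moreover $g$ preserves $h_{\basepta}|_{X_{\barbasepta}}$, fixes the critical point $e_1$ together with all its critical values, and --- because $\nu_\Lg$ is $K$-invariant --- acts by an isometry on $T_{e_1} X_{\barbasepta}$ preserving the Hessian $\cH[e_1]$ and therefore the vertical positive eigenspace $T_v[e_1]$. Consequently $g$ carries the Picard--Lefschetz cycle $\basis_1 = PL[e_1, \gamma_1, o_1]$ to $PL[e_1, \gamma_1, g_*o_1]$, which equals $+\basis_1$ or $-\basis_1$ according to whether $g$ preserves or reverses the orientation $o_1$ of $T_v[e_1]$. This sign depends continuously on $g$, hence only on the component of $g$, and is multiplicative in $g$; so it defines a character $\epsilon \colon I \to \{\pm 1\}$ with $\lambda_l(u)\,\basis_1 = \epsilon(u)\,\basis_1$.

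To finish I would identify $\epsilon$ with $\tau$ using the fundamental class. Let $F = \sum_{w \in W_\La} c_w\,\basis_w$ be a generator of the rank-one subgroup $M^0_l(\Pone) \subset M_l(\Pone)$ of Section~\ref{subsec-fund-class}. By Lemma~\ref{lemma-fund-class-coefs}(i) we have $c_1 \ne 0$, and by Proposition~\ref{prop-fundamental-class}(iii) the microlocal monodromy acts on $M^0_l(\Pone)$ through $\tau$, so $\lambda_l(u)\,F = \tau(u)\,F$. On the other hand, the propagation formula of the first paragraph gives $\lambda_l(u)\,F = \sum_w c_w\,\epsilon(w^{-1}\cdot u)\,\basis_w$. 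Comparing the coefficients of $\basis_1$ in the free $\baseRing$-module $M_l(\Pone)$ yields $c_1\,\epsilon(u) = c_1\,\tau(u)$, hence $\epsilon = \tau$, and the lemma follows.

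The hard part is the middle step: justifying that the microlocal monodromy of the inertia subgroup $I$ is computed by the geometric action of a representative $g \in Z_K(\La)$ on the Morse data near the covector $(0, h_{\basepta})$, and checking carefully that this action fixes $\basis_1$ up to precisely the sign by which $g$ acts on the orientation line of $T_v[e_1]$. Once that is in hand, the remaining ingredients --- the semidirect product structure of $\widetilde B_{W_\La}$ and the computation of the fundamental class in Section~\ref{subsec-fund-class} --- combine purely formally.
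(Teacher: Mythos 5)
Your proposal reaches the right conclusion and is broadly parallel to the paper's argument, but there are a few places where the two diverge, one of which involves a small imprecision worth flagging.

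\textbf{The sign claim on $\basis_1$.} You argue that $g \in Z_K(\La)$ preserves $T_v[e_1]$ because ``$\nu_\Lg$ is $K$-invariant, hence $g$ acts by an isometry.'' This is not quite right as stated: $\nu_\Lg$-invariance gives preservation of the complex symmetric bilinear form, but $T_v[e_1]$ is defined as the positive eigenspace of $\cH_v[e_1]$ \emph{relative to the Hermitian metric} $\langle\,,\,\rangle$, and that metric is built from $\nu_\Lg$ \emph{together with} the real structure $\Lp_\bR$. An arbitrary $g \in Z_K(\La)$ need not preserve $\Lp_\bR$, hence need not be a Hermitian isometry. The fix is easy --- choose the representative $g$ in the compact form $Z_{K_\bR}(\La)$, which has the same component group --- but you should say so. The paper sidesteps this entirely: it only observes that $g\circ\kappa$ satisfies conditions (i)--(iii) of the Picard--Lefschetz construction (not (iv)), and notes that (i)--(iii) alone determine a Lefschetz thimble class up to sign. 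That is a cleaner way to land on $\lambda_l(u)\,\basis_1 = \pm\basis_1$ without worrying about what $g$ does to $T_v[e_1]$.

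\textbf{Propagation and identification.} From $\lambda_l(u)\,\basis_1 = \epsilon(u)\,\basis_1$ you propagate via the semidirect product structure and the relation $\basis_w = \lambda_l(\tilde r(b_w))\,\basis_1$, obtaining a diagonal action $\lambda_l(u)\,\basis_w = \epsilon(w^{-1}\cdot u)\,\basis_w$, and then pin down $\epsilon = \tau$ by comparing the $\basis_1$-coefficient of the fundamental class $F$, using $c_1 \neq 0$ from Lemma~\ref{lemma-fund-class-coefs}(i). The paper instead uses that $\lambda_l$ commutes with $\mu_l$ and that $\basis_1$ is cyclic for $\mu_l$ to conclude immediately that $\lambda_l(u)$ is a scalar, and then reads off the scalar from Proposition~\ref{prop-fundamental-class}(iii). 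Your route is correct (all the cited ingredients are established before this lemma), just one step longer. Both buy the same thing; the paper's version avoids having to discuss $w$-dependence of the sign at all.

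In summary: the structure of your argument matches the paper's, modulo (1) a small imprecision in the isometry claim that you should repair by choosing $g$ in the compact form or by adopting the paper's ``(i)--(iii) suffice'' observation, and (2) a somewhat different (but valid) route to identifying the sign character with $\tau$.
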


\begin{proof}
Pick an element $u \in I$, and an element $g \in Z_K (\La)$ representing $u$. The action of $\lambda_l (u)$ on $M_l (\Pone)$ is induced by the action of $g$ on $X_{\barbasepta}$, via the isomorphism of Lemma \ref{lemma-morse-dual}. Note that $g (l) = l$, and so $g$ acts as an automorphism of the pair $(X_{\barbasepta}, \{ \xi_l (x) \geq \xi_0 \})$ appearing in that lemma. Let:
\beqn
\kappa : (\rD^d, \partial \rD^d) \to (X_{\barbasepta}, \{ \xi_l (x) \geq \xi_0 \}),
\eeqn
be an embedded disk representing the class $\basis_1 = PL [e_1, \gamma_1, o_1]$, as in Section~\ref{subsec-PLclasses}. Then the composition:
\beqn
g \circ \kappa : (\rD^d, \partial \rD^d) \to (X_{\barbasepta}, \{ \xi_l (x) \geq \xi_0 \}),
\eeqn
with a suitable orientation, represents the class $\lambda (u) \, \basis_1$. Since $g (l) = l$, we have $h_l \circ g \circ \kappa = h_l \circ \kappa$. It follows that the embedding $g \circ \kappa$ satisfies conditions (i)-(iii) of Section~\ref{subsec-PLclasses}, which suffice to determine the relative homology class represented by $g \circ \kappa$ up to sign. Namely, we have:
\beqn
\lambda_l (u) \, \basis_1 = \pm \basis_1 \, .
\eeqn
Note that we do not assert that the embedding $g \circ \kappa$ satisfies condition (iv) of Section~\ref{subsec-PLclasses}.

Recall that the action $\lambda_l$ commutes with the monodromy action:
\beqn
\mu_l : B_{W_\La} \to \on{Aut} (M_l (\Pone)), 
\eeqn
and that $\basis_1 \in M_l (\Pone)$ is a cyclic element for the latter (this follows from Proposition~\ref{prop-partial-description} (ii)). It follows that we have:
\beqn
\lambda_l (u) = \pm \on{Id}_{M_l (\Pone)}.
\eeqn
Finally, by Proposition \ref{prop-fundamental-class} (iii), the sign in the above equation is equal to $\tau (u)$.
\end{proof}

Theorem \ref{thm-chi-eq-one} now follows from Propositions \ref{prop-fourier} and \ref{prop-chi-eq-one-basis}, and Lemma \ref{lemma-trivial-inertia}. The isomorphism of Theorem \ref{thm-chi-eq-one} (i) comes from a $\widetilde B_{W_\La}$-module isomorphism $\cM = \cH_1 \to M_l (\Pone)$ which sends $1 \in \cH_1$ into $\basis_1 \in M_l (\Pone)$.

\section{Proof of the main theorem}
\label{sec-proof}

Our proof of Theorem \ref{thm-main} will in many ways be parallel to the proof of Theorem \ref{thm-chi-eq-one} in Section~\ref{sec-proof-of-thm-chi-eq-one}. The main distinction is that the perverse sheaf $\Pone = P_1$ of Section~\ref{sec-proof-of-thm-chi-eq-one} is equipped with a monodromy action $\mu: B_{W_\La} \to \on{Aut} (\Pone)$ of the full braid group $B_{W_\La}$ (see \eqref{eqn-mu-chi-eq-one}), whereas the sheaf $P_\chi$ for $\chi \neq 1$ is only equipped with a monodromy action $\mu: B_{W_\La}^\chi \to \on{Aut} (P_\chi)$ (see \eqref{eqn-mu}). We will get around this limitation by utilizing the extended monodromy in the family, introduced in Section~\ref{subsec-twisted-monodromy} (see \eqref{ext-mon}).

\subsection{A canonical basis for \texorpdfstring{$M_l (P_\chi)$}{Lg}}
As in Section~\ref{subsec-fourier}, for every $y \in \Lp^{rs}$, we consider the Morse group $M_y (P_\chi) = M_{(0, h_y)} (P_\chi)$, and we write $M (P_\chi)$ for the resulting Morse local system on $\Lp^{rs}$.

\begin{prop}\label{prop-fourier-chi}
We have:
\beqn
\fF P_\chi \cong \on{IC} (\Lp^{rs}, M (P_\chi)).
\eeqn
\end{prop}

\begin{proof}
The proof of Proposition~\ref{prop-fourier} goes through in the the presence of the local system $\cL_\chi$; see \cite[Remark 1.4]{G1}.
\end{proof}

\begin{remark}\label{rmk-Z-to-k-chi}
As in Remark \ref{rmk-Z-to-k}, we can conclude from Proposition \ref{prop-fourier-chi} that the claim of Theorem \ref{thm-main} for $\baseRing = \bZ$ implies the claim for general $\baseRing$.
\end{remark}

Recall the basepoint $l \in \La_\bR^+$, and consider the Morse group $M_l (P_\chi)$. As in Section~\ref{sec-proof-of-thm-chi-eq-one}, we will write:
\beqn
\lambda_l : \widetilde B_{W_\La} \to \on{Aut} (M_l (P_\chi)),
\eeqn
for the microlocal monodromy action. Our proof of Theorem \ref{thm-main} will be based on the following analog of Proposition \ref{prop-chi-eq-one-basis}.

\begin{prop}\label{prop-general-basis}
The $\baseRing$-module $M_l (P_\chi)$ is free, and there exists a basis $\{ \basis_w \}_{w \in W_\La}$ of $M_l (P_\chi)$ such that, for every $w \in W_\La$ and every $s \in S$, we have:
\beqn
\begin{array}{ll}
\lambda_l \circ \tilde r \, (\sigma_s) \, \basis_w = - \basis_{sw} + 2 \, \basis_w \, , &
\text{if } \, \BruhatLength (sw) < \BruhatLength (w), \; w^{-1} s w \in W_{\La, \chi}^0 \, , 
\\& \text{and } \delta (s) \text{ is odd}; \\
\lambda_l \circ \tilde r \, (\sigma_s) \, \basis_w = \basis_{sw} \, , &
\text{otherwise}.
\end{array}
\eeqn
\end{prop}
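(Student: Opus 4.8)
The plan is to transpose the argument of Section~\ref{sec-proof-of-thm-chi-eq-one} to the twisted setting, using the extended monodromy of Section~\ref{subsec-twisted-monodromy} in place of the full braid-group action on $\Pone$. First I would set up Picard--Lefschetz theory for $M_l(P_\chi)$ exactly as in Sections~\ref{subsec-fourier}--\ref{subsec-basis-up-to-sign}. The critical points of $h_{l, \barbasepta} : X_{\barbasepta} \to \bC$ are still the Morse points $\{ e_w = w(\basepta) \}_{w \in W_\La}$ (Lemma~\ref{lemma-crit-points} is unaffected by the coefficients), but each vanishing thimble now carries the local system $\hat\cL_\chi$; trivializing $\hat\cL_\chi$ along the path $\tilde r(b_w)$ singles out a Picard--Lefschetz class $\pm \basis_w \in M_l(P_\chi)$, and these form a basis up to sign (the twisted analogue of Lemma~\ref{lemma-basis}). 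As in Section~\ref{subsec-picking-signs} I would then fix signs by declaring $\basis_1 = PL[e_1, \gamma_1, o_1]$ and $\basis_w = \lambda_l \circ \tilde r(b_w)\,\basis_1$. With this normalization the ``otherwise'' clause is automatic whenever $\BruhatLength(w) < \BruhatLength(sw)$: there $b_{sw} = \sigma_s b_w$, hence $\lambda_l \circ \tilde r(\sigma_s)\,\basis_w = \basis_{sw}$ by definition.

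The substance lies in the range $\BruhatLength(sw) < \BruhatLength(w)$. Here, as in the proof of Lemma~\ref{lemma-microlocal-minimal-polynomials}, the Picard--Lefschetz geometry of Proposition~\ref{prop-partial-description} shows that $\lambda_l \circ \tilde r(\sigma_s)$ preserves the rank-two block $M_l[u]$ spanned by $\{\basis_w, \basis_{sw}\}$, and the ``going up'' analysis just made (applied to the coset representative $sw$) gives $\lambda_l \circ \tilde r(\sigma_s)\,\basis_{sw} = \basis_w$; thus $T = \lambda_l \circ \tilde r(\sigma_s)|_{M_l[u]}$ is cyclic on $\basis_{sw}$ and is determined by its characteristic polynomial. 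To compute that polynomial I would run the rank-one reduction of Proposition~\ref{prop-minimal-polynomials}: since $P_\chi$ alone carries only a $B_{W_\La}^\chi$-monodromy, I would instead work with $\tilde\cP_\chi = \psi_{F_2}(\tilde g_* \hat\cL_\chi)[-]$, whose fibre is $\bigoplus_{\bar w} P_{\bar w \cdot \chi}$ and which carries a genuine $\sigma_s$-monodromy via equation~\eqref{ext-mon}, and then apply the Thom $A_f$/$A_{F_2}$ argument to reduce to the rank-one pair $(\bar G_s, \bar K_s)$ equipped with the restriction of the twisting local system. The component group of that rank-one pair is $I_s$; the block $M_l[u]$ sits over the summand of $\tilde\cP_\chi$ corresponding to the coset of $w$, so the restricted character is governed by the reflection $w^{-1} s w$, which by Corollary~\ref{cor-sInW0} is trivial on $I_{w^{-1} s w}$ precisely when $w^{-1} s w \in W_{\La, \chi}^0$. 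Feeding in the rank-one monodromy equations --- $(\mu - 1)^2 = 0$ in the untwisted odd case, $\mu^2 - 1 = 0$ in the untwisted even case, and $\mu^2 - 1 = 0$ in the nontrivially twisted case (necessarily $\delta(s) = 1$), as recorded in \cite[Example~3.7]{G2} and Proposition~\ref{prop-example} --- the characteristic polynomial of $T$ is $(t-1)^2$ exactly when $w^{-1} s w \in W_{\La, \chi}^0$ and $\delta(s)$ is odd, and is $t^2 - 1$ otherwise. Since $T\basis_{sw} = \basis_w$, this forces $T\basis_w = -\basis_{sw} + 2\basis_w$ in the first case and $T\basis_w = \basis_{sw}$ in all remaining cases, which is the assertion of the proposition.

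The one point still to be nailed down is the sign in the unipotent case: a priori the characteristic polynomial of $T$ could be $(t+1)^2$ rather than $(t-1)^2$. In Section~\ref{sec-proof-of-thm-chi-eq-one} this was excluded by the fundamental class of $X_{\barbasepta}$ (Proposition~\ref{prop-fundamental-class} and Lemma~\ref{lemma-fund-class-coefs}), but for $\chi \neq 1$ the twisted homology $H_d(X_{\barbasepta}; \cL_{\bar w \cdot \chi})$ vanishes for every coset $\bar w$, so that anchor is unavailable; I expect this to be the main obstacle. My plan to get around it is to pin the sign at the rank-one level directly: when $w^{-1} s w \in W_{\La, \chi}^0$ the rank-one reduction lands in the \emph{untwisted} rank-one pair $(\bar G_s, \bar K_s)$, for which Theorem~\ref{thm-chi-eq-one} (equivalently \cite[Section~6]{G2}) already computes the microlocal monodromy with eigenvalue $+1$; what then remains is to verify that the reduction underlying Proposition~\ref{prop-minimal-polynomials} is compatible not only with the monodromy in the family but also with the microlocal monodromy $\lambda_l$, so that this rank-one computation transfers verbatim to the block $M_l[u]$. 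With the sign so fixed, the remainder of the proof is a faithful transcription of Section~\ref{sec-proof-of-thm-chi-eq-one}, with the $\baseRing[\widetilde B_{W_\La}]$-module structures replaced by their $\chi$-twisted counterparts.
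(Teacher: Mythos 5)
Your setup — the twisted Picard--Lefschetz classes, the normalization $\basis_w = \lambda_l \circ \tilde r(b_w)\,\basis_1$, and the observation that the ``otherwise'' clause for $\BruhatLength(w) < \BruhatLength(sw)$ is then forced by $b_{sw} = \sigma_s b_w$ — is exactly what the paper does (Section~\ref{subsec-PLclasses-general} and Lemma~\ref{lemma-interpret-basis}). You also correctly reduce the substantive case $\BruhatLength(sw) < \BruhatLength(w)$ to determining the characteristic polynomial of $T = \lambda_l \circ \tilde r(\sigma_s)$ on the rank-two block, correctly list the target polynomials, and correctly isolate the real difficulty: for $\chi \neq 1$ the top twisted homology $H_d(X_{\barbasepta};\cL_\chi)$ vanishes, so the fundamental-class anchor of Proposition~\ref{prop-fundamental-class}/Lemma~\ref{lemma-fund-class-coefs} that pins the $\pm 1$ in the unipotent case is gone.

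Where your plan diverges from the paper, and where it has a genuine gap, is the resolution of that sign. You propose to run a rank-one reduction \emph{for the microlocal monodromy} $\lambda_l$ and transfer the eigenvalue $+1$ from the untwisted rank-one computation, and you flag yourself that ``what remains is to verify that the reduction underlying Proposition~\ref{prop-minimal-polynomials} is compatible \dots also with $\lambda_l$.'' That compatibility is precisely what is not available in this form: the Thom $A_f/A_{F_2}$ reduction of Propositions~\ref{prop-minimal-polynomials} and \ref{prop-minimal-polynomials-general} is engineered for the monodromy in the family, by restricting the nearby-cycle sheaf on $X_{\barbaseptas}$ to the transverse slice $\baseptas + \bar\Lp_s$ and using faithfulness of $j^*$. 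The microlocal monodromy is a different operator — it is holonomy of the Morse local system in the covector $l \in \Lp^{rs}$, and $l$ does not live in the slice — so there is no comparable faithful restriction functor for $\lambda_l$, and the claimed ``transfer verbatim to the block $M_l[u]$'' is the missing step. You would have to either build that comparison, which is a substantial additional argument, or abandon this route.

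The paper avoids the issue entirely and resolves the sign by bookkeeping at the level of Picard--Lefschetz data rather than by a microlocal rank-one reduction. Concretely (Section~\ref{subsec-w-eq-one}): write $PL[e_1,\gamma_{1,s},o_1,c_1] = \basis_1 + m_s \basis_s$ and note that both $\mu(\sigma_s^{\pm 1})\,\basis_1$ (or $\mu(\sigma_s^{\pm 2})\,\basis_1$ when $s \notin W^0_{\La,\chi}$) and $\lambda_l \circ \tilde r(\sigma_s^2)\,\basis_1$ are Picard--Lefschetz classes for the \emph{same} path $\gamma_{1,s}$ at $e_1$, differing only in orientation and in the choice of generator of the stalk $(\cL_\chi)_{e_1}$. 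The orientation contribution $o_{1,s} = (-1)^{\delta(s)}\,o_1$ is read off from the untwisted Proposition~\ref{prop-chi-eq-one-basis} (orientations do not see $\chi$), while the coefficient contribution $c_{1,s} = \chi(r(\sigma_s^2))\,c_1$ is Corollary~\ref{cor-char-and-lifts}. These two signs are what distinguish the four cases, and the unknown integer $m_s$ is then nailed down by the $2\times 2$ matrix of $\mu(\sigma_s)$ (resp.\ $\mu(\sigma_s^2)$) together with the rank-one equations of Proposition~\ref{prop-minimal-polynomials-general}, which are equations for $\mu$ only. Your proposal correctly anticipates the destination but replaces this orientation/coefficient bookkeeping by an unsubstantiated microlocal rank-one reduction; that is the gap. (The remaining difference — you handle the cosets one at a time inside the rank-one reduction, the paper first uses the extended monodromy to move $\basis_w$ into $M_l(P_{w\cdot\chi})$ and then applies the $w=1$ case with character $w\cdot\chi$ — is a harmless reorganization.)
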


Sections~\ref{subsec-PLclasses-general} - \ref{subsec-general-w} below will be devoted to the proof of Proposition \ref{prop-general-basis}.

\subsection{Picard-Lefschetz classes}
\label{subsec-PLclasses-general}
As in Section~\ref{subsec-PLclasses}, we have the following lemma.

\begin{lemma}
The Morse group $M_l (P_\chi)$ can be identified as follows:
\beqn
M_l (P_\chi) \cong H_d (X_{\barbasepta}, \{ x \in X_{\barbasepta} \; | \; \xi_l (x) \geq \xi_0 \}; \cL_\chi),
\eeqn
where $\xi_0$ is any real number with $\xi_0 > \xi (e_1)$.
\end{lemma}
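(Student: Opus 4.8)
The statement is the twisted-coefficients analogue of Lemma~\ref{lemma-nearby-morse} combined with Lemma~\ref{lemma-morse-dual}, so the plan is to run the same two-step argument, keeping careful track of the local system $\cL_\chi$ throughout. First I would unwind the definition of $M_l (P_\chi) = M_{(0, h_l)} (P_\chi)$ using equation~\eqref{defn-Morse-group} (which makes sense verbatim with twisted coefficients), and then the definition of the nearby cycle sheaf $P_\chi = \psi_{\barbasepta} \circ p_{\barbasepta}^* (\cL_\chi)[-]$ from Section~\ref{subsec-twisted}. Exactly as in the proof of Lemma~\ref{lemma-nearby-morse}, the Morse group at a point $y\in\Lp^{rs}$ computed against a $\bC^*$-conic perverse sheaf obtained as nearby cycles in the family $\cZ_{\barbasepta}\to\bC$ is identified with the relative cohomology of the general fiber $X_{\barbasepta}$, now with coefficients in the pulled-back local system $p_{\barbasepta}^*(\cL_\chi)|_{X_{\barbasepta}}\cong\cL_\chi$; invoking \cite[Lemma 3.1(i)]{G3} (which is insensitive to the coefficient system) gives
\[
M_l (P_\chi) \cong H^d (X_{\barbasepta}, \{ x \in X_{\barbasepta} \; | \; \xi_l (x) \leq - \xi_0 \}; \cL_\chi)
\]
for $\xi_0\gg1$.

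Next I would apply Poincar\'e--Lefschetz duality on the (complex, hence oriented) smooth variety $X_{\barbasepta}$ to pass from cohomology of the pair $(X_{\barbasepta},\{\xi_l\le-\xi_0\})$ with coefficients in $\cL_\chi$ to homology of the complementary pair $(X_{\barbasepta},\{\xi_l\ge\xi_0\})$ with coefficients in the dual local system $\cL_\chi^\vee$. Here the one genuinely new point compared with the $\chi=1$ case is that $\cL_\chi^\vee\cong\cL_\chi$: this holds because $\chi$ takes values in $\{\pm1\}$, so $\cL_\chi$ is self-dual as a rank-one local system. That is why the same symbol $\cL_\chi$ appears on both sides of the claimed isomorphism. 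The genericity of $h_l$ on $X_{\barbasepta}$ (Lemma~\ref{lemma-gencov}, applied to $X_{\barbasepta}$ as in Lemmas~\ref{lemma-crit-points} and~\ref{lemma-morse-dual}) guarantees that $h_l|_{X_{\barbasepta}}$ is a proper Morse function away from the sublevel set, so that the duality is the standard one between a relative cohomology and a relative Borel--Moore/locally-finite homology and no further corrections appear.

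Finally I would record that the range of $\xi_0$ in the statement, namely any $\xi_0>\xi(e_1)$, is controlled exactly as in the untwisted case: by Lemma~\ref{lemma-bruhat} the critical values $\{h_l(e_w)\}_{w\in W_\La}$ of $h_l|_{X_{\barbasepta}}$ all lie below $\xi(e_1)$, and the Morse-theoretic argument of Lemma~\ref{lemma-large-xi} shows the homology group is independent of $\xi_0$ once $\xi_0$ exceeds the top critical value; twisting by $\cL_\chi$ does not affect this stability statement, since it only concerns attaching cells above the highest critical value, where the pair is already a product. I expect the only point requiring any care — and the ``main obstacle'' in an otherwise routine transcription — to be the bookkeeping of the duality isomorphism with a nontrivial local system, specifically pinning down the canonical identification $\cL_\chi^\vee\cong\cL_\chi$ and checking that it is compatible with the $K$-equivariant structure and with the later identifications of $X_{\hatbasepta}$-fibers in Section~\ref{subsec-twisted-monodromy}; everything else is a direct quotation of the arguments already given for $\chi=1$.
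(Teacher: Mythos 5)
Your proposal is correct and follows exactly the same route as the paper: the paper's own proof simply cites Lemmas~\ref{lemma-morse-dual} and \ref{lemma-large-xi} (which in turn rest on Lemma~\ref{lemma-nearby-morse}, Poincar\'e duality, Lemma~\ref{lemma-gencov}, and Lemma~\ref{lemma-bruhat}) and then notes the self-duality of $\cL_\chi$, which is precisely the extra bookkeeping point you identify and resolve via $\chi$ taking values in $\{\pm 1\}$.
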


\begin{proof}
The proof is the same as those of Lemmas~\ref{lemma-morse-dual} and \ref{lemma-large-xi}.
\end{proof}

As in Section~\ref{subsec-picking-signs}, the basis elements $\{ \basis_w \}_{w \in W_\La}$ of Proposition \ref{prop-general-basis} will be constructed as Picard-Lefschetz classes. In order to specify a Picard-Lefschetz class with coefficients in $\cL_\chi$, one needs to specify a triple $[e, \gamma, o]$ as in Section~\ref{subsec-PLclasses}, and in addition, to pick a generator $c \in (\cL_\chi)_e$ of the stalk $(\cL_\chi)_e \cong \baseRing$. The data $[e, \gamma, o, c]$ determines a Picard-Lefschetz class:
\beqn
PL [e, \gamma, o, c] \in H_d (X_{\barbasepta}, \{ x \in X_{\barbasepta} \; | \; \xi_l (x) \geq \xi_0 \}; \cL_\chi),
\eeqn
as follows. Let:
\beqn
\kappa : (\on{D}^d, \partial \on{D}^d) \to (X_{\bar a}, \{ \xi_l (x) \geq \xi_0 \}),
\eeqn
be a smoothly embedded disk, representing the Picard-Lefschetz class \linebreak $PL [e, \gamma, o]$, as in Section~\ref{subsec-PLclasses}. The local system $\cL_\chi$ is trivial on the image of $\kappa$. Therefore, the generator $c$ determines a section of $\cL_\chi |_{\on{Im} (\kappa)}$, and hence a Picard-Lefschetz class $PL [e, \gamma, o, c]$ as above.

Note that the $K$-equivariant structure on $\cL_\chi$ gives rise to a $\widetilde W_\La$-equivariant structure on the restriction of $\cL_\chi$ to the set $Z_l = \{ e_w \}_{w \in W_\La}$. Thus, for every $\widetilde w \in \widetilde W_\La$ and $w' \in W_\La$, we obtain an action map $(\cL_\chi)_{e_{w'}} \to (\cL_\chi)_{e_{q (\widetilde w) \, w'}} \,$, which we denote by $c \mapsto \widetilde w \cdot c$.

We are now prepared to define the basis elements $\{ \basis_w \}_{w \in W_\La}$. Pick a generator $c_1 \in (\cL_\chi)_{e_1}$. For each $w \in W_\La - \{ 1 \}$, define a generator $c_w \in (\cL_\chi)_{e_w}$ by setting:
\beqn
c_w = r (b_w) \cdot c_1 \, ,
\eeqn
where $r = \tilde p \circ \tilde r$, as in Definition~\ref{defn-regular-splitting}. Finally, for every $w \in W_\La$, define:
\beqn
\basis_w = PL [e_w, \gamma_w, o_w, c_w] \in M_l (P_\chi),
\eeqn
where $[e_w, \gamma_w, o_w]$ are as in Sections~\ref{subsec-basis-up-to-sign}, \ref{subsec-picking-signs}. The claim that the $\baseRing$-module $M_l (P_\chi)$ is free and that the elements $\{ \basis_w \}_{w \in W_\La}$ form a basis of $M_l (P_\chi)$ is analogous to Lemma~\ref{lemma-basis}.

\begin{lemma}\label{lemma-interpret-basis}
For every $w \in W_\La$, we have:
\beqn
\basis_w = \lambda_l \circ \tilde r (b_w) \, \basis_1 \, .
\eeqn
\end{lemma}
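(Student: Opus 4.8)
The plan is to prove the identity $\basis_w = \lambda_l \circ \tilde r(b_w)\,\basis_1$ by tracking the Picard--Lefschetz class $\basis_1 = PL[e_1,\gamma_1,o_1,c_1]$ under the microlocal monodromy operator $\lambda_l \circ \tilde r(b_w)$, and factoring the computation into an ``underlying cycle'' part and a ``coefficient'' part. Since $\lambda_l$ is the monodromy of the $K$-equivariant Morse local system $M(P_\chi)$ on $\Lp^{rs}$, the operator $\lambda_l(\tilde r(b_w))$ is induced by a self-homotopy-equivalence of the pair $(X_{\barbasepta},\{\xi_l \geq \xi_0\})$ (depending only on $\tilde q(\tilde r(b_w)) = b_w \in B_{W_\La}$) together with the action of the $K$-equivariant structure of $\cL_\chi$ on coefficients. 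Forgetting $\cL_\chi$, the self-homotopy-equivalence is exactly the transformation analyzed in the case $\chi = 1$: by the very construction in Section~\ref{subsec-picking-signs} it carries the triple $[e_1,\gamma_1,o_1]$ to the triple $[e_w,\gamma_w,o_w]$ with no residual sign, because in the $\chi = 1$ setting $\basis_w$ was \emph{defined} to be $\lambda_l\circ\tilde r(b_w)\,\basis_1$. It is convenient to first reduce to $w\in S$ by writing $b_w = \sigma_{s_1}\cdots\sigma_{s_k}$ for a reduced word and iterating; the relevant per-generator input is the proof of Proposition~\ref{prop-partial-description}, which only manipulates the critical values $h_t(\tilde e_w(t))$ of the moving linear form and is therefore insensitive to the coefficient system.

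For the coefficient part one must identify the transport of the stalk generator $c_1\in(\cL_\chi)_{e_1}$ along a loop representing $\tilde r(b_w)$, landing in $(\cL_\chi)_{e_w}$. The local system $\cL_\chi$ is canonically trivialized along each vanishing disk, and along the arcs in $\La^{rs}$ occurring in the explicit loops $\Gamma_s$ from the proof of Proposition~\ref{prop-partial-description}; hence this transport is governed entirely by the $K$-equivariant structure, i.e.\ by the group element by which the monodromy translates the fiber $X_{\barbasepta}$. By the definition of $\tilde p : \widetilde B_{W_\La}\to\widetilde W_\La$ in diagram~\eqref{mainDiagram} and the Cartesian description~\eqref{concrete}, that group element is precisely $r(b_w) = \tilde p(\tilde r(b_w))\in\widetilde W_\La = N_K(\La)/Z_K(\La)^0$, which acts on $\Lambda$ as $w$ and hence sends $e_1$ to $e_w$ inside $Z_l$. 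Therefore the coefficient transport carries $c_1$ to $r(b_w)\cdot c_1$, which by the definition of $c_w$ in Section~\ref{subsec-PLclasses-general} is exactly $c_w$. At the level of a single simple reflection this reads $\lambda_l\circ\tilde r(\sigma_s)\,PL[e_{w'},\gamma_{w'},o_{w'},c_{w'}] = PL[e_{sw'},\gamma_{sw'},o_{sw'},c_{sw'}]$ whenever $\BruhatLength(sw') > \BruhatLength(w')$, the coefficient statement there being $c_{sw'} = r(\sigma_s)\cdot c_{w'}$, valid since $b_{sw'} = \sigma_s b_{w'}$ and $r$ is a homomorphism; iterating this along the reduced word gives the general case.

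Putting the two parts together yields $\lambda_l\circ\tilde r(b_w)\,\basis_1 = PL[e_w,\gamma_w,o_w,c_w] = \basis_w$, because a Picard--Lefschetz class with coefficients is determined by its underlying triple together with the chosen stalk generator. The main obstacle is the bookkeeping needed to make rigorous the factorization ``microlocal monodromy $=$ (cycle transport) $\otimes$ (coefficient transport)'' and, inside it, to check that the coefficient transport along the loop $\tilde r(b_w)$ genuinely agrees with the $\widetilde W_\La$-equivariant structure on $\cL_\chi|_{Z_l}$ via $\tilde p$. This is exactly the point at which the regularity of the splitting $\tilde r$ enters: it guarantees, through~\eqref{concrete} and Definition~\ref{defn-regular-splitting}, that the $\widetilde W_\La$-component of $\tilde r(b_w)$ is precisely $r(b_w)$ and that the ``$\La^{rs}$-part'' of the loop contributes trivially to the holonomy of $\cL_\chi$ at the critical points $e_{w'}$. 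Once this is in hand the lemma follows; it is the analog for $P_\chi$ of the identity $\basis_w = \lambda_l\circ\tilde r(b_w)\,\basis_1$ that held by construction for $\Pone$.
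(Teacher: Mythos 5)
Your proof is correct and follows essentially the same route as the paper: reduce by induction on length to a single simple reflection, invoke the $\chi=1$ analysis (Proposition~\ref{prop-partial-description}(i) and the choice of orientations in Section~\ref{subsec-picking-signs}) to handle the underlying vanishing cycle, and observe that the coefficient transport along $\Gamma_s$ on $[1/2,1]$ is governed by the $K$-equivariant structure and hence amounts to the action of $r(\sigma_s) \in \widetilde W_\La$, giving $c_{sw'} = r(\sigma_s)\cdot c_{w'}$. One minor misattribution: the fact that the $\widetilde W_\La$-component of $\tilde r(b_w)$ equals $r(b_w)$ is just the definition $r = \tilde p\circ\tilde r$; regularity of $\tilde r$ is what makes the explicit loop $\Gamma_s$ (with $k_s \in K^0$) a valid representative of $\tilde r(\sigma_s)$ in the first place.
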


\begin{proof}
By induction on $\BruhatLength (w)$, it suffices to show that for every $w \in W_\La$ and every $s \in S$ such that $\BruhatLength (w) < \BruhatLength (sw)$, we have:
\beqn
\lambda_l \circ \tilde r \, (\sigma_s) \, \basis_w =  \basis_{sw} \, .
\eeqn
In the case of the trivial local system ($\chi = 1$), we have established this fact up to sign in Proposition~\ref{prop-partial-description} (i), and then fixed the signs by our choice of the orientations $\{ o_w \}_{w \in W_\La}$ in Section~\ref{subsec-picking-signs}. We now proceed as in the proof of Proposition~\ref{prop-partial-description} (i), making use of the path $\Gamma_s$. The only additional observation we need is that the effect of $\Gamma_s$ on the interval $[1/2, 1]$ on the generator $c_w \in (\cL_\chi)_{e_w}$ amounts to the action of $r (\sigma_s) \in \widetilde W_\La$. This establishes the induction step.
\end{proof}

Lemma \ref{lemma-interpret-basis} immediately implies the claim of Proposition \ref{prop-general-basis} for pairs $(w, s)$ with $\BruhatLength (w) < \BruhatLength (sw)$. For the case of pairs $(w, s)$ with $\BruhatLength (w) > \BruhatLength (sw)$, it will be convenient to reverse the roles of $w$ and $sw$, and to restate the claim of the proposition as follows.

\begin{prop}\label{prop-general-basis-restated}
For every $w \in W_\La$ and every $s \in S$ with $\BruhatLength (w) < \BruhatLength (sw)$, we have:
\beqn
\begin{array}{ll}
\lambda_l \circ \tilde r \, (\sigma_s^2) \, \basis_w = - \basis_w + 2 \, \basis_{sw} \, , &
\text{ if } \; w^{-1} s w \in W_{\La, \chi}^0 \; \text{ and } \; \delta (s) \text{ is odd}; \\
\lambda_l \circ \tilde r \, (\sigma_s^2) \, \basis_w = \basis_w \, , &
\text{ otherwise}.
\end{array}
\eeqn
\end{prop}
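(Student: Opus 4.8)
The plan is to reduce the statement to a rank-one computation, in the spirit of the proof of Proposition~\ref{prop-minimal-polynomials}, and then to invoke the two $(SL(2),SO(2))$ pictures of Section~\ref{sec-example}.

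\emph{Step 1: reduction to a rank-one assertion.} First I would fix $w\in W_\La$ and $s\in S$ with $\BruhatLength(w)<\BruhatLength(sw)$, and set $t=w^{-1}sw$, a reflection with $\delta(t)=\delta(s)$ and $I_t=w^{-1}\cdot I_s$. Since $\BruhatLength(sw)=\BruhatLength(s)+\BruhatLength(w)$ we have $b_{sw}=\sigma_s b_w$, so the braid $\beta_t:=b_w^{-1}\sigma_s b_w$ satisfies $p(\beta_t)=t$ and $\beta_t^2=b_w^{-1}\sigma_s^2 b_w$. Put $\basis_1':=\lambda_l\circ\tilde r(\beta_t)\,\basis_1=\lambda_l\circ\tilde r(b_w^{-1})\,\basis_{sw}$. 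Using Lemma~\ref{lemma-interpret-basis} (which gives $\basis_w=\lambda_l\circ\tilde r(b_w)\,\basis_1$ and $\basis_{sw}=\lambda_l\circ\tilde r(b_w)\,\basis_1'$) one finds that $\lambda_l\circ\tilde r(\sigma_s^2)\,\basis_w=\lambda_l\circ\tilde r(b_w)\bigl(\lambda_l\circ\tilde r(\beta_t^2)\,\basis_1\bigr)$, so that Proposition~\ref{prop-general-basis-restated} is equivalent to the assertion that $\lambda_l\circ\tilde r(\beta_t^2)\,\basis_1=-\basis_1+2\,\basis_1'$ when $\delta(s)$ is odd and $t\in W_{\La,\chi}^0$, and $\lambda_l\circ\tilde r(\beta_t^2)\,\basis_1=\basis_1$ otherwise. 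By Corollary~\ref{cor-sInW0}, the condition $t\in W_{\La,\chi}^0$ is equivalent to $\chi|_{I_t}=1$.

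\emph{Step 2: reduction to rank one.} I would prove this last assertion by reducing to the rank-one symmetric pair $(\bar G_t,\bar K_t)$ (equivalently $(\bar G_t',\bar K_t')$) of Section~\ref{subsec-rank-one} attached to $t$, following the proof of Proposition~\ref{prop-minimal-polynomials}: pick a generic point $\genpta$ on the hyperplane $\La_t$ fixed by $t$, a small analytic disc in $\La$ through $\genpta$ transverse to $\La_t$, and base change the family $f$ and its twisted lift along the induced double cover; Thom's $A_f$, $A_{F_1}$ and $A_{F_2}$ conditions (the last as in Proposition~\ref{prop-asubf2}) identify the iterated nearby cycle with $P_\chi$ and identify $\lambda_l\circ\tilde r(\beta_t^2)$ (and the corresponding monodromy in the family) with the monodromy of the resulting one-parameter family around its special fibre. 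Restricting further to the slice $\genpta+\bar\Lp_t$ as in Proposition~\ref{prop-minimal-polynomials}, and noting that $\hat \cL_\chi$ restricts along the $\bar K_t$-orbit through $\genpta$ to the rank-one local system attached to $\chi|_{I_t}$, reduces the computation to the rank-one pair $(\bar G_t,\bar K_t)$ equipped with the character $\chi|_{I_t}$, in which $\dim\bar\Lp_t=\delta(t)+1$.

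\emph{Step 3: the two rank-one models.} If $\chi|_{I_t}=1$ the twist is trivial, so $t\in W_{\La,\chi}$ by Lemma~\ref{lemma-W0} and the monodromy in the family is defined around $\beta_t$; the analysis of Proposition~\ref{prop-chi-eq-one-basis}(i) and Lemma~\ref{lemma-microlocal-minimal-polynomials}, applied to this rank-one pair, shows that $\lambda_l\circ\tilde r(\beta_t)$ is unipotent with $(\lambda_l\circ\tilde r(\beta_t)-1)^2=0$ when $\delta(t)$ is odd, and has order two with $\lambda_l\circ\tilde r(\beta_t)\,\basis_1=\basis_1'$ and $\lambda_l\circ\tilde r(\beta_t)\,\basis_1'=\basis_1$ when $\delta(t)$ is even; in either case this gives the required value of $\lambda_l\circ\tilde r(\beta_t^2)\,\basis_1$. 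If $\chi|_{I_t}\neq1$ then $\delta(t)=1$ by Lemma~\ref{lemma-trichotomy}, with $I_t=\langle\check\alpha_t(-1)\rangle\cong\bZ/2$ and $(\bar G_t',\bar K_t')\cong(SL(2),SO(2))$ carrying the nontrivial character $\chi_1$; by Proposition~\ref{prop-example}(ii) the sheaf $P_{\chi_1}$ is a direct sum of two $\IC$-sheaves with no nontrivial self-extensions, and both its monodromy in the family and its microlocal monodromy around $\beta_t^2$ act as the identity, so $\lambda_l\circ\tilde r(\beta_t^2)\,\basis_1=\basis_1$, as required.

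\emph{Step 4: fixing signs, and the main difficulty.} The rank-one input determines $\lambda_l\circ\tilde r(\beta_t^2)$ only up to the sign ambiguity inherent in Picard--Lefschetz classes; I would remove it exactly as in the proofs of Lemmas~\ref{lemma-microlocal-minimal-polynomials} and~\ref{lemma-fund-class-coefs}, using that $\basis_1$ is cyclic for the operators in play and that, by Proposition~\ref{prop-fundamental-class}, the microlocal monodromy acts on the fundamental-class line $M^0_l(\Pone)\subset M_l(\Pone)$ of Section~\ref{subsec-fund-class} (and on its counterpart for $P_\chi$) through the character $\tau$, which excludes the spurious eigenvalue $-1$. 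Combined with Step~1 this finishes the proof. The main obstacle will be the twisted rank-one reduction of Step~2: because $\hat \cL_\chi$ lives on the cover $\tilde\Lp_\chi^{rs}$ and not on $\Lp^{rs}$, and the monodromy in the family of $P_\chi$ is only defined over $\La^{rs}/W_{\La,\chi}$, one must carry out the $A_f$-argument using the family $f_\chi$ and the extended monodromy of Section~\ref{subsec-twisted-monodromy}, and check carefully that on a generic transverse slice to $\La_t$ the induced rank-one twist is exactly $\chi|_{I_t}$, so that the only rank-one models that arise are the two of Step~3.
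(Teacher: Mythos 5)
Your Step 1 reduction is algebraically correct: conjugating by $\lambda_l\circ\tilde r(b_w)$ does transfer the statement for the pair $(w,s)$ to a statement about $\lambda_l\circ\tilde r(\beta_t^2)$ acting on $\basis_1$, with $t=w^{-1}sw$. But Steps 2--3 run into an obstruction that you flag only at the very end and underestimate. The rank-one reduction of Proposition~\ref{prop-minimal-polynomials} (and its twisted analog Proposition~\ref{prop-minimal-polynomials-general}) works by base-changing $f$ (resp.\ $f_\chi$) along an arc near the wall $\La_s$, with basepoints $\basepta,l$ placed \emph{near that wall in} $\overline{\La_\bR^+}$. This is crucial: the arc and the basepoint are close, so no transport along the parameter space is required, and the local monodromy of the arc is literally $\mu(\sigma_s)$. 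For your non-simple reflection $t$, a generic point of $\La_t$ is not in $\overline{\La_\bR^+}$, so you cannot place the basepoint near $\La_t$ without changing the chamber, and hence the canonical basis $\{\basis_w\}$. If instead you keep the basepoint in $\La_\bR^+$, the arc near $\La_t$ is far from the basepoint, and identifying the local monodromy at $\La_t$ with $\mu(\beta_t^2)\in\on{End}(P_\chi)$ requires transporting along a path in $\La^{rs}/W_{\La,\chi}$ --- and that transport is precisely the extended monodromy of Section~\ref{subsec-twisted-monodromy}. Similarly, the factorization $\mu(\beta_t)=\mu(b_w)^{-1}\mu(\sigma_s)\mu(b_w)$ that would let you quote the known quadratic relation for $\sigma_s$ only makes sense after passing to the extended monodromy, since $b_w\notin B_{W_\La}^\chi$ in general and hence $\mu(b_w)$ is not an endomorphism of $P_\chi$. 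The same obstruction reappears in obtaining the \emph{explicit} formula (not just the minimal polynomial), which in Section~\ref{subsec-w-eq-one} comes from the Picard--Lefschetz path $\gamma_{1,s}$ with basepoints near $\La_s$; for $\La_t$ with $t$ non-simple, the analogous path and the resulting relative cycles are not directly comparable with the canonical basis $\{\basis_w\}$ attached to $\La_\bR^+$.

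The paper sidesteps all of this by conjugating in the \emph{other} variable: instead of replacing the simple reflection $s$ by the non-simple $t$, it keeps $s$ and replaces the character $\chi$ by $w\cdot\chi$. The extended monodromy $\mu_l(\bar b_w)\colon M_l(P_\chi)\to M_l(P_{w\cdot\chi})$ carries $\basis_w$ to $\pm\basis_1[w]$ (equation~\eqref{eqn-key-claim}), commutes with microlocal monodromy, and together with the tautological equivalence $w^{-1}sw\in W^0_{\La,\chi}\iff s\in W^0_{\La,w\cdot\chi}$ (equation~\eqref{character-conj}) reduces the general-$w$ claim to the already-proved $w=1$ claim for the conjugate character --- exactly the Picard--Lefschetz-near-$\La_s$ setup of Section~\ref{subsec-w-eq-one}. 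So the extended monodromy is not a ``careful check'' at the margin of your argument; it is the replacement for the conjugation step you are attempting. If you want to salvage your Steps 2--3, the cleanest fix is to insert the extended monodromy $\mu_l(\bar b_w)$ \emph{before} Step 2, which turns your plan into the paper's proof.
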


For the proof of Proposition \ref{prop-general-basis-restated}, we will need the geometric input of the next subsection.

\subsection{Equations for the monodromy in the family}
We have the following analog of Proposition \ref{prop-minimal-polynomials}. Recall the pure braid group $P\!B_{W_\La} \subset B_{W_\La}$.

\begin{prop}\label{prop-minimal-polynomials-general}
(i) Let $s \in S$ be a simple reflection with $s \in W^0_{\La, \chi}$. Then we have $\sigma_s \in B_{W_\La}^\chi$ and the monodromy transformation $\mu (\sigma_s) \in \on{End} (P_\chi)$ satisfies:
\begin{eqnarray*}
(\mu (\sigma_s) - 1)^2 &=& 0, \;\;\;\; \text{if $\delta (s)$ is odd};
\\
\mu (\sigma_s)^2 - 1 &=& 0, \;\;\;\; \text{if $\delta (s)$ is even}.
\end{eqnarray*}
\indent
(ii) Let $s \in S$ be a simple reflection with $s \notin W^0_{\La, \chi}$. Then we have ${\sigma_s}^2 \in P\!B_{W_\La} \subset B_{W_\La}^\chi$ and the monodromy transformation $\mu (\sigma_s^2) \in \on{End} (P_\chi)$ satisfies:
\beqn
\mu (\sigma_s^2) - 1 = 0.
\eeqn
\end{prop}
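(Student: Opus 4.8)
The plan is to adapt the proof of Proposition~\ref{prop-minimal-polynomials}, which reduces the analogous untwisted relations to a rank one computation by means of Thom's $A_f$ condition for $f : \Lp \to \La / W_\La$. The new feature is that we must carry the twisted coefficients through the reduction and correctly identify the rank one local system that appears. We first dispose of the membership claims. In case (i), Lemma~\ref{lemma-W0} gives $W_{\La, \chi}^0 \subset W_{\La, \chi}$, so $s \in W_{\La, \chi}^0$ forces $p (\sigma_s) = s \in W_{\La, \chi}$, i.e.\ $\sigma_s \in B_{W_\La}^\chi$. For case (ii), and in fact always, we have $p (\sigma_s^2) = s^2 = 1 \in W_{\La, \chi}$, so $\sigma_s^2 \in P\!B_{W_\La} = p^{-1} (1) \subset p^{-1} (W_{\La, \chi}) = B_{W_\La}^\chi$.

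Next, the reduction to rank one. We repeat the construction in the proof of Proposition~\ref{prop-minimal-polynomials}: fix a generic point $\baseptas \in \La_s \cap \overline{\La_\bR^+}$, a small $\epsilon > 0$, and take $\basepta$ to be the unique point with $\basepta - \baseptas \in \La_s^\p$ and $\on{dist} (\basepta, \baseptas) = \epsilon$; form the analytic arcs $\gamma$ and $\bar\gamma$ (with $f \circ \gamma (z) = \bar\gamma (z^2)$), the base change $f_{\bar\gamma} : \Lp_{\bar\gamma} \to \rD_{\bar\gamma}$, the nearby cycle sheaf $P_{\bar\gamma}$ with its monodromy $\mu_{\bar\gamma}$, and the family $F : \cZ_{\bar\gamma} \to \rD_{\bar\gamma} \times \bC$, which satisfies Thom's conditions (P0), (P1), (P2) of that proof. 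The only change is that the constant sheaf on the open stratum $\cZ_{\bar\gamma, 3}$ is replaced by the rank one $K$-equivariant local system pulled back from the twisted coefficients (and, in case (ii) where $\sigma_s$ need not lie in $B_{W_\La}^\chi$, one works over the double cover of $\rD_{\bar\gamma}^*$ on which this local system becomes single-valued). The roof diagram argument, together with the transversality estimates behind (P0)--(P2), then goes through with these coefficients and yields the factorization $\psi_{\barbaseptas} (P_{\bar\gamma}) \cong P_\chi$, with the rank one monodromy corresponding to $\mu (\sigma_s)$ in case (i) and to $\mu (\sigma_s^2)$ in case (ii). Restricting to the normal slice $\Lp_s = Z_\Lp (\La_s)$ via the faithful transverse restriction functor to $\baseptas + \bar \Lp_s$, exactly as in the proof of Proposition~\ref{prop-minimal-polynomials}, reduces the assertions to the rank one symmetric pair $(\bar G_s, \bar K_s)$ of Section~\ref{subsec-rank-one}, with $\dim \bar \Lp_s = \delta (s) + 1$, little Weyl group $\bZ / 2 = \{ 1, s \}$, and $\bar f_s$ a non-degenerate homogeneous quadratic; the coefficient local system restricts to the general fiber of $\bar f_s$ as the one determined by the character $\chi |_{I_s} : I_s \to \{ \pm 1 \}$.

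It remains to perform the rank one computation. In case (i), $s \in W_{\La, \chi}^0$ implies $\chi |_{I_s} = 1$ by Corollary~\ref{cor-sInW0}, so the coefficient local system is trivial and we are in the untwisted rank one situation; the relations $(\mu (\sigma_s) - 1)^2 = 0$ for $\delta (s)$ odd and $\mu (\sigma_s)^2 - 1 = 0$ for $\delta (s)$ even are then exactly the rank one relations used in the proof of Proposition~\ref{prop-minimal-polynomials} (see \cite[Example 3.7]{G2}); note that $\delta (s)$ even forces $\delta (s) > 1$, so that the general fiber of $\bar f_s$ is a simply connected smooth affine quadric of dimension $\delta (s) \geq 2$, consistently with the triviality of the coefficients. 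In case (ii), $s \notin W_{\La, \chi}^0$ gives $\chi |_{I_s} \neq 1$; by Lemma~\ref{lemma-trichotomy} (iv) and its proof this forces $\delta (s) = 1$, $I_s \cong \bZ / 2$, and $(\bar G'_s, \bar K'_s) \cong (SL(2), SO(2))$ (and passing from $\bar G_s$ to $\bar G'_s$ does not affect the relevant endomorphism rings), and the coefficient local system on the rank one general fiber, a copy of $\bC^*$, is the non-trivial rank one local system $\cL_{\chi_1}$ of Section~\ref{sec-example}. By Proposition~\ref{prop-example} (ii), the monodromy in the family attached to $\sigma_s$ on $P_{\chi_1}$ squares to the identity for either choice of splitting homomorphism, whence $\mu (\sigma_s^2) = 1$, as claimed.

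The main obstacle is the second paragraph: verifying that the factorization-through-the-discriminant argument of Proposition~\ref{prop-minimal-polynomials}---the roof diagram and the $A_F$, $A_{F_1}$, $A_{F_2}$ transversality estimates---remains valid after the constant coefficients are replaced by the twisted local system, and that the induced rank one local system is precisely the one cut out by $\chi |_{I_s}$. Granting this, cases (i) and (ii) follow at once from the untwisted rank one computation and from Proposition~\ref{prop-example} (ii), respectively.
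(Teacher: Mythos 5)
Your proposal is correct and follows essentially the same route as the paper: reduce to the rank one pair $(\bar G_s, \bar K_s)$ via the roof diagram argument of Proposition~\ref{prop-minimal-polynomials}, identify the induced rank one local system as the one cut out by $\chi|_{I_s}$, then invoke Corollary~\ref{cor-sInW0} (trivial coefficients, so the untwisted relations) in case (i), and pass to $(\bar G'_s, \bar K'_s) \cong (SL(2), SO(2))$ with the nontrivial character and apply Proposition~\ref{prop-example}(ii) in case (ii). Two small matters the paper makes explicit that you leave implicit: in case (i) the paper works with the family $f_\chi : \tilde\Lp_\chi \to \La/W_{\La,\chi}$ rather than $f$ precisely because the extension $\hat\cL_\chi$ is a genuine local system there (the lift of $\bar\gamma$ to $\La/W_{\La,\chi}$ exists since $s \in W_{\La,\chi}^0 \subset W_{\La,\chi}$), and the triviality of the restricted local system uses the regularity of $\tilde r$ together with Corollary~\ref{cor-sInW0}, not the latter alone — the paper spells this out via the embedding $\widetilde j_{\baseptas}$ of $\bar\Lp_s$ into $\tilde\Lp_\chi$.
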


\begin{proof}
For part (i), the proof proceeds by analogy with the proof of Proposition~\ref{prop-minimal-polynomials}, but working with the family $f_\chi : \tilde \Lp_\chi \to \La / W_{\La, \chi}$ instead of $f$. Just as in that proof, the statement can be reduced to the rank one case. We continue to use the notation of that proof. Consider the map:
\beqn
\nu: (\baseptas + \bar \Lp_s) \to (\baseptas + \bar \Lp_s) \inv K_s \cong (\baseptas + \La_s^\p) / W_s \to \La / W_{\La, \chi} \, ,
\eeqn
where the last arrow is induced by the inclusion $\baseptas + \La_s^\p \to \La$ (note that we have $s \in W^0_{\La, \chi} \subset W_{\La, \chi}$). Using the map $\nu$, we obtain an embedding:
\beqn
\widetilde j_{\baseptas} : \bar \Lp_s \to \tilde \Lp_\chi, \;\; x \mapsto (\nu (\baseptas + x), \baseptas + x).
\eeqn
Recall the quotient map $\bar f_s : \bar \Lp_s \to \La_s^\p / W_s \cong \bar \Lp_s \inv K_s$. Consider the regular semisimple locus $\bar \Lp_s^{rs} = \bar f_s^{-1} (\La_s^\p / W_s - \{ 0 \}) \subset \bar \Lp_s$. It is not the case that $\widetilde j_{\baseptas} (\bar \Lp_s^{rs})$ is a subset of $\tilde \Lp_\chi^{rs}$. However, if we pick a small neighborhood of zero $U_s \subset \La_s^\p / W_s$, then we have $\widetilde j_{\baseptas} (\bar \Lp_s^{rs} \cap \bar f_s^{-1} (U_s)) \subset \tilde \Lp_\chi^{rs}$. Moreover, by Corollary~\ref{cor-sInW0} and the regularity assumption on the splitting homomorphism $\tilde r$ (see Defintion~\ref{defn-regular-splitting}), the pullback $\widetilde j_{\baseptas}^* \hat \cL_\chi$ restricts to a trivial local system on $\bar \Lp_s^{rs} \cap \bar f_s^{-1} (U_s)$. Thus, we are reduced to the same problem for the rank one pair $(\bar G_s, \bar K_s)$ as in the proof of Proposition~\ref{prop-minimal-polynomials}. Part (i) follows.

For part (ii), note that $s \notin W^0_{\La, \chi}$ implies that $\delta (s) = 1$ and $\check\alpha_s (-1) \notin Z_K (\La)^0$, i.e., we are in the situation Lemma~\ref{lemma-trichotomy} (iv)(b). Since we have $\sigma_s^2 \in P\!B_{W_\La}$, we can base change the family $f$ all the way to $\La$, and consider the family:
\beqn
\tilde f: \tilde \Lp = \Lp \times_{\La / W_\La} \La \to \La.
\eeqn
Using this family, we proceed again similarly to the proof of Proposition~\ref{prop-minimal-polynomials}, but using the pullback via the arc $\gamma : \rD_\gamma \to \La$ instead of $\bar\gamma : \rD_{\bar\gamma} \to \La / W_\La$, to reduce the statement to the case of the rank one pair $(\bar G_s, \bar K_s)$. Moreover, we can further reduce to the case of the pair $(\bar G'_s, \bar K'_s)$ which is isomorphic to $(SL(2), SO(2))$, as in the proof of Lemma~\ref{lemma-trichotomy} (iv)(b). By Corollary~\ref{cor-sInW0}, the character that we obtain in this way for the pair $(SL(2), SO(2))$ is the non-trivial character $\chi_1$ of Section~\ref{sec-example}. Part (ii) now follows from Corollary \ref{cor-example}.
\end{proof}

\subsection{Proof of Proposition \ref{prop-general-basis-restated} for \texorpdfstring{$w = 1$}{Lg}}
\label{subsec-w-eq-one}
Assume that the basepoints $\basepta = l \in \La_\bR^+$ have been chosen as in Section~\ref{subsec-near-wall} (i.e., near the hyperplane $\La_s \subset \La$), and recall the path $\gamma_{1, s} : [0, 1] \to \bC$ defined in that section. As in~\eqref{eqn-pl-formula}, we have:
\beq\label{eqn-ms}
PL [e_1, \gamma_{1, s}, o_1, c_1] = \basis_1 + m_s \, \basis_s \in M_l (P_\chi),
\eeq
for some $m_s \in \bZ$, independent of $\baseRing$. By arguing as in Lemma \ref{lemma-generator-squared} (i) and using Lemma \ref{lemma-interpret-basis}, we obtain:
\beqn
\lambda_l \circ \tilde r \, (\sigma_s^2) \, \basis_1 = \lambda_l \circ \tilde r \, (\sigma_s) \, \basis_s = PL [e_1, \gamma_{1, s}, o_{1, s}, c_{1, s}],
\eeqn
where $o_{1, s} \in \cO [1]$ is an orientation which is independent of $\chi$, and $c_{1, s} \in (\cL_\chi)_{e_1}$ is given by:
\beqn
c_{1, s} = r (\sigma_s^2) \cdot c_1 \, .
\eeqn
By the definition of the equivariant local system $\cL_\chi$, we have:
\beqn
c_{1, s} = \chi (r (\sigma_s^2)) \cdot c_1 \, .
\eeqn
Thus, by Corollary~\ref{cor-char-and-lifts}, we have:
\beqn
c_{1, s} = c_1 \, , \, \text{ if} \; s \in W^0_{\La, \chi} \, , \;\;\;\; \text{and} \;\;\;\;
c_{1, s} = -c_1 \, , \, \text{ otherwise}.
\eeqn
Morover, by Proposition \ref{prop-chi-eq-one-basis} (i), we have:
\beqn
o_{1, s} = (-1)^{\delta (s)} \cdot o_1 \, .
\eeqn

Consider now the case $s \in W^0_{\La, \chi}$. In this case, we have $\sigma_s \in B_{W_\La}^\chi$, and we can consider the monodromy transformation $\mu (\sigma_s) : P_\chi \to P_\chi$. Arguing as in the proof of Proposition~\ref{prop-partial-description} (ii), and using Proposition \ref{prop-chi-eq-one-basis} (ii) and the definition of the local system $\hat \cL_\chi$ on $\tilde \Lp^{rs}_\chi$ in Section~\ref{subsec-twisted-monodromy}, we obtain:
\beqn
\mu (\sigma_s) \, \basis_1 = PL [e_s, \gamma_s, (-1)^{\delta (s) + 1} \cdot o_s, r (\sigma_s^{-1}) \cdot c_1].
\eeqn
Moreover, by Corollary~\ref{cor-char-and-lifts}, we have:
\beqn
r (\sigma_s^{-1}) \cdot c_1 = r (\sigma_s) \cdot (r (\sigma_s^{-2}) \cdot c_1) = \chi (r (\sigma_s^{-2})) \cdot r (\sigma_s) \cdot c_1 = c_s \, .
\eeqn
Thus, we have:
\beqn
\mu (\sigma_s) \, \basis_1 = (-1)^{\delta (s) + 1} \cdot \basis_s \, .
\eeqn
Similarly, as in Lemma \ref{lemma-generator-squared} (ii) and using Proposition \ref{prop-chi-eq-one-basis} (ii), we have:
\beqn
\mu (\sigma_s) \, \basis_s = PL [e_1, \gamma_{1, s}, - o_1, r (\sigma_s^{-1}) \cdot c_s] = - PL [e_1, \gamma_{1, s}, o_1, c_1] = - \basis_1 - m_s \, \basis_s \, .
\eeqn
Thus, we see that the monodromy transformation $\mu (\sigma_s)$ preserves the rank $2$ submodule $M_{1, s} \subset M_l (P_\chi)$ spanned by the basis elements $\{ \basis_1, \basis_s \}$. Let $\mu_{1, s} (\sigma_s)$ be the restriction of $\mu (\sigma_s)$ to $M_{1, s}$. Using the basis $\{ \basis_1, \basis_s \}$, we can summarize the above as follows:
\beqn
\mu_{1, s} (\sigma_s) =
\begin{pmatrix}
0                              &  -1   \\
(-1)^{\delta (s) + 1}  &  -m_s
\end{pmatrix}.
\eeqn
Using Proposition \ref{prop-minimal-polynomials-general} (i), we can conclude that the integer $m_s$ of equation~\eqref{eqn-ms} is given by:
\beqn
m_s = (-1)^{\delta (s)} - 1.
\eeqn
The claim of Proposition~\ref{prop-general-basis-restated} for $w = 1$ and $s \in W^0_{\La, \chi}$ follows.

Consider now the case $s \notin W^0_{\La, \chi}$. In this case, we necessarily have $\delta (s) = 1$. We have $\sigma_s^2 \in B_{W_\La}^\chi$, and we can consider the monodromy transformation $\mu (\sigma_s^2) : P_\chi \to P_\chi$. As before, we compute:
\beqn
\mu (\sigma_s^2) \, \basis_1 = PL [e_1, \gamma_{1, s}, - o_1, r (\sigma_s^{-2}) \cdot c_1].
\eeqn
However, in this case, we have:
\beqn
r (\sigma_s^{-2}) \cdot c_1 = \chi (r (\sigma_s^{-2})) \cdot c_1 = - c_1 \, .
\eeqn
Thus, we obtain:
\beqn
\mu (\sigma_s^2) \, \basis_1 = PL [e_1, \gamma_{1, s}, o_1, c_1] = \basis_1 + m_s \, \basis_s \, .
\eeqn
Using Proposition~\ref{prop-minimal-polynomials-general} (ii), we can conclude that $m_s = 0$. The claim of Proposition~\ref{prop-general-basis-restated} for $w = 1$ and $s \notin W^0_{\La, \chi}$ follows. This completes the proof Proposition \ref{prop-general-basis-restated} for $w = 1$.

\subsection{Proof of Proposition \ref{prop-general-basis-restated} for general \texorpdfstring{$w$}{Lg}}
\label{subsec-general-w}
Recall the extended monodromy structure introduced in equation~\eqref{ext-mon}. We use it to complete the proof of Proposition \ref{prop-general-basis-restated}. We recall that the extended monodromy is given by:
\beqn
\mu: B_{W_\La} \to \on{Aut} \Bigg( \bigoplus_{\bar w \in W_\La / W_{\La, \chi}} P_{\bar w \cdot \chi} \Bigg),
\eeqn
and that for a braid $b \in B_{W_\La}$, we have $\mu (b) : P_{\bar w \cdot \chi} \to P_{(p(b) \bar w) \cdot \chi} \,$.

Recall the basis $\{ \basis_{w'}  \}_{w' \in W_\La}$ of the Morse group $M_l (P_\chi)$ constructed in Section~\ref{subsec-PLclasses-general}. For each $w \in W_\La$, we can apply the same construction to obtain a basis:
\beqn
\{ \basis_{w'} [w] \}_{w' \in W_\La} \;\; \text{of} \;\; M_l (P_{w \cdot \chi}).
\eeqn
Each element $b \in B_{W_\La}$ induces a map $\mu_l (b) : M_l (P_{w \cdot \chi}) \to M_l (P_{(p(b) w) \cdot \chi})$. Note that we have:
\beq
\label{character-conj}
w^{-1} s w \in W^0_{\La, \chi} \iff s \in W^0_{\La, w \cdot \chi} \, .
\eeq

Let $w = s_1 \cdots s_m$, $s_i \in S$, be a reduced expression. Recall that we write $b_w = \sigma_{s_1} \cdots \sigma_{s_m} \in B_{W_\La}$. Let $\bar b_w = \sigma_{s_1}^{-1} \cdots \sigma_{s_m}^{-1} \in B_{W_\La}$. The key step of the proof is the following claim:
\beq\label{eqn-key-claim}
\mu_l (\bar b_w) \, \basis_w = \pm \basis_1 [w].
\eeq
This claim follows by induction on the length $\BruhatLength (w)$, using the following:  
\beq\label{eqn-key-claim-step}
\text{if} \;\; \BruhatLength (w) < \BruhatLength (ws) \;\; \text{then} \;\;
\mu_l (\sigma_s) \basis_w [w'] = \pm \basis_{ws} [sw'] \;\; \text{for all} \;\; w' \in W_\La \, . 
\eeq
The claim~\eqref{eqn-key-claim-step} is analogous to Proposition \ref{prop-partial-description} (ii). The claim~\eqref{eqn-key-claim} follows. 

Since extended monodromy commutes with microlocal monodromy, equation~\eqref{eqn-key-claim} allows us to reduce the claim of Proposition~\ref{prop-general-basis-restated} for general $w \in W_\La$ to a corresponding claim about the action of $\lambda_l \circ \tilde{r} \, (\sigma_s^2)$ on the basis element $\basis_1 [w] \in M_l (P_{w \cdot \chi})$. In view of equation~\eqref{character-conj}, this corresponding claim has been proved in Section~\ref{subsec-w-eq-one}. This completes the proof of  Proposition~\ref{prop-general-basis-restated}, and with it, that of Proposition~\ref{prop-general-basis}.

\subsection{Proof of Theorem \ref{thm-main}}
Let us note first that, by Lemma \ref{lemma-interpret-basis}, we have:
\beq\label{eqn-cyclic-vector}
\begin{gathered}
\text{the Morse group $M_l (P_\chi)$ is a cyclic $\baseRing [B_{W_\La}]$-module, with $\basis_1$ as a} \\
\text{generator, under the microlocal monodromy action $\lambda_l \circ \tilde r$.}
\end{gathered}
\eeq 
Let us write:
\beq\label{decomp-to-cosets}
M_l (P_\chi) = \bigoplus_{u \in W_\La / W_{\La, \chi}^0} V_u \, ,
\eeq
where $V_u = \on{span} \{ \basis_w \; | \; w \in u \}$. Let $u_0 \in W_\La / W_{\La, \chi}^0$ be the identity coset, and let $V_0 = V_{u_0}$.

The $B_{W_\La}$-action $\lambda_l \circ \tilde r$ respects the direct sum decomposition in~\eqref{decomp-to-cosets}, acting on it in the following manner. Pick a set of representatives $\{ w_u \in u \}_{u \in W_\La / W_{\La, \chi}^0}$. Then, for every $s \in S$, we have:
\beqn
\begin{array}{ll}
\lambda_l \circ \tilde r (\sigma_s) \, V_u = V_u \, ,       & \text{ if } w_u^{-1} s w_u \in W_{\La, \chi}^0 \, , \\
\lambda_l \circ \tilde r (\sigma_s) \, V_u = V_{s u} \, ,  & \text{ if } w_u^{-1} s w_u \notin W_{\La, \chi}^0 \, .
\end{array}
\eeqn
To see this, note that $w$ and $s w$ are in the same coset precisely when $w^{-1} s w \in W_{\La, \chi}^0$, then apply the formulas of Proposition~\ref{prop-general-basis}. It follows that, for every $\braid \in B_{W_\La}$, we have:
\beq\label{eqn-action1}
\lambda_l \circ \tilde r \, (\braid) : V_{u_1} \mapsto V_{u_2} \, , \, \text{ where } u_2 = p(\braid) \, u_1 \in W_\La / W_{\La, \chi}^0 \, .
\eeq
In particular, the submodule $V_0 \subset M_l (P_\chi)$ is stable under the action of $B_{W_\La}^{\chi, 0} \subset B_{W_\La}$.

Using the formulas of Proposition~\ref{prop-general-basis} once again, we see that, for every $s \in S$, we have:
\beq\label{relations1}
\begin{array}{lll}
(\lambda_l \circ \tilde r \, (\sigma_s) - 1)^2 = 0  & \text{ on } V_u \, ,  & \text{ if } w_u^{-1} s w_u \in W_{\La, \chi}^0 \text{ and } \delta(s) \text{ is odd}, \\
\lambda_l \circ \tilde r \, (\sigma_s^2) = 1          & \text{ on } V_u \, ,  & \text{ otherwise}.
\end{array}
\eeq

Let us now make use of these considerations to analyze the action of $B_{W_\La}^{\chi, 0}$ on $V_0$. First, we claim that this action factors through the braid group $B_{W_{\La, \chi}^0}$. To see this, recall from Section~\ref{subsec-main-thm} that we have the following commutative diagram:
\beqn
\xymatrix{
1 \ar[r] & \pi_1 (\La^{rs}) \ar[r] \ar@{->>}[d]_{\varphi_1} & B_{W_\La}^{\chi, 0} = \pi_1 (\La^{rs} / W_{\La, \chi}^0) \ar[r] \ar@{->>}_{\varphi}[d] & W_{\La, \chi}^0 \ar[r] \ar@{=}[d] & 1 \;\, \\
1 \ar[r] & \pi_1 (\La_\chi^{rs}) \ar[r] & B_{W_{\La, \chi}^0} = \pi_1 (\La_\chi^{rs} / W_{\La, \chi}^0) \ar[r] & W_{\La, \chi}^0 \ar[r] & 1 \, .
}
\eeqn
By inspection, we have $\ker (\varphi) = \ker (\varphi_1)$. Thus, it suffices to show that $\ker (\varphi_1)$ acts trivially on $V_0$. To do so, for every $s \in S$ and every $\braid \in B_{W_\La}$, consider the element:
\beqn
\sigma_{s, \braid} = \braid \, \sigma_s \, \braid^{-1} \in B_{W_\La} \, .
\eeqn
We have $\sigma_{s, \braid}^2 \in \ker (\varphi_1)$, whenever $p(\braid) \, s \, p(\braid)^{-1} \notin W_{\La, \chi}^0$. Moreover, the elements $\{ \sigma_{s, \braid}^2 \}$, for all such pairs $(s, \braid)$, generate the group $\ker (\varphi_1)$; this follows, for example, from \cite[Proposition A1, p$. \; 181$]{BMR}. Using equations \eqref{eqn-action1} and \eqref{relations1}, we can readily check that:
\beqn
p (\braid) \, s \, p(\braid)^{-1} \notin W_{\La, \chi}^0 \; \implies \; \lambda_l \circ \tilde r \, (\sigma_{s, \braid}^2) = 1 \text{ on } V_0 \, ,
\eeqn
thus verifying the claim.

Let us now analyze the action of $B_{W_{\La, \chi}^0}$ on $V_0$. To that end, recall the set of simple reflections $S_\chi = \{ s_{\bar\alpha_1}, \dots, s_{\bar\alpha_\nSchi} \} \subset W_{\La, \chi}^0$ introduced in Section~\ref{subsec-heck-alg}, and the associated set of counter-clockwise braid generators $\{ \sigma_1, \dots, \sigma_\nSchi \} \subset B_{W_{\La, \chi}^0}$. Each of these braid generators can we written as:
\beqn
\sigma_i = \varphi (\sigma_{s_i, b_i}),
\eeqn
for some $s_i \in S$ and $b_i \in B_{W_\La}$. (Note that the $s_i$ for different $i$ do not have to be distinct.) Moreover, we have $\delta (s_{\bar\alpha_i}) = \delta (s_i)$. Using equations \eqref{eqn-action1} and \eqref{relations1}, we can readily infer that:
\beq\label{relations2}
\begin{array}{lll}
(\lambda_l \circ \tilde r \, (\sigma_{s_i, b_i}) - 1)^2 = 0  & \text{ on } V_0 \, ,  & \text{ if } \delta(s_{\bar\alpha_i}) \text{ is odd}, \\
\lambda_l \circ \tilde r \, (\sigma_{s_i, b_i}^2) = 1          & \text{ on } V_0 \, ,  & \text{ if } \delta(s_{\bar\alpha_i}) \text{ is even}.
\end{array}
\eeq
Consider the map of $\baseRing$-modules:
\beq\label{V0eqHeck}
\baseRing [B_{W_{\La, \chi}^0}] \to V_0 \, , \, \text{ defined by } \; \varphi (\braid) \mapsto \lambda_l \circ \tilde r \, (\braid) \, \basis_1 \, , \;\; \braid \in B_{W_\La}^{\chi, 0} \, .
\eeq
By applying Lemma \ref{lemma-interpret-basis} to all $w \in W_{\La, \chi}^0$, we can see that this map is surjective. Equations \eqref{relations2} imply that this map factors through the canonical quotient map $\eta_\chi : \baseRing [B_{W_{\La, \chi}^0}] \to \cH_{W_{\La, \chi}^0}$. Since both $V_0$ and $\cH_{W_{\La, \chi}^0}$ are free $\baseRing$-modules of rank $|W_{\La, \chi}^0|$, and since $\baseRing$ is an integral domain, we have:
\beq\label{eqn-iso1}
\text{the map \eqref{V0eqHeck} yields an isomorphism of $\baseRing [B_{W_\La}^{\chi, 0}]$-modules $V_0 \cong \cH_{W_{\La, \chi}^0} \,$.}
\eeq

By an argument as in the proof of Lemma \ref{lemma-trivial-inertia}, and using that lemma for the case $\chi = 1$, one can show that, for every $u \in I$, we have:
\beq\label{IonBasis1}
\lambda_l (u) \, \basis_1 = \chi(u) \cdot \tau (u) \cdot \basis_1 \, .
\eeq
We can use Lemma \ref{lemma-interpret-basis}, plus the fact that the group $W_{\La, \chi}^0$ preserves the character $\chi$, while all of $W_\La$ preserves the character $\tau$, to generalize equation~\eqref{IonBasis1} as follows. For every $u \in I$ and every $w \in W_{\La, \chi}^0$, we have:
\beq\label{IonV0}
\lambda_l (u) \, \basis_w = \chi(u) \cdot \tau (u) \cdot \basis_w \, .
\eeq
Combining~\eqref{eqn-iso1} and~\eqref{IonV0}, we obtain an isomorphism of $\baseRing [\widetilde B_{W_\La}^{\chi, 0}]$-modules:
\beq\label{submodV0}
V_0 \cong \baseRing_\chi \otimes \cH_{W_{\La, \chi}^0} \otimes \baseRing_\tau \, .
\eeq

Consider the following map of $\baseRing [\widetilde B_{W_\La}]$-modules:
\beqn
\begin{gathered}
\Psi : \; \baseRing [\widetilde B_{W_\La}] \otimes_{\baseRing [\widetilde B_{W_\La}^{\chi, 0}]} (\baseRing_\chi \otimes \cH_{W_{\La, \chi}^0} \otimes \baseRing_\tau) \to M_l (P_\chi),
\\
\Psi : \; \tilde\braid \otimes (1_\chi \otimes 1_\cH \otimes 1_\tau) \mapsto \lambda_l (\tilde\braid) \, \basis_1 \, ,
\end{gathered}
\eeqn
where $1_\cH \in \cH_{W_{\La,\chi}^0}$ denotes the identity. This map is well-defined in view of equation~\eqref{submodV0}. By~\eqref{eqn-cyclic-vector}, the map $\Psi$ is surjective. As before, we observe that both the domain and the range of $\Psi$ are free $\baseRing$-modules of rank $|W_\La|$. It follows that $\Psi$ is an isomorphism. Finally, we note that:
\beqn
\baseRing [\widetilde B_{W_\La}] \otimes_{\baseRing [\widetilde B_{W_\La}^{\chi, 0}]} (\baseRing_\chi \otimes \cH_{W_{\La, \chi}^0} \otimes \baseRing_\tau) \cong \left(\baseRing [\widetilde B_{W_\La}] \otimes_{\baseRing [\widetilde B_{W_\La}^{\chi, 0}]} (\baseRing_\chi \otimes \cH_{W_{\La, \chi}^0})\right) \otimes \baseRing_\tau \, ,
\eeqn
since $\tau$ is preserved by $W_\La$. By the definition of $\cM_\chi$ in \eqref{eqn-M-chi}, this completes the proof of Theorem~\ref{thm-main}.

\end{document}